\newcommand{\R}{\mathbb{R}}
\newcommand{\etab}{\boldsymbol{\eta}}
\newcommand{\te}{\textrm}
\newcommand{\tacka}{\,\cdot\,}
\newcommand{\veps}{\varepsilon}
\DeclareMathOperator{\diam}{diam}
\DeclareMathOperator{\divv}{div}
\DeclareMathOperator{\argmin}{argmin}
\definecolor{mygreen}{rgb}{0.1,0.75,0.2}
\definecolor{dgrn}{rgb}{0.1,0.6,0.1}
\definecolor{dred}{rgb}{0.6,0,0}
\definecolor{lgray}{rgb}{0.5,0.5,0.5}
\newcommand{\dgrn}{\color{dgrn}}
\newcommand{\nc}{\normalcolor}
\newcommand{\De}[1]{{\color{dgrn}#1}}
 \newenvironment{listi}
  {\begin{list} 
 {(\roman{broj})}
{ \usecounter{broj}}
     \setlength{\labelwidth}{25pt}
  }
{   \end{list} }
\newcounter{broj}
    \newcommand*{\qrr@gobblenexttocentry}[5]{}
    \newcommand*{\qrr@gobblenexttocentry}[4]{}
\newcommand*{\addsubsection}{%
    \addtocontents{toc}{\protect\qrr@gobblenexttocentry}%
    \subsection}
\numberwithin{equation}{section}
\numberwithin{figure}{section}
\theoremstyle{plain}
\newtheorem{thm}{\protect\theoremname}[section]
  \theoremstyle{definition}
  \newtheorem{example}[thm]{\protect\examplename}
  \theoremstyle{definition}
  \newtheorem{defn}[thm]{\protect\definitionname}
  \theoremstyle{remark}
  \newtheorem{rem}[thm]{\protect\remarkname}
  \theoremstyle{plain}
  \newtheorem{prop}[thm]{\protect\propositionname}
  \newtheorem{cor}[thm]{Corollary}
  \newtheorem{fact}[thm]{Fact}
  \newtheorem{assumption}[thm]{Assumption}
\newtheorem{lemma}[thm]{Lemma}
  \providecommand{\definitionname}{Definition}
  \providecommand{\examplename}{Example}
  \providecommand{\propositionname}{Proposition}
  \providecommand{\remarkname}{Remark}
\providecommand{\theoremname}{Theorem}
\title{Nonlocal Wasserstein Distance: Metric and Asymptotic Properties}
\begin{document}

\author{Dejan Slep\v{c}ev \and Andrew Warren }
\address{Department of Mathematical Sciences, Carnegie Mellon University, Pittsburgh, PA 15213, USA.}
\email{slepcev@math.cmu.edu}
\email{awarren1@andrew.cmu.edu}

\begin{abstract}
The seminal result of Benamou and Brenier provides a characterization of the Wasserstein distance as the path of the minimal action in the space of probability measures, where paths are solutions of the continuity equation and the action is the kinetic energy. Here we consider a fundamental modification of the framework where the paths are solutions of nonlocal (jump) continuity equations and the action is a nonlocal kinetic energy. The resulting nonlocal Wasserstein distances are relevant to fractional diffusions and Wasserstein distances on graphs. We characterize the basic properties of the distance and obtain sharp conditions on the (jump) kernel specifying the nonlocal transport that determine whether the topology metrized is the weak or the strong topology. A key result of the paper are the quantitative comparisons between the nonlocal and local Wasserstein distance. 
\end{abstract}

\date{\today}

\maketitle

\tableofcontents

%
%







%

\section{Introduction and Summary of Results}



This paper is devoted to the study of a nonlocal Wasserstein distance defined as the least nonlocal action needed to connect two measures via a nonlocal continuity equation. 
The standard Wasserstein
distance is  the minimal transportation cost to couple two measures, for the quadratic point-to-point cost:
\[
W_{2}(\mu_{0},\mu_{1}):=\left(\inf_{\pi\in\Pi(\mu_{0},\mu_{1})}\int_{\mathbb{R}^{d}\times\mathbb{R}^{d}}|x-y|^{2}d\pi(x,y)\right)^{1/2},
\]
where $\mu_{0}$ and $\mu_{1}$ lie in $\mathcal{P}_{2}(\mathbb{R}^{d})$, the space 
of probability measures on $\mathbb{R}^{d}$ with finite second moments,
and $\Pi(\mu_{0},\mu_{1})$ is the space of all couplings of $\mu_{0}$
and $\mu_{1}$. The celebrated work of Benamou
and Brenier \cite{benamou2000computational} establishes that the Wasserstein
metric has a \emph{dynamical} reformulation inspired by fluid mechanics.
There, one considers the space of all narrowly continuous curves $\rho_{t}:[0,1]\rightarrow\mathcal{P}_{2}(\mathbb{R}^{d})$ and \emph{velocity vector fields} $v_{t}:[0,1]\rightarrow L^{2}(\rho_{t};\mathbb{R}^{d})$
such that the \emph{continuity equation}
\begin{equation}
\partial_{t}\rho_{t}+\divv(\rho_{t}v_{t})=0\label{eq:ce}
\end{equation}
holds in the sense of distributions. In \cite{benamou2000computational}
it is shown that 
\begin{equation}
W_{2}^{2}(\mu_{0},\mu_{1})=\inf\left\{ \int_{0}^{1}\!\! \int_{\mathbb{R}^{d}}|v_{t}(x)|^{2}d\rho_{t}(x)dt:\rho_{0} = \mu_{0},\rho_{1}= \mu_{1},\text{ and }(\rho_{t},v_{t})_{t\in[0,1]}\text{ solve (\ref{eq:ce}})\right\} .\label{eq:bb}
\end{equation}
The integral $\int_{0}^{1}\int_{\mathbb{R}^{d}}|v_{t}(x)|^{2}d\rho_{t}(x)dt$
represents the \emph{total action} along $(\rho_{t},v_{t})_{t\in[0,1]}$;
in other words, the $2$-Wasserstein distance between $\mu_{0}$ and
$\mu_{1}$ can be reformulated as the \emph{least action} of a curve
in $\mathcal{P}_{2}(\mathbb{R}^{d})$ connecting $\mu_{0}$ to $\mu_{1}$
in which the flow of mass is continuous, in the sense of satisfying
equation \ref{eq:ce}.


In this article, our object of study is the class of \emph{nonlocal
transportation metrics} on $\mathcal{P}(\mathbb{R}^{d})$, introduced by Erbar
in \cite{erbar2014gradient}, which are defined in terms of an nonlocal action
minimization problem on the space of curves in $\mathcal{P}(\mathbb{R}^{d})$.
A key difference is that the curves connecting the measures are not solutions of the continuity equation \ref{eq:ce}, but solutions of \emph{nonlocal continuity equation}:
\begin{equation}
\partial_{t}\rho_{t}(x)+\int_{\mathbb{R}^{d}}v_{t}(x,y)\theta(\rho_{t}(x),\rho_{t}(y))\eta(x,y)dy=0, \label{eq:nce}
\end{equation}
where $v_{t}:\mathbb{R}^{d}\times\mathbb{R}^{d}\rightarrow\mathbb{R}$ is the nonlocal velocity, $\eta: \R^d \times \R^d \to [0, \infty)$ is the \emph{weight kernel}  which
encodes the ability to transport mass directly from $x$ to $y$,
and $\theta: [0, \infty) \times [0, \infty) \to [0, \infty)$ allows one to define
 an ``interpolated density'' $\theta(\rho_{t}(x),\rho_{t}(y))$,
which is a generalized average of $\rho_{t}(x)$ and $\rho_{t}(y)$.

The \emph{nonlocal total action} is formally given by 
\begin{equation}
\frac{1}{2}\int_{0}^{1}\int_{\mathbb{R}^{d}\times\mathbb{R}^{d}}v_{t}(x,y)^{2}\theta(\rho_{t}(x),\rho_{t}(y))\eta(x,y)dxdydt.\label{eq:nonlocal-action-nonrigorous}
\end{equation}
Together, \ref{eq:nce} and \ref{eq:nonlocal-action-nonrigorous}
allow us to consider  the family of \emph{nonlocal transportation
distances} (since the distance now depends on the choice of $\eta$
and $\theta$)\emph{ }
\begin{equation}
\mathcal{W}_{\eta,\theta}^{2}(\mu_{0},\mu_{1}):=\inf\left\{ \frac{1}{2}\int_{0}^{1}\! \int_{\mathbb{R}^{d}\times\mathbb{R}^{d}}v_{t}(x,y)^{2}\theta(\rho_{t}(x),\rho_{t}(y))\eta(x,y)dxdydt\right\} \label{eq:nlw-nonrigorous}
\end{equation}
where the infimum runs over all $(\rho_{t},v_{t})_{t\in[0,1]}$ which
solve (\ref{eq:nce}), such that $\rho_{0}=\mu_{0}$ and
$\rho_{1}=\mu_{1}$. This family of distances can be viewed simultaneously
as nonlocal analogues of the Benamou-Brenier formulation of the $W_{2}$
metric, and also as a ``continuum state space'' analogue of the
\emph{graph Wasserstein distance} defined in \cite{chow2012fokker,maas2011gradient,mielke2011gradient},
wherein the underlying space is a finite graph or irreducible Markov
chain, rather than $\mathbb{R}^{d}$ as in the case of $\mathcal{W}_{\eta,\theta}$.

In this paper we investigate topological and metric properties of the family of distances $\mathcal{W}_{\eta,\theta}$ and compare them to the Wasserstein metric $W_2$. 
Some properties have already been established in \cite{erbar2014gradient}: it is known that the $\mathcal{W}_{\eta,\theta}$
distance is lower semicontinuous with respect to narrow convergence,
and that if the kernel $\eta$ has finite second moments, then the
topology induced by $\mathcal{W}_{\eta,\theta}$ is at least as fine
as that of narrow convergence. 


Here we show that the topology metrized by $\mathcal{W}_{\eta,\theta}$ can be strictly stronger than that of narrow convergence. In particular we characterize for which kernels 
 $\mathcal{W}_{\eta,\theta}$ metrizes the narrow, the strong, or an even stronger topology on the space of measures supported within a compact domain. The key to establishing the result is the following proposition which loosely speaking characterizes the effort needed to spread mass from a point to the surrounding region. 

\begin{prop}
\label{prop:dirac separation} Let $\mathcal{W}_{\eta,\theta}$ be
defined as in Definition \ref{def:nlw}. Suppose that $\eta$ and
$\theta$ satisfy Assumptions \ref{assu:eta properties} (i-v) and \ref{assu:theta properties}
respectively. If $\nu$ is any compactly supported probability measure
singular to $\delta_{0}$ (the Dirac measure at the origin), then
depending on the choice of $\eta$ and $\theta$: 
\begin{listi}
\item If $\theta(1,0)=0$ and $\int_{B(0,1)}\eta(|y|)dy<\infty$ then $\mathcal{W}_{\eta,\theta}(\delta_{0},\nu)=\infty$.
\item If $\theta(1,0)>0$ and $\int_{B(0,1)}\eta(|y|)dy<\infty$ then  $\infty> \mathcal{W}_{\eta,\theta}(\delta_{0},\nu)\geq2\left(\int_{\mathbb{R}^{d}}\eta(|y|)dy\right)^{-1/2}$.
\item If instead $\eta$ has \emph{algebraic blow-up }at the origin, that
is, there exists some $s>0$, $\delta>0$, and constant $c$ such
that $\eta(|y|)\geq c|y|^{-d-s}$ when $|y|\leq\delta$, %
{} then instead we have the estimate 
\[
\mathcal{W}_{\eta}(\delta_{0},\mathfrak{m}_{B(0,\delta)})\leq C\delta^{s/2}
\]
 with explicit constant $C$, where $\mathfrak{m}_{B(0,\delta)}$
is the uniform probability measure on $B(0,\delta)$. In particular,
$\inf_{\nu\in\{\mathcal{P}(\mathbb{R}^{d}):\nu\bot\delta_{0}\}}\mathcal{W}_{\eta,\theta}(\delta_{0},\nu)=0$.
\end{listi}
\end{prop}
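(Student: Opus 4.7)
The proof splits into three cases, each treated by a distinct approach. For case (i), I would exploit the $1$-homogeneity of $\theta$ from Assumption \ref{assu:theta properties}: combined with $\theta(1,0)=0$, this gives $\theta(a,0)=0$ for all $a\geq 0$, hence by symmetry $\theta(\rho_t(x),\rho_t(y))=0$ whenever $(x,y)\notin\supp(\rho_t)\times\supp(\rho_t)$. For any curve $(\rho_t,v_t)$ of finite action, the flux $v_t\theta(\rho_t,\rho_t)\eta$ is absolutely continuous with respect to $\theta(\rho_t,\rho_t)\eta$, so it vanishes outside $\supp(\rho_t)\times\supp(\rho_t)$, and no mass can flow into the complement of the support. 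The support is then non-increasing in time: starting from $\supp(\rho_0)=\{0\}$, we obtain $\rho_t\equiv\delta_0$ by mass conservation, contradicting $\rho_1=\nu\perp\delta_0$.

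For case (ii), I would prove finiteness by constructing the explicit curve $\rho_t=m(t)\delta_0+(1-m(t))\nu$ with $m(t)=(1-t)^2$: this square-root reparametrization of $m$ ensures the atom-density cost integrand $\dot m^2/(m\,\theta(1,0)\int\eta)$ stays bounded on $[0,1]$, and the density-density part is finite by compactness of $\supp\nu$ and local integrability of $\eta$. For the lower bound, let $m(t)=\rho_t(\{0\})$, so $m(0)=1$ and $m(1)=0$ by $\nu\perp\delta_0$. Denoting by $g_t(y)\,dy$ the atom-to-density flux so that $\dot m(t)=-\int g_t(y)\,dy$, Cauchy--Schwarz gives
\begin{equation*}
\dot m(t)^2\leq\Bigl(\int\frac{g_t(y)^2}{\theta(m,\rho_t(y))\eta(0,y)}\,dy\Bigr)\cdot\int\theta(m,\rho_t(y))\eta(0,y)\,dy.
\end{equation*}
The first factor lower-bounds a portion of the total action rate. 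Bounding the second factor by $\tfrac{m}{2}\int\eta(|y|)\,dy$ to leading order (via $\theta(a,b)\leq\tfrac12(a+b)$, which follows from $1$-homogeneity, symmetry, and concavity of $\theta$, together with a careful estimate of the density contribution), integrating in $t$, and invoking Cauchy--Schwarz in time with $\int_0^1 dm/\sqrt m=2$, yields $2\mathcal{W}_{\eta,\theta}^2\geq 8/\int\eta$, i.e., $\mathcal{W}_{\eta,\theta}\geq 2(\int\eta)^{-1/2}$.

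For case (iii), I would construct a dyadic spreading curve. Set $r_k=\delta/2^k$ and $\mu_k=\mathfrak{m}_{B(0,r_k)}$; since $r_k\to 0$, $\mu_k\to\delta_0$ narrowly. Using $\eta(|y|)\geq c|y|^{-d-s}$ on $B(0,\delta)$ together with a rescaling argument restricted to jumps supported within $B(0,r_k)$, one obtains the uniform estimate $\mathcal{W}_{\eta,\theta}^2(\mu_{k+1},\mu_k)\leq C\,r_k^s$. Concatenating the sub-curves with time allocation $\tau_k\propto\sqrt{a_k}$ (where $a_k=\mathcal{W}^2(\mu_{k+1},\mu_k)$) gives the optimal total action $(\sum_k\sqrt{a_k})^2\leq (C\sum_k r_k^{s/2})^2\leq C'\delta^s$ by summing the geometric series, hence $\mathcal{W}_{\eta,\theta}(\delta_0,\mu_0)\leq C''\delta^{s/2}$, with the explicit constant coming from the geometric sum and the scaling factor.

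The main obstacle throughout is rigorously handling atoms: defining $\theta(\rho,\rho)$ and the action when $\rho$ has an atomic part (which relies essentially on the $1$-homogeneity of $\theta$), and pinning down the sharp constant $2$ in case (ii) by separating atom-to-density from density-to-density contributions. In case (iii) the dyadic construction also requires a uniform-in-$k$ scaling estimate, which is cleanest when $\eta$ is exactly Riesz near zero but needs slightly more care in general, using monotonicity in $\eta$ to compare with the pure power-law kernel.
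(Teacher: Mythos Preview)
Your constructions for the upper bound in (ii) and for (iii) match the paper's almost exactly. For (ii), the paper's Lemma~\ref{lem:expel arithmetic mean} uses the curve $\rho_t = g(t)\delta_0 + (1-g(t))\mathfrak{m}_{B(0,\delta)}$ with $g(t)=t^2$, identical to your $m(t)=(1-t)^2$ up to time reversal. For (iii), the paper's Lemma~\ref{lem:expel non-integrable} runs a dyadic construction over annuli rather than balls, but the scaling estimate $\mathcal{W}(\mu_{k+1},\mu_k)\lesssim r_k^{s/2}$ and the geometric summation are the same.

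For the lower bound in (ii), your Cauchy--Schwarz argument tracking $m(t)=\rho_t(\{0\})$ is a direct specialization to $A=\{0\}$ of the paper's general $TV$ lower bound (Proposition~\ref{prop:TV lower bound}), and in fact extracts a sharper constant. One correction: on $\{0\}\times(\mathbb{R}^d\setminus\{0\})$ the second argument of $\theta$ in the action is $\frac{d(\text{Leb}\otimes\rho_t)}{d\lambda}(0,y)=0$ (since $\text{Leb}(\{0\})=0$), not $\rho_t(y)$ as you write. This actually simplifies your bound, since then $\theta(m,0)=m\kappa_\theta\leq m/2$ exactly, and the ``density contribution'' you worry about does not appear.

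For part (i), your core observation is correct---finite action plus $\theta(1,0)=0$ forces the flux to vanish wherever one argument of $\theta$ is zero---but the ``support is non-increasing'' phrasing does not yield a proof. With $\rho_0=\delta_0$ the support is a single point, and deducing that $\rho_t(\{0\})$ cannot decrease requires testing the weak continuity equation against functions approximating $1_{\{0\}}$ and passing to the limit. The paper (Proposition~\ref{prop:expel infinite}) does exactly this with cutoffs $\xi_\delta\to 1_{\{0\}}$: it shows that any decrease in $\rho_t(\{0\})$ forces $|\mathbf{j}_t|(\{0\}\times\mathbb{R}^d\setminus\{0\})>0$ on a positive-measure time set, whereupon the measure-theoretic vanishing of $\theta$ you identify makes the action infinite there. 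You flag atom-handling as the main obstacle, and this is precisely the step where your sketch needs to be filled in.
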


We remark that the proposition is not a trichotomy:  the case where $\int_{B(0,1)}\eta(|y|)dy=\infty$,
but $\eta(|y|)$ does not grow strictly faster than $|y|^{-d}$ near the origin, remains
open.

We prove Proposition \ref{prop:dirac separation} in Section \ref{subsec:Expel-problem}. 
While Proposition \ref{prop:dirac separation} shows that in case (i),
the topology induced by $\mathcal{W}_{\eta,\theta}$ on $\mathcal{P}(\mathbb{R}^{d})$
is highly disconnected, we give a further structural description of
the topology of $\mathcal{W}_{\eta,\theta}$ in the other two cases.
Moreover in case (ii) we establish a quantitative comparison to a combination of total variation and transportation distances. 
Let $W_1$ be the Monge distance, that is the optimal transportation distance with linear cost.
\begin{thm}
\label{thm:nlw topology structure} Let $\mathcal{W}_{\eta,\theta}$
be defined as in Definition \ref{def:nlw}. Suppose that $\eta$ and
$\theta$ satisfy Assumptions \ref{assu:eta properties} (i-v) and \ref{assu:theta properties}
respectively. If $\theta(1,0)>0$ and $\int_{B(0,1)}\eta(|y|)dy<\infty$,
then on a compact domain, there exists an explicit constant $C$ such
that for all $\mu,\nu\in\mathcal{P}(\mathbb{R}^{d})$, 
\[
\frac{1}{C}TV^{1/2}(\mu,\nu)\leq\mathcal{W}_{\eta,\theta}(\mu,\nu)\leq C\cdot TV^{1/2}(\mu,\nu).
\]
In particular, on a compact domain, $\mathcal{W}_{\eta,\theta}$ metrizes
the strong topology on probability measures. 

Conversely, in the case where $\int_{B(0,1)}\eta(|y|)dy=\infty$ but
$\eta$ has finite second moment, then we merely have the lower bound
\[
W_{1}(\mu,\nu)\leq C\mathcal{W}_{\eta,\theta}(\mu,\nu);
\]
and furthermore, if there exists some $s>0$, $\delta>0$, and constant
$c$ such that $\eta(|y|)\geq c|y|^{-d-s}$ when $|y|\leq\delta$,
then on a compact domain, $\mathcal{W}_{\eta,\theta}$ metrizes the
weak topology on probability measures. 
\end{thm}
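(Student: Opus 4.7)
The proof decomposes naturally: Case~1 ($\int_{B(0,1)}\eta < \infty$ and $\theta(1,0)>0$) demands both an upper and a lower bound relating $\mathcal{W}_{\eta,\theta}$ to $TV^{1/2}$; Case~2 ($\eta$ non-integrable at the origin with finite second moment) demands the lower bound $W_1\leq C\,\mathcal{W}_{\eta,\theta}$ and, under algebraic blow-up, the weak-metrization claim. My plan is to handle the two lower bounds together via a unified duality, then construct an explicit admissible path for the upper bound, and finally combine the $W_1$ lower bound with Proposition~\ref{prop:dirac separation}(iii) for the weak metrization.

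For the lower bounds, given any admissible curve $(\rho_t, v_t)$ from $\mu$ to $\nu$ and any test function $\varphi$, antisymmetrizing in $x,y$ and applying Cauchy--Schwarz twice (first in the measure $\theta\,\eta\,dxdy$ and then in the time variable) yields
\[
\left|\int\varphi\,d(\nu-\mu)\right|^2 \;\leq\; \tfrac12\left(\int_0^1\!\!\iint(\varphi(y)-\varphi(x))^2\,\theta\,\eta\,dxdydt\right)\cdot \mathcal{W}_{\eta,\theta}^2(\mu,\nu).
\]
In Case~2 I restrict to $1$-Lipschitz $\varphi$, bound $(\varphi(y)-\varphi(x))^2\leq|y-x|^2$, and use the finite second moment of $\eta$ together with the bound $\theta(a,b)\lesssim a+b$ (from Assumption~\ref{assu:theta properties}) to control the first factor uniformly in $t$; Kantorovich--Rubinstein duality then recovers $W_1$ on the left. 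In Case~1 I take $\|\varphi\|_\infty\leq 1$, so that $(\varphi(y)-\varphi(x))^2\leq 4$; the first factor is controlled uniformly in $t$ by $\int_{\mathbb{R}^d}\eta<\infty$, and the supremum over $\varphi$ yields the claimed $TV^{1/2}\leq C\,\mathcal{W}_{\eta,\theta}$ after comparing with the uniform bound $TV(\mu,\nu)\leq 2$ on the compact domain.

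For the upper bound $\mathcal{W}_{\eta,\theta}\leq C\cdot TV^{1/2}$ in Case~1, I would construct an explicit admissible curve using the Hahn--Jordan decomposition. Writing $\mu-\nu=\sigma_+-\sigma_-$ with $\sigma_\pm$ mutually singular of common mass $m=TV(\mu,\nu)/2$, take the linear interpolation $\rho_t=\mu\wedge\nu+(1-t)\sigma_++t\sigma_-$ and, for a coupling $\gamma\in\Pi(\sigma_+,\sigma_-)$, set $v_t(x,y)=[\gamma(x,y)-\gamma(y,x)]/[\theta(\rho_t(x),\rho_t(y))\eta(x,y)]$. The nonlocal continuity equation then follows from the marginal property of $\gamma$ together with the disjoint supports of $\sigma_\pm$. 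The decisive step is the action estimate: using $\theta(1,0)>0$ and the structural assumptions on $\theta$ to lower-bound $\theta(\rho_t(x),\rho_t(y))$ by a positive multiple of $(1-t)\sigma_+(x)+t\sigma_-(y)$ on the support of $\gamma$, combined with a judicious choice of $\gamma$ (e.g., the product coupling $\sigma_+\otimes\sigma_-/m$, with refinements for concentrated pieces), one estimates the time-and-space integral by $C\cdot m$, yielding $\mathcal{W}_{\eta,\theta}^2\leq C\cdot TV$.

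For the weak-metrization statement under algebraic blow-up, the lower bound $W_1\leq C\,\mathcal{W}_{\eta,\theta}$ already shows that $\mathcal{W}_{\eta,\theta}$-convergence forces narrow convergence. The converse direction is the principal obstacle and requires a two-scale approximation. Given $\mu_n\to\mu$ narrowly on the compact domain, partition the domain into sets $A_i^{(N)}$ of diameter $\delta_N\to 0$ and collapse the mass of each $A_i^{(N)}$ to a Dirac at its centre, producing atomic approximants $\overline{\mu}_n,\overline{\mu}$. Proposition~\ref{prop:dirac separation}(iii) controls $\mathcal{W}_{\eta,\theta}(\mu_n,\overline{\mu}_n)$ and $\mathcal{W}_{\eta,\theta}(\mu,\overline{\mu})$ by $C\,\delta_N^{s/2}$ via the algebraic blow-up, while the finite-atom distance $\mathcal{W}_{\eta,\theta}(\overline{\mu}_n,\overline{\mu})$ is controlled through an explicit grid construction together with narrow continuity. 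The delicate diagonal argument---tuning $\delta_N\to 0$ slowly enough relative to $n$ so as to balance the two error sources---is where the technical heart of this direction lies.
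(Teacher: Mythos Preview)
Your lower-bound arguments are essentially the paper's: both Propositions~\ref{prop:W1 lower bound} and~\ref{prop:TV lower bound} proceed exactly by testing against bounded (resp.\ Lipschitz) functions and applying Cauchy--Schwarz/reverse H\"older together with $\theta(r,s)\leq (r+s)/2$. (One caveat: your Cauchy--Schwarz yields $TV\lesssim\mathcal{W}$, not $TV^{1/2}\lesssim\mathcal{W}$; the appeal to ``$TV\leq 2$'' does not repair this, since for small $TV$ the square root is \emph{larger}. The paper proves the same inequality $TV\lesssim\mathcal{W}$ and the theorem statement is slightly loose on this point, so this is not a defect unique to your proposal.)

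The upper bound $\mathcal{W}\lesssim TV^{1/2}$ is where your proposal has a genuine gap. Your linear interpolation $\rho_t=\mu\wedge\nu+(1-t)\sigma_++t\sigma_-$ with a one-step flux built from a coupling $\gamma\in\Pi(\sigma_+,\sigma_-)$ forces the action to contain a factor $1/\eta(x,y)$ on $\operatorname{supp}\gamma$. Under Assumption~\ref{assu:eta properties}(v) the kernel is only guaranteed positive for $|x-y|\leq 1$, so if $\operatorname{supp}\sigma_+$ and $\operatorname{supp}\sigma_-$ are separated by more than one unit (e.g.\ $\mu=\delta_0$, $\nu=\delta_2$ on $[0,3]$), \emph{no} single-step coupling yields finite action; mass must be routed through intermediate points. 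Your interpolation creates no such intermediate mass when $\mu\wedge\nu=0$. A second issue is that you manipulate $\sigma_\pm$ as densities, but they may be mutually singular measures. The paper avoids both problems by an indirect route (Proposition~\ref{prop:W+TV upper bound}): it first proves a crude bound $\mathcal{W}_\eta^2\lesssim W_2^2+C$ via the disintegration inequality (Theorem~\ref{thm:disintegration-inequality}) and the Dirac-to-Dirac estimate $\mathcal{W}_\eta(\delta_x,\delta_y)\leq C|x-y|+C'$ obtained by \emph{hopping} through a chain of small balls (Lemma~\ref{lem:nonlocal dirac crude upper bound}); then it splits off $\Theta=\mu\wedge\nu$, applies the crude bound to the normalized remainders, and uses $1$-homogeneity of the action to recover the factor $TV$.

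For the weak-metrization direction your atomize-and-diagonalize scheme is plausible but heavier than necessary. The paper's argument is a one-liner once Lemma~\ref{lem:topological upper bound non-integrable} is in place: the disintegration inequality gives $\mathcal{W}_\eta^2(\mu_n,\mu)\leq\int\Phi(|x-y|)^2\,d\pi_n$ with $\Phi(t)\sim t^{s/2}$ near $0$ and $\Phi$ continuous; on a compact domain narrow convergence forces the optimal couplings $\pi_n$ to concentrate on the diagonal, so $\int\Phi^2\,d\pi_n\to 0$.
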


The lower bounds asserted in Theorem \ref{thm:nlw topology structure}
are established in Section \ref{subsec:Global-lower-bounds}; the
corresponding upper bound makes use of the estimates from Section
\ref{subsec:Expel-problem}, and is proved in Section \ref{subsec:Global-upper-bounds}.
\medskip

We now turn to making the quantitative comparison between the 
Wasserstein distance and the nonlocal Wasserstein distances more precise. 
In particular we show that when the kernel of nonlocal transport $\eta$ is localized the nonlocal Wasserstein distance converges to the Wasserstein distance up to the appropriate scaling. We furthermore obtain explicit error bounds on the difference. 
\begin{thm}
\label{thm:nonlocal-to-local} Let $\mathcal{W}_{\eta,\theta}$ be
defined as in Definition \ref{def:nlw}.. Suppose that $\eta$ and
$\theta$ satisfy Assumptions \ref{assu:eta properties} and \ref{assu:theta properties}
respectively. Let $\varepsilon\in(0,1]$, and define $\eta_{\varepsilon}(|x-y|):=\varepsilon^{-d}\eta\left(\frac{|x-y|}{\varepsilon}\right)$.
Suppose that $\eta(|y|)$ has finite second moment $M_{2}(\eta):=\int|y|^{2}\eta(|y|)dx$.
Let $\rho_{0},\rho_{1}\in\mathcal{P}(\mathbb{R}^{d})$. Then the following
estimates hold: 
\begin{listi}
\item Suppose either that $\eta(|y|)$ is integrable and $\theta(1,0)>0$,
or that there exists some $s>0$, $\delta>0$, and constant $c$ such
that $\eta(|y|)\geq c|y|^{-d-s}$ when $|y|\leq\delta$. Then, there
exists a constant $C_{d,\theta,\eta}$ depending solely and explicitly
on $d$, $\theta$, and $\eta$, such that
\[
\sqrt{\frac{M_{2}(\eta)}{2d}}\varepsilon\mathcal{W}_{\eta_{\varepsilon},\theta}(\rho_{0},\rho_{1})\leq\left(1+\sqrt{\varepsilon}\right)^{2}W_{2}(\rho_{0},\rho_{1})+C_{d,\theta,\eta}\sqrt{\varepsilon}.
\]
\item Suppose that $\rho_{0}$ and $\rho_{1}$ are supported inside some
domain of radius $R$. Then, there is a constant $C_{R,\eta}$ depending
solely and explicitly on $R$ and $\eta$ such that 
\[
W_{2}^{2}(\rho_{0},\rho_{1})\leq\frac{M_{2}(\eta)}{2d}\varepsilon^{2}\mathcal{W}_{\eta_{\varepsilon},\theta}^{2}(\rho_{0},\rho_{1})+C_{R,\eta}\sqrt{\varepsilon}.
\]
\end{listi}
In particular, when restricting attention to probability measures supported on a compact domain, these estimates imply the Gromov-Hausdorff
convergence of $\sqrt{\frac{M_{2}(\eta)}{2d}}\varepsilon\mathcal{W}_{\eta_{\varepsilon},\theta}$
to $W_{2}$ as $\varepsilon\rightarrow0$.
\end{thm}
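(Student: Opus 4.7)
The plan is to establish (i) and (ii) by dual constructions: (i) by lifting a near-optimal $W_{2}$ Benamou--Brenier curve to a nonlocal one, and (ii) by projecting a near-optimal nonlocal curve onto a local momentum field via a first-moment computation. The universal prefactor $M_{2}(\eta)/(2d)$ arises from the isotropy identity
\[
\int_{\mathbb{R}^{d}}(y-x)(y-x)^{T}\eta_{\varepsilon}(|x-y|)\,dy \;=\; \frac{M_{2}(\eta)\,\varepsilon^{2}}{d}\,I,
\]
which is exactly the identity responsible for the convergence of the rescaled nonlocal Laplacian $\varepsilon^{-2}(\eta_{\varepsilon}\ast u - u)$ to $\tfrac{M_{2}(\eta)}{2d}\Delta u$.

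For the upper bound (i), I begin with a near-optimal $(\rho_{t}, v_{t})$ realizing $W_{2}(\rho_{0}, \rho_{1})$. Since the nonlocal continuity equation requires densities regular enough to evaluate $\theta$, I first mollify $\rho_{t}$ at scale $\sqrt{\varepsilon}$ (adjusting $v_{t}$ so that the mollified pair still solves the local continuity equation). I then propose the antisymmetric ansatz
\[
V_{t}(x,y) := \frac{d}{M_{2}(\eta)\varepsilon^{2}}\,\bigl(v_{t}(x)+v_{t}(y)\bigr)\cdot(y-x),
\]
and verify via Taylor expansion in $y-x$, using the assumed $1$-homogeneity of $\theta$ along the diagonal, that $\int V_{t}\,\theta(\tilde\rho_{t}(x),\tilde\rho_{t}(y))\eta_{\varepsilon}\,dy$ reproduces $\divv(\tilde{\rho}_{t}v_{t})$ up to a Hessian-type remainder of order $\varepsilon$. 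A parallel expansion of the nonlocal action yields, at leading order, $\tfrac{2d}{M_{2}(\eta)\varepsilon^{2}}\int|v_{t}|^{2}\,d\tilde{\rho}_{t}$, which is precisely the claimed scaling. The residual defect in the nonlocal CE, together with the cost of connecting $\rho_{i}$ to $\tilde{\rho}_{i}$, is absorbed into the $\sqrt{\varepsilon}$ error via the upper bound of Theorem \ref{thm:nlw topology structure} (when $\int_{B(0,1)}\eta<\infty$ and $\theta(1,0)>0$) or via Proposition \ref{prop:dirac separation}(iii) (in the fractional case). The multiplicative factor $(1+\sqrt{\varepsilon})^{2}$ arises from a mild time-reparametrization and scaling used to glue the regularization segments to the interior Taylor calculation.

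For the lower bound (ii), I take a near-optimal nonlocal curve $(\rho_{t}, V_{t})$ with $V_{t}$ antisymmetric, and define the local momentum
\[
j_{t}(x) := \tfrac{1}{2}\int V_{t}(x,y)\,(y-x)\,\theta(\rho_{t}(x),\rho_{t}(y))\,\eta_{\varepsilon}(x,y)\,dy.
\]
Testing the nonlocal CE against smooth $\phi$ and symmetrizing $\phi(y)-\phi(x)$ via antisymmetry of $V_{t}$ produces $\partial_{t}\!\int\phi\,d\rho_{t} = \int\nabla\phi\cdot j_{t}\,dx + R_{t}[\phi]$, where $R_{t}[\phi]$ is the second-order Taylor remainder, controlled by $\|D^{2}\phi\|_{\infty}$ times a quantity of order $\sqrt{\varepsilon}$ thanks to the support in a ball of radius $R$ and the finite second moment of $\eta$. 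A sharpened Cauchy--Schwarz exploiting isotropy,
\[
|j_{t}(x)|^{2} \;\leq\; \frac{M_{2}(\eta)\,\varepsilon^{2}}{4d}\,\rho_{t}(x)\int V_{t}^{2}(x,y)\,\theta\,\eta_{\varepsilon}\,dy,
\]
follows from the operator-norm identity $\sup_{|e|=1}\int((y-x)\cdot e)^{2}\eta_{\varepsilon}(|y-x|)\,dy = \varepsilon^{2}M_{2}(\eta)/d$ together with the bound $\theta(a,b)\lesssim\max(a,b)$ supplied by Assumption \ref{assu:theta properties}. Integrating in $t$ and dividing by $\rho_{t}$ yields $\int_{0}^{1}\!\int|j_{t}|^{2}/\rho_{t}\,dt \leq \tfrac{M_{2}(\eta)\varepsilon^{2}}{2d}\mathcal{W}_{\eta_{\varepsilon},\theta}^{2}(\rho_{0},\rho_{1})$. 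Because $(\rho_{t}, j_{t}/\rho_{t})$ solves the local CE only modulo $R_{t}$, I close the estimate by appending a short corrective curve absorbing the defect, whose $W_{2}$-cost is of order $\sqrt{\varepsilon}$ by support compactness and $M_{2}(\eta)<\infty$; the Benamou--Brenier characterization of $W_{2}^{2}$ then delivers the stated bound.

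The main obstacles are technical. The nonlocal framework enforces absolute continuity (with enough regularity to evaluate $\theta$) of the intermediate measures, while optimal curves in either framework are typically singular. Quantifying the regularization error on both sides in order to produce the clean forms $(1+\sqrt{\varepsilon})^{2}$ in (i) and $C_{R,\eta}\sqrt{\varepsilon}$ in (ii) is the delicate part, and this is exactly where Proposition \ref{prop:dirac separation} and Theorem \ref{thm:nlw topology structure} enter: in case (i) one needs both to ensure $\mathcal{W}_{\eta_{\varepsilon},\theta}(\rho_{i},\tilde\rho_{i})$ is small, which is precisely where the case assumption on $\eta$ and $\theta$ in the hypotheses becomes essential. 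A secondary difficulty in (i) is that the simple ansatz satisfies the nonlocal CE only to leading order; a correction must be constructed via a linearized nonlocal divergence problem and shown not to inflate the action at main order. Once (i) and (ii) are in hand, the Gromov--Hausdorff convergence of $\sqrt{M_{2}(\eta)/(2d)}\,\varepsilon\,\mathcal{W}_{\eta_{\varepsilon},\theta}$ to $W_{2}$ on a compact domain follows at once from the two-sided quantitative comparison.
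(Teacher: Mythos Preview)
Your plan for (i) correctly identifies the antisymmetric ansatz and the scale-$\sqrt{\varepsilon}$ mollification, but you miss the paper's central device: the kernel $\zeta_\eta(r) := \int_r^\infty s\,\eta(s)\,ds$ has the property that if $(\rho_t,\vec{\mathbf j}_t)$ solves the local continuity equation, then $(\boldsymbol{\bar\zeta_{(\eta_\varepsilon)}}\!*\rho_t,\mathbf j_t)$, with $d\mathbf j_t(x,y)=\tfrac{d}{\varepsilon^2 M_2(\eta)}(y-x)\cdot\bigl(d\vec{\mathbf j}_t(x)\,dy+d\vec{\mathbf j}_t(y)\,dx\bigr)$, solves the nonlocal continuity equation \emph{exactly} (Lemma~\ref{lem:exact-nonlocalization}). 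This exact nonlocalization eliminates the need for your ``correction via a linearized nonlocal divergence problem,'' which you leave unspecified and which would be delicate to control at the required order. The paper first smooths with the Laplace kernel $K_s$ (to obtain the relative-Lipschitz estimate of Lemma~\ref{lem:relative Lipschitz}), then applies $\bar\zeta_{(\eta_\varepsilon)}$ to land exactly on a nonlocal CE solution; the action comparison then goes through cleanly with $s=\sqrt\varepsilon$, and the endpoint corrections are handled by Lemma~\ref{lem:crude W2 upper bound} exactly where you say the case hypotheses enter.

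For (ii) your primal route---projecting a nonlocal curve to a local momentum $j_t$ and invoking Benamou--Brenier---is genuinely different from the paper, which argues \emph{dually}: it takes the optimal local Hamilton--Jacobi solution, mollifies it, and shows that $\tfrac{2d}{\varepsilon^2 M_2(\eta)}K_s\!*\phi_t - \tfrac{CR^2}{\varepsilon s}t$ is an $s$-smoothed nonlocal HJ subsolution (Proposition~\ref{prop:local-to-nonlocal subsolution}), then invokes a one-sided duality inequality (Proposition~\ref{prop:s-smooth duality inequality}). Your primal argument has a gap. The Cauchy--Schwarz step as written is incorrect: the second factor is $\int ((y-x)\cdot e)^2\,\theta(\rho_t(x),\rho_t(y))\,\eta_\varepsilon\,dy$, and the bound $\theta(a,b)\le\tfrac{a+b}{2}$ does not let you pull out $\rho_t(x)$ alone---you are left with a convolution term $\int((y-x)\cdot e)^2\rho_t(y)\,\eta_\varepsilon\,dy$. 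This can be repaired by first smoothing the nonlocal curve with $K_s$ (the action is stable, Proposition~\ref{prop:convolution action stability}) and using the relative-Lipschitz estimate to get $\theta(\rho_t^s(x),\rho_t^s(y))\le(1+O(\varepsilon/s))\rho_t^s(x)$; but then dividing by $\rho_t^s$ introduces $1/\min_{t,x}\rho_t^s(x)$, a quantity depending on the unknown nonlocal geodesic and not obviously bounded in terms of $R$ and $\eta$ alone. The paper's dual route sidesteps this entirely: since $\|\nabla\phi_t\|_\infty\le R$ uniformly on a domain of radius $R$ (Corollary~\ref{cor:HJ Lipschitz diameter bound}), all constants in the Taylor-remainder estimates are explicit in $R$ and moments of $\eta$, with no reference to the interpolating density.
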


Part 1 of Theorem \ref{thm:nonlocal-to-local} is deduced, with an
explicit constant for $C_{d,\theta,\eta}$, as Corollary \ref{upper-bound};
likewise, Part 2 of Theorem \ref{thm:nonlocal-to-local} is deduced,
with an explicit constant for $C_{R,\eta}$, in Corollary \ref{lower-bound}.
\medskip

\subsection{Related work}
Having stated the main results of the article, let us give some further
motivating discussion regarding nonlocal Wasserstein distances, and
why their topological and asymptotic properties are of interest.

\subsubsection{Nonlocal Wasserstein metric, and associated gradient flows.}

Nonlocal Wasserstein distances were introduced in the work of Erbar \cite{erbar2014gradient}. 
A central result of \cite{erbar2014gradient}  is that the
$\mathcal{W}_{\eta, \theta}$ gradient flow of  entropy $E(\rho) = \int \ln \rho d \rho$
is the \emph{fractional heat equation }
\[
\partial_{t}\rho_{t}+(-\Delta)^{s/2}\rho=0
\]
when $\theta$ is chosen to be the logarithmic mean, and $\eta$
is chosen to be the jump kernel of an $s$-stable  Levy process:  
$\eta(x,y) = - c |x-y|^{-s-d}$.
More broadly, this result suggests that nonlocal parabolic equations may
be studied in an analogous fashion to those parabolic equations (such
as: the heat equation, the Fokker-Planck equation, the porous medium
equation)
which can be cast as $W_{2}$ gradient flows \cite{ambrosio2008gradient}.
Erbar has made important contributions in that direction by establishing the lower-semicontinuity of the nonlocal action, showing the topology of nonlocal Wasserstein distances is at least as strong as that of the Wasserstein distance, and showing that the entropy is geodesically convex with respect to the nonlocal Wasserstein distance.

As with the regular Wasserstein distance it is of interest to consider gradient flows of the functionals that combine some or all of: entropy, potential, and interaction energy. For $\beta_i \geq 0$ for $i=1,2,3$
\begin{equation}
G(\rho) = \beta_1 \int \ln \rho \, d\rho + \beta_2 \int U(x) \, d\rho(x) + \beta_3 \iint K(x,y) d\rho(x) d\rho(y).
\end{equation}
where $K(x,y) =K(y,x)$.
If $\beta_1 \beta_2 >0 $ and $\beta_3=0$ this would be a nonlocal Fokker-Planck equation, for $\beta_1 \beta_3 >0 $ this would be a nonlocal McKean-Vlasov equation.
The issue that arises in nonlocal Wasserstein gradient flows is that the behavior of the solutions at low temperature, for $\beta_1 \ll 1$, crucially depends on the kernel $\eta$ and interpolation $\theta$. In particular the result of Proposition \ref{prop:dirac separation}(i) indicates that when $\beta_1=0$, $\eta$ is integrable and $\theta$ is for example a logarithmic mean then gradient flow of the potential (or interaction) energy  is unable to move a delta mass! This follows from the fact that since the potential energy is finite the gradient flow curves have finite action. This highlights the need to better understand the influence of the choice of $\eta$ and $\theta$ on the nonlocal Wasserstein metric and the resulting gradient flows. 

To  overcome the issues with the freezing of support for the gradient flow of potential and interaction energies,   Esposito, Patacchini, Schlichting, and Slep\v{c}ev \cite{esposito2019nonlocal} studied a modification of the nonlocal transportation framework, inspired by upwind numerical schemes, which allows for the interpolation $\theta$ to depend on the velocity. 
For antisymmetric $v$
\[ \theta(\rho(x), \rho(y), v(x,y)) = \begin{cases}
\rho(x) \quad  & \te{if } v(x,y)\geq 0 \\
\rho(y) & \te{otherwise.}
\end{cases}\]

They study gradient flow of the nonlocal interaction energy ($\beta_1=\beta_2=0$)
with respect to nonlocal Wasserstein distances both on graphs and in the continuum. 


The resulting upwind nonlocal Wasserstein ``distance'' is not symmetric, and is shown to be a 
quasimetric. It  provides a formal \emph{Finslerian} (rather than Riemannian)
differential structure on $\mathcal{P}(\mathbb{R}^{d})$, which is
nonetheless sufficient to develop gradient flows as curves of maximal slope. 

Lastly: in \cite{esposito2021novel}, the authors show that the 1D aggregation
equation 
\[
\partial_{t}f_{t}=\partial_{v}(f_{t}\partial_{v}W*f_{t})\qquad W(v)=c|v|^{3}
\]
can be cast as the gradient flow of the kinetic energy with respect
to a nonlocal transportation metric, which they call the \emph{nonlocal
collision metric}. This metric falls outside the scope of this article
because the action they consider is 2-homogeneous (rather than 1-homogeneous).
While the aggregation equation is also known to be a 2-Wasserstein
gradient flow (of the nonlocal interaction energy, rather than the
kinetic energy), it is nonetheless notable that a nonlocal transportation
metric is recently shown to be physically relevant in kinetic theory.

\subsubsection{Graph Wasserstein distances}

Maas  \cite{maas2011gradient}, Mielke \cite{mielke2011gradient}, and 
Chow, Huang, Li, and Zhou \cite{chow2012fokker} have independently introduced 
a metric structure for probability measures on discrete spaces (finite
graphs or Markov chains) modeled on the Benamou-Brenier formulation
of the $W_{2}$ metric.
 Our setup  largely follows that of Maas.

Let $\mathcal{X}$ be a finite set. Let $\pi$ be some distinguished
probability measure on $\mathcal{X}$. Define 
\[
\mathcal{P}_{\pi}(\mathcal{X}):=\left\{ \rho:\mathcal{X}\rightarrow\mathbb{R}_{+}\mid\sum_{x\in\mathcal{X}}\rho(x)\pi(x)=1\right\} .
\]
In other words, $\mathcal{P}_{\pi}(\mathcal{X})$ is the set of probability
densities on $\mathcal{X}$, w.r.t. $\pi$. 

Consider some irreducible Markov kernel $K:\mathcal{X}\times\mathcal{X}\rightarrow\mathbb{R}_{+}$
such that $\pi$ is the unique stationary measure for $K$, that is,
\[
\pi(y)=\sum_{x\in\mathcal{X}}\pi(x)K(x,y).
\]
Furthermore, assume that $K$ is reversible, that is, the \emph{detailed
balance condition}
\[
K(x,y)\pi(x)=K(y,x)\pi(y)
\]
holds. Equivalently \cite[Chapter 9]{levin2017markov}, one may consider
a connected weighted graph on $\mathcal{X}$ with weights $w(x,y)$,
where $\pi$ is the stationary distribution for the uniform random
walk on $(\mathcal{X},w)$. 

Let $\theta(x,y)$ be an interpolation function\footnote{By this, we informally mean: a function which is a well-behaved, but
possibly nonlinear, average of $x$ and $y$. Rigorously, we mean a
function from $\mathbb{R}_{+}\times\mathbb{R}_{+}\rightarrow\mathbb{R}$
satisfying all the conditions of Assumption \ref{assu:theta properties}
below.}; define the shorthand $\hat{\rho}(x,y):=\theta(\rho(x),\rho(y))$.
We introduce the \emph{graph continuity equation}
\[
\dot{\rho}_{t}(x)+\sum_{y\in\mathcal{X}}v_{t}(x,y)\hat{\rho}_{t}(x,y)K(x,y)=0
\]
where $v_{t}:\mathcal{X}\times\mathcal{X}\rightarrow\mathbb{R}$ is
thought of as a ``vector field'' on $\mathcal{X}$, analogous to
the vector field $v_{t}$ appearing in the (continuum) continuity
equation. The term $\sum_{y\in\mathcal{X}}v_{t}(x,y)\hat{\rho}_{t}(x,y)K(x,y)$
can be interpreted as a graph analogue of the term $\divv(\rho v)$
from the continuity equation on $\mathbb{R}^{d}$. 

The action of a density-potential pair is given by 
\[
\mathcal{A}(\rho,v):=\frac{1}{2}\sum_{x,y\in\mathcal{X}}(v_{t}(x,y))^{2}\hat{\rho}(x,y)K(x,y)\pi(x).
\]
From here, one can define a geodesic metric on $\mathcal{P}_{\pi}(\mathcal{X})$
in a variational fashion, by setting 
\[
\mathcal{W}_{\theta, \eta, \pi}(\bar{\rho_{0}},\bar{\rho_{1}})^{2}:=\inf\left\{ \int_{0}^{1}\mathcal{A}(\rho_{t},v_{t})dt\right\} 
\]
where the infimum runs over all pairs $(\rho_{t},v_{t})_{t\in[0,1]}$
satisfying the graph continuity equation, with
$\rho_{0}=\bar{\rho_{0}}$ and $\rho_{1}=\bar{\rho}_{1}$. %
This Benamou-Brenier-type formulation of a distance on a discrete base space
is more technically straightforward than its continuum
ancestor $W_{2}$. For one, it is shown in \cite{maas2011gradient}
that, at least on the ``interior'' of $\mathcal{P}_{\pi}(\mathcal{X})$
(namely, the subset  $\{\rho(x)\in\mathcal{P}_{\pi}(\mathcal{X}):\forall x\in\mathcal{X},\rho(x)>0\}$)
we can interpret $\mathcal{W}$
as a geodesic metric arising from a \emph{bona fide} Riemannian metric
structure; this is in contrast to the continuum setting, where the
space $\mathcal{P}_{2}(\mathbb{R}^{d})$ can only be understood
``formally'' as a Riemannian manifold. Indeed, for the metric $\mathcal{W}_{\theta, \eta, \pi}$ and related gradient flows,
numerous heuristic arguments from the Otto calculus can be translated
to rigorous arguments in the discrete setting. This has been exploited
to study a variety of evolution equations on discrete spaces as $\mathcal{W}_{\theta, \eta, \pi}$
gradient flows, for instance discrete analogs of the
porous medium equation \cite{erbarmaas2014gradient} and the McKean-Vlasov
equation \cite{erbar2016gradient}.

The reason for the need to introduce an interpolation $\theta$  in discrete setting is rather clear.
Indeed, because mass configurations
are defined on the set of nodes, and vector fields are defined on
edges, the discrete analogue of the flux of the continuity equation, $\rho v$, must combine node- and edge-defined quantities in a noncanonical
fashion. Indeed, in the definition of $\mathcal{W}_{\theta, \eta, \pi}$ the
role of the flux $\rho v$ is played by the quantity $\theta(\rho(x),\rho(y))v(x,y)$,
where the interpolation $\theta$ is introduced in order to define an edge-based quantity (flux) based on vertex-defined quantities (mass). 
While there are many
possible  choices for $\theta$, which $\theta$ one chooses can significantly alter the geometry of $\mathcal{W}_{\theta, \eta, \pi}$. 

\subsubsection*{From graphs to continuum.}


A number of works has investigated the asymptotic properties of graph Wasserstein distances as the graphs converge to a continuum limit. This is of particular interest in data science and for  mesh-free numerical  schemes.
For instance, if one considers a sequence of finite graphs
$G_{n}$ equipped with the shortest-path metric, converging in some sense to a continuous
domain $\mathcal{G}\subset\mathbb{R}^{d}$,

is it the case that $(\mathcal{P}(G_{n}),\mathcal{W}_{\theta, \eta, \pi})$ 
also converges to $(\mathcal{P}(\mathcal{G}),W_{2})$? Similar Gromov-Hausdorff-type
stability results are well-established for a sequence of continuous
domains where each respective space of probability measures is equipped
with the $W_{2}$ metric \cite{villani2008optimal}. However, the
problem of discrete-to-continuum stability for the graph Wasserstein
distance turns out to be considerably more delicate. In \cite{gigli2013gromov},
it is shown that if we consider a sequence $\mathcal{X}_{n}$ of finer
and finer $d$-dimensional regular lattices on the flat $d$-torus
$\mathbb{T}^{d}$, and take for our Markov chain the uniform random
walk on said lattice, then under appropriate rescaling the sequence
of spaces $(\mathcal{P}(\mathcal{X}_{n}),\mathcal{W}_{\theta, \eta, \pi})$ converges
to $(\mathcal{P}(\mathbb{T}^{d}),W_{2})$ in the sense of Gromov-Hausdorff. On the other hand, such convergence does \emph{not} hold for an arbitrary
sequence of regular meshes \cite{gladbach2020scaling}. 
Despite this failure of convergence for general sequences of meshes, Garcia-Trillos has shown \cite{garciatrillos2020gromov} that $\mathcal{W}_{\theta, \eta, \pi}$ corresponding to weighted random geometric graphs (e.g. where vertices are random i.i.d. samples from the Lebesgue measure on the torus) converges in the sense of Gromov-Hausdorff to $(\mathcal{P}(\mathbb{T}^{d}),W_{2})$ as the 
number of vertices goes to infinity and the graph bandwidth converges to zero at appropriate rate (which is such that unweighted graph degrees go to infinity).

Our own Theorem
\ref{thm:nonlocal-to-local} provides another result in this vein,
whereby the 2-Wasserstein distance is recovered in the limit; but
ours gives a \emph{nonlocal-to-local} convergence result, rather than
discrete-to-continuum. 

We should also draw attention to one other question regarding the
graph Wasserstein metrics: what can be said about the geodesics on
$(\mathcal{P}(\mathcal{X}),\mathcal{W}_{\theta, \eta, \pi})$? Two works \cite{erbar2019geometry,gangbo2019geodesics}
have independently investigated graph Wasserstein geodesics via their
\emph{dual }description in terms of solutions to a suitable discrete
\emph{Hamilton-Jacobi equation}. In the present article, an analogous
Hamilton-Jacobi duality result for the nonlocal Wasserstein metric
is exploited in Section \ref{sec:nonlocal hj} to prove Part 2 of Theorem
\ref{thm:nonlocal-to-local}.

\subsubsection{Other related work}

In \cite{peletier2020jump}, Peletier, Rossi, Savar\'e and Tse consider a far-reaching generalization
of the results of \cite{erbar2014gradient}. While \cite{erbar2014gradient}
shows that the fractional heat equation can be viewed as the gradient
flow of $\text{KL}(\cdot\mid\text{Leb})$ with respect to a Wasserstein-like
metric, \cite{peletier2020jump} investigates a large class of reversible
Markov jump processes whose Kolmogorov forward equations may be cast
as so-called \emph{generalized gradient systems}, which are a further
abstraction of the gradient flows in metric spaces studied in \cite{ambrosio2008gradient}.

Finally, let us remark on some related work on the subject of \emph{nonlocal
conservation laws}. We draw attention to two distinct uses of the term
in the literature (although the works cited in the following discussion refer to some other uses of the term in the literature). On the one hand, certain authors use ``nonlocal
conservation law'' to refer to solutions to the (local) continuity equation
$\partial_{t}u_{t}+\divv(u_{t}v_{t})=0$ where $v_{t}$ is itself
a nonlocal functional of $u_{t}$; see \cite{blandin2016well,crippa2013existence}
for a general discussion of this class of equations and an overview
of related literature. In particular, we draw attention to recent
work \cite{colombo2019singular,colombo2020local} investigating the
local limit (namely, as $\varepsilon\rightarrow0$) of nonlocal conservation
laws of the form 
\[
\partial_{t}u_{t}^{\varepsilon}+\divv\left(u_{t}^{\varepsilon}b\left(\eta_{\varepsilon}*u_{t}^{\varepsilon}\right)\right)=0
\]
where $b:\mathbb{R}_{+}\rightarrow\mathbb{R}^{d}$, and $\eta:\mathbb{R}^{d}\rightarrow\mathbb{R}_{+}$
is some convolution kernel and $\eta_{\varepsilon}(x):=\frac{1}{\varepsilon^{d}}\eta\left(\frac{x}{\varepsilon}\right)$.
Formally, the singular limit is given by the conservation law $\partial_{t}u_{t}+\divv(u_{t}b(u_{t}))=0$,
but \cite{colombo2019singular} exhibits counterexamples where $u_{t}^{\varepsilon}\not\rightarrow u_{t}$
(e.g. in $L^{p}$ for $p>1$) even when $b$ and $\eta$ are regular.
More recently, sufficient conditions for nonlocal-to-local convergence
in one dimension have been given in \cite{colombo2020local}; but
it remains the case that formally ``obvious'' nonlocal-to-local
convergence problems can present unexpected technical phenomena.

In a distinct line of work, the articles \cite{du2012new,du2017nonlocal} introduce new classes of "nonlocal conservation laws" where one replaces the divergence term in the continuity equation with a nonlocal divergence-type operator. The nonlocal divergence and gradient in these papers, as well as in 
 \cite{du2012new, gilboa2009nonlocal}), share the properties of the objects we study, but also have important differences. 
 More importantly the authors are interested in conservation laws where the flux $j$ is a nonlinear function of $\rho$. 
Several ways to encode the nonlinear and nonlocal dependence on $\rho$ are developed.  
In \cite{du2017nonlocal}  the authors present, 
in the same vein as in \cite{colombo2020local}, sufficient conditions  which allow one to recover a local conservation law in the limit, under a suitable rescaling of the nonlocal divergence operator.

\section{Preliminaries}\label{sec:Preliminaries}

\addsubsection*{Wasserstein metric}
\label{subsec:Wasserstein-metric}

Let $\mathcal{P}_{2}(\mathbb{R}^{d})$ denote the space of probability
measures on $\mathbb{R}^{d}$ with finite second moments. The 2-Wasserstein
distance $W_{2}$ on $\mathcal{P}_{2}(\mathbb{R}^{d})$ is defined
by 
\[
W_{2}^{2}(\mu,\nu):=\inf_{\pi\in\Pi(\mu,\nu)}\int|x-y|^{2}d\pi(x,y)
\]
where $\Pi(\mu,\nu)$ is the set  of all transport plans (couplings) of $\mu$ and
$\nu$. The 2-Wasserstein distance also has a well-known dynamical
formulation, due to Benamou and Brenier \cite{benamou2000computational}:
\[
W_{2}^{2}(\mu,\nu)=\inf\left\{ \int_{0}^{1}\int_{\mathbb{R}^{d}}|v_{t}(x)|^{2}d\rho_{t}(x)dt \:: \: \partial_{t}\rho_{t}+\divv(\rho_{t}v_{t})=0,\;\rho_{0}=\mu,\;\rho_{1}=\nu\right\} .
\]
Here, the continuity equation $\partial_{t}\rho_{t}+\divv(\rho_{t}v_{t})=0$
is interpreted in a suitable distributional sense, in particular to
allow $\rho_{t}$ to be a probability measure (rather than a smooth
density). 

The Benamou-Brenier formulation of the $W_{2}$ metric can be interpreted
as showing that the $W_{2}$ distance between $\mu$ and $\nu$ is
given by the minimal total kinetic energy of a unit-time flow
of mass with initial and terminal distribution specified by $\mu$
and $\nu$ respectively. Classically, kinetic energy is either formulated
in terms of position and velocity, or position and momentum; accordingly,
as was observed in \cite{benamou2000computational} (but see also
further discussion and extensions in \cite{brenier2003extended,dolbeault2009new}),
one can also rewrite the
Benamou-Brenier formulation of $W_{2}$ in ``mass-flux'' coordinates:
\[
W_{2}^{2}(\mu,\nu)=\inf\left\{ \int_{0}^{1}\int_{\mathbb{R}^{d}}\left|\frac{d\mathbf{v}_{t}}{d\rho_{t}}\right|^{2}d\rho_{t}(x)dt \: : \: \partial_{t}\rho_{t}+\divv\mathbf{v}_{t}=0,\rho_{0}=\mu,\rho_{1}=\nu\right\} 
\]
where $\mathbf{v}_{t}$ is a locally finite signed measure, which
formally takes the place of $\rho_{t}v_{t}$. This presentation of
$W_{2}$ has the technical advantage that the action $\int_{\mathbb{R}^{d}}\left|\frac{d\mathbf{v}}{d\rho}(x)\right|^{2}d\rho(x)$
is jointly convex and lower semicontinuous in $\rho$ and $\mathbf{v}$
(this is a well-known consequence of Reshetnyak's theorem; for completeness,
we provide a proof of this result which covers the case where $\mathbf{v}\in\mathcal{M}_{loc}(\mathbb{R}^{d})$
in Theorem \ref{thm:locally finite reshetnyak}). In particular, see
Remark \ref{rem:W2 convolution stability} for a useful consequence
of these properties.

\subsection{Nonlocal structure: weight kernel, interpolation, vector calculus}

We equip $\mathbb{R}^{d}$ with an ``underlying nonlocal structure'', as follows.
\begin{assumption}[Weight kernel]
\label{assu:eta properties} The function $\eta:\{(x,y)\in\mathbb{R}^{d}\times\mathbb{R}^{d}\backslash\{x=y\}\}\rightarrow[0,\infty)$
satisfies the following properties: 
\begin{listi}
\item $\eta$ is \emph{continuous} on the set $\{(x,y)\in\mathbb{R}^{d}\times\mathbb{R}^{d}\backslash\{x=y\}:\eta(x,y)>0\};$ 
\item $\eta$ is \emph{isotropic}, that is, there exists radial profile $\etab: (0, \infty) \to [0,\infty)$ such that 
$\eta(x,y) = \etab(|x-y|)$.
\item The radial profile $\etab$ is \emph{non-increasing.}
\item $\eta$ satisfies the \emph{tail moment bound} $\int_{\mathbb{R}^{d}}1\wedge|y|^{2} \, \etab(|y|)dy<\infty$.

\hspace{-4.5em}Additionally, for some results we require that 
\item The support of $\etab$ contains $(0,1]$,

\hspace{-4.5em}Or, furthermore, that
\item The support of $\etab$ is \textit{equal} to $(0,1]$. 
\end{listi}
\end{assumption}

The assumption of isotropy is largely imposed to simplify the statements
of our results. When combined with the tail moment bound, these suffice
to guarantee that Assumption 1.1 from \cite{erbar2014gradient} is
satisfied; the relevance for us is that we make use of several results
from \cite{erbar2014gradient} which require this assumption. On the other hand, the arguments of Sections \ref{exact nonlocalization} and \ref{sec:nonlocal hj} make  use of the assumption that
$\eta$ is compactly supported. Note also that under the assumption $\eta$ is isotropic,
if $\eta$ is compactly supported then the suport of $\etab(|y|)$
is equal to $\bar{B}(0,R)$ for some $R>0$; in assumption (vi), we
fix $R=1$ merely as a convention. Likewise, in view of assumption (iii)  assumption (v) may be viewed as merely a convention unless $\etab$ is identically zero.

Since $\eta$ is taken to be isotropic, we often write $\eta(|x-y|)$
rather than $\etab(|x-y|)$; in other words, we abusively identify $\eta$
with its radial profile.

We write $M_{p}(\eta):=\int_{\mathbb{R}^{d}}|x-y|^{p}\eta(|x-y|)dy$
to denote the $p$-th central moment of $\eta(x,y)$ with $x$ fixed. Note
that due to isotropy, $M_{p}(\eta)$ does not depend on the choice
of $x$. Unless otherwise stated, we do not explicitly assume $p$th
moment bounds on $\eta$, but note that Theorem \ref{thm:nonlocal-to-local}
assumes that $M_{2}(\eta)<\infty$. 

In what follows, given a choice of weight kernel $\eta$, we denote
$G:=\{(x,y)\in\mathbb{R}^{d}\times\mathbb{R}^{d}\backslash\{x=y\}:\eta(x,y)>0\}$.
The intended interpretation is that $G$ is the set of edges we have
placed on $\mathbb{R}^{d}$, with $\eta(x,y)$ being the edge weight
between $x$ and $y$. 

We also assume that the \emph{interpolation function} $\theta$ satisfies the following:
\begin{assumption}[Interpolation function]
\label{assu:theta properties}$\theta:[0,\infty)\times[0,\infty)\rightarrow[0,\infty)$
satisfies the following properties:
\begin{listi}
\item Regularity: $\theta$ is continuous on $[0,\infty)\times[0,\infty)$
and $C^{1}$ on $(0,\infty)\times(0,\infty)$;
\item Symmetry: $\theta(s,t)=\theta(t,s)$ for $s,t\geq0$; 
\item Positivity, normalisation: $\theta(s,t)>0$ for $s,t>0$ and $\theta(1,1)=1$;
\item Monotonicity: $\theta(r,t)\leq\theta(s,t)$ for all $0\leq r\leq s$
and $t\geq0$;
\item Positive 1-homogeneity: $\theta(\lambda s,\lambda t)=\lambda\theta(s,t)$
for $\lambda>0$ and $s\geq t\geq0$;
\item Concavity: the function $\theta:[0,\infty)\times[0,\infty)\rightarrow[0,\infty)$
is concave;
\item Connectedness: $C_{\theta}:=\int_{0}^{1}\frac{dr}{\theta(1-r,1+r)}\in[0,\infty).$
\end{listi}
\end{assumption}

\begin{rem}
Points (i-vi) in the preceding assumption are identical to \cite[Assumption 2.1]{erbar2014gradient},
except that Erbar also assumes that $\theta$ is \emph{zero on the
boundary}, namely $\theta(0,t)=0$ for all $t\geq0$. However, a careful
reading of \cite{erbar2014gradient} indicates that this extra assumption
is never used (except in the sense that Erbar proves some results
only for the logarithmic mean, which is indeed zero on the boundary).
We do \emph{not} assume $\theta$ is zero on the boundary; moreover,
we will see below that whether or not $\theta$ is zero on the boundary
has significant topological consequences for the nonlocal Wasserstein
distance.

We call point (vii) ``connectedness'' because, as discussed
in \cite{maas2011gradient} if instead $C_{\theta}=\infty$, the discrete
$\mathcal{W}_{\theta}$ distance on the space of probability measures
supported on a symmetric graph with two points becomes topologically
disconnected. The assumption that $C_{\theta}<\infty$ is required
for several arguments in Section \ref{sec:metric structural results}.
\end{rem}

\begin{lemma}
\label{lem:arithmetic mean upper bound}Any $\theta$ satisfying
Assumption \ref{assu:theta properties} also satisfies $\theta(r,s)\leq\frac{r+s}{2}$.%
\end{lemma}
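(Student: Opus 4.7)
The plan is to combine symmetry, concavity, 1-homogeneity, and normalisation in a direct two-line calculation. The key observation is that the midpoint of $(r,s)$ and $(s,r)$ is the diagonal point $\left(\frac{r+s}{2},\frac{r+s}{2}\right)$, and on the diagonal $\theta$ is pinned down completely by 1-homogeneity and the normalisation $\theta(1,1)=1$.

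Concretely, I would first note that for $r,s > 0$,
\[
\theta(r,s) \;=\; \tfrac{1}{2}\bigl(\theta(r,s)+\theta(s,r)\bigr) \;\leq\; \theta\!\left(\tfrac{r+s}{2},\tfrac{r+s}{2}\right),
\]
using symmetry (Assumption 2.2(ii)) for the first equality and concavity (Assumption 2.2(vi)) for the inequality. Then by 1-homogeneity (Assumption 2.2(v)) applied with $\lambda=\tfrac{r+s}{2}$ on the diagonal, together with the normalisation $\theta(1,1)=1$ (Assumption 2.2(iii)),
\[
\theta\!\left(\tfrac{r+s}{2},\tfrac{r+s}{2}\right) \;=\; \tfrac{r+s}{2}\,\theta(1,1)\;=\;\tfrac{r+s}{2}.
\]
Chaining these gives $\theta(r,s)\leq \tfrac{r+s}{2}$.

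The only issue requiring any care is the degenerate case when $r$ or $s$ vanishes. If $r+s=0$, then by 1-homogeneity $\theta(0,0)=\lambda\theta(0,0)$ for every $\lambda>0$, forcing $\theta(0,0)=0$, which matches $\tfrac{r+s}{2}=0$. If exactly one of $r,s$ is zero, continuity of $\theta$ on $[0,\infty)\times[0,\infty)$ (Assumption 2.2(i)) lets me take limits in the inequality already established for strictly positive arguments. I do not expect any substantive obstacle; the lemma is essentially a packaging of the four structural axioms on $\theta$ above, and the concavity-plus-symmetry trick is the whole content.
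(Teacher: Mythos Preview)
Your proof is correct and uses the same ingredients as the paper's argument (symmetry, concavity, 1-homogeneity, and the normalisation $\theta(1,1)=1$). Your execution is in fact a bit cleaner: the paper argues via the supporting hyperplane at the diagonal (using that the directional derivative orthogonal to $(1,1)$ vanishes by symmetry, then bounding by the tangent plane via concavity), whereas your midpoint-concavity step $\tfrac{1}{2}\theta(r,s)+\tfrac{1}{2}\theta(s,r)\leq\theta\bigl(\tfrac{r+s}{2},\tfrac{r+s}{2}\bigr)$ avoids any differentiability considerations and gets to the same conclusion directly.
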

\begin{proof}
First, note that by 1-homogeneity and the normalization $\theta(1,1)=1$,
we have $\theta(r,r)=r$ for all $r\in\mathbb{R}_{+}$. At the same
time, symmetry implies that at any point along the line $(r,r)$ in
$\mathbb{R}_{+}\times\mathbb{R}_{+}$ (excluding $(0,0)$, the directional
derivative of $\theta(r,r)$ in the direction orthogonal to the vector
$(1,1)$ must be zero. Therefore, by concavity, $\theta(r,s)$ is
upper-bounded by the hyperplane which takes the value $r$ at $(r,r)$
and has directional derivative zero in the direction orthogonal to
$(1,1)$ at every point $(r,r)$, and the only such hyperplane is
given by $\frac{r+s}{2}$.
\end{proof}

\begin{defn}[Nonlocal gradient and divergence]
 For any function $\phi:\mathbb{R}^{d}\rightarrow\mathbb{R}$ we
define its \emph{nonlocal gradient} $\overline{\nabla}\phi:G\rightarrow\mathbb{R}$
by 
\[
\overline{\nabla}\phi(x,y)=\phi(y)-\phi(x)\text{ for all }(x,y)\in G.
\]
For any $\mathbf{j}\in\mathcal{M}_{loc}(G)$, its \emph{nonlocal divergence}
$\overline{\nabla}\cdot\mathbf{j}\in\mathcal{M}_{loc}(\mathbb{R}^{d})$
is defined as the $\eta$-weighted adjoint of $\overline{\nabla}$,
i.e., 
\[
\int_{\mathbb{R}^{d}}\phi d\overline{\nabla}\cdot\mathbf{j}=-\frac{1}{2}\iint_{G}\overline{\nabla}\phi(x,y)\eta(x,y)d\mathbf{j}(x,y).
\]
\end{defn}

\begin{rem}
It is the nonlocal divergence operator $\overline{\nabla}\cdot$ which
replaces the usual divergence operator in the Definition \ref{def:nce}.
However,  we will simply write out the intergral operator
$\frac{1}{2}\iint_{G}\overline{\nabla}\phi(x,y)\eta(x,y)d\mathbf{j}(x,y)$
explicitly in the sequel; the definition here is just presented for easier comparison
with articles such as \cite{erbar2014gradient,esposito2019nonlocal}.
Additionally, we caution the reader that other conventions for the definition
of nonlocal gradient and divergence
operator are present in the literature,
in particular our definition is not the same as the one presented
in \cite{du2012analysis}.
\end{rem}

\subsection{Action}
We rigorously define the action in the ``flux form''.

\begin{defn}[Action]
\label{def:action}Let $\eta$ satisfy Assumption \ref{assu:eta properties}
and $\theta$ satisfy Assumption \ref{assu:theta properties}. Let $\mu\in\mathcal{P}(\mathbb{R}^{d})$
and $\mathbf{j}\in\mathcal{M}_{loc}(G)$. Let $m\in\mathcal{M}_{loc}(\mathbb{R}^{d})$
be any \emph{reference measure}. Define the \emph{action }of the pair
$(\mu,\mathbf{j})$ by
\[
\mathcal{A}_{\theta,\eta}(\mu,\mathbf{j};m):=\frac12 \, \iint_{G}\frac{\left(\frac{d\mathbf{j}}{d\lambda}(x,y)\right)^{2}}{\theta\left(\frac{d(\mu\otimes m)}{d\lambda}(x,y),\frac{d(m\otimes\mu)}{d\lambda}(x,y)\right)}\eta(x,y)d\lambda(x,y)
\]
where $\lambda$ is taken to be any nonnegative measure in $\mathcal{M}_{loc}^{+}(G)$
such that $|\mathbf{j}|,\mu\otimes m,m\otimes\mu\ll\lambda$. Here,
the fraction in the integrand is understood with the convention that
$\frac{0}{0}=0$.

If $\theta$ and $\eta$ are obvious from context, and $m$ is chosen
to be $\text{Leb}$, the Lebesgue measure on $\mathbb{R}^{d}$, we
simply write $\mathcal{A}(\mu,\mathbf{j})$. 
\end{defn}

\begin{rem}
Note that $\mathcal{A}_{\theta,\eta}(\mu,\mathbf{j};m)$ does not
depend on the choice of $\lambda$ satisfying this domination condition
$|\mathbf{j}|,\mu\otimes m,m\otimes\mu\ll\lambda$, since $\theta$
is 1-homogeneous.
\end{rem}

\begin{rem}
The choice of ``reference measure'' $m$ in Definiton \ref{def:action}
allows us to encode alternate geometries on $\mathbb{R}^{d}$ besides
the usual Euclidean one; phrased differently, Definiton \ref{def:action}
makes sense when working with any metric measure space $(\mathbb{R}^{d},d,m)$
(however if we use a metric $d$ other than the Euclidean one, note
this would change what it means for $\eta$ to be isotropic). In particular,
if we consider a ``weighted measured graph''  $G=(V,E,w,m_{n})$ with vertices in $\mathbb{R}^{d}$,
where $w(x_{i},x_{j})=\eta(x_{i},x_{j})$ for any $(x_{i},x_{j})\in E$,
and $m_{n}$ is a measure supported on $V$, then selecting $m_{n}$
as the reference measure in Definiton \ref{def:action} causes $\mathcal{A}_{\eta,\theta,m_{n}}$
to coincide with the action associated to the \emph{graph Wasserstein
distance }discussed in the introduction, if we also restrict to the
case where $\mu\ll m_{n}$ and $\mathbf{j}\ll\sum_{i,j}w(x_{i},x_{j})m_{n}(x_{i})m_{n}(x_{j})$.
\end{rem}


\begin{lemma}
\label{lem:action convex lsc}The action $\mathcal{A}_{\theta,\eta}(\mu,\mathbf{j};m)$
is jointly convex in $(\mu,\mathbf{j})$, and is jointly lower semicontinuous
in $(\mu,\mathbf{j})$ with respect to the narrow topology on $\mathcal{P}(\mathbb{R}^{d})$
and the weak{*} topology on $\mathcal{M}_{loc}(G)$.
\end{lemma}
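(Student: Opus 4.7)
The plan is to recast the action as a Reshetnyak-type integral functional of a vector-valued local measure and then invoke the joint convexity and lower semicontinuity of such functionals.

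First, I would introduce the function $f:[0,\infty)\times[0,\infty)\times\mathbb{R}\to[0,\infty]$ given by
\[
f(a,b,c):=\begin{cases}\dfrac{c^{2}}{\theta(a,b)}, & \theta(a,b)>0,\\ 0, & c=0\text{ and }\theta(a,b)=0,\\ +\infty, & c\neq 0\text{ and }\theta(a,b)=0,\end{cases}
\]
and verify that $f$ is positively $1$-homogeneous, jointly convex, and lower semicontinuous. Positive $1$-homogeneity is immediate from Assumption \ref{assu:theta properties}(v); convexity follows from the standard fact that for a concave positively $1$-homogeneous function $\theta$ on the nonnegative quadrant, the map $(a,b,c)\mapsto c^{2}/\theta(a,b)$ is a perspective of the convex function $c\mapsto c^{2}$, hence convex (a brief computation or appeal to \cite[\S 3.2.6]{boyd2004convex} suffices); lower semicontinuity is a consequence of the continuity of $\theta$ on $(0,\infty)\times(0,\infty)$ together with the convention on the boundary, once one checks that the infinite values are attained only at the appropriate accumulation points.

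Next, consider the linear map $T:\mathcal{P}(\mathbb{R}^{d})\times\mathcal{M}_{loc}(G)\to\mathcal{M}_{loc}(G;\mathbb{R}^{3})$ defined by $T(\mu,\mathbf{j}):=(\mu\otimes m,\,m\otimes\mu,\,\mathbf{j})$. Choosing any $\lambda\in\mathcal{M}_{loc}^{+}(G)$ dominating all three components, one may write
\[
\mathcal{A}_{\theta,\eta}(\mu,\mathbf{j};m)=\tfrac{1}{2}\iint_{G}f\!\left(\tfrac{dT(\mu,\mathbf{j})}{d\lambda}(x,y)\right)\eta(x,y)\,d\lambda(x,y),
\]
and the $1$-homogeneity of $f$ ensures that the right-hand side is independent of $\lambda$. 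Joint convexity of $\mathcal{A}_{\theta,\eta}$ is then immediate from the joint convexity of $f$, the linearity of $T$, and the standard convexity property of perspective integrals (one picks a common $\lambda$ dominating both endpoints of the interpolation and uses Jensen pointwise).

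For lower semicontinuity, suppose $\mu_{n}\to\mu$ narrowly and $\mathbf{j}_{n}\overset{*}{\rightharpoonup}\mathbf{j}$ in $\mathcal{M}_{loc}(G)$. I would first verify, by testing against $C_{c}(G)$ functions and applying Fubini, that narrow convergence of $\mu_{n}$ implies $\mu_{n}\otimes m\overset{*}{\rightharpoonup}\mu\otimes m$ and $m\otimes\mu_{n}\overset{*}{\rightharpoonup}m\otimes\mu$ in $\mathcal{M}_{loc}(G)$; hence $T(\mu_{n},\mathbf{j}_{n})\overset{*}{\rightharpoonup}T(\mu,\mathbf{j})$ in $\mathcal{M}_{loc}(G;\mathbb{R}^{3})$. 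The conclusion then follows from the locally finite Reshetnyak theorem (Theorem \ref{thm:locally finite reshetnyak}), applied with integrand $f$ and background weight $\eta$ (which is merely a measurable nonnegative function on $G$, absorbed into the reference measure as $\eta\cdot\lambda$).

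The main obstacle I anticipate is the careful bookkeeping around the weight $\eta$ and the possibly non-$\sigma$-finite tensor measures on $G$: one must ensure that the hypotheses of the Reshetnyak-type statement in Theorem \ref{thm:locally finite reshetnyak} accommodate integration against $\eta$ on the non-compact off-diagonal set $G$, and that the lower semicontinuous extension of $f$ on the boundary $\{\theta(a,b)=0\}$ is compatible with the convention $0/0=0$ used in Definition \ref{def:action}. These are essentially technical but must be handled with care so that the case $c\neq 0$, $\theta(a,b)=0$ correctly contributes $+\infty$ rather than being erroneously discarded.
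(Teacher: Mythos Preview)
Your proposal is correct and follows essentially the same route as the paper: package $(\mu\otimes m,\,m\otimes\mu,\,\mathbf{j})$ as a vector-valued local measure, observe that the action is a Reshetnyak-type integral of a positively $1$-homogeneous, convex, l.s.c.\ integrand, and invoke Theorem~\ref{thm:locally finite reshetnyak}. The paper's Corollary~\ref{cor:action convex l.s.c.} does precisely this, with one bookkeeping difference: rather than ``absorbing $\eta$ into the reference measure'' (which is slightly awkward since the Reshetnyak statement is phrased with respect to $|\lambda|$, not an auxiliary weighted measure), the paper builds $\eta(x,y)\mathbf{1}_{G}(x,y)$ directly into the integrand $f$, so that $f:\mathbb{R}^{2d}\times\mathbb{R}^{3}\to[0,\infty]$ depends on the base point; it also extends $\theta$ by zero to negative arguments so that the integrand is defined on all of $\mathbb{R}^{3}$. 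Your anticipated obstacle about $\eta$ is thus resolved more cleanly by the paper's choice, but both are equivalent in substance.
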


\begin{proof}
This is proved in Corollary \ref{cor:action convex l.s.c.}. 
\end{proof}
\begin{rem}[Comparison with action given in Erbar] \label{rem:action comparison} Our Definition \ref{def:action} is superficially different from
the definition of the action given in \cite[Section 2]{erbar2014gradient}.
Nonetheless, it can be seen that the two definitions are, in fact,
equivalent, and so our choice of an alternate presentation of the
action is largely one of taste. 

Specialized to our setting, Erbar defined the action $\mathcal{A}^{\prime}(\mu,\mathbf{v})$
of a pair $(\mu,\mathbf{v})\in\mathcal{P}(\mathbb{R}^{d})\times\mathcal{M}_{loc}(G)$
by 
\[
\mathcal{A}_{\theta,\eta}^{\prime}(\mu,\mathbf{v}):=\iint_{G}\frac{\left(\frac{d\mathbf{v}}{d\lambda}(x,y)\right)^{2}}{2\theta\left(\frac{d\mu^{1}}{d\lambda}(x,y),\frac{d\mu^{2}}{d\lambda}(x,y)\right)}d\lambda(x,y)
\]
where $d\mu^{1}=\eta(x,y)d\mu(x)m(y)$ and $d\mu^{2}=\eta(x,y)dm(x)d\mu(y)$,
and $\lambda$ is any measure in $\mathcal{M}_{loc}(G)$ dominating
$\mathbf{v}$, $\mu^{1}$, and $\mu^{2}$ (and the fraction in the
integrand is understood with the convention that $\frac{0}{0}=0$).
Now, given a pair $(\mu,\mathbf{j})\in\mathcal{P}(\mathbb{R}^{d})\times\mathcal{M}_{loc}(G)$,
it is routine to check (by using the chain rule for Radon-Nikodym
derivatives and the 1-homogeneity of $\theta$) that 
\[
\mathcal{A}_{\theta,\eta}(\mu,\mathbf{j})=\mathcal{A}_{\theta,\eta}^{\prime}(\mu,\mathbf{v})
\]
provided that $\mathbf{v}\in\mathcal{M}_{loc}(G)$ is defined so that
$\frac{d\mathbf{v}}{d\mathbf{j}}(x,y)=\eta(x,y)$. And conversely,
if first given $(\mu,\mathbf{v})$, it holds that $\mathcal{A}_{\theta,\eta}(\mu,\mathbf{j})=\mathcal{A}_{\theta,\eta}^{\prime}(\mu,\mathbf{v})$
provided that $\mathbf{j}\in\mathcal{M}_{loc}(G)$ is defined so that
$\frac{d\mathbf{j}}{d\mathbf{v}}=\frac{1}{\eta(x,y)}$; note that
this latter definition (of $\mathbf{j}$) is unproblematic on $G$,
since by definition $\eta(x,y)>0$ everywhere on $G$. 
\end{rem}



\subsection{Nonlocal continuity equation.}
We define the weak solutions of the nonlocal continuity equation \eqref{eq:nce}, in flux form, in the same way as \cite[Section 2.3]{esposito2019nonlocal}.

\begin{defn}[Nonlocal continuity equation]
\label{def:nce}Let $T>0$. We say that $(\mu_{t},\mathbf{j}_{t})_{t\in[0,T]}$
\emph{solves the nonlocal continuity equation} provided that
\begin{listi}
\item $\mu_{(\cdot)}:[0,T]\rightarrow\mathcal{P}(\mathbb{R}^{d})$ is continuous
when $\mathcal{P}(\mathbb{R}^{d})$ is equipped with the narrow topology,
\item $\mathbf{j}_{(\cdot)}:[0,T]\rightarrow\mathcal{M}_{loc}(G)$ is Borel
when $\mathcal{M}_{loc}(G)$ is equipped with the weak{*} topology,
\item 
$\forall\varphi\in C_{c}^{\infty}([0,T]\times\mathbb{R}^{d})$
\begin{equation}
\int_{0}^{T}\int_{\mathbb{R}^{d}}\partial_{t}\varphi_{t}(x)d\mu_{t}(x)dt+\frac{1}{2}\int_{0}^{T}\iint_{G}\overline{\nabla}\varphi_{t}(x,y)\eta(x,y)d\mathbf{j}_{t}(x,y)dt=0.\label{eq:ncewf}
\end{equation}
\end{listi}
We write $(\mu_{t},\mathbf{j}_{t})\in\mathcal{CE}_{T}$ to indicate
that $(\mu_{t},\mathbf{j}_{t})_{t\in[0,T]}$ satisfies conditions
(i), (ii), and (iii) above. Furthermore, we write $\mathcal{CE}$ to
denote $\mathcal{CE}_{1}$, and write $(\mu_{t},\mathbf{j}_{t})\in\mathcal{CE}_{T}(\nu,\sigma)$
to indicate that $(\mu_{t},\mathbf{j}_{t})\in\ensuremath{\mathcal{CE}_{T}}$
and $\mu_{0}=\nu$ and $\mu_{1}=\sigma$. 
\end{defn}

\begin{rem}[Comparison with nonlocal continuity equation given in \cite{erbar2014gradient}]
\label{rem:nce comparison} In \cite{erbar2014gradient}, a slightly different nonlocal continuity
equation is considered. There, equation \ref{eq:nce} is replaced
with the following equation:
\begin{equation}
\forall\varphi\in C_{c}^{\infty}([0,T]\times\mathbb{R}^{d})\qquad\int_{0}^{T}\int_{\mathbb{R}^{d}}\partial_{t}\varphi_{t}(x)d\mu_{t}(x)dt+\frac{1}{2}\int_{0}^{T}\iint_{G}\overline{\nabla}\varphi_{t}(x,y)d\mathbf{v}_{t}(x,y)dt=0.\label{eq:nce-erbar}
\end{equation}
Here, $\mathbf{v}_{(\cdot)}:[0,T]\rightarrow\mathcal{M}_{loc}(G)$
is likewise assumed to be weak{*} Borel. It is evident that if $(\mu_{t},\mathbf{j}_{t})_{t\in[0,T]}$
satisfies the nonlocal continuity equation in the sense of Definition
\ref{def:nce}, then $(\mu_{t},\mathbf{v}_{t})$ satisfies \ref{eq:nce-erbar},
provided that $\mathbf{v}_{t}\in\mathcal{M}_{loc}(G)$ is defined
so that $\frac{d\mathbf{v}_{t}}{d\mathbf{j}_{t}}(x,y)=\eta(x,y)$.
Conversely, if $(\mu_{t},\mathbf{v}_{t})$ satisfies \ref{eq:nce-erbar},
then defining $\mathbf{j}_{t}$ so that $\frac{d\mathbf{j}_{t}}{d\mathbf{v}_{t}}(x,y)=\frac{1}{\eta(x,y)}$,
we see that $(\mu_{t},\mathbf{j}_{t})_{t\in[0,T]}$ satisfies the
nonlocal continuity equation in the sense of Definition \ref{def:nce}.
Note that this latter definition (of $\mathbf{j}_{t}$) is unproblematic
on $G$, since by definition $\eta(x,y)>0$ everywhere on $G$. 
\end{rem}

It is sometimes advantageous to work with a stronger notion of solution
to the nonlocal continuity equation:
\begin{defn}
\label{def:nce-classical}Let $\rho_{t}(x):[0,1]\times\mathbb{R}^{d}\rightarrow\mathbb{R}_{+}$
be a probability density which is differentiable in $t$, and let
$j_{t}(x,y):[0,1]\times G\rightarrow\mathbb{R}$.
We say that $(\rho_{t},j_{t})_{t\in[0,1]}$ is a \emph{classical solution}
to the nonlocal continuity equation provided that $j_{t}(x,y)$ is antisymmetric and 
\[
\partial_{t}\rho_{t}(x)+\int_{\mathbb{R}^{d}}j_{t}(x,y)\eta(x,y)dy=0
\]
holds pointwise in $t$ and $x$. 
\end{defn}

Note that if $(\rho_{t},j_{t})_{t\in[0,1]}$ is a classical solution
to the nonlocal continuity equation, then it holds that the time-dependent
measures $(\rho_{t}dx,j_{t}dxdy)_{t\in[0,1]}$ are again a solution
to the nonlocal continuity equation in the sense of Definition \ref{def:nce}.

\subsection{Nonlocal Wasserstein metric}
\begin{defn} \label{def:nlw}
Let $\eta$ satisfy Assumption \ref{assu:eta properties} (i-iv)
and $\theta$ satisfy Assumption \ref{assu:theta properties}, and let
$m\in\mathcal{M}_{loc}^{+}(\mathbb{R}^{d})$. The \emph{nonlocal Wasserstein
distance} $\mathcal{W}_{\eta,\theta,m}$ on $\mathcal{P}(\mathbb{R}^{d})$
is defined by 
\[
\mathcal{W}_{\eta,\theta,m}^{2}(\nu,\sigma):=\inf\left\{ \int_{0}^{1}\mathcal{A}_{\eta,\theta,m}(\mu_{t},\mathbf{j}_{t})dt:(\mu_{t},\mathbf{j}_{t})\in\mathcal{CE}(\nu,\sigma)\right\} .
\]
We will write $\mathcal{W}_{\eta,\theta}$ to denote the case where
$m=\text{Leb}$. Furthermore, we will usually drop the explicit reference
to the choice of $\theta$, and simply write $\mathcal{W}_{\eta}$.
\end{defn}

\begin{rem}
In view of Remarks \ref{rem:action comparison} and \ref{rem:nce comparison},
our definition of the nonlocal Wasserstein distance is equivalent
to a special case of the nonlocal Wasserstein distance defined in
\cite{erbar2014gradient}.
\end{rem}

\begin{fact}
$\mathcal{W}_{\eta,\theta,m}$ is a pseudometric on $\mathcal{P}(\mathbb{R}^{d})$.
On $\mathcal{P}(\mathbb{R}^{d})$, $\mathcal{W}_{\eta,\theta,m}^{2}$
is jointly convex, and $\mathcal{W}_{\eta,\theta m}$ is jointly narrowly
lower semicontinuous. 
\end{fact}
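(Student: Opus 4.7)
The plan is to verify each of the three assertions separately, with symmetry, non-negativity, and $\mathcal{W}_{\eta,\theta,m}(\nu,\nu)=0$ being immediate, while the triangle inequality, joint convexity of the square, and lower semicontinuity need work.

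First I would handle the easier parts. Symmetry follows from time-reversal: if $(\mu_t, \mathbf{j}_t)_{t\in[0,1]} \in \mathcal{CE}(\nu,\sigma)$, then $(\tilde{\mu}_t,\tilde{\mathbf{j}}_t) := (\mu_{1-t}, -\mathbf{j}_{1-t})$ lies in $\mathcal{CE}(\sigma,\nu)$ and has the same action, since $\mathcal{A}_{\eta,\theta,m}$ is quadratic in $\mathbf{j}$. The vanishing on the diagonal follows by taking the trivial curve $\mu_t \equiv \nu$, $\mathbf{j}_t \equiv 0$, which clearly belongs to $\mathcal{CE}(\nu,\nu)$ and has zero action.

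Next I would prove joint convexity of $\mathcal{W}_{\eta,\theta,m}^2$. Fix $\lambda\in[0,1]$ and pairs $(\nu_0,\sigma_0),(\nu_1,\sigma_1)$; for each $\veps>0$ take $(\mu_t^i,\mathbf{j}_t^i)\in\mathcal{CE}(\nu_i,\sigma_i)$ with $\int_0^1 \mathcal{A}(\mu_t^i,\mathbf{j}_t^i)\,dt \le \mathcal{W}^2(\nu_i,\sigma_i) + \veps$. Because the nonlocal continuity equation \eqref{eq:ncewf} is linear in $(\mu,\mathbf{j})$, the convex combination $(\mu_t^{\lambda},\mathbf{j}_t^{\lambda}) := ((1-\lambda)\mu_t^0 + \lambda\mu_t^1,\,(1-\lambda)\mathbf{j}_t^0 + \lambda\mathbf{j}_t^1)$ lies in $\mathcal{CE}((1-\lambda)\nu_0+\lambda\nu_1,\,(1-\lambda)\sigma_0+\lambda\sigma_1)$; by the joint convexity of the action (Lemma \ref{lem:action convex lsc}), its integrated action is at most $(1-\lambda)(\mathcal{W}^2(\nu_0,\sigma_0)+\veps)+\lambda(\mathcal{W}^2(\nu_1,\sigma_1)+\veps)$. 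Letting $\veps\to 0$ yields joint convexity of $\mathcal{W}^2$. The triangle inequality is then established by the standard time-rescaling and concatenation trick: if $(\mu_t^1,\mathbf{j}_t^1)\in\mathcal{CE}(\mu,\nu)$ and $(\mu_t^2,\mathbf{j}_t^2)\in\mathcal{CE}(\nu,\sigma)$, reparametrize them onto $[0,T_1]$ and $[T_1,1]$ respectively, with flux rescaled by the inverse of the length of the subinterval; the action on each piece scales like $A_i/T_i$ (resp.\ $A_i/(1-T_1)$), and optimizing over $T_1$ gives $\mathcal{W}(\mu,\sigma)\le\sqrt{A_1}+\sqrt{A_2}$, from which the triangle inequality follows after taking infima.

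The main obstacle is narrow joint lower semicontinuity. I would take sequences $\nu_n\to\nu$, $\sigma_n\to\sigma$ narrowly with $\liminf_n \mathcal{W}_{\eta,\theta,m}(\nu_n,\sigma_n) =: L < \infty$ (otherwise there is nothing to prove), pass to a subsequence realizing the liminf, and choose near-optimal paths $(\mu_t^n,\mathbf{j}_t^n)\in\mathcal{CE}(\nu_n,\sigma_n)$ with integrated action bounded by $L^2 + o(1)$. The bounded action together with Lemma \ref{lem:action convex lsc} and a Cauchy-Schwarz-type control of the curve's narrow modulus of continuity (as developed in \cite{erbar2014gradient}) should yield equicontinuity of $t\mapsto\mu_t^n$ into $\mathcal{P}(\mathbb{R}^d)$ with the narrow topology on a tight subset, so that an Arzelà--Ascoli argument produces a narrowly continuous limit curve $\mu_t$ with $\mu_0=\nu,\,\mu_1=\sigma$. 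For the fluxes, the bound on the action together with the superlinear structure of the integrand (via the de la Vall\'ee Poussin criterion applied to the action density) yields uniform tightness and weak* precompactness of the time-integrated measures $\mathbf{j}_t^n\,dt$ on $[0,1]\times G$; extract a weak* limit $\mathbf{j}_t\,dt$. Passing to the limit in the weak formulation \eqref{eq:ncewf} shows $(\mu_t,\mathbf{j}_t)\in\mathcal{CE}(\nu,\sigma)$, and applying the joint lower semicontinuity of the integrated action (again from Lemma \ref{lem:action convex lsc}, extended to the space-time setting) gives $\mathcal{W}^2(\nu,\sigma) \le \int_0^1 \mathcal{A}(\mu_t,\mathbf{j}_t)\,dt \le \liminf_n \int_0^1 \mathcal{A}(\mu_t^n,\mathbf{j}_t^n)\,dt = L^2$, as desired. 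The delicate points will be the equicontinuity of the curves and the space-time weak* compactness of the fluxes; both are essentially contained in the machinery of \cite{erbar2014gradient} and can be cited.
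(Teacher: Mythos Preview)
Your proposal is correct and follows precisely the standard argument from \cite{erbar2014gradient}. The paper itself does not give a proof at all: it simply states ``The proof of this fact is exactly as in \cite{erbar2014gradient} and is therefore omitted,'' so your sketch is in fact more detailed than what the paper provides, and your closing remark that the delicate compactness steps ``are essentially contained in the machinery of \cite{erbar2014gradient} and can be cited'' is exactly the paper's own stance.
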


The proof of this fact is exactly as in \cite{erbar2014gradient}
and is therefore omitted.

\begin{lemma}[antisymmetric flux]
\label{lem:antisymmetric-tangent-flux} Let $(\mu_{t},\mathbf{j}_{t})_{t\in[0,1]}$
solve the nonlocal continuity equation. Let $\mathbf{j}_{t}^{as}:=(\mathbf{j}-\mathbf{j}^{\top})/2$.
Then
\begin{listi}
\item $(\mu_{t},\mathbf{j}_{t}^{as})_{t\in[0,1]}$ also solves the nonlocal
continuity equation.
\item $\mathcal{A}(\mu_{t},\mathbf{j}_{t}^{as})\leq\mathcal{A}(\mu_{t},\mathbf{j}_{t})$
for all $t\in[0,1]$. 
\end{listi}

In particular, given any NLW geodesic $(\mu_{t})_{t\in[0,1]}$, there
exists a tangent flux which is antisymmetric. 
\end{lemma}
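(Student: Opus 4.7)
The plan is to exploit the antisymmetry of $\overline{\nabla}\varphi(x,y) = \varphi(y)-\varphi(x)$ under the swap $T(x,y)=(y,x)$ together with the $T$-symmetries of $\eta$, of $\theta$, and of the reference measures appearing in the action. The symmetry-antisymmetry decomposition $\mathbf{j} = \mathbf{j}^{s} + \mathbf{j}^{as}$ where $\mathbf{j}^{s} = (\mathbf{j} + \mathbf{j}^{\top})/2 = (\mathbf{j}+T_{\#}\mathbf{j})/2$ will kill the symmetric part in the weak form of the NCE and will reduce the action comparison to an elementary pointwise inequality of the form $(a-b)^{2} \leq 2(a^{2}+b^{2})$.

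For part (i), I observe that the integrand $\overline{\nabla}\varphi(x,y)\eta(x,y)$ in \eqref{eq:ncewf} is antisymmetric in $(x,y)$, while $\mathbf{j}^{s}$ is $T$-invariant. Hence for any antisymmetric Borel $f$, $\int f \, d\mathbf{j}^{s} = \tfrac{1}{2}(\int f\, d\mathbf{j} + \int f\circ T\, d\mathbf{j}) = 0$, so
\[
\tfrac{1}{2}\iint_{G} \overline{\nabla}\varphi(x,y)\eta(x,y)\, d\mathbf{j}_{t}(x,y) = \tfrac{1}{2}\iint_{G}\overline{\nabla}\varphi(x,y)\eta(x,y)\, d\mathbf{j}_{t}^{as}(x,y).
\]
Thus $(\mu_{t},\mathbf{j}_{t}^{as})$ inherits \eqref{eq:ncewf} from $(\mu_{t},\mathbf{j}_{t})$. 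The weak-$\ast$ Borel measurability of $t \mapsto \mathbf{j}_{t}^{as}$ follows from that of $\mathbf{j}_{t}$ by linearity (the map $\mathbf{j} \mapsto T_{\#}\mathbf{j}$ is weak-$\ast$ continuous), so $(\mu_{t},\mathbf{j}_{t}^{as}) \in \mathcal{CE}$.

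For part (ii), I first reduce to a $T$-symmetric reference measure by replacing any admissible $\lambda$ with $\tilde\lambda := (\lambda + T_{\#}\lambda)/2$; this still dominates $|\mathbf{j}_{t}|$, $\mu \otimes m$, and $m \otimes \mu$. Writing $j(x,y) := d\mathbf{j}/d\tilde\lambda$, the $T$-invariance of $\tilde\lambda$ gives $d\mathbf{j}^{\top}/d\tilde\lambda(x,y) = j(y,x)$, hence $d\mathbf{j}^{as}/d\tilde\lambda(x,y) = (j(x,y)-j(y,x))/2$. Setting $\alpha(x,y) := \theta\bigl(\tfrac{d(\mu\otimes m)}{d\tilde\lambda}(x,y),\tfrac{d(m\otimes\mu)}{d\tilde\lambda}(x,y)\bigr)$, the symmetry of $\theta$ makes $\alpha$ symmetric in $(x,y)$, and $\eta$ is symmetric as well. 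Interchanging $x$ and $y$ in the action integral and averaging the two representations yields
\[
\mathcal{A}(\mu_{t},\mathbf{j}_{t}) = \tfrac{1}{4}\iint_{G} \frac{j(x,y)^{2}+j(y,x)^{2}}{\alpha(x,y)}\,\eta(x,y)\,d\tilde\lambda,
\]
while a direct computation gives
\[
\mathcal{A}(\mu_{t},\mathbf{j}_{t}^{as}) = \tfrac{1}{8}\iint_{G}\frac{(j(x,y)-j(y,x))^{2}}{\alpha(x,y)}\,\eta(x,y)\,d\tilde\lambda.
\]
The pointwise bound $(a-b)^{2} \leq 2(a^{2}+b^{2})$ finishes (ii).

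The final assertion is then immediate: if $(\mu_{t})_{t\in[0,1]}$ is an NLW geodesic with optimal flux $\mathbf{j}_{t}$, then $(\mu_{t},\mathbf{j}_{t}^{as})$ is admissible by (i) and has action no larger by (ii), so $\mathbf{j}_{t}^{as}$ is itself an optimal (tangent) flux. I do not foresee significant obstacles; the only subtlety to handle carefully is the preliminary symmetrization of $\lambda$, after which everything reduces to the elementary quadratic inequality above.
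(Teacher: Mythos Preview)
Your proof is correct and follows essentially the same approach as the paper. For (i) both arguments use the antisymmetry of $\overline{\nabla}\varphi\,\eta$ under the swap $T$; for (ii) the paper invokes convexity of $\mathcal{A}$ in $\mathbf{j}$ to get $\mathcal{A}(\mu,\mathbf{j}^{as})\le\tfrac12\mathcal{A}(\mu,\mathbf{j})+\tfrac12\mathcal{A}(\mu,-\mathbf{j}^{\top})$ and then shows $\mathcal{A}(\mu,-\mathbf{j}^{\top})=\mathcal{A}(\mu,\mathbf{j})$ via the same symmetrization of $\lambda$, which is exactly your pointwise inequality $(a-b)^{2}\le 2(a^{2}+b^{2})$ unpacked.
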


\begin{proof}
The proof of (i) is identical to the argument presented in \cite[Corollary 2.8]{esposito2019nonlocal}
and is therefore omitted.

(ii) This likewise follows from a minor modification of arguments given
in \cite[Lemma 2.6 and Corollary 2.8]{esposito2019nonlocal}. Namely,
we reason from Lemma \ref{lem:action convex lsc}, in particular from
the convexity of the action in the $\mathbf{j}$ variable:
\[
\mathcal{A}(\mu_{t},\mathbf{j}_{t}^{as})\leq\frac{1}{2}\mathcal{A}(\mu_{t},\mathbf{j}_{t})+\frac{1}{2}\mathcal{A}(\mu_{t},-\mathbf{j}_{t}^{T}).
\]
Now, selecting $\lambda_{t}\in\mathcal{M}(G)$ so that $\lambda_{t}\gg-\mathbf{j}_{t}^{T}$, 
\[
\mathcal{A}_{\theta,\eta}(\mu_{t},-\mathbf{j}_{t}^{T})=\iint_{G}\frac{\left(\frac{d\left(-\mathbf{j}_{t}^{T} (x,y)\right)}{d\lambda_{t}}\right)^{2}}{2\theta\left(\frac{d(\mu\otimes m)}{d\lambda}(x,y),\frac{d(m\otimes\mu)}{d\lambda}(x,y)\right)}\eta(x,y)d\lambda_{t}(x,y).
\]
Note that without loss of generality we can take $\lambda_{t}$ to
also dominate $\mathbf{j}_{t}$, and furthermore we can take $\lambda_{t}$
to be symmetric (by replacing $\lambda_{t}$ with $(\lambda_{t}+\lambda_{t}^{T})/2$.
Then, computing that 
\[
\frac{d(-\mathbf{j}_{t}^{T})}{d\lambda_{t}}=-\frac{d\mathbf{j}_{t}^{T}}{d\lambda_{t}}=-\left(\frac{d\mathbf{j}_{t}}{d\lambda_{t}}\right)^{T}
\]
and using the fact that $\theta\left(\frac{d(\mu\otimes m)}{d\lambda}(x,y),\frac{d(m\otimes\mu)}{d\lambda}(x,y)\right)$, $\eta (x,y)$,
and $\lambda_{t}$ are all symmetric, we see that
\[
\iint_{G}\frac{\left(\frac{d\left(-\mathbf{j}_{t}^{T}\right)}{d\lambda_{t}}\right)^{2}}{2\theta\left(\frac{d(\mu\otimes m)}{d\lambda} ,\frac{d(m\otimes\mu)}{d\lambda} \right)} \, \eta d\lambda_{t} =\iint_{G}\frac{\left(\frac{d\mathbf{j}_{t}}{d\lambda_{t}}\right)^{2}}{2\theta\left(\frac{d(\mu\otimes m)}{d\lambda} ,\frac{d(m\otimes\mu)}{d\lambda} \right)} \, \eta d\lambda_{t} .
\]
Consequently, $\mathcal{A}(\mu_{t},-\mathbf{j}_{t}^{T})=\mathcal{A}(\mu_{t},\mathbf{j}_{t})$,
and so $\mathcal{A}(\mu_{t},\mathbf{j}_{t}^{as})\leq\mathcal{A}(\mu_{t},\mathbf{j}_{t})$.
\end{proof}
\begin{rem}
Thus far, we have only defined $\mathcal{W}_{\eta,\theta,m}(\mu,\nu)$
in the case where $\mu,\nu\in\mathcal{P}(\mathbb{R}^{d})$. However,
it is occasionally useful to consider the nonlocal Wasserstein distance
between nonnegative Radon measures \emph{of equal mass}: we do so,
in particular, in the proof of Proposition \ref{prop:W+TV upper bound}
below. In particular, the action $\mathcal{A}(\rho,\mathbf{j})$ is
still well-defined for $\rho\in\mathcal{M}^{+}(\mathbb{R}^{d})$,
and it is trivial to modify the definition of a solution to the nonlocal
continuity equation we have given in Definition \ref{def:nce}, to
allow for the case where $\rho_{t}:[0,1]\rightarrow\mathcal{M}^{+}(\mathbb{R}^{d})$
but $\rho_{t}$ has fixed total mass for all $t\in[0,T]$. Therefore,
in the case where $\mu,\nu\in\mathcal{M}^{+}(\mathbb{R}^{d})$ and
$\Vert\mu\Vert=\Vert\nu\Vert$, $\mathcal{W}_{\eta,\theta,m}(\mu,\nu)$
can be defined exactly as in Definition \ref{def:nlw}. More precisely,
we have the following well-definedness and homogeneity result:
\end{rem}

\begin{prop}[Extension of $\mathcal{W}_{\eta,\theta,m}$ to general nonnegative
measures]
\label{prop:nlw equal mass extension} Let $\mu,\nu\in\mathcal{M}^{+}(\mathbb{R}^{d})$.
Suppose that $\Vert\mu\Vert_{TV}=\Vert\nu\Vert_{TV}=M>0$. Then, 
\[
\mathcal{W}_{\eta,\theta,m}^{2}(\mu,\nu)=M\mathcal{W}_{\eta,\theta,m}^{2}\left(\frac{\mu}{M},\frac{\nu}{M}\right).
\]
In particular, $\mathcal{W}_{\eta,\theta,m}^{2}(\mu,\nu)<\infty$
iff $\,\mathcal{W}_{\eta,\theta,m}^{2}\left(\frac{\mu}{M},\frac{\nu}{M}\right)<\infty$.
\end{prop}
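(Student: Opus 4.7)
The plan is to exploit two scaling properties: the linearity of the nonlocal continuity equation in $(\rho_t, \mathbf{j}_t)$, and the precise scaling behavior of the action $\mathcal{A}_{\eta,\theta,m}$ under multiplication of both its arguments by a common scalar, which follows from the $1$-homogeneity of $\theta$ (Assumption \ref{assu:theta properties}(v)).

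First I would establish a bijection between curves: if $(\rho_t, \mathbf{j}_t)_{t\in[0,1]} \in \mathcal{CE}(\mu,\nu)$, then setting $\tilde\rho_t := \rho_t/M$ and $\tilde{\mathbf{j}}_t := \mathbf{j}_t/M$, the pair $(\tilde\rho_t, \tilde{\mathbf{j}}_t)_{t\in[0,1]}$ satisfies the defining weak formulation \eqref{eq:ncewf} against every test function $\varphi \in C_c^\infty([0,1] \times \R^d)$, simply because both terms in \eqref{eq:ncewf} are linear in the pair $(\mu_t, \mathbf{j}_t)$. The boundary conditions likewise rescale to $\tilde\rho_0 = \mu/M$ and $\tilde\rho_1 = \nu/M$, and the continuity/Borel measurability requirements are preserved. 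Reversing the scaling gives the opposite direction, so this is indeed a bijection $\mathcal{CE}(\mu,\nu) \leftrightarrow \mathcal{CE}(\mu/M, \nu/M)$.

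Next I would verify the scaling identity $\mathcal{A}_{\eta,\theta,m}(\rho/M, \mathbf{j}/M) = M^{-1}\mathcal{A}_{\eta,\theta,m}(\rho, \mathbf{j})$. Fix a nonnegative $\lambda \in \mathcal{M}_{loc}^+(G)$ dominating $|\mathbf{j}|$, $\rho \otimes m$, and $m \otimes \rho$; note $\lambda$ then also dominates the rescaled measures $|\mathbf{j}|/M$, $(\rho/M) \otimes m$, and $m \otimes (\rho/M)$. The Radon--Nikodym derivatives scale linearly: $d(\mathbf{j}/M)/d\lambda = M^{-1} d\mathbf{j}/d\lambda$, and similarly for $\rho \otimes m$ and $m \otimes \rho$. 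Using the $1$-homogeneity of $\theta$ to pull the factor $M^{-1}$ out of the denominator, the integrand becomes
\[
\frac{(M^{-1} d\mathbf{j}/d\lambda)^2}{\theta\bigl(M^{-1}\, d(\rho\otimes m)/d\lambda,\, M^{-1}\, d(m\otimes\rho)/d\lambda\bigr)}\eta = \frac{M^{-2}}{M^{-1}} \cdot \frac{(d\mathbf{j}/d\lambda)^2}{\theta\bigl(d(\rho\otimes m)/d\lambda,\, d(m\otimes\rho)/d\lambda\bigr)}\eta,
\]
which is exactly $M^{-1}$ times the integrand defining $\mathcal{A}_{\eta,\theta,m}(\rho,\mathbf{j})$; integrating over $G$ yields the claimed identity.

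Finally, combining these two ingredients, for any $(\rho_t, \mathbf{j}_t) \in \mathcal{CE}(\mu,\nu)$ we get $\int_0^1 \mathcal{A}_{\eta,\theta,m}(\rho_t,\mathbf{j}_t)\,dt = M \int_0^1 \mathcal{A}_{\eta,\theta,m}(\rho_t/M, \mathbf{j}_t/M)\,dt$, and passing to the infimum over the (bijectively corresponding) classes $\mathcal{CE}(\mu,\nu)$ and $\mathcal{CE}(\mu/M, \nu/M)$ yields $\mathcal{W}_{\eta,\theta,m}^2(\mu,\nu) = M\, \mathcal{W}_{\eta,\theta,m}^2(\mu/M, \nu/M)$. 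The ``in particular'' assertion about finiteness is then immediate since $M > 0$. There is no real obstacle here; the only subtlety worth double-checking is that the bookkeeping on $\lambda$ in the action computation is dominationwise valid for \emph{both} $(\rho,\mathbf{j})$ and $(\rho/M,\mathbf{j}/M)$ simultaneously, which it clearly is.
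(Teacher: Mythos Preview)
Your proposal is correct and follows essentially the same approach as the paper: both use linearity of the nonlocal continuity equation and the $1$-homogeneity of the action (inherited from the $1$-homogeneity of $\theta$) to relate the variational problems at scale $M$ and scale $1$. The paper phrases it as two inequalities via an optimal curve, while you set up the scaling bijection $\mathcal{CE}(\mu,\nu)\leftrightarrow\mathcal{CE}(\mu/M,\nu/M)$ and pass to the infimum directly; the content is the same.
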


\begin{proof}
Let $(\rho_{t},\mathbf{j}_{t})_{t\in[0,1]}\in\mathcal{CE}\left(\frac{\mu}{M},\frac{\nu}{M}\right)$,
and suppose that $\mathcal{W}_{\eta,\theta,m}^{2}\left(\frac{\mu}{M},\frac{\nu}{M}\right)=\int_{0}^{1}\mathcal{A}(\rho_{t},\mathbf{j}_{t})dt$.
Then, $\left(M\rho_{t},M\mathbf{j}_{t}\right)_{t\in[0,1]}$ is also
a solution to the nonlocal continuity equation, with endpoints $\mu$
and $\nu$; consequently, using the 1-homogeneity of the action, 
\[
\mathcal{W}_{\eta,\theta,m}^{2}(\mu,\nu)\leq\int_{0}^{1}\mathcal{A}(M\rho_{t},M\mathbf{j}_{t})dt=M\int_{0}^{1}\mathcal{A}(\rho_{t},\mathbf{j}_{t})dt=M\mathcal{W}_{\eta,\theta,m}^{2}\left(\frac{\mu}{M},\frac{\nu}{M}\right).
\]
By identical reasoning, we also deduce that $\mathcal{W}_{\eta,\theta,m}^{2}\left(\frac{\mu}{M},\frac{\nu}{M}\right)\leq\frac{1}{M}\mathcal{W}_{\eta,\theta,m}^{2}(\mu,\nu)$. 
\end{proof}

\subsection{Convolutions}

We make frequent use of convolution estimates in Sections \ref{exact nonlocalization}
and \ref{sec:nonlocal hj}. A number of elementary computations relating to
convolutions are deferred to Appendix \ref{sec:additional}; here, we fix
notation and state some basic convolution stability results concerning
the $\mathcal{W}_{\eta}$ distances. Lastly, we show that a specific
convolution kernel, the \emph{Laplace kernel} $K:=c_{K}e^{-|x-y|}$,
has the property that smoothed densities $K*\mu$ have \emph{relative
Lipschitz regularity} of the form $\frac{K*\mu(x)}{K*\mu(x^{\prime})}\leq1+C|x-x^{\prime}|$,
a fact which we exploit repeatedly in Sections \ref{exact nonlocalization}
and \ref{sec:nonlocal hj}.
\begin{defn}
We say that $k:(0,\infty)\rightarrow[0,\infty)$ is a \emph{convolution
kernel} if it is $C^{1}$ on its support and normalized so that $\int_{\mathbb{R}^{d}}k(|x|)dx=1$. 
\end{defn}

Given a convolution kernel $k$ and a (possibly signed, possibly vector-valued)
measure $\mu$, we denote 
\[
k*\mu(x):=\int k(|x-y|)d\mu(y).
\]
We also use the following notation: given a convolution kernel $k$
and a measure $\mu$, we write $\boldsymbol{k}*\mu$ to denote the
measure whose Lebesgue density is given by 
\[
\frac{d(\boldsymbol{k}*\mu)}{dx}=k*\mu.
\]

Separately, for measures $\mathbf{j}\in\mathcal{M}_{loc}(G)$, we
follow \cite{erbar2014gradient} and define the convolution $\boldsymbol{k}*\mathbf{j}\in\mathcal{M}_{loc}(G)$
of a convolution kernel $k$ (on $\mathbb{R}^{d}$) with $\mathbf{j}$
as follows:
\[
\boldsymbol{k}*\mathbf{j}=\int_{\mathbb{R}^{d}}k(|z|)d\mathbf{j}(x-z,y-z)dz
\]
in other words, for all $\varphi\in C_{c}^{\infty}(G)$, 
\[
\iint_{G}\varphi(x,y)d(\boldsymbol{k}*\mathbf{j})(x,y)=\int_{\mathbb{R}^{d}}k(|z|)\varphi(x+z,y+z)d\mathbf{j}(x,y)dz.
\]
Note that this definition may be understood as a special case of convolution
with respect to a translation-invariant group action on a space: in
this case, $\mathbb{R}^{d}$ acts on $G$ by the translation $(x,y)\mapsto(x+z,y+z)$,
and this action is indeed translation-invariant since $\eta(|x-y|)$
is preserved under this translation.

\begin{prop}[Stability of action and metric under convolution]
\label{prop:convolution action stability} Let $k$ be any convolution
kernel. 
\begin{listi}
\item For any $\mu\in\mathcal{P}(\mathbb{R}^{d})$ and $\mathbf{j}\in\mathcal{M}_{loc}(G)$,
$\mathcal{A}_{\theta,\eta}(\boldsymbol{k}*\mu,\boldsymbol{k}*\mathbf{j})\leq\mathcal{A}_{\theta,\eta}(\mu,\mathbf{j})$.
\item Let $(\mu_{t},\mathbf{j}_{t})_{t\in[0,1]}\in\mathcal{CE}$. Then,
$(\boldsymbol{k}*\mu_{t},\boldsymbol{k}*\mathbf{j}_{t})_{t\in[0,1]}\in\mathcal{CE}$
also. 
\item (\cite[Proposition 4.8]{erbar2014gradient}) For any $\mu_{0},\mu_{1}\in\mathcal{P}_{2}(\mathbb{R}^{d})$,
$\mathcal{W}_{\theta,\eta}(\boldsymbol{k}*\mu_{0},\boldsymbol{k}*\mu_{1})\leq\mathcal{W}_{\theta,\eta}(\mu_{0},\mu_{1})$.
\end{listi}
\end{prop}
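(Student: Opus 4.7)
The plan is to prove (i) by a ``Jensen's inequality for measure-valued integrals'' argument exploiting the joint convexity of $\mathcal{A}$ from Lemma~\ref{lem:action convex lsc}, then derive (ii) directly from the distributional form of the nonlocal continuity equation, and finally deduce (iii) as an immediate consequence.

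For part (ii), given $\varphi\in C_{c}^{\infty}([0,T]\times\mathbb{R}^{d})$, I would test the equation \eqref{eq:ncewf} for the pair $(\boldsymbol{k}\ast\mu_{t},\boldsymbol{k}\ast\mathbf{j}_{t})$. Writing the first term as
\[
\int_{0}^{T}\!\!\int_{\mathbb{R}^{d}}\partial_{t}\varphi_{t}(x)\,d(\boldsymbol{k}\ast\mu_{t})(x)\,dt=\int_{0}^{T}\!\!\int_{\mathbb{R}^{d}}\partial_{t}(k\ast\varphi_{t})(x)\,d\mu_{t}(x)\,dt,
\]
where I used Fubini and the radial symmetry $k(|z|)=k(|-z|)$, and similarly rewriting the nonlocal flux term as
\[
\iint_{G}\overline{\nabla}\varphi_{t}(x,y)\eta(x,y)\,d(\boldsymbol{k}\ast\mathbf{j}_{t})(x,y)=\iint_{G}\overline{\nabla}(k\ast\varphi_{t})(x,y)\eta(x,y)\,d\mathbf{j}_{t}(x,y),
\]
using the crucial translation-invariance $\eta(x+z,y+z)=\eta(|x-y|)$ together with $\overline{\nabla}\varphi(x+z,y+z)=\overline{\nabla}(k\ast\varphi)(x,y)$ after integrating against $k(|z|)$, I reduce the claim to the fact that $(\mu_{t},\mathbf{j}_{t})$ solves the nonlocal continuity equation against the test function $k\ast\varphi_{t}$. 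The small technicality is that $k\ast\varphi$ may not have compact support; I would handle this by a standard density/truncation argument (or by noting it suffices to test against functions in $C_{b}^{\infty}$ together with decay of $\mu_{t}$, $\mathbf{j}_{t}$ controlled by the tail assumptions), and verify the continuity/Borel measurability conditions (i) and (ii) of Definition~\ref{def:nce} are preserved by convolution.

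For part (i), the key observation is that the action functional $\mathcal{A}_{\theta,\eta}$ is \emph{translation-invariant}: if $T_{z}:\mathbb{R}^{d}\to\mathbb{R}^{d}$ denotes translation by $z$, then since $\eta$ depends only on $|x-y|$ and $m=\mathrm{Leb}$ is translation-invariant, we have
\[
\mathcal{A}_{\theta,\eta}\bigl((T_{z})_{\#}\mu,(T_{z}\times T_{z})_{\#}\mathbf{j}\bigr)=\mathcal{A}_{\theta,\eta}(\mu,\mathbf{j}).
\]
Furthermore, one can verify directly from the definitions that
\[
\boldsymbol{k}\ast\mu=\int_{\mathbb{R}^{d}}k(|z|)\,(T_{z})_{\#}\mu\,dz,\qquad\boldsymbol{k}\ast\mathbf{j}=\int_{\mathbb{R}^{d}}k(|z|)\,(T_{z}\times T_{z})_{\#}\mathbf{j}\,dz,
\]
so that $(\boldsymbol{k}\ast\mu,\boldsymbol{k}\ast\mathbf{j})$ is a $k(|z|)dz$-weighted average of the translated pairs. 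Since $\mathcal{A}_{\theta,\eta}$ is jointly convex and lower semicontinuous by Lemma~\ref{lem:action convex lsc}, the Jensen inequality for convex lower-semicontinuous functionals on Banach-space-valued integrals yields
\[
\mathcal{A}_{\theta,\eta}(\boldsymbol{k}\ast\mu,\boldsymbol{k}\ast\mathbf{j})\leq\int_{\mathbb{R}^{d}}k(|z|)\,\mathcal{A}_{\theta,\eta}\bigl((T_{z})_{\#}\mu,(T_{z}\times T_{z})_{\#}\mathbf{j}\bigr)\,dz=\mathcal{A}_{\theta,\eta}(\mu,\mathbf{j}),
\]
using $\int k(|z|)\,dz=1$. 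The main obstacle here is justifying the measure-valued Jensen inequality rigorously; this is a standard fact but requires either a disintegration argument or approximation of the integral by convex combinations (using the lower semicontinuity to pass to the limit).

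Part (iii) is then immediate: given $\varepsilon>0$, choose $(\mu_{t},\mathbf{j}_{t})\in\mathcal{CE}(\mu_{0},\mu_{1})$ with $\int_{0}^{1}\mathcal{A}(\mu_{t},\mathbf{j}_{t})\,dt\leq\mathcal{W}_{\theta,\eta}^{2}(\mu_{0},\mu_{1})+\varepsilon$. By (ii), $(\boldsymbol{k}\ast\mu_{t},\boldsymbol{k}\ast\mathbf{j}_{t})\in\mathcal{CE}(\boldsymbol{k}\ast\mu_{0},\boldsymbol{k}\ast\mu_{1})$, and by (i), its total action is bounded by $\int_{0}^{1}\mathcal{A}(\mu_{t},\mathbf{j}_{t})\,dt$. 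Taking the infimum and letting $\varepsilon\to0$ gives the claim.
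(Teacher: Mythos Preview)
Your overall strategy matches the paper's: translation invariance plus convexity for (i), testing with $k*\varphi$ for (ii), and combining for (iii). The paper defers (ii) to Erbar's argument and (iii) is proved exactly as you do.

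The one point worth flagging is your treatment of the Jensen step in (i). You call the measure-valued Jensen inequality ``a standard fact,'' but the paper explicitly states (in proving the auxiliary Lemma~\ref{lem:mass-flux convolution inequality}) that they are \emph{unaware} of a version of Jensen's inequality in the literature that applies here, because $\mathbf{j}$ lives in $\mathcal{M}_{loc}(G)$, which is not a Banach space---so ``Banach-space-valued integrals'' does not cover it. The paper instead gives an ad hoc argument: draw i.i.d.\ $Z_1,Z_2,\ldots$ with law $k(|z|)\,dz$, apply finite convexity to $\frac{1}{n}\sum_i(\mu_{Z_i},\mathbf{j}_{Z_i})$, use the strong law of large numbers on the right-hand side, and use a Glivenko--Cantelli-type argument together with sequential lower semicontinuity of $\mathcal{A}$ to handle the left-hand side. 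This is essentially the ``approximation of the integral by convex combinations'' route you sketch, so your proposal is not wrong---but you should be aware that this step carries the bulk of the technical work, and the empirical-measure argument is how the paper actually executes it.
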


\begin{proof}
(i) In Lemma \ref{lem:mass-flux convolution inequality}, we show
that
\[
\mathcal{A}_{\eta,\theta}(\boldsymbol{k}*\mu,\boldsymbol{k}*\mathbf{j})\leq\int_{\mathbb{R}^{d}}\mathcal{A}_{\eta,\theta}(\mu_{z},\mathbf{j}_{z})k(z)dz.
\]
The result now follows according to the reasoning given in \cite[proof of Proposition 2.8]{erbar2014gradient}.
(ii) follows by identical reasoning to \cite[proof of Proposition 4.8]{erbar2014gradient}.
Finally, (iii) follows by combining (i) and (ii): indeed, letting
$(\mu_{t},\mathbf{j}_{t})_{t\in[0,1]}$ be an action-minimizing solution
to the nonlocal continuity equation with endpoints $\mu_{0}$ and
$\mu_{1}$, we see that 
\[
\mathcal{W}_{\theta,\eta}^{2}(\mu_{0},\mu_{1})=\int_{0}^{1}\mathcal{A}(\mu_{t},\mathbf{j}_{t})dt\geq\int_{0}^{1}\mathcal{A}(\boldsymbol{k}*\mu_{t},\boldsymbol{k}*\mathbf{j}_{t})dt\geq\mathcal{W}_{\theta,\eta}(\boldsymbol{k}*\mu_{0},\boldsymbol{k}*\mu_{1}).
\]
\end{proof}

\begin{rem}
\label{rem:W2 convolution stability}Similarly, it is known \cite[Lemmas 8.1.9 and 8.1.0]{ambrosio2008gradient}
that if $(\rho_{t},\mathbf{v}_{t})_{t\in[0,1]}$ solves the (local)
continuity equation $\partial_{t}\rho_{t}+\divv\mathbf{v}_{t}=0$
in the sense of distributions, and $k$ is a convolution kernel, then
$(\boldsymbol{k}*\rho_{t},\boldsymbol{k}*\mathbf{v}_{t})_{t\in[0,1]}$
is again a solution of $\partial_{t}\rho_{t}+\divv\mathbf{v}_{t}=0$
in the sense of distributions; and similarly, 
\[
\int_{\mathbb{R}^{d}}\left|\frac{d(\boldsymbol{k}*\mathbf{v}_{t})}{d(\boldsymbol{k}*\mu_{t})}\right|^{2}d(\boldsymbol{k}*\mu_{t})\leq\int_{\mathbb{R}^{d}}\left|\frac{d\mathbf{v}_{t}}{d\mu_{t}}\right|^{2}d\mu_{t}.
\]
Therefore, applying the mass-flux presentation of the $W_{2}$ metric
described in Section \ref{subsec:Wasserstein-metric} above, we can
reason exactly as in the proof of part (iii) of the previous proposition
to deduce that for any convolution kernel $k$ and probability measures
$\mu_{0}$ and $\mu_{1}$,
\[
W_{2}(\boldsymbol{k}*\mu_{0},\boldsymbol{k}*\mu_{1})\leq W_{2}(\mu_{0},\mu_{1}).
\]
\end{rem}

\subsubsection{Relative Lipschitz estimate for the right convolution.}

Let $K(x) = c_K \, e^{-|x|}$ where $\frac{1}{c_K} =\int_{\R^d} e^{-|x|} dx$. Let $K_\delta(x) = \frac{1}{\delta^d} K \left( \frac{x}{\delta}\right)$. 

\begin{lemma}
\label{lem:relative Lipschitz}
Consider $\mu \in \mathcal P_2(\R^d)$. 
Let $\mu_\delta = K_\delta * \mu$
Then 

\begin{equation*}
\ln \frac{\mu_\delta(y)}{\mu_\delta(x)}  \leq \frac{1}{\delta} |y-x|.
\end{equation*}
Furthermore if $|x-y|\leq \delta$ then 
\begin{equation*}
    \mu_\delta(y) \leq \mu_\delta(x) \left(1 + \frac{3}{\delta} |y-x| \right)
\end{equation*}
\end{lemma}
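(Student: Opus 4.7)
The plan is to exploit the triangle inequality together with the exponential form of the Laplace kernel. Write
\[
\mu_\delta(y) = \int_{\mathbb{R}^d} K_\delta(y-z) \, d\mu(z) = \frac{c_K}{\delta^d} \int_{\mathbb{R}^d} e^{-|y-z|/\delta}\, d\mu(z),
\]
and similarly for $\mu_\delta(x)$. The key observation is that the triangle inequality $|y-z| \geq |x-z| - |x-y|$ yields the pointwise comparison
\[
e^{-|y-z|/\delta} \leq e^{|x-y|/\delta}\, e^{-|x-z|/\delta},
\]
which, upon integrating against $d\mu(z)$, gives the multiplicative bound
\[
\mu_\delta(y) \leq e^{|x-y|/\delta} \mu_\delta(x).
\]
Taking logarithms yields the first asserted inequality immediately.

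For the second estimate, I would simply linearize the exponential on $[0,1]$. Since $t \mapsto e^t$ is convex, on the interval $[0,1]$ it lies below its secant line, so $e^t \leq 1 + (e-1)t \leq 1 + 3t$. Setting $t = |x-y|/\delta \in [0,1]$ and combining with the multiplicative bound above produces
\[
\mu_\delta(y) \leq \left(1 + \tfrac{3}{\delta}|x-y|\right) \mu_\delta(x),
\]
as required.

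I do not expect any real obstacle: both estimates reduce to the triangle inequality applied to the Laplace kernel, plus an elementary scalar inequality $e^t \leq 1 + 3t$ on $[0,1]$. The only mild care needed is to ensure that the normalization constant $c_K$ and the $\delta^{-d}$ scaling cancel identically on both sides of the inequality, which they do since the same rescaled kernel appears in the definition of both $\mu_\delta(x)$ and $\mu_\delta(y)$. No regularity assumption on $\mu$ beyond being a probability measure is needed, since the bound is derived pointwise before integration.
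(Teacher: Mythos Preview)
Your proof is correct and follows essentially the same approach as the paper: both use the triangle inequality in the exponent of the Laplace kernel to obtain $\mu_\delta(y)\le e^{|x-y|/\delta}\mu_\delta(x)$, then the elementary bound $e^t\le 1+3t$ on $[0,1]$ for the second estimate. Your justification of the latter via the secant-line inequality for the convex function $e^t$ is in fact slightly more explicit than the paper's, which simply asserts the bound.
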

\begin{proof} Let $h = y-x$. We can assume $h \neq 0$
\begin{align*}
 |\ln \mu_\delta(x+h) - \ln \mu_\delta(x) | & \leq 
 \ln \frac{ \int e^{-|x-z|/\delta} e^{|h|/\delta} d\mu(z)}{ \int e^{-|x-z|/\delta} d\mu(z)} = \frac{|h|}{\delta}
\end{align*}
Therefore 
\[ \ln \frac{\mu_\delta(y)}{\mu_\delta(x)}  \leq \frac{1}{\delta} |y-x|. \]
Hence, for $|y-x|<\delta$,
\[  \mu_\delta(y) \leq \mu_\delta(x) e^{|y-x|/\delta} \leq \mu_\delta(x) \left(1 + \frac{3}{\delta} |y-x| \right). \]
\end{proof}

\section{Metric structure of $\mathcal{W}_{\eta,\theta}$ distances}\label{sec:metric structural results}

\subsection{General lower bounds for nonlocal Wasserstein distances}\label{subsec:Global-lower-bounds}

In this subsection, we consider nonlocal Wassersein distances $\mathcal{W}_{\eta,\theta,m}$
with general reference measure $m$, since this complicates our analysis
only minimally. As in \ref{def:nlw}, we assume that $\eta$ satsfies Assumption
\ref{assu:eta properties} (i-iv), and that $\theta$ satisfies Assumption
\ref{assu:theta properties}.

{} We first recall the following result of Erbar (which we specialize
somewhat), which gives a partial characterization of the topology
induced by the nonlocal Wasserstein distance.
\begin{prop}
(\cite[Proposition 4.5]{erbar2014gradient}) \label{prop:W1 lower bound}
Suppose that $\rho_{0},\rho_{1}\in\mathcal{P}(\mathbb{R}^{d})$. The nonlocal Wasserstein distance $\mathcal{W}_{\eta,\theta,m}$
with arbitrary reference
measure $m$ of Definition \ref{def:nlw} satisfies
\[
\sqrt{\frac{2}{\tilde{C}}}W_{1}(\rho_{0},\rho_{1})\leq\mathcal{W}_{\eta,\theta,m}(\rho_{0},\rho_{1}).
\]
Here $\tilde{C}=\sup_{x\in\mathbb{R}^{d}}\int_{\mathbb{R}^{d}}|x-y|^{2}\eta(|x-y|)dm(y)$.
\end{prop}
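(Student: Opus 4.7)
The plan is to use the Kantorovich--Rubinstein duality
\[
W_{1}(\rho_{0},\rho_{1})=\sup\left\{ \int\phi\,d(\rho_{1}-\rho_{0})\;:\;\phi\text{ is 1-Lipschitz}\right\}
\]
and then to bound $\int\phi\,d(\rho_{1}-\rho_{0})$ for a fixed 1-Lipschitz test function $\phi$ by the total action along an arbitrary $(\mu_{t},\mathbf{j}_{t})\in\mathcal{CE}(\rho_{0},\rho_{1})$. Given such a curve, the weak form of the nonlocal continuity equation (Definition \ref{def:nce}, equation \ref{eq:ncewf}) applied to a time-independent $\phi$ yields
\[
\int\phi\,d\rho_{1}-\int\phi\,d\rho_{0}=\frac{1}{2}\int_{0}^{1}\!\!\iint_{G}\overline{\nabla}\phi(x,y)\,\eta(x,y)\,d\mathbf{j}_{t}(x,y)\,dt,
\]
modulo a mollification argument (convolve $\phi$ with a smooth compactly supported kernel and truncate so the test function lies in $C_{c}^{\infty}$, then pass to the limit using that $\overline{\nabla}\phi$ remains bounded by $|y-x|$).

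\textbf{Cauchy--Schwarz against the action.} For each $t$, pick $\lambda\in\mathcal{M}_{loc}^{+}(G)$ dominating $|\mathbf{j}_{t}|,\mu_{t}\otimes m,m\otimes\mu_{t}$, and factor
\[
\overline{\nabla}\phi\,\eta\,\tfrac{d\mathbf{j}_{t}}{d\lambda}=\Bigl[\overline{\nabla}\phi\sqrt{\eta\,\hat{\mu}_{t}}\Bigr]\cdot\Bigl[\tfrac{\sqrt{\eta}}{\sqrt{\hat{\mu}_{t}}}\tfrac{d\mathbf{j}_{t}}{d\lambda}\Bigr],
\]
where $\hat{\mu}_{t}:=\theta\bigl(\tfrac{d(\mu_{t}\otimes m)}{d\lambda},\tfrac{d(m\otimes\mu_{t})}{d\lambda}\bigr)$. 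The Cauchy--Schwarz inequality gives
\[
\Bigl|\iint_{G}\overline{\nabla}\phi\,\eta\,d\mathbf{j}_{t}\Bigr|\leq\Bigl(\iint_{G}|\overline{\nabla}\phi|^{2}\eta\,\hat{\mu}_{t}\,d\lambda\Bigr)^{1/2}\bigl(2\mathcal{A}_{\theta,\eta,m}(\mu_{t},\mathbf{j}_{t})\bigr)^{1/2}.
\]
For the first factor I will use the Lipschitz bound $|\overline{\nabla}\phi(x,y)|\leq|y-x|$ together with Lemma \ref{lem:arithmetic mean upper bound}, i.e.\ $\theta(a,b)\leq(a+b)/2$, which by 1-homogeneity of $\theta$ implies $\hat{\mu}_{t}\,d\lambda\leq\tfrac{1}{2}\bigl(d(\mu_{t}\otimes m)+d(m\otimes\mu_{t})\bigr)$. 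Using the symmetry of $|x-y|^{2}\eta(|x-y|)$ and the definition of $\tilde{C}$, this collapses to
\[
\iint_{G}|\overline{\nabla}\phi|^{2}\eta\,\hat{\mu}_{t}\,d\lambda\leq\iint|x-y|^{2}\eta(|x-y|)\,d\mu_{t}(x)\,dm(y)\leq\tilde{C},
\]
since $\mu_{t}$ is a probability measure.

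\textbf{Integration in time and duality.} Combining the two bounds,
\[
\Bigl|\int\phi\,d(\rho_{1}-\rho_{0})\Bigr|\leq\tfrac{1}{2}\int_{0}^{1}\sqrt{\tilde{C}}\sqrt{2\mathcal{A}_{\theta,\eta,m}(\mu_{t},\mathbf{j}_{t})}\,dt\leq\sqrt{\tfrac{\tilde{C}}{2}}\Bigl(\int_{0}^{1}\mathcal{A}_{\theta,\eta,m}(\mu_{t},\mathbf{j}_{t})\,dt\Bigr)^{1/2},
\]
where the last step is Cauchy--Schwarz in $t$. Infimizing over admissible curves in $\mathcal{CE}(\rho_{0},\rho_{1})$ gives $|\int\phi\,d(\rho_{1}-\rho_{0})|\leq\sqrt{\tilde{C}/2}\,\mathcal{W}_{\eta,\theta,m}(\rho_{0},\rho_{1})$, and supremizing over 1-Lipschitz $\phi$ yields the claimed estimate after rearrangement.

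\textbf{Main obstacle.} The non-routine step is the measure-theoretic bookkeeping: ensuring the Cauchy--Schwarz factorization is independent of the choice of reference measure $\lambda$, and justifying the use of a merely 1-Lipschitz $\phi$ (which is not in $C_{c}^{\infty}$) as a test function in \ref{eq:ncewf}. For the latter one mollifies $\phi$ by a standard convolution kernel and multiplies by a cutoff, obtaining $\phi_{n}\in C_{c}^{\infty}$ with $\|\phi_{n}\|_{\mathrm{Lip}}\leq 1$ and $\phi_{n}\to\phi$ locally uniformly; the uniform bound $|\overline{\nabla}\phi_{n}(x,y)|\leq|x-y|$ together with the finite-second-moment assumption on $\eta$ (Assumption \ref{assu:eta properties}(iv)) supplies a dominating integrand which lets one pass to the limit both in the left-hand side (by narrow continuity of $\mu_{t}$) and in the right-hand side (by dominated convergence on $G$).
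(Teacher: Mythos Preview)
Your proof is correct and is essentially the standard argument. Note that the paper does not actually prove this proposition: it is quoted from \cite[Proposition 4.5]{erbar2014gradient} and stated without proof. Your approach mirrors exactly the structure the paper uses immediately afterwards for Proposition~\ref{prop:TV lower bound} (the $TV$ lower bound), where the same Cauchy--Schwarz/reverse H\"older splitting against $\hat{\mu}_t\,\eta\,d\lambda$ is performed, followed by the bound $\theta(r,s)\leq(r+s)/2$ from Lemma~\ref{lem:arithmetic mean upper bound}; the only difference is the choice of test function (there a cutoff $\xi_\delta$ with $|\overline{\nabla}\xi_\delta|\leq 1$, here a $1$-Lipschitz $\phi$ with $|\overline{\nabla}\phi|\leq|x-y|$), which is why $\tilde{C}$ carries the weight $|x-y|^2$ while the constant $C$ in Proposition~\ref{prop:TV lower bound} does not.

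One minor remark on your mollification step: when you truncate a $1$-Lipschitz $\phi$ by multiplying with a cutoff $\chi_R$, the product need not remain $1$-Lipschitz because $\|\phi\|_\infty$ is unbounded. The clean fix (implicit in \cite[Remark 3.3]{erbar2014gradient}, which the paper invokes for the same purpose in the proof of Proposition~\ref{prop:s-smooth duality inequality}) is to first subtract a constant so that $\phi(0)=0$, whence $|\phi(x)|\leq|x|$ and the truncated function has Lipschitz constant $1+O(1/R)$; passing $R\to\infty$ then recovers the sharp constant.
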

In particular, observe that this $W_{1}$ lower bound is vacuous in
the case where $m=\text{Leb}$ and the second moment of $\eta$ is
infinite.

When the lower bound in the previous proposition is non-vacuous, this
shows, in particular, that the topology induced by $\mathcal{W}_{\eta,\theta}$
as strong or stronger than the narrow topology on $\mathcal{P}(\mathbb{R}^{d})$. What we show below is that, when $\eta$ is integrable, the topology is strictly stronger.
More precisely we show that the
topology induced by $\mathcal{W}_{\eta,\theta,m}$ is at least as
strong as the \emph{strong topology} on $\mathcal{P}(\mathbb{R}^{d})$. This indicates that the nonlocal Wasserstein distances are fundamentally different from the standard Wasserstein distances. 

\begin{prop}
\label{prop:TV lower bound}Suppose that $\rho_{0},\rho_{1}\in\mathcal{P}(\mathbb{R}^{d})$.
The nonlocal Wasserstein distance $\mathcal{W}_{\eta,\theta,m}$
with arbitrary reference
measure $m$ of Definition \ref{def:nlw} satisfies
\[
\sqrt{\frac{2}{C}}TV(\rho_{0},\rho_{1})\leq\mathcal{W}_{\eta,\theta,m}(\rho_{0},\rho_{1}).
\]
Here $C=\sup_{x\in\mathbb{R}^{d}}\int_{\mathbb{R}^{d}}\eta(|x-y|)dm(y)$.
\end{prop}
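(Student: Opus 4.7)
The plan is to adapt the duality-style argument behind Erbar's $W_1$ lower bound (Proposition \ref{prop:W1 lower bound}), but replace 1-Lipschitz test functions with bounded test functions so that the right-hand side of the nonlocal continuity equation produces a total variation on the left.

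First, I would fix a test function $\varphi\in C_c^\infty(\mathbb{R}^d)$ with $\|\varphi\|_\infty\le 1$ and insert $\varphi_t(x)=\zeta(t)\varphi(x)$ into the weak form \eqref{eq:ncewf}, where $\zeta$ is a smooth approximation of $\mathbf{1}_{[0,1]}$. Using the narrow continuity of $t\mapsto\mu_t$ guaranteed by Definition \ref{def:nce}, passing to the limit yields
\[
\int\varphi\,d(\rho_1-\rho_0)=\tfrac{1}{2}\int_0^1\iint_G\overline{\nabla}\varphi(x,y)\,\eta(x,y)\,d\mathbf{j}_t(x,y)\,dt
\]
for any admissible $(\mu_t,\mathbf{j}_t)\in\mathcal{CE}(\rho_0,\rho_1)$.

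Next, I would estimate the right-hand side via Cauchy--Schwarz against the action. Choose $\lambda_t\in\mathcal{M}_{loc}^+(G)$ dominating $|\mathbf{j}_t|$, $\mu_t\otimes m$, and $m\otimes\mu_t$, and abbreviate $\hat\rho_t:=\theta\!\left(\tfrac{d(\mu_t\otimes m)}{d\lambda_t},\tfrac{d(m\otimes\mu_t)}{d\lambda_t}\right)$. Splitting the integrand as
\[
\overline{\nabla}\varphi\sqrt{\eta\,\hat\rho_t}\cdot\frac{d\mathbf{j}_t/d\lambda_t}{\sqrt{\hat\rho_t}}\sqrt{\eta}
\]
Cauchy--Schwarz produces
\[
\Bigl|\tfrac{1}{2}\iint_G\overline{\nabla}\varphi\,\eta\,d\mathbf{j}_t\Bigr|\le\tfrac{1}{2}\Bigl(\iint_G|\overline{\nabla}\varphi|^2\eta\,\hat\rho_t\,d\lambda_t\Bigr)^{1/2}\!\Bigl(2\mathcal{A}_{\theta,\eta}(\mu_t,\mathbf{j}_t;m)\Bigr)^{1/2}.
\]

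For the first factor, I would use Lemma \ref{lem:arithmetic mean upper bound} ($\theta(a,b)\le(a+b)/2$) to bound $\hat\rho_t\,d\lambda_t\le\tfrac{1}{2}(\mu_t\otimes m+m\otimes\mu_t)$, and then use symmetry of $\eta$ together with $|\overline{\nabla}\varphi(x,y)|\le 2$:
\[
\iint_G|\overline{\nabla}\varphi|^2\eta\,\hat\rho_t\,d\lambda_t\le\iint|\varphi(y)-\varphi(x)|^2\eta(|x-y|)\,d\mu_t(x)\,dm(y)\le 4C,
\]
where $C=\sup_x\int\eta(|x-y|)dm(y)$ and $\|\mu_t\|_{TV}=1$. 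Substituting back and applying Cauchy--Schwarz in the $t$-integral,
\[
\Bigl|\int\varphi\,d(\rho_1-\rho_0)\Bigr|\le\sqrt{2C}\Bigl(\int_0^1\mathcal{A}_{\theta,\eta}(\mu_t,\mathbf{j}_t;m)\,dt\Bigr)^{1/2}.
\]
Infimizing over $(\mu_t,\mathbf{j}_t)\in\mathcal{CE}(\rho_0,\rho_1)$ and then taking the supremum over $\varphi$ with $\|\varphi\|_\infty\le 1$ gives $2\,TV(\rho_0,\rho_1)\le\sqrt{2C}\,\mathcal{W}_{\eta,\theta,m}(\rho_0,\rho_1)$, which is the claimed inequality.

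The only subtle point is the approximation step that legitimizes inserting $\zeta(t)\varphi(x)$ into \eqref{eq:ncewf}; this is standard once narrow continuity of $\mu_t$ is invoked, so no genuine difficulty arises. Everything else is a careful Cauchy--Schwarz against the flux-form action, precisely parallel to Erbar's $W_1$ argument but with the $|\overline\nabla\varphi|\le|x-y|$ bound replaced by the crude bound $|\overline\nabla\varphi|\le 2$, which is what converts the $|x-y|^2$ weight inside $\tilde C$ into a plain $\eta$ weight inside $C$.
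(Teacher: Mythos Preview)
Your proof is correct and follows essentially the same route as the paper: plug a bounded test function into the nonlocal continuity equation, apply Cauchy--Schwarz against the action, and use Lemma \ref{lem:arithmetic mean upper bound} to bound the $\theta$-weighted integral by $C$. The paper's version differs only cosmetically---it uses a $[0,1]$-valued cutoff $\xi_\delta$ for a near-optimal set $A$ (so $|\overline{\nabla}\xi_\delta|\le 1$ rather than your $|\overline{\nabla}\varphi|\le 2$) and phrases the Cauchy--Schwarz step as a reverse H\"older inequality, but the arithmetic is identical.
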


This lower bound is vacuous when $\eta$ is non-integrable. This suggests
that the upwind nonlocal transportation distance induces a weaker
topology when $\eta$ is non-integrable; we address this point later
on in Lemma \ref{lem:crude W2 upper bound}.
\begin{proof}
Recall that one of the several equivalent definitions of the TV norm
is as follows:
\[
TV(\rho_{0},\rho_{1})=\sup_{A\in\mathcal{B}(\mathbb{R}^{d})}|\rho_{0}(A)-\rho_{1}(A)|.
\]
Let $A$ be some measurable set such that $|\rho_{0}(A)-\rho_{1}(A)|>TV(\rho_{0},\rho_{1})-\frac{\varepsilon}{2}.$
Any mass-flux pair $(\rho_{t},\mathbf{j}_{t})$ connecting $\rho_{0}$
to $\rho_{1}$ must therefore move at least $TV(\rho_{0},\rho_{1})-\varepsilon$
of mass from $A$ to $A^{C}$ (or vice versa). Without loss of generality,
we can take $A$ to be compact.

Let $(\rho_{t},\mathbf{j}_{t})\in\mathcal{CE}(\rho_{0},\rho_{1})$
be an action-minimizing mass-flux pair, so that $\mathcal{W}_{\eta,\theta,m}^{2}(\rho_{0},\rho_{1})=\int_{0}^{1}\mathcal{A}(\rho_{t},\mathbf{j}_{t})dt$;
without loss of generality we can assume, by \cite[Proposition 4.3]{erbar2014gradient},
that $(\rho_{t},\mathbf{j}_{t})$ is \emph{unit speed} in the sense
that $\mathcal{W}_{\eta,\theta,m}^{2}(\rho_{0},\rho_{1})=\mathcal{A}(\rho_{t},\mathbf{j}_{t})$
for almost all $t\in[0,1]$. We may also assume, without loss of generality,
that $\mathbf{j}_{t}$ is antisymmetric for almost all $t$, by Lemma
\ref{lem:antisymmetric-tangent-flux}. 

Let $\lambda_{t}$ be some measure which dominates all of $\mathbf{j}_{t}$,
$m\otimes\rho_{t}$, and $\rho_{t}\otimes m$. The action for $(\rho_{t},\mathbf{j}_{t})$
is 
\begin{align*}
\mathcal{A}_{\theta}(\rho_{t},\mathbf{j}_{t}) & =\frac{1}{2}\iint_{G}\left(\frac{\left(\frac{d\mathbf{j}_{t}}{d\lambda_{t}}(x,y)\right)^{2}}{\theta\left(\frac{d(\rho_{t}\otimes m)}{d\lambda_{t}}(x,y),\frac{d(m\otimes\rho_{t})}{d\lambda_{t}}(x,y)\right)}\right)\eta(x,y)d\lambda_{t}(x,y).
\end{align*}
Applying the reverse H\"older inequality, and using the fact that $\left|\frac{d\mathbf{j}_{t}}{d\lambda_{t}}\right|=\frac{d|\mathbf{j}_{t}|}{d\lambda_{t}}$,
we get 
\begin{align*}
\mathcal{A}_{\theta}(\rho_{t},\mathbf{j}_{t}) & \geq\frac{1}{2}\left(\iint_{G}\frac{d|\mathbf{j}_{t}|}{d\lambda_{t}}(x,y)\eta(x,y)d\lambda_{t}(x,y)\right)^{2}\\
 & \qquad\times\left(\iint_{G}\theta\left(\frac{d(\rho_{t}\otimes m)}{d\lambda_{t}}(x,y),\frac{d(m\otimes\rho_{t})}{d\lambda_{t}}(x,y)\right)\eta(x,y)d\lambda_{t}(x,y)\right)^{-1}.
\end{align*}
By Lemma \ref{lem:arithmetic mean upper bound}, $\theta$ automatically satisfies $\theta(r,s)\leq(r+s)/2$, so we find that 
\begin{multline*}
\left(\iint_{G}\theta\left(\frac{d(\rho_{t}\otimes m)}{d\lambda_{t}}(x,y),\frac{d(m\otimes\rho_{t})}{d\lambda_{t}}(x,y)\right)\eta(x,y)d\lambda_{t}(x,y)\right)\\
\leq\frac{1}{2}\left(\iint_{G}\left(\frac{d(\rho_{t}\otimes m)}{d\lambda_{t}}(x,y)+\frac{d(m\otimes\rho_{t})}{d\lambda_{t}}(x,y)\right)\eta(x,y)d\lambda_{t}(x,y)\right).
\end{multline*}
By the Radon-Nikodym theorem, we have the estimate 
\begin{align*}
\iint_{G}\frac{d\rho_{t}\otimes m}{d\lambda_{t}}(x,y)\eta(x,y)d\lambda_{t}(x,y) & =\iint_{G}\eta(x,y)d(\rho_{t}\otimes m)(x,y)\\
 & \leq\int_{\mathbb{R}^{d}}\left[\sup_{x\in\mathbb{R}^{d}}\int_{\mathbb{R}^{d}}\eta(|x-y|)dm(y)\right]d\rho_{t}(x)\\
 & =\sup_{x\in\mathbb{R}^{d}}\int_{\mathbb{R}^{d}}\eta(|x-y|)dm(y) =: C.
\end{align*}
The same estimate works if we replace $\rho_{t}(y)$ with $\rho_{t}(x)$;
hence, we conclude that 
\[
\iint_{G}\theta\left(\frac{d(\rho_{t}\otimes m)}{d\lambda_{t}}(x,y),\frac{d(m\otimes\rho_{t})}{d\lambda_{t}}(x,y)\right)\eta(x,y)d\lambda_{t}(x,y)\leq C.
\]
Therefore, (applying the Radon-Nikodym theorem once more)
\[
\mathcal{A}_{\theta}(\rho_{t},\mathbf{j}_{t})\geq\frac{1}{2C}\left(\iint_{G}\frac{d|\mathbf{j}_{t}|}{d\lambda_{t}}(x,y)\eta(x,y)d\lambda_{t}\right)^{2}=\frac{1}{2C}\left(\iint_{G}\eta(x,y)d|\mathbf{j}_{t}|(x,y)\right)^{2}.
\]

Let $\xi_{\delta}$ be a\emph{ cutoff function} for the set $A$,
more precisely $\xi_{\delta}=1$ on $A$, $0$ on $A_{\delta}^{c}$,
and continuous on $\mathbb{R}^{d}$ (the existence of such a $\xi_{\delta}$
is guaranteed by Urysohn's lemma). We use $\xi_{\delta}(x)$ as a
test function in the nonlocal continuity equation: by \cite[Lemma 2.15]{esposito2019nonlocal}
we find that 
\[
\int_{\mathbb{R}^{d}}\xi_{\delta}(x)d\rho_{1}(x)-\int_{\mathbb{R}^{d}}\xi_{\delta}(x)d\rho_{0}=-\frac{1}{2}\int_{0}^{1}\iint_{G}(\xi_{\delta}(y)-\xi_{\delta}(x))\eta(x,y)d\mathbf{j}_{t}(x,y)dt.
\]
Note that $|\xi_{\delta}(y)-\xi_{\delta}(x)|\leq1$, so 
\begin{align*}
\left|\int_{\mathbb{R}^{d}}\xi_{\delta}(x)d\rho_{1}(x)-\int_{\mathbb{R}^{d}}\xi_{\delta}(x)d\rho_{0}\right| & =\frac{1}{2}\left|\int_{0}^{1}\iint_{G}(\xi_{\delta}(y)-\xi_{\delta}(x))\eta(x,y)d\mathbf{j}_{t}(x,y)dt\right|\\
 & \leq\frac{1}{2}\int_{0}^{1}\iint_{G}\eta(x,y)d|\mathbf{j}_{t}|(x,y)dt.
\end{align*}
Now, selecting $\delta>0$ so that 
\[
\left|\left(\int_{\mathbb{R}^{d}}\xi_{\delta}(x)d\rho_{1}(x)-\int_{\mathbb{R}^{d}}\xi_{\delta}(x)d\rho_{0}\right)-\left(\rho_{1}(A)-\rho_{0}(A)\right)\right|<\frac{\varepsilon}{2}
\]
we can compute that 
\begin{align*}
TV(\rho_{0},\rho_{1}) & \leq|\rho_{1}(A)-\rho_{0}(A)|+\frac{\varepsilon}{2}\\
 & \leq\left|\int_{\mathbb{R}^{d}}\xi_{\delta}(x)d\rho_{1}(x)-\int_{\mathbb{R}^{d}}\xi_{\delta}(x)d\rho_{0}\right|+\varepsilon\\
 & \leq\frac{1}{2}\int_{0}^{1}\iint_{G}\eta(x,y)d|\mathbf{j}_{t}|(x,y)dt+\varepsilon\\
 & \leq\frac{1}{2}\sqrt{2C}\int_{0}^{1}\sqrt{\mathcal{A}(\rho_{t},\mathbf{j}_{t})}dt+\varepsilon.
\end{align*}

Since $\mathcal{W}_{\eta,\theta,m}(\rho_{0},\rho_{1}):=\int_{0}^{1}\sqrt{\mathcal{A}(\rho_{t},\mathbf{j}_{t})}dt$,
and $\varepsilon>0$ was arbitrary, we conclude that 
\[
\mathcal{W}_{\eta,\theta,m}(\rho_{0},\rho_{1})\geq\sqrt{\frac{2}{C}}TV(\rho_{0},\rho_{1})\qquad C=\sup_{x\in\mathbb{R}^{d}}\int_{\mathbb{R}^{d}}\eta(|x-y|)dm(y)
\]
as desired.
\end{proof}

We also have the following technical corollary, which will be used
in Proposition \ref{prop:expel infinite}.
\begin{cor}
\label{cor:H\"older-half-cts}Suppose that $C:=\sup_{x\in\mathbb{R}^{d}}\int_{\mathbb{R}^{d}}\eta(|x-y|)dm(y)<\infty$.
Let $\rho_{0},\rho_{1}\in\mathcal{P}(\mathbb{R}^{d})$, and suppose
that $\mathcal{W}_{\eta,\theta,m}(\rho_{0},\rho_{1})<\infty$. Let
$(\rho_{t},\mathbf{j}_{t})\in\mathcal{CE}(\rho_{0},\rho_{1})$ be
a constant-speed action-minimizing mass-flux pair, so that $\mathcal{W}_{\eta,\theta,m}^{2}(\rho_{t_{0}},\rho_{t_{1}})=\int_{t_{0}}^{t_{1}}\mathcal{A}(\rho_{t},\mathbf{j}_{t})dt$
for all $0\leq t_{0}<t_{1}\leq1$. Then, for any Borel $A\subset\mathbb{R}^{d}$,
the function $t\mapsto\rho_{t}(A)$ is $\frac{1}{2}$-H\"older continuous.
\end{cor}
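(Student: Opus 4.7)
The plan is to combine the total variation lower bound from Proposition \ref{prop:TV lower bound} with the constant-speed hypothesis on the geodesic.

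First, I observe that since $(\rho_t, \mathbf{j}_t)$ is a constant-speed action-minimizing mass-flux pair, the action $\mathcal{A}(\rho_t, \mathbf{j}_t)$ is constant in $t$ (equal to $\mathcal{W}_{\eta,\theta,m}^2(\rho_0, \rho_1)$). Consequently, for any $0 \leq t_0 < t_1 \leq 1$, the restriction $(\rho_t, \mathbf{j}_t)_{t \in [t_0, t_1]}$ (after an affine reparametrization from $[t_0, t_1]$ to $[0,1]$, using the $1$-homogeneity of $\mathcal{W}_{\eta,\theta,m}^2$ under time rescaling) is still action-minimizing, so that
\[
\mathcal{W}_{\eta,\theta,m}^2(\rho_{t_0}, \rho_{t_1}) = \int_{t_0}^{t_1} \mathcal{A}(\rho_t, \mathbf{j}_t) \, dt = (t_1 - t_0)\, \mathcal{W}_{\eta,\theta,m}^2(\rho_0, \rho_1).
\]

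Next, I apply Proposition \ref{prop:TV lower bound} to the pair $(\rho_{t_0}, \rho_{t_1})$: since $C = \sup_{x} \int \eta(|x-y|) dm(y) < \infty$ by hypothesis,
\[
TV(\rho_{t_0}, \rho_{t_1}) \leq \sqrt{\tfrac{C}{2}}\, \mathcal{W}_{\eta,\theta,m}(\rho_{t_0}, \rho_{t_1}) = \sqrt{\tfrac{C}{2}}\, \mathcal{W}_{\eta,\theta,m}(\rho_0,\rho_1)\, \sqrt{t_1 - t_0}.
\]
Since for any Borel $A \subset \mathbb{R}^d$ we have $|\rho_{t_1}(A) - \rho_{t_0}(A)| \leq TV(\rho_{t_0}, \rho_{t_1})$, the chain of inequalities gives
\[
|\rho_{t_1}(A) - \rho_{t_0}(A)| \leq \sqrt{\tfrac{C}{2}}\, \mathcal{W}_{\eta,\theta,m}(\rho_0,\rho_1) \, \sqrt{t_1 - t_0},
\]
which is exactly the claimed $\tfrac{1}{2}$-H\"older continuity, with explicit H\"older constant $\sqrt{C/2}\,\mathcal{W}_{\eta,\theta,m}(\rho_0,\rho_1)$.

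The only nontrivial step is the first one, namely the identity $\mathcal{W}_{\eta,\theta,m}^2(\rho_{t_0}, \rho_{t_1}) = (t_1-t_0)\mathcal{W}_{\eta,\theta,m}^2(\rho_0,\rho_1)$; but this is immediate from the constant-speed hypothesis, which was precisely built in so as to make the action uniformly distributed in time (and is itself obtainable, for any geodesic of finite length, by the reparametrization argument used in \cite[Proposition 4.3]{erbar2014gradient}). Everything else is a direct application of the TV lower bound and the definition of total variation.
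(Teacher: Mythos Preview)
Your proof is correct and follows essentially the same strategy as the paper: use the constant-speed hypothesis to relate $\mathcal{W}_{\eta,\theta,m}(\rho_{t_0},\rho_{t_1})$ to $\mathcal{W}_{\eta,\theta,m}(\rho_0,\rho_1)$ via a factor of $\sqrt{t_1-t_0}$, then invoke Proposition~\ref{prop:TV lower bound} and the definition of total variation. Your version is in fact slightly cleaner, since you read off the identity $\mathcal{W}_{\eta,\theta,m}^2(\rho_{t_0},\rho_{t_1})=(t_1-t_0)\mathcal{W}_{\eta,\theta,m}^2(\rho_0,\rho_1)$ directly from the stated hypothesis, whereas the paper passes through an explicit reparametrization and then uses only the cruder bound $\mathcal{W}_{\eta,\theta,m}(\rho_{t_0},\rho_{t_1})\le \mathcal{W}_{\eta,\theta,m}(\rho_0,\rho_1)$; your H\"older constant $\sqrt{C/2}\,\mathcal{W}_{\eta,\theta,m}(\rho_0,\rho_1)$ is the correct one coming from Proposition~\ref{prop:TV lower bound}.
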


\begin{proof}
Let $0\leq t_{0}<t_{1}\leq1$. Consider $(\rho_{t},\mathbf{j}_{t})_{t\in[t_{0},t_{1}]}$,
the $t_{0}$-to-$t_{1}$ restriction of $(\rho_{t},\mathbf{j}_{t})\in\mathcal{CE}(\rho_{0},\rho_{1})$.
Let $(\tilde{\rho}_{t},\tilde{\mathbf{j}_{t}})_{t\in[0,1]}$ denote
the uniform reparametrization of $(\rho_{t},\mathbf{j}_{t})_{t\in[t_{0},t_{1}]}$
into a unit-time solution to the nonlocal continuity equation; compute
that 
\[
\int_{t_{0}}^{t_{1}}\mathcal{A}(\rho_{t},\mathbf{j}_{t})dt=(t_{1}-t_{0})\int_{0}^{1}\mathcal{A}(\tilde{\rho}_{t},\tilde{\mathbf{j}_{t}})dt.
\]
But, $(\tilde{\rho}_{t},\tilde{\mathbf{j}_{t}})_{t\in[0,1]}$ is also
an action-minimizing solution to the nonlocal continuity equation
--- otherwise, we could locally replace $(\rho_{t},\mathbf{j}_{t})_{t\in[t_{0},t_{1}]}$
and get a lower-action mass-flux pair connecting $\rho_{0}$ and $\rho_{1}$,
which is ruled out by assumption. Therefore, by Proposition \ref{prop:TV lower bound},
\[
(t_{1}-t_{0})^{1/2}\mathcal{W}_{\theta,\eta,m}(\rho_{t_{0}},\rho_{t_{1}})\geq\sqrt{\frac{2}{C}}TV(\rho_{t_{0}},\rho_{t_{1}}).
\]
Now, let $A$ be any Borel set. Since $TV(\rho_{t_{0}},\rho_{t_{1}})=\sup_{A\in\mathcal{B}(\mathbb{R}^{d})}|\rho_{t_{0}}(A)-\rho_{t_{1}}(A)|$,
we find that
\[
(t_{1}-t_{0})^{1/2}\mathcal{W}_{\theta,\eta,m}(\rho_{t_{0}},\rho_{t_{1}})\geq\sqrt{\frac{2}{C}}|\rho_{t_{0}}(A)-\rho_{t_{1}}(A)|.
\]
Finally, since $\mathcal{W}_{\theta,\eta,m}(\rho_{t_{0}},\rho_{t_{1}})\leq\mathcal{W}_{\theta,\eta,m}(\rho_{0},\rho_{1})$,
we find that 
\[
|\rho_{t_{0}}(A)-\rho_{t_{1}}(A)|\leq\sqrt{\frac{2}{C}}\mathcal{W}_{\theta,\eta,m}(\rho_{0},\rho_{1})(t_{1}-t_{0})^{1/2}
\]
which shows that $t\mapsto\rho_{t}(A)$ is $\frac{1}{2}$-H\"older continuous,
as desired.
\end{proof}

\subsection{Expel problem for $\mathcal{W}_{\eta,\theta}$} \label{subsec:Expel-problem}

In this subsection we consider the \emph{expel problem} for nonlocal
Wasserstein distances. That is, given a Dirac mass, say at the origin $\delta_{0}$ for concreteness, we wish to estimate
$\inf_{\nu\bot\delta_{0}}\mathcal{W}_{\eta,\theta}(\delta_{0},\nu)$.
Throughout, we only consider the case where the reference measure
is the Lebesgue measure.
In this subsection, we also assume that $\eta$ satisfies Assumption
\ref{assu:eta properties} (i-v), and that $\theta$ satisfies Assumption
\ref{assu:theta properties}. 

We shall make repeated use of an adaptation of a specific computation
from \cite{maas2011gradient}, which we present separately as Lemma \ref{lem:2 point space estimate}.

\begin{lemma}[expel cost upper bound]
\label{lem:expel non-integrable}
 Let $0<\delta<\varepsilon$ where
$\varepsilon\leq1$, and let $x_{0}\in\mathbb{R}^{d}$. Suppose that
there is some constant $c_{s}$ such that $\eta(x,y)\geq c_{s}|x-y|^{-d-s}$
when $|x-y|\leq\frac{\delta}{\varepsilon}$. Let $\mathfrak{m}_{B(x_{0},\delta)}$ denote
the uniform probability measure on the ball $B(x_{0},\delta)$. Then, 
\[
\mathcal{W}_{\eta,\varepsilon}(\delta_{x_{0}},\mathfrak{m}_{B(x_{0},\delta)})\leq\frac{C_{\theta}}{C_{d,s}}\left(\frac{\delta}{\varepsilon}\right)^{s/2}
\]
where $C_{\theta}:=\int_{0}^{1}\frac{1}{\sqrt{\theta(1-r,1+r)}}dr$
and $C_{d,s}$ is given explicitly in the proof.
\end{lemma}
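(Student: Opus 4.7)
By translation invariance assume $x_0 = 0$. The goal is to build an explicit mass-flux pair $(\rho_t, \mathbf{j}_t)_{t \in [0,1]} \in \mathcal{CE}(\delta_0, \mathfrak{m}_{B(0,\delta)})$ whose total action meets the claimed bound. It is convenient to first handle a regularized version: replace $\delta_0$ by the uniform probability measure $\mathfrak{m}_{B(0,\varepsilon')}$ on a small ball, for $\varepsilon' \in (0,\delta)$, establish the upper bound uniformly in $\varepsilon'$, and then recover the lemma by passing $\varepsilon' \to 0$ using the narrow lower semicontinuity of the nonlocal Wasserstein distance recorded as a fact after Definition \ref{def:nlw}.

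For the regularized problem I would use a parallel-transport construction, motivated by Lemma \ref{lem:2 point space estimate}: route mass independently along each edge from the source bulk near $0$ to a target point $y \in B(0,\delta)$, following the optimal two-point profile in the variable $r \in [0,1]$ that parametrizes the fraction of mass already transferred. Exploiting the rotational symmetry of both $\eta_\varepsilon$ and the target, the superposed flux has an explicit radial form. Lemma \ref{lem:2 point space estimate} supplies a per-channel squared cost of the form $C_\theta^2/\eta_\varepsilon(|y|)$, where $C_\theta = \int_0^1 dr/\sqrt{\theta(1-r,1+r)}$ arises as the arclength of the constant-speed geodesic on the associated two-point graph and is finite by the connectedness assumption on $\theta$.

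Summing over channels---which by joint convexity of the action (Lemma \ref{lem:action convex lsc}) gives an upper bound on the action of the superposed pair---the total squared action is of order
\[
\int_0^1 \mathcal{A}(\rho_t,\mathbf{j}_t)\,dt \;\lesssim\; \frac{C_\theta^2}{|B(0,\delta)|^2}\int_{B(0,\delta)} \frac{dy}{\eta_\varepsilon(|y|)}.
\]
Substituting the algebraic blow-up lower bound---which after rescaling $\eta \mapsto \eta_\varepsilon$ reads $\eta_\varepsilon(|y|) \geq c_s\,\varepsilon^s\,|y|^{-d-s}$ for $|y| \leq \delta$---and evaluating the radial integral $\int_0^\delta \rho^{2d+s-1}\,d\rho$ in spherical coordinates yields the $(\delta/\varepsilon)^s$ scaling, with the explicit constant $C_{d,s}$ arising from the unit-ball volume and surface-area factors.

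The main technical obstacle is realizing the parallel-transport decomposition as a single admissible mass-flux pair. One must define $(\rho_t,\mathbf{j}_t)$ as a superposition---an integral over target edges $(0,y)$ with weights $|B(0,\delta)|^{-1}\,dy$---verify that it satisfies the nonlocal continuity equation in the sense of Definition \ref{def:nce}, and show that its action is dominated by the integrated sum of the two-point actions via joint convexity of $(\rho,\mathbf{j}) \mapsto \mathcal{A}(\rho,\mathbf{j})$. A secondary subtlety is the averaging over source points in $B(0,\varepsilon')$, which contributes only a lower-order correction that vanishes as $\varepsilon' \to 0$ and is absorbed by the final lower semicontinuity step.
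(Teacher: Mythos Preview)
The proposed one-shot parallel-transport construction has a genuine gap: the per-channel cost $C_\theta^2/\eta_\varepsilon(|y|)$ and the resulting formula $\frac{C_\theta^2}{|B(0,\delta)|^2}\int_{B(0,\delta)}\eta_\varepsilon(|y|)^{-1}\,dy$ only hold when $\theta(1,0)>0$. That is precisely the mechanism of Lemma~\ref{lem:expel arithmetic mean}, not of the present lemma, which must also cover interpolations zero on the boundary (e.g.\ the logarithmic mean). For those, your regularization does \emph{not} give a lower-order correction: along a one-shot curve from $A=B(0,\varepsilon')$ to $B\approx B(0,\delta)$ the action denominator is $\theta(\rho_t(x),\rho_t(y))$ with $\rho_t(x)\sim|A|^{-1}$ and $\rho_t(y)\sim|B|^{-1}$. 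By $1$-homogeneity this equals $|A|^{-1}\,\theta\bigl(1,|A|/|B|\bigr)$, which is $o(|A|^{-1})$ when $\theta(1,0)=0$, so the action blows up as $\varepsilon'\to 0$. Equivalently, the Lemma~\ref{lem:2 point space estimate} bound carries an unavoidable factor $|A|^{-1}=(\alpha_d(\varepsilon')^d)^{-1}$, and there is no way to trade it for $|B|^{-1}$ without $\theta(1,0)>0$.

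The paper's proof avoids ever transporting between regions of disparate volume. It cascades through dyadic annuli $\mathfrak{A}(x_0,\delta 2^{-n})=B(x_0,\delta 2^{-n})\setminus B(x_0,\delta 2^{-n-1})$: each step applies Lemma~\ref{lem:2 point space estimate} to two consecutive annuli of comparable Lebesgue measure, giving a per-step cost $\lesssim\bigl(|\mathfrak{A}_n|\,\eta_\varepsilon(\tfrac{3}{2}\delta 2^{-n})\bigr)^{-1/2}$. The algebraic blow-up of $\eta$ makes $|\mathfrak{A}_n|\,\eta_\varepsilon(\delta 2^{-n})\sim (\delta/\varepsilon)^{-s}2^{ns}$ grow, so the step costs decay like $(\delta/\varepsilon)^{s/2}2^{-ns/2}$ and sum geometrically. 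Your narrow-lower-semicontinuity step is correct and is exactly how the paper passes from $\mathfrak{m}_{\mathfrak{A}(x_0,\delta 2^{-N})}\rightharpoonup\delta_{x_0}$; one final two-point estimate connects $\mathfrak{m}_{\mathfrak{A}(x_0,\delta)}$ to $\mathfrak{m}_{B(x_0,\delta)}$.
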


An important consequence of the result above is that $\mathcal{W}_{\eta,\varepsilon}(\delta_{x_{0}},\mathfrak{m}_{B(x_{0},\delta)}) \to 0$ as $\delta \to 0$ and hence the expel cost is zero: $\inf_{\nu\bot\delta_{0}}\mathcal{W}_{\eta,\theta}(\delta_{0},\nu)=0$.
\begin{proof}
Given $x_{0}\in\mathbb{R}^{d}$ and $r>s>0$ we write $\mathfrak{A}(x_{0},r)$
to denote $B(x_{0},r)\backslash B(x_{0},\frac{r}{2})$, that is, the
 annulus of outer radius $r$ and inner radius $\frac{r}{2}$.
We let $\mathfrak{m}_{\mathfrak{A}(x_{0},r)}$ denote the uniform
probability measure on $\mathfrak{A}(x_{0},r)$. 

Fix $\delta>0$. Applying Lemma \ref{lem:2 point space estimate}
with $A=\mathfrak{A}(x_{0},\delta2^{-n})$ and $B=\mathfrak{A}(x_{0},2^{-n-1})$,
we find that 
\[
\mathcal{W}_{\eta,\varepsilon}(\mathfrak{m}_{\mathfrak{A}(x_{0},\delta2^{-n})},\mathfrak{m}_{\mathfrak{A}(x_{0},\delta2^{-n-1})})\leq\frac{C_{\theta}}{4\sqrt{|\mathfrak{A}(x_{0},\delta2^{-n})|\eta_{\varepsilon}(\delta\frac{3}{2}2^{-n})}}.
\]
Let us write this upper bound in a more explicit fashion. We know
that 
\[
|\mathfrak{A}(x_{0},\delta2^{-n})|=\alpha_{d}\left(\delta2^{-n}\right)^{d}-\alpha_{d}\left(\delta2^{-n-1}\right)^{d}=\alpha_{d}\delta^{d}\left(\frac{1}{2^{nd}}-\frac{1}{2^{nd+d}}\right)
\]
where $\alpha_{d}$ is the volume of the $d$-dimensional unit ball.
On the other hand, $\eta_{\varepsilon}\left(\delta\frac{3}{2}2^{-n}\right)=\frac{1}{\varepsilon^{d}}\eta(\frac{\delta}{\varepsilon}\frac{3}{2}2^{-n})$.
Suppose now that on $B(x_{0},\frac{\delta}{\varepsilon})$, $\eta(x,y)\geq c_{s}|x-y|^{-d-s}$
where $s>0$. Then 
\begin{equation}
\eta_{\varepsilon}\left(\frac{\delta}{\varepsilon}\frac{3}{2}2^{-n}\right)\geq\frac{1}{\varepsilon^{d}}c_{s}\left(\frac{\delta}{\varepsilon}\right)^{-d-s}\left(\frac{3}{2}2^{-n}\right)^{-d-s}\label{eq:eta delta over epsilon lower bound}
\end{equation}
so that 
\begin{align*}
|\mathfrak{A}(x_{0},\delta2^{-n})|\eta_{\varepsilon}\left(\delta\frac{3}{2}2^{-n}\right) & \geq\alpha_{d}\delta^{d}\left(\frac{1}{2^{nd}}-\frac{1}{2^{nd+d}}\right)\cdot\frac{1}{\varepsilon^{d}}c_{s}\left(\frac{\delta}{\varepsilon}\right)^{-d-s}\left(\frac{3}{2}2^{n}\right)^{d+s}\\
 & =\alpha_{d}\left(\frac{\delta}{\varepsilon}\right)^{-s}2^{ns}\left(1-\frac{1}{2^{d}}\right)c_{s}\left(\frac{3}{2}\right)^{-d-s}.
\end{align*}
Putting $\tilde{C}_{d,s}=2\left(\alpha_{d}c_{s}\left(\frac{3}{2}\right)^{-d-s}\right)^{1/2}$,
this shows that 
\[
\mathcal{W}_{\eta,\varepsilon}(\mathfrak{m}_{\mathfrak{A}(x_{0},\delta2^{-n})},\mathfrak{m}_{\mathfrak{A}(x_{0},\delta2^{-n-1})})\leq\frac{C_{\theta}}{\tilde{C}_{d,s}}\left(\frac{\delta}{\varepsilon}\right)^{s/2}2^{-ns/2}.
\]
Summing the geometric series, we find 
\[
\sum_{n=0}^{\infty}\mathcal{W}_{\eta,\varepsilon}(\mathfrak{m}_{\mathfrak{A}(x_{0},\delta2^{-n})},\mathfrak{m}_{\mathfrak{A}(x_{0},\delta2^{-n-1})})\leq\frac{C_{\theta}}{\tilde{C}_{d,s}}\frac{1}{1-2^{s/2}}\left(\frac{\delta}{\varepsilon}\right)^{s/2}.
\]
It follows that $\mathcal{W}_{\eta,\varepsilon}(\delta_{x_{0}},\mathfrak{m}_{\mathfrak{A}(x_{0},\delta)})\leq\frac{C_{\theta}}{\tilde{C}_{d,s}}\frac{1}{1-2^{s/2}}\left(\frac{\delta}{\varepsilon}\right)^{s/2}$.
To see why, observe that $(\mathfrak{m}_{\mathfrak{A}(x_{0},\delta2^{-n})})_{n\in\mathbb{N}}$
converges to $\delta_{x_{0}}$ in $W_{1}$ and thus in the narrow
topology. Since $\mathcal{W}_{\eta,\varepsilon}$ is jointly l.s.c.
with respcet to the narrow topology, we find that 
\begin{align*}
\mathcal{W}_{\eta,\varepsilon}(\delta_{x_{0}},\mathfrak{m}_{\mathfrak{A}(x_{0},\delta)}) & \leq\liminf_{k\rightarrow\infty}\mathcal{W}_{\eta,\varepsilon}(\mathfrak{m}_{\mathfrak{A}(x_{0},\delta2^{-k})},\mathfrak{m}_{\mathfrak{A}(x_{0},\delta)})\\
 & \leq\liminf_{k\rightarrow\infty}\sum_{n=0}^{k}\mathcal{W}_{\eta,\varepsilon}(\mathfrak{m}_{\mathfrak{A}(x_{0},\delta2^{-n})},\mathfrak{m}_{\mathfrak{A}(x_{0},\delta2^{-n-1})})\\
 & \leq\frac{C_{\theta}}{\tilde{C}_{d,s}}\frac{1}{1-2^{s/2}}\left(\frac{\delta}{\varepsilon}\right)^{s/2}.
\end{align*}

Finally, we can easily upper bound $\mathcal{W}_{\eta,\varepsilon}(\mathfrak{m}_{\mathfrak{A}(x_{0},\delta)},\mathfrak{m}_{B(x_{0},\delta)})$.
We use yet another construction based on the $\mathcal{W}$ geodesic
in the two-point space: we use exactly the same computation as in
the proof of Lemma \ref{lem:2 point space estimate}%
. Indeed, consider the curve $\rho_{t}:[0,1]\rightarrow\mathcal{P}(\mathbb{R}^d)$
defined by 
\[
\frac{d\rho_{t}}{d\text{Leb}}(x)=\begin{cases}
\frac{1-\gamma_{t}}{2|\mathfrak{A}(x_{0},\delta)|} & x\in\mathfrak{A}(x_{0},\delta)\\
\frac{1+\gamma_{t}}{2|B(x_{0},\delta2^{-1})|} & x\in B(x_{0},\delta2^{-1})\\
0 & \text{else}.
\end{cases}
\]
Additionally, let $\mathbf{j}_{t}$ be chosen exactly as in the proof
of Lemma \ref{lem:2 point space estimate}.%
{} This $(\rho_{t},\mathbf{j}_{t})$ is constructed so that $\rho_{0}=\mathfrak{m}_{\mathfrak{A}(x_{0},\delta)}$
and $\rho_{1}=\mathfrak{m}_{B(x_{0},\frac{\delta}{2})}$, and so that
the mass on $\mathfrak{A}\left(x_{0},\delta\right)$ is decreasing
uniformly on the set, and continuously in time; therefore, there is
a $t_{0}\in(0,1)$ such that $\rho_{t_{0}}$ has uniform distribution
on $B(x_{0},\delta)$. %

In particular, it follows that 
\[
\mathcal{W}_{\eta,\varepsilon}(\mathfrak{m}_{\mathfrak{A}(x_{0},\delta)},\mathfrak{m}_{B(x_{0},\delta)})\leq\sqrt{\int_{0}^{1}\mathcal{A}(\rho_{t},\mathbf{j}_{t})dt}.
\]
Note however that $\sqrt{\int_{0}^{1}\mathcal{A}(\rho_{t},\mathbf{j}_{t})dt}$
is none other than the upper bound for $\mathcal{W}_{\eta,\varepsilon}(\mathfrak{m}_{\mathfrak{A}(x_{0},\delta)},\mathfrak{m}_{B(x_{0},\frac{\delta}{2})})$,
so we have that 
\begin{align*}
\mathcal{W}_{\eta,\varepsilon}(\mathfrak{m}_{\mathfrak{A}(x_{0},\delta)},\mathfrak{m}_{B(x_{0},\delta)})\leq\frac{C_{\theta}}{4\sqrt{|B\left(x_{0},\frac{\delta}{2}\right)|\eta_{\varepsilon}\left(\frac{3}{2}\delta\right)}} & \leq\frac{C_{\theta}}{4\sqrt{\alpha_{d}\left(\frac{\delta}{2}\right)^{d}\frac{1}{\varepsilon^{d}}c_{s}\left(\frac{3}{2}\frac{\delta}{\varepsilon}\right)^{-d-s}}}\\
 & =\frac{C_{\theta}}{2^{1-d/2}\tilde{C}_{d,s}}\left(\frac{\delta}{\varepsilon}\right)^{s/2}.
\end{align*}
Therefore, by the triangle inequality, we have that 
\[
\mathcal{W}_{\eta,\varepsilon}(\delta_{x_{0}},\mathfrak{m}_{B(x_{0},\delta)})\leq\frac{C_{\theta}}{C_{d,s}}\left(\frac{\delta}{\varepsilon}\right)^{s/2}
\]
where $C_{d,s}=\tilde{C}_{d,s}\left(\frac{1}{(1-2^{s/2})}+2^{d/2-1}\right)^{-1}.$
\end{proof}

The previous lemma computed an upper bound on the expel cost for general
interpolation $\theta$, in the case where $\eta(|\tacka|)$ is non-integrable
in $B(0,\delta)$. It is also possible to provide an expel upper bound
in the case where $\theta$ is \emph{nonzero on the boundary} --
this condition is satisfied, for instance, by the arithmetic mean,
but \emph{not} by the logarithmic mean.
\begin{lemma}
\label{lem:expel arithmetic mean}
Suppose that $\theta(1,0)=\kappa_{\theta}>0$. (Note that if $\theta(r,s)=\frac{r+s}{2}$,
then $\kappa_{\theta}=\frac{1}{2}$.) In this case, 
\[
\mathcal{W}_{\eta,\varepsilon}(\delta_{x_{0}},\mathfrak{m}_{B(x_{0},\delta)})\leq\frac{1}{\sqrt{\kappa_{\theta}\alpha_{d}\left(\frac{\delta}{\varepsilon}\right)^{d}\eta\left(\frac{\delta}{\varepsilon}\right)}}.
\]
\end{lemma}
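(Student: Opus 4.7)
The plan is to exhibit an explicit mass-flux pair connecting $\delta_{x_0}$ to $\mathfrak{m}_{B(x_0,\delta)}$ and then bound its action from above. Write $B := B(x_0,\delta)$. For a smooth reparametrization $\tau:[0,1]\to[0,1]$ with $\tau(0)=0$ and $\tau(1)=1$, to be chosen later, I would set
$$\rho_t := (1-\tau(t))\delta_{x_0} + \tau(t)\,\mathfrak{m}_B, \qquad \mathbf{j}_t := \tau'(t)\,\widetilde{\mathbf{j}},$$
where $\widetilde{\mathbf{j}}$ is the antisymmetric Radon measure on $G$ whose restriction to $\{x_0\}\times B$ has density $\frac{1}{|B|\,\eta_\varepsilon(|y-x_0|)}$ relative to $\delta_{x_0}(dx)\,dy$. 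A direct computation via Definition \ref{def:nce}(iii), using the symmetry of $\eta_\varepsilon$ and the antisymmetry of $\widetilde{\mathbf{j}}$, shows that this pair lies in $\mathcal{CE}(\delta_{x_0},\mathfrak{m}_B)$; indeed, for $\varphi \in C_c^\infty$,
$$\tfrac12 \iint_G \overline{\nabla}\varphi(x,y)\,\eta_\varepsilon(|x-y|)\,d\mathbf{j}_t(x,y) = \tfrac{\tau'(t)}{|B|}\int_B \bigl(\varphi(y)-\varphi(x_0)\bigr)\,dy,$$
which matches $\tfrac{d}{dt}\int\varphi\,d\rho_t$.

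To compute the action, I would take as reference $\lambda := \delta_{x_0}(dx)\,\mathbf{1}_B(y)\,dy + \mathbf{1}_B(x)\,dx\,\delta_{x_0}(dy)$ restricted to the support of $\widetilde{\mathbf{j}}$; by the $\lambda$-independence noted after Definition \ref{def:action}, this is harmless. On $\{x_0\}\times B$ one reads off $d\mathbf{j}_t/d\lambda = \tau'(t)/(|B|\,\eta_\varepsilon(|y-x_0|))$ and $d(\rho_t\otimes m)/d\lambda = 1-\tau(t)$, whereas $d(m\otimes\rho_t)/d\lambda = 0$ since the Lebesgue reference assigns no mass to $\{x_0\}$. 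Assumption \ref{assu:theta properties}(v) (positive $1$-homogeneity) then yields
$$\theta\bigl(1-\tau(t),\,0\bigr) = (1-\tau(t))\,\theta(1,0) = (1-\tau(t))\,\kappa_\theta,$$
which is strictly positive thanks to the standing hypothesis $\kappa_\theta>0$. Using monotonicity of $\eta$ to estimate $\eta_\varepsilon(|y-x_0|)\geq \varepsilon^{-d}\eta(\delta/\varepsilon)$ for $y\in B$, and combining the two symmetric contributions on $\{x_0\}\times B$ and $B\times\{x_0\}$, one obtains
$$\mathcal{A}(\rho_t,\mathbf{j}_t) \leq \frac{\tau'(t)^2}{(1-\tau(t))\,\kappa_\theta\,\alpha_d\,(\delta/\varepsilon)^d\,\eta(\delta/\varepsilon)}.$$

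Finally, choosing $\tau$ so that $\tau'(t)^2/(1-\tau(t))$ is constant---for instance $\tau(t) := 1-(1-t)^2$, which is optimal for this functional---I would integrate in $t$ and take square roots to obtain a bound of the desired form, with any residual numerical constant absorbed. The one delicate point is the evaluation of $\theta$ in the presence of a singular Dirac component of $\rho_t$: it is precisely the nondegeneracy $\theta(1,0)=\kappa_\theta>0$, together with $1$-homogeneity, that makes the denominator $(1-\tau(t))\kappa_\theta$ a meaningful and strictly positive quantity rather than an indeterminate form. Without this hypothesis (for instance for the logarithmic mean, where $\theta(1,0)=0$) the same construction would produce infinite action, consistent with the expelling obstruction of Proposition \ref{prop:dirac separation}(i).
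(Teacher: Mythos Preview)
Your construction is essentially the paper's: both interpolate linearly between $\delta_{x_0}$ and $\mathfrak{m}_B$ (the paper runs the curve in the opposite direction), place an antisymmetric flux on $\{x_0\}\times B\cup B\times\{x_0\}$ with density proportional to $1/(|B|\eta_\varepsilon)$, invoke $\theta(r,0)=\kappa_\theta r$ from $1$-homogeneity, and optimize the time reparametrization; your $\tau(t)=1-(1-t)^2$ is precisely the time-reversal of the paper's $g(t)=t^2$. One technical wrinkle: your $\lambda$ does not dominate $\rho_t\otimes m$ and $m\otimes\rho_t$ globally (e.g.\ the piece $\tau(t)\,\mathfrak{m}_B\otimes\text{Leb}$), so Definition~\ref{def:action} is not literally applicable; augmenting $\lambda$ by $dx\,dy$ on $G$ fixes this and, since the flux vanishes there, does not change the computation.

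On the constant: carrying your estimate through gives $\int_0^1 \tau'(t)^2/(1-\tau(t))\,dt=4$, hence
\[
\mathcal{W}_{\eta,\varepsilon}(\delta_{x_0},\mathfrak{m}_B)\leq \frac{2}{\sqrt{\kappa_\theta\alpha_d(\delta/\varepsilon)^d\eta(\delta/\varepsilon)}},
\]
a factor of $2$ larger than the lemma's stated bound; the phrase ``any residual numerical constant absorbed'' is not adequate when the lemma asserts a specific constant. In fact the paper takes flux density $\pm g'(t)/(2\eta_\varepsilon|B|)$, but substituting this into the weak nonlocal continuity equation of Definition~\ref{def:nce} yields only half of $\tfrac{d}{dt}\int\varphi\,d\rho_t$, so the correct flux is twice as large and the paper's action bound should be multiplied by $4$. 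Thus the factor of $2$ you obtain appears to be the correct outcome of the argument, and the discrepancy lies in the paper's normalization rather than in your reasoning.
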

\
\begin{proof}
Let $0<\delta<\varepsilon$. Let $g:[0,1]\rightarrow[0,1]$ be a function
to be determined later, such that $g(0)=0$ and $g(1)=1$. Let $\gamma:=\text{Leb}+\delta_{x_{0}}$.
Consider the curve $\rho_{t}:[0,1]\rightarrow\mathcal{P}(\mathbb{R}^d)$ defined
by
\[
\frac{d\rho_{t}}{d\gamma}(x)=\begin{cases}
g(t) & x=x_{0}\\
\frac{1-g(t)}{|B(x_{0},\delta)|} & x\in B(x_{0},\delta)\backslash\{x_{0}\}\\
0 & \text{else}.
\end{cases}
\]
Note that with our given boundary conditions on $g(t)$, $\rho_{0}$
is the uniform measure on $B(x_{0},\delta_{0})$, and $\rho_{1}=\delta_{x_{0}}$.
Note also that by construction, 
\[
\frac{d}{dt}\frac{d\rho_{t}}{d\gamma}(x)=\begin{cases}
g^{\prime}(t) & x=x_{0}\\
\frac{-g^{\prime}(t)}{|B(x_{0},\delta)|} & x\in B(x_{0},\delta)\backslash\\
0 & \text{else}.
\end{cases}\{x_{0}\}
\]
 Let $\mathbf{j}_{t}(x,y)$ be a flux so that $(\rho_{t},\mathbf{j}_{t})$
solves the nonlocal continuity equation; in particular we set
\[
\frac{d\mathbf{j}_{t}}{d(\gamma\otimes\gamma)}(x,y)=\begin{cases}
-\frac{g^{\prime}(t)}{2\eta_{\varepsilon}(x,y)|B(x_{0},\delta)|} & (x,y)\in\{x_{0}\}\times B(x_{0},\delta)\backslash\{x_{0}\}\\
\frac{g^{\prime}(t)}{2\eta_{\varepsilon}(x,y)|B|} & (x,y)\in B(x_{0},\delta)\backslash\{x_{0}\}\times\{x_{0}\}\\
0 & \text{else}.
\end{cases}
\]
Together, since $\gamma\otimes\gamma$ dominates all of $\mathbf{j}_{t}$,
$\rho_{t}\otimes\text{Leb}$, and $\text{Leb}\otimes\rho_{t}$, the
action of $(\rho_{t},\mathbf{j}_{t})$ is then 
\begin{align*}
\mathcal{A}(\rho_{t},\mathbf{j}_{t}) & :=\iint_{G}\frac{\left(\frac{d\mathbf{j}_{t}}{d(\gamma\otimes\gamma)}(x,y)\right)^{2}}{2\theta\left(\frac{d(\rho_{t}\otimes\text{Leb})}{d(\gamma\otimes\gamma)}(x,y),\frac{d(\text{Leb}\otimes\rho_{t})}{d(\gamma\otimes\gamma)}(x,y)\right)}\eta_{\varepsilon}(x,y)d\gamma(x)d\gamma(y).
\end{align*}
Observe that 
\[
\frac{d(\rho_{t}\otimes\text{Leb})}{d(\gamma\otimes\gamma)}=\frac{d(\rho_{t}\otimes\text{Leb})}{d((\text{Leb}+\delta_{x_{0}})\otimes(\text{Leb}+\delta_{x_{0}}))}(x,y)=\begin{cases}
\frac{d\rho_{t}}{d(\text{Leb}+\delta_{x_{0}})}(x) & y\neq x_{0}\\
0 & y=x_{0}
\end{cases}
\]
and similarly for $\frac{d(\text{Leb}\otimes\rho_{t})}{d(\gamma\otimes\gamma)}$.
Moreover, note that $\frac{d\mathbf{j}_{t}}{d(\gamma\otimes\gamma)}(x,y)=0$
off of $\{x_{0}\}\times B(x_{0},\delta)\backslash\{x_{0}\}\cup B(x_{0},\delta)\backslash\{x_{0}\}\times\{x_{0}\}$.
Consequently, 
\begin{align*}
\mathcal{A}(\rho_{t},\mathbf{j}_{t}) & =\iint_{\{x_{0}\}\times B(x_{0},\delta)\backslash\{x_{0}\}}\frac{\left(-\frac{g^{\prime}(t)}{2\eta_{\varepsilon}(x,y)|B(x_{0},\delta)|}\right)^{2}}{2\theta\left(\frac{d\rho_{t}}{d(\text{Leb}+\delta_{x_{0}})}(x)\mathbf{1}_{y\neq x_{0}},\frac{d\rho_{t}}{d(\text{Leb}+\delta_{x_{0}})}(y)\mathbf{1}_{x\neq x_{0}}\right)}\eta_{\varepsilon}(x,y)d\gamma(x)d\gamma(y)\\
 & \qquad+\iint_{B(x_{0},\delta)\backslash\{x_{0}\}\times\{x_{0}\}}\frac{\left(\frac{g^{\prime}(t)}{2\eta_{\varepsilon}(x,y)|B(x_{0},\delta)|}\right)^{2}}{2\theta\left(\frac{d\rho_{t}}{d(\text{Leb}+\delta_{x_{0}})}(x)\mathbf{1}_{y\neq x_{0}},\frac{d\rho_{t}}{d(\text{Leb}+\delta_{x_{0}})}(y)\mathbf{1}_{x\neq x_{0}}\right)}\eta_{\varepsilon}(x,y)d\gamma(x)d\gamma(y)\\
 & =\int_{B(x_{0},\delta)\backslash\{x_{0}\}}\frac{\left(\frac{g^{\prime}(t)}{2\eta_{\varepsilon}(x_{0},y)|B(x_{0},\delta)|}\right)^{2}}{\theta\left(g(t),0\right)}\eta_{\varepsilon}(x_{0},y)dy.
\end{align*}
Clearly, if it were the case that $\theta\left(g(t),0\right)=0$,
then the action of $(\rho_{t},\mathbf{j}_{t})$ would be infinite.
However, since we have instead assumed that $\theta(1,0)=\kappa_{\theta}>0$,
and so by 1-homogeneity, $\theta\left(g(t),0\right)=\kappa_{\theta}g(t)$
for all $t$, and hence
\begin{align*}
\mathcal{A}(\rho_{t},\mathbf{j}_{t}) & =\int_{B(x_{0},\delta)\backslash\{x_{0}\}}\frac{\left(\frac{g^{\prime}(t)}{2\eta_{\varepsilon}(x_{0},y)|B(x_{0},\delta)|}\right)^{2}}{\kappa_{\theta}g(t)}\eta_{\varepsilon}(x_{0},y)dy\\
 & =\frac{1}{4\kappa_{\theta}|B(x_{0},\delta)|^{2}}\int_{B(x_{0},\delta)\backslash\{x_{0}\}}\frac{(g^{\prime}(t))^{2}}{g(t)\eta_{\varepsilon}(x_{0},y)}dy\\
 & \leq\frac{1}{4\kappa_{\theta}|B(x_{0},\delta)|\eta_{\varepsilon}(\delta)}\frac{(g^{\prime}(t))^{2}}{g(t)}.
\end{align*}
Consequently, 
\[
\int_{0}^{1}\mathcal{A}(\rho_{t},\mathbf{j}_{t})\leq\frac{1}{4\kappa_{\theta}|B(x_{0},\delta)|\varepsilon^{-d}\eta\left(\frac{\delta}{\varepsilon}\right)}\int_{0}^{1}\frac{(g^{\prime}(t))^{2}}{g(t)}dt.
\]
Finally, we select $g(t)=t^{2}$. With this choice, $\int_{0}^{1}\frac{(g^{\prime}(t))^{2}}{g(t)}dt=4$.
And since $|B(x_{0},\delta)|=\alpha_{d}\delta^{d}$, we conclude
that 
\[
\mathcal{W}_{\eta,\varepsilon}(\delta_{x_{0}},\mathfrak{m}_{B(x_{0},\delta)})\leq\frac{1}{\sqrt{\kappa_{\theta}\alpha_{d}\left(\frac{\delta}{\varepsilon}\right)^{d}\eta\left(\frac{\delta}{\varepsilon}\right)}}
\]
as desired.
\end{proof}

\begin{prop}
\label{prop:expel infinite}If $\int_{\mathbb{R}^{d}}\eta(|y|)dy<\infty$,
and $\theta(1,0)=0$, then for all probability measures $\nu\in\mathcal{P}(\mathbb{R}^{d})$
which are singular to $\delta_{x_{0}}$, $\mathcal{W}_{\eta,\theta}(\delta_{x_{0}},\nu)=\infty$.
\end{prop}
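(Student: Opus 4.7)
My plan is to argue by contradiction: suppose $\mathcal{W}_{\eta,\theta}(\delta_{x_0}, \nu) < \infty$, extract a finite-action mass-flux pair $(\rho_t, \mathbf{j}_t) \in \mathcal{CE}(\delta_{x_0}, \nu)$, reparametrize to constant speed via \cite[Proposition 4.3]{erbar2014gradient}, and then show that the atomic mass $a_t := \rho_t(\{x_0\})$ cannot decrease. Since $a_0 = 1$ and $a_1 = \nu(\{x_0\}) = 0$, this will produce the contradiction.

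As a preliminary, since $\eta$ is integrable and $m = \text{Leb}$, the constant $C = \int \eta(|y|)\,dy$ from Proposition \ref{prop:TV lower bound} is finite, so Corollary \ref{cor:H\"older-half-cts} applied with $A = \{x_0\}$ gives that $t \mapsto a_t$ is $\tfrac{1}{2}$-H\"older continuous. Setting $t^* := \inf\{t \in [0,1] : a_t = 0\}$, this yields $t^* \in (0, 1]$ with $a_{t^*} = 0$ and $a_t > 0$ on $[0, t^*)$.

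The heart of the argument is to show that for a.e.\ $t \in [0, t^*)$, the flux $|\mathbf{j}_t|$ assigns zero mass to $S^+ := \{x_0\} \times (\mathbb{R}^d \setminus \{x_0\})$ and to $S^- := (\mathbb{R}^d \setminus \{x_0\}) \times \{x_0\}$. Since $\int_0^1 \mathcal{A}(\rho_t, \mathbf{j}_t)\,dt < \infty$, the action $\mathcal{A}(\rho_t, \mathbf{j}_t)$ is finite for a.e.\ $t$; fix such a $t$ and let $\lambda$ dominate $|\mathbf{j}_t|$, $\rho_t \otimes \text{Leb}$, and $\text{Leb} \otimes \rho_t$. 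Since $\text{Leb}(\{x_0\}) = 0$, the Radon--Nikodym derivative $\frac{d(\text{Leb}\otimes\rho_t)}{d\lambda}$ vanishes $\lambda$-a.e.\ on $S^+$. The assumption $\theta(1, 0) = 0$ together with 1-homogeneity gives $\theta(r, 0) = r\theta(1,0) = 0$ for every $r \geq 0$, so the denominator in the action integrand vanishes $\lambda$-a.e.\ on $S^+$; under the convention $\tfrac{0}{0} = 0$, finiteness of $\mathcal{A}(\rho_t, \mathbf{j}_t)$ then forces $\frac{d\mathbf{j}_t}{d\lambda} = 0$ $\lambda$-a.e.\ on $S^+$, and the identical reasoning applies on $S^-$.

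To close, I would test the weak nonlocal continuity equation between $0$ and $t^*$ against smooth bumps $\varphi_r(x) = \psi((x-x_0)/r)$ where $\psi \in C_c^\infty(\mathbb{R}^d)$, $\psi(0) = 1$, $0 \leq \psi \leq 1$, $\supp \psi \subset B(0, 1)$. (The evaluation at the two specific endpoints is obtained in the standard way by taking spacetime test functions $\zeta_\epsilon(t)\varphi_r(x)$ with $\zeta_\epsilon$ approximating $\mathbf{1}_{[0, t^*]}$ and using narrow continuity of $\rho_t$.) Letting $r \to 0$: on the left, $\int \varphi_r\,d\rho_0 = 1$, while $\int \varphi_r\,d\rho_{t^*} \leq a_{t^*} + \rho_{t^*}(B(x_0, r) \setminus \{x_0\}) \to 0$, so the LHS tends to $-1$. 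On the right, $|\overline{\nabla}\varphi_r| \leq 1$ everywhere, and by the previous step $a.e.$ $(t, x, y)$ relative to $|\mathbf{j}_t|\,dt$ on $[0, t^*) \times G$ satisfies $x, y \neq x_0$, so $\overline{\nabla}\varphi_r(x, y) \to 0$ pointwise there. Dominated convergence with dominator $\eta(x, y)$ --- whose integral $\int_0^{t^*} \iint_G \eta\,d|\mathbf{j}_t|\,dt$ is finite via the Cauchy--Schwarz bound $\iint_G \eta\,d|\mathbf{j}_t| \leq \sqrt{2C\,\mathcal{A}(\rho_t, \mathbf{j}_t)}$ extracted from the proof of Proposition \ref{prop:TV lower bound} --- gives that the RHS tends to $0$, producing the contradiction $-1 = 0$. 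The main technical hurdle is precisely this passage to the limit: the natural test function $\mathbf{1}_{\{x_0\}}$ is inadmissible, so one must substitute the bump approximation $\varphi_r$ and then invoke dominated convergence jointly in $r$ and in the spacetime variables, leaning on the Step~2 support restriction to push $\overline{\nabla}\varphi_r$ to zero on the support of the time-integrated flux.
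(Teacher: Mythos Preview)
Your proposal is correct and takes essentially the same approach as the paper. Both arguments hinge on (i) using $\theta(\cdot,0)=0$ together with 1-homogeneity to deduce that finite action forces the flux to vanish on $\{x_0\}\times(\mathbb{R}^d\setminus\{x_0\})$ and its transpose, and (ii) testing the nonlocal continuity equation against smooth bumps approximating $\mathbf{1}_{\{x_0\}}$ to contradict the decay of the atom. The paper packages the contradiction by introducing the set $\mathfrak{T}$ of times at which both the atom and the flux through it are positive, showing it is null (from (i)) yet non-null (via a H\"older splitting of the flux integral in (ii)); your organization is slightly more direct, deducing from (i) that the support of the flux avoids the atom for a.e.\ $t$, and then applying dominated convergence against the dominator $\eta(x,y)$ to send the right-hand side of the continuity equation to zero while the left tends to $-1$.
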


\begin{proof}
Let $(\rho_{t},\mathbf{j}_{t})_{t\in[0,1]}$ solve the nonlocal continuity
equation, and let $\rho_{0}=\delta_{0}$ and $\rho_{1}=\nu$. We assume
for the sake of contradiction that $\int_{0}^{1}\mathcal{A}(\rho_{t},\mathbf{j}_{t})dt<\infty$.

We define the set 
\[
\mathfrak{T}=\{t\in[0,1]\::\: \rho_{t}(\{0\})>0 \; \wedge \; |\mathbf{j}_{t}|(\{0\}\times\mathbb{R}^{d}\backslash\{0\})>0\}.
\]
Note that this set is measurable since $\rho_{t}:[0,1]\rightarrow\mathcal{P}(\mathbb{R}^{d})$
is narrowly continuous and $\mathbf{j}_{t}:[0,1]\rightarrow\mathcal{M}_{loc}(G)$
is a Borel function. We claim that for any $t\in\mathfrak{T}$, it
holds that 
\[
\mathcal{A}(\rho_{t},\mathbf{j}_{t})=\infty.
\]
To see this, let $\lambda_{t}\in\mathcal{M}^{+}(G)$ be any measure
such that $\rho_{t}\otimes\text{Leb}+\text{Leb}\otimes\rho_{t}+|\mathbf{j}_{t}|\ll\lambda_{t}$.
In particular, if $\rho_{t}(\{0\})\neq0$ (and, so, for any $t\in\mathfrak{T}$),
the fact that $\rho_{t}\otimes\text{Leb}\ll\lambda_{t}$ implies that
$\lambda_{t}\upharpoonright\{0\}\times\mathbb{R}^{d}$ is not identically
zero. 

At the same time, compute that 
\begin{align*}
\frac{d(\text{Leb}\otimes\rho_{t})}{d\lambda_{t}}(x,y) & =\frac{d(\text{Leb}\otimes\rho_{t})}{d(\text{Leb}\otimes\rho_{t}+\rho_{t}\otimes\text{Leb})}(x,y)\frac{d(\text{Leb}\otimes\rho_{t}+\rho_{t}\otimes\text{Leb})}{d\lambda_{t}}(x,y)
\end{align*}
Note that for all $t\in\mathfrak{T}$ we may select a representative
of $\frac{d(\text{Leb}\otimes\rho_{t})}{d(\text{Leb}\otimes\rho_{t}+\rho_{t}\otimes\text{Leb})}$
so that
\[
\frac{d(\text{Leb}\otimes\rho_{t})}{d(\text{Leb}\otimes\rho_{t}+\rho_{t}\otimes\text{Leb})}(0,y)=0\quad\forall y\in\mathbb{R}^{d}
\]
since for all $t\in\mathfrak{T}$ and open, bounded $U\subset\mathbb{R}^{d}$,
\[
\rho_{t}\otimes\text{Leb}(\{0\}\times U)=\rho_{t}(\{0\})\text{Leb}(U)>0
\]
 and so $(\text{Leb}\otimes\rho_{t}+\rho_{t}\otimes\text{Leb})(\{0\}\times U)>0$,
but $\text{Leb}\otimes\rho_{t}(\{0\}\times U)=0$. This implies that
(up to a choice of a.e.-equivalent representative) $\frac{d\text{Leb}\otimes\rho_{t}}{d\lambda_{t}}(0,y)=0$
for all $y\in\mathbb{R}^{d}$. 

Therefore, compute as follows: for all $t\in\mathfrak{T}$,
\begin{align*}
\mathcal{A}_{\eta,\varepsilon}(\rho_{t},\mathbf{j}_{t}) & =\iint_{G}\frac{\left(\frac{d\mathbf{j}_{t}}{d\lambda_{t}}(x,y)\right)^{2}}{2\theta\left(\frac{d(\rho_{t}\otimes\text{Leb})}{d\lambda_{t}}(x,y),\frac{d(\text{Leb}\otimes\rho_{t})}{d\lambda_{t}}(x,y)\right)}\eta_{\varepsilon}(x,y)d\lambda_{t}(x,y)\\
 & \geq\int_{\{0\}\times\mathbb{R}^{d}\backslash\{0\}}\frac{\left(\frac{d\mathbf{j}_{t}}{d\lambda_{t}}(x,y)\right)^{2}}{2\theta\left(\frac{d(\rho_{t}\otimes\text{Leb})}{d\lambda_{t}}(x,y),\frac{d(\text{Leb}\otimes\rho_{t})}{d(\lambda_{t})}(x,y)\right)}\eta_{\varepsilon}(x,y)d\lambda_{t}(x,y)\\
 & =\int_{\{0\}\times\mathbb{R}^{d}\backslash\{0\}}\frac{\left(\frac{d\mathbf{j}_{t}}{d\lambda_{t}}(x,y)\right)^{2}}{2\theta\left(\frac{d(\rho_{t}\otimes\text{Leb})}{d\lambda_{t}}(x,y),0\right)}\eta_{\varepsilon}(x,y)d\lambda_{t}(x,y)\\
 & =\infty.
\end{align*}
Therefore, if $\int_{0}^{1}\mathcal{A}(\rho_{t},\mathbf{j}_{t})<\infty$,
it must be the case that $\text{Leb}(\mathfrak{T})=0$. 

However, we claim that $\text{Leb}(\mathfrak{T})>0$. Indeed, consider
the following. Let $\xi \in C^\infty_c(\R^d, [0,1])$ such that $\xi(0)=1$ and let $\xi_{\delta}(x):=\xi\left(\frac{x}{\delta}\right)$.
Note that $0\leq\xi_{\delta}(x)\leq1$, and that as $\delta\rightarrow0$, $\xi_{\delta}(x)$ converges pointwise
to the indicator $1_{\{x=0\}}$. Plugging $\xi_{\delta}(x)$
into the continuity equation, we find that 
\[
\int_{\mathbb{R}^{d}}\xi_{\delta}(x)d\rho_{t}(x)-\int_{\mathbb{R}^{d}}\xi_{\delta}(x)d\rho_{0}(x)=-\int_{0}^{t}\iint_{G}(\xi_{\delta}(y)-\xi_{\delta}(x))\eta(x,y)d\mathbf{j}_{t}(x,y)dt
\]
and so 
\begin{align*}
\left|\int_{\mathbb{R}^{d}}\xi_{\delta}(x)d\rho_{t}(x)-\int_{\mathbb{R}^{d}}\xi_{\delta}(x)d\rho_{0}(x)\right| & \leq\int_{0}^{t}\iint_{G}|\xi_{\delta}(y)-\xi_{\delta}(x)|\eta(x,y)d|\mathbf{j}_{t}|(x,y)dt.\\
 & :=\iint_{G}|\xi_{\delta}(y)-\xi_{\delta}(x)|\frac{d|\mathbf{j}_{t}|}{d\lambda_{t}}(x,y)\eta(x,y)d\lambda_{t}(x,y)dt.
\end{align*}
Using the fact that $\frac{d|\mathbf{j}_{t}|}{d\lambda_{t}}=\left|\frac{d\mathbf{j}_{t}}{d\lambda_{t}}\right|$,
and observing that 
\[
\left|\frac{d\mathbf{j}_{t}}{d\lambda_{t}}\right|=\sqrt{2\theta\left(\frac{d\rho_{t}\otimes\text{Leb}}{d\lambda_{t}},\frac{d\text{Leb}\otimes\rho_{t}}{d\lambda_{t}}\right)}\sqrt{\frac{\left(\frac{d\mathbf{j}_{t}}{d\lambda_{t}}\right)^{2}}{\theta\left(\frac{d(\rho_{t}\otimes\text{Leb})}{d\lambda_{t}},\frac{d(m\otimes\rho_{t})}{d\lambda_{t}}\right)}}
\]
whenever the right hand side is well defined, we deduce (now using
the convention that $0\cdot\infty=\infty$) that 
\begin{multline*}
\int_{0}^{t}\iint_{G}|\xi_{\delta}(y)-\xi_{\delta}(x)|\frac{d|\mathbf{j}_{t}|}{d\lambda_{t}}\eta d\lambda_{t}(x,y) dt\leq\\
\int_{0}^{t}\iint_{G}|\xi_{\delta}(y)-\xi_{\delta}(x)|\sqrt{2\theta\left(\frac{d\rho_{t}\otimes\text{Leb}}{d\lambda_{t}},\frac{d\text{Leb}\otimes\rho_{t}}{d\lambda_{t}}\right)}\sqrt{\frac{\left(\frac{d\mathbf{j}_{t}}{d\lambda_{t}}\right)^{2}}{\theta\left(\frac{d(\rho_{t}\otimes\text{Leb})}{d\lambda_{t}},\frac{d(\text{Leb}\otimes\rho_{t})}{d\lambda_{t}}\right)}}\eta\, d\lambda_{t}(x,y) dt.
\end{multline*}
Applying H\"older's inequality, and using the fact that $\theta(r,s)\leq\frac{r+s}{2}$,
we find that this last expression is bounded above by 
\begin{multline*}
\left(\int_{0}^{t}\iint_{G}\left(\frac{d\rho_{t}\otimes\text{Leb}}{d\lambda_{t}}(x,y)+\frac{d\text{Leb}\otimes\rho_{t}}{d\lambda_{t}}(x,y)\right)\eta(x,y)d\lambda_{t}(x,y)dt\right)^{1/2}\\
\times\left(\int_{0}^{t}\iint_{G}\frac{(\xi_{\delta}(y)-\xi_{\delta}(x))^{2}\left(\frac{d\mathbf{j}_{t}}{d\lambda_{t}}(x,y)\right)^{2}}{2\theta\left(\frac{d(\rho_{t}\otimes\text{Leb})}{d\lambda_{t}}(x,y),\frac{d(\text{Leb}\otimes\rho_{t})}{d\lambda_{t}}(x,y)\right)}\eta_{\varepsilon}(x,y)d\lambda_{t}(x,y)dt\right)^{1/2}.
\end{multline*}
The term in the left parentheses is bounded above, in turn, by $2t\int_{\mathbb{R}^{d}}\eta(|y|)dy$.
To see this, compute that 
\begin{align*}
\int_{0}^{t}\iint_{G}\frac{d\rho_{t}\otimes\text{Leb}}{d\lambda_{t}}(x,y)\eta(x,y)d\lambda_{t}(x,y)dt & =\int_{0}^{t}\iint_{G}\eta(x,y)d(\rho_{t}\otimes\text{Leb})(x,y)dt\\
 & \leq\int_{0}^{t}\int_{\mathbb{R}^{d}}\left(\int_{\mathbb{R}^{d}}\eta(x,y)dy\right)d\rho_{t}(x)dt\\
 & =tC.
\end{align*}
The computation for $\int_{0}^{t}\iint_{G}\frac{\text{Leb}\otimes\rho_{t}}{d\lambda_{t}}(x,y)\eta(x,y)d\lambda_{t}(x,y)dt$
is identical. Therefore, we find that 
\begin{multline*}
\left|\int_{\mathbb{R}^{d}}\xi_{\delta}(x)d\rho_{t}(x)-\int_{\mathbb{R}^{d}}\xi_{\delta}(x)d\rho_{0}(x)\right|\leq\\
\sqrt{2tC}\left(\int_{0}^{t}\iint_{G}\frac{(\xi_{\delta}(y)-\xi_{\delta}(x))^{2}\left(\frac{d\mathbf{j}_{t}}{d\lambda_{t}}(x,y)\right)^{2}}{2\theta\left(\frac{d(\rho_{t}\otimes\text{Leb})}{d\lambda_{t}}(x,y),\frac{d(\text{Leb}\otimes\rho_{t})}{d\lambda_{t}}(x,y)\right)}\eta_{\varepsilon}(x,y)d\lambda_{t}(x,y)dt\right)^{1/2}.
\end{multline*}
Now, since $(\xi_{\delta}(y)-\xi_{\delta}(x))^{2}\leq1$, and $\frac{\left(\frac{d\mathbf{j}_{t}}{d\lambda_{t}}(x,y)\right)^{2}}{2\theta\left(\frac{d(\rho_{t}\otimes\text{Leb})}{d\lambda_{t}}(x,y),\frac{d(\text{Leb}\otimes\rho_{t})}{d\lambda_{t}}(x,y)\right)}\eta_{\varepsilon}(x,y)$
is integrable with respect to $d\lambda_{t}(x,y)dt$ on $G\times[0,t]$
(assuming that $\int_{0}^{t}\mathcal{A}(\rho_{t},\mathbf{j}_{t})dt<\infty$),
and $(\xi_{\delta}(y)-\xi_{\delta}(x))^{2}$ converges pointwise to
$1_{\{0\}\times\mathbb{R}^{d}\backslash\{0\}}$, we can apply the
dominated convergence theorem to deduce that as $\delta\rightarrow0$,

\begin{multline*}
\int_{0}^{t}\iint_{G}\frac{(\xi_{\delta}(y)-\xi_{\delta}(x))^{2}\left(\frac{d\mathbf{j}_{t}}{d\lambda_{t}}(x,y)\right)^{2}}{2\theta\left(\frac{d(\rho_{t}\otimes\text{Leb})}{d\lambda_{t}}(x,y),\frac{d(\text{Leb}\otimes\rho_{t})}{d\lambda_{t}}(x,y)\right)}\eta_{\varepsilon}(x,y)d\lambda_{t}(x,y)dt\\
\longrightarrow \int_{0}^{t}\iint_{\{0\}\times\mathbb{R}^{d}\backslash\{0\}}\frac{\left(\frac{d\mathbf{j}_{t}}{d\lambda_{t}}(x,y)\right)^{2}}{2\theta\left(\frac{d(\rho_{t}\otimes\text{Leb})}{d\lambda_{t}}(x,y),\frac{d(\text{Leb}\otimes\rho_{t})}{d\lambda_{t}}(x,y)\right)}\eta_{\varepsilon}(x,y)d\lambda_{t}(x,y)dt.
\end{multline*}
At the same time, as $\delta\rightarrow0$, $\left|\int_{\mathbb{R}^{d}}\xi_{\delta}(x)d\rho_{t}(x)-\int_{\mathbb{R}^{d}}\xi_{\delta}(x)d\rho_{0}(x)\right|\rightarrow\left|\rho_{t}(\{0\})-\rho_{0}(\{0\})\right|$.
So in the limit we find that 
\[
\left|\rho_{t}(\{0\})-\rho_{0}(\{0\})\right|\leq\sqrt{2tC}\left(\int_{0}^{t}\iint_{\{0\}\times\mathbb{R}^{d}\backslash\{0\}}\frac{\left(\frac{d\mathbf{j}_{t}}{d\lambda_{t}} \right)^{2}}{2\theta\left(\frac{d(\rho_{t}\otimes\text{Leb})}{d\lambda_{t}} ,\frac{d(\text{Leb}\otimes\rho_{t})}{d\lambda_{t}} \right)}\eta_{\varepsilon}(x,y)d\lambda_{t}(x,y)dt\right)^{1/2}.
\]
In particular, if $t$ is taken to be the first time such that $\rho_{t}(\{0\})=0$ (note that such a first $t$ exists, thanks to Corollary \ref{cor:H\"older-half-cts}), then we find that 
\[
\frac{1}{\sqrt{2tC}}<\int_{0}^{t}\iint_{\{0\}\times\mathbb{R}^{d}\backslash\{0\}}\frac{\left(\frac{d\mathbf{j}_{t}}{d\lambda_{t}}(x,y)\right)^{2}}{2\theta\left(\frac{d(\rho_{t}\otimes\text{Leb})}{d\lambda_{t}}(x,y),\frac{d(\text{Leb}\otimes\rho_{t})}{d\lambda_{t}}(x,y)\right)}\eta_{\varepsilon}(x,y)d\lambda_{t}(x,y)dt
\]
which implies, in particular, that on a subset of $[0,t)$ of positive
measure, 
\[
\frac{1}{\sqrt{2tC}}<\iint_{\{0\}\times\mathbb{R}^{d}\backslash\{0\}}\frac{\left(\frac{d\mathbf{j}_{t}}{d\lambda_{t}}(x,y)\right)^{2}}{2\theta\left(\frac{d(\rho_{t}\otimes\text{Leb})}{d\lambda_{t}}(x,y),\frac{d(\text{Leb}\otimes\rho_{t})}{d\lambda_{t}}(x,y)\right)}\eta_{\varepsilon}(x,y)d\lambda_{t}(x,y)
\]
which means that on a subset of $[0,t)$ of positive measure, $|\mathbf{j}_{t}|(\{0\}\times\mathbb{R}^{d}\backslash\{0\})>0$.
However, for \emph{all} of $[0,t)$, we know that $\rho_{t}(\{0\})>0$.
Therefore, $\text{Leb}(\mathfrak{T})>0$.

\end{proof}

\subsection{Global upper bounds for $\mathcal{W}_{\eta,\theta}$}\label{subsec:Global-upper-bounds} Throughout this subsection, we assume that $\eta$ satisfies Assumption
\ref{assu:eta properties} (i-v), and that $\theta$ satisfies Assumption
\ref{assu:theta properties}. 

We record the following estimate, which will ultimately be used to show that on a compact domain, the topology induced by $\mathcal{W}_{\eta,\theta}$ is no stronger than the narrow topology.

\begin{lemma}
\label{lem:topological upper bound non-integrable}Suppose that there
is some constant $c_{s}$ such that $\eta(x,y)\geq c_{s}|x-y|^{-d-s}$
when $|x-y|\leq\frac{1}{2}$. Then, 
\[
\mathcal{W}_{\eta,\varepsilon}(\delta_{x},\delta_{y})\leq\Phi\left(\frac{|x-y|}{\varepsilon}\right)
\]
where 
\[
\Phi(t)=\begin{cases}
C_{d,\theta,s}t^{s/2} & 0\leq t<\frac{3}{8}\\
C_{d,\theta,s}\left(\frac{3}{8}\right)^{s/2}\frac{8}{3}t & \frac{3}{8}\leq t
\end{cases}
\]
and $C_{d,\theta,s}$ depends only on $d,\theta,$ and $s$, and is
explicitly given in the proof.
\end{lemma}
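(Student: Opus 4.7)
The strategy is to prove the $t^{s/2}$ bound first in the range $t := |x-y|/\varepsilon \leq 3/8$ by a three-term triangle-inequality decomposition using uniform measures on small balls as intermediaries, and then to extend to $t > 3/8$ by chaining the small-scale bound along equally spaced Dirac masses.

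For the small-scale case, choose the intermediate radius $r = |x-y|/6$. This value is calibrated so that (i) $B(x,r)$ and $B(y,r)$ are disjoint, (ii) the maximum pairwise distance $|x-y|+2r = 4|x-y|/3$ satisfies $4t/3 \leq 1/2$ throughout $t \leq 3/8$, placing it inside the region where the algebraic blowup of $\eta$ holds, and (iii) the expel scale $r/\varepsilon = t/6$ is comfortably within the range where Lemma \ref{lem:expel non-integrable} applies. By the triangle inequality,
\[
\mathcal{W}_{\eta,\varepsilon}(\delta_x,\delta_y) \leq \mathcal{W}_{\eta,\varepsilon}(\delta_x,\mathfrak{m}_{B(x,r)}) + \mathcal{W}_{\eta,\varepsilon}(\mathfrak{m}_{B(x,r)},\mathfrak{m}_{B(y,r)}) + \mathcal{W}_{\eta,\varepsilon}(\mathfrak{m}_{B(y,r)},\delta_y).
\]
The first and last terms are bounded by $(C_\theta/C_{d,s})(t/6)^{s/2}$ via Lemma \ref{lem:expel non-integrable}. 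The middle term is bounded using Lemma \ref{lem:2 point space estimate} applied to the disjoint pair $A=B(x,r)$, $B=B(y,r)$, which yields an estimate of the form $C_\theta/\bigl(4\sqrt{\alpha_d r^d \, \eta_\varepsilon(4|x-y|/3)}\bigr)$; inserting the algebraic lower bound $\eta_\varepsilon(4|x-y|/3) \geq c_s\varepsilon^{-d}(4t/3)^{-d-s}$ and simplifying collapses this to $O(t^{s/2})$. Summing the three contributions produces $\mathcal{W}_{\eta,\varepsilon}(\delta_x,\delta_y) \leq C_{d,\theta,s}\, t^{s/2}$ with an explicit constant.

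For $t > 3/8$, set $N = \lceil 8t/3 \rceil$ and interpolate via equally spaced points $x = x_0, x_1, \ldots, x_N = y$. Each rescaled step length $t/N \leq 3/8$ lies in the small-scale regime, so applying the triangle inequality and the small-scale bound pairwise gives
\[
\mathcal{W}_{\eta,\varepsilon}(\delta_x,\delta_y) \leq \sum_{k=0}^{N-1} \mathcal{W}_{\eta,\varepsilon}(\delta_{x_k},\delta_{x_{k+1}}) \leq N\, C_{d,\theta,s}(t/N)^{s/2} \leq N\, C_{d,\theta,s}(3/8)^{s/2}.
\]
Since $t \geq 3/8$ implies $8t/3 \geq 1$ and hence $N \leq 8t/3 + 1 \leq 2\cdot(8t/3)$, the right-hand side is at most $2 C_{d,\theta,s}(3/8)^{s/2}(8/3)\,t$, which matches $\Phi(t)$ after absorbing the factor $2$ into the constant.

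The main technical point is the precise calibration of $r$. The choice $r = |x-y|/6$ is tuned so that at the threshold $t = 3/8$ the scaled maximum pairwise distance $4t/3 = 1/2$ exactly saturates the algebraic blowup regime; it is this balancing between disjointness of the two balls, applicability of the expel lemma, and applicability of the two-point space estimate that forces the transition point $3/8$ in the piecewise definition of $\Phi$.
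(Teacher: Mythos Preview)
Your proposal is correct and follows essentially the same approach as the paper: triangle inequality via uniform measures on balls of radius $|x-y|/6$, with Lemma \ref{lem:expel non-integrable} controlling the two expel terms and Lemma \ref{lem:2 point space estimate} controlling the middle term, followed by chaining for $t\geq 3/8$. Your explanation of why the choice $r=|x-y|/6$ forces the threshold $3/8$ (so that $4t/3$ exactly saturates the blowup region $|z|\leq 1/2$) is a nice clarification that the paper leaves implicit; the only cosmetic difference is that the paper bakes the factor $2$ into $C_{d,\theta,s}$ upfront (writing the small-scale bound as $\tfrac12 C_{d,\theta,s}t^{s/2}$) rather than absorbing it at the end as you do.
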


\begin{proof}
Let $\mathfrak{m}_{B(x_{0},\delta)}$ denote the uniform probability
measure on the ball $B(x_{0},\delta)$. Then, 
\[
\mathcal{W}_{\eta,\varepsilon}(\delta_{x},\delta_{y})\leq\mathcal{W}_{\eta,\varepsilon}(\delta_{x},\mathfrak{m}_{B(x,\delta)})+\mathcal{W}_{\eta,\varepsilon}(\mathfrak{m}_{B(x,\delta)},\mathfrak{m}_{B(y,\delta)})+\mathcal{W}_{\eta,\varepsilon}(\mathfrak{m}_{B(y,\delta)},\delta_{y}).
\]
In order to estimate $\mathcal{W}_{\eta,\varepsilon}(\mathfrak{m}_{B(x,\delta)},\mathfrak{m}_{B(y,\delta)})$,
we use Lemma \ref{lem:2 point space estimate}. Indeed, applying
Lemma \ref{lem:2 point space estimate} in the case where $A=B(x_{0},\delta)$
and $B=B(x_{1},\delta)$, and $|x_{0}-x_{1}|+2\delta\leq\frac{\varepsilon}{2}$,
it follows that
\[
\mathcal{W}_{\varepsilon,\eta}(\mathfrak{m}_{B(x_{0},\delta)},\mathfrak{m}_{B(x_{1},\delta)})\leq\frac{C_{\theta}}{4\sqrt{\alpha_{d}\delta^{d}\eta_{\varepsilon}(|x_{0}-x_{1}|+2\delta)}}.
\]
In particular, since $\eta(|x-y|)\geq c_{s}|x-y|^{-d-s}$, we find
that 
\[
\mathcal{W}_{\varepsilon,\eta}(\mathfrak{m}_{B(x_{0},\delta)},\mathfrak{m}_{B(x_{1},\delta)})\leq\frac{C_{\theta}}{4\sqrt{\alpha_{d}\left(\frac{\delta}{\varepsilon}\right)^{d}c_{s}\left(\frac{|x_{0}-x_{1}|+2\delta}{\varepsilon}\right)^{-d-s}}}.
\]
Note that $2\delta<|x_{0}-x_{1}|\leq\frac{\varepsilon}{2}-2\delta$
but otherwise $\delta$ is arbitrary. In particular, if we pick $\delta=\frac{1}{6}|x_{0}-x_{1}|$,
this leads to the constraint on $x_{0}$ and $x_{1}$ that $|x_{0}-x_{1}|<\frac{3}{8}\varepsilon$,
and the estimate 
\[
\mathcal{W}_{\varepsilon,\eta}(\mathfrak{m}_{B(x_{0},\delta)},\mathfrak{m}_{B(x_{1},\delta)})\leq\frac{C_{\theta}}{4\sqrt{\alpha_{d}c_{s}8^{-d-s}}}\left(\frac{\frac{4}{3}|x_{0}-x_{1}|}{\varepsilon}\right)^{s/2}\text{ when }|x_{0}-x_{1}|<\frac{3}{8}\varepsilon.
\]
At the same time, we know from Lemma \ref{lem:expel non-integrable}
that
\[
\mathcal{W}_{\eta,\varepsilon}(\delta_{x_{0}},\mathfrak{m}_{B(x_{0},\delta)})\leq\frac{C_{\theta}}{C_{d,s}}\left(\frac{\frac{1}{6}|x_{0}-x_{1}|}{\varepsilon}\right)^{s/2}
\]
and similarly for $x_{1}$, so altogether, 
\[
\mathcal{W}_{\eta,\varepsilon}(\delta_{x_{0}},\delta_{x_{1}})\leq\frac{1}{2}C_{d,\theta,s}\left(\frac{|x_{0}-x_{1}|}{\varepsilon}\right)^{s/2}\text{ when }|x_{0}-x_{1}|<\frac{3}{8}\varepsilon
\]
where 
\[
C_{d,\theta,s}=\frac{C_{\theta}}{2\sqrt{\alpha_{d}c_{s}8^{-d-s}}}\left(\frac{4}{3}\right)^{s/2}+4\frac{C_{\theta}}{C_{d,s}}\frac{1}{6^{s/2}}.
\]

For arbitrary $x,y\in\mathbb{R}^{d}$, it suffices to repeatedly apply
the triangle inequality. Namely, construct a sequence $x=x_{0},x_{1},\ldots,x_{k}=y$
so that $|x_{i}-x_{i+1}|<\frac{3}{8}\varepsilon$ for each $i\in\{0,\ldots,k-1\}$;
note in particular we can take 
\[
k=\left\lceil \frac{|x-y|}{\frac{3\varepsilon}{8}}\right\rceil .
\]
Thus, for $|x-y|\geq\frac{3}{8}\varepsilon$, we have that 
\begin{align*}
\mathcal{W}_{\eta,\varepsilon}(\delta_{x},\delta_{y})\leq\sum_{i=1}^{k-1}\mathcal{W}_{\eta,\varepsilon}(\delta_{x_{i}},\delta_{x_{i+1}})\leq k\frac{C_{d,\theta,s}}{2}\left(\frac{3}{8}\right)^{s/2} & \leq\frac{C_{d,\theta,s}}{2}\left(\frac{3}{8}\right)^{s/2}\left(\frac{|x-y|}{\frac{3\varepsilon}{8}}+1\right)\\
 & \leq C_{d,\theta,s}\left(\frac{3}{8}\right)^{s/2}\frac{8}{3}\frac{|x-y|}{\varepsilon}.
\end{align*}
Note that when $|x-y|=\frac{3}{8}\varepsilon$, we have $C_{d,\theta,s}\left(\frac{3}{8}\right)^{s/2}\frac{8}{3}\frac{|x-y|}{\varepsilon}=C_{d,\theta,s}\left(\frac{3}{8}\right)^{s/2}$,
while $\frac{1}{2}C_{d,\theta,s}\left(\frac{|x_{0}-x_{1}|}{\varepsilon}\right)^{s/2}=\frac{1}{2}C_{d,\theta,s}\left(\frac{3}{8}\right)^{s/2}$;
so by defining the continuous, nondecreasing function
\[
\Phi_{\varepsilon}:[0,\infty)\rightarrow[0,\infty)
\]
\[
\Phi(t)=\begin{cases}
C_{d,\theta,s}t^{s/2} & 0\leq t<\frac{3}{8}\\
C_{d,\theta,s}\left(\frac{3}{8}\right)^{s/2}\frac{8}{3}t & \frac{3}{8}\leq t
\end{cases}
\]
we see that 
\[
\mathcal{W}_{\eta,\varepsilon}(\delta_{x},\delta_{y})\leq\Phi\left(\frac{|x-y|}{\varepsilon}\right).
\]

\end{proof}
More generally, one has the following upper bound:
\begin{lemma}
\label{lem:nonlocal dirac crude upper bound} Let $\varepsilon > 0$. Suppose that  $\eta(x,y)\geq c_{s}|x-y|^{-d-s}$
when $|x-y|\leq\frac{1}{6}$ or $\theta(1,0)=\kappa_{\theta}>0$. Then for all $x,y\in\mathbb{R}^{d}$, 
\[
\mathcal{W}_{\eta,\varepsilon}(\delta_{x},\delta_{y})\leq\frac{C_{d,\theta}}{\sqrt{\eta\left(\frac{1}{2}\right)}}\frac{1}{\varepsilon}|x-y|+C_{d,\theta,\eta}
\]
where $C_{d,\theta,\eta}$ is an explicit constant given in the proof.
\end{lemma}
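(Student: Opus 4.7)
The plan is to mimic the triangle-inequality chain-of-balls strategy used in Lemma~\ref{lem:topological upper bound non-integrable}, but with a single fixed choice of radius $\delta$ tied to $\varepsilon$, so that one reads off a clean linear-in-$|x-y|$ upper bound with explicit dependence on $\eta(1/2)$. Specifically, I would fix $\delta = \varepsilon/8$ and insert uniform probability measures on balls of that radius between $\delta_x$ and $\delta_y$ along a chain of intermediate points; that is,
\[
\mathcal{W}_{\eta,\varepsilon}(\delta_x,\delta_y) \leq \mathcal{W}_{\eta,\varepsilon}(\delta_x,\mathfrak{m}_{B(x,\delta)}) + \sum_{i=0}^{k-1}\mathcal{W}_{\eta,\varepsilon}(\mathfrak{m}_{B(x_i,\delta)},\mathfrak{m}_{B(x_{i+1},\delta)}) + \mathcal{W}_{\eta,\varepsilon}(\mathfrak{m}_{B(y,\delta)},\delta_y),
\]
where $x = x_0, x_1, \ldots, x_k = y$ lie on the segment from $x$ to $y$ with $|x_i - x_{i+1}| \leq \varepsilon/4$, and hence $k \leq 4|x-y|/\varepsilon + 1$.

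For the two endpoint expel terms, I would invoke Lemma~\ref{lem:expel non-integrable} when $\eta$ has algebraic blow-up (yielding a bound of order $(C_\theta/C_{d,s})(1/8)^{s/2}$) or Lemma~\ref{lem:expel arithmetic mean} when $\theta(1,0) = \kappa_\theta > 0$ (yielding a bound of order $1/\sqrt{\kappa_\theta \alpha_d 8^{-d}\eta(1/8)}$). In either case, with $\delta/\varepsilon = 1/8$ fixed, the expel cost is a finite constant depending only on $d$, $\theta$, $\eta$, which I would absorb into $C_{d,\theta,\eta}$.

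For each interior hop, I would apply Lemma~\ref{lem:2 point space estimate} with $A = B(x_i, \varepsilon/8)$ and $B = B(x_{i+1}, \varepsilon/8)$. Since $|x_i - x_{i+1}| + 2\delta \leq \varepsilon/2$, the kernel weight appearing in the denominator of the estimate is at least $\eta_\varepsilon(\varepsilon/2) = \varepsilon^{-d}\eta(1/2)$, which by Assumption~\ref{assu:eta properties}(v) is strictly positive. The key cancellation of $\varepsilon$'s then produces a per-hop bound of the form $C_\theta \cdot 8^{d/2}/(4\sqrt{\alpha_d\,\eta(1/2)})$, independent of $\varepsilon$ and of $i$.

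Summing the $k$ hops and combining with the endpoint expel costs gives
\[
\mathcal{W}_{\eta,\varepsilon}(\delta_x,\delta_y) \leq \left(\frac{4|x-y|}{\varepsilon} + 1\right)\frac{C_\theta\,8^{d/2}}{4\sqrt{\alpha_d\,\eta(1/2)}} + 2\cdot(\text{expel cost}),
\]
and the ``$+1$'' from the ceiling is absorbed into $C_{d,\theta,\eta}$, producing the stated estimate. The main technical point, as in the previous lemma, is a careful bookkeeping of the constants in Lemma~\ref{lem:2 point space estimate}, ensuring the denominator weight is $\eta_\varepsilon(|x_i - x_{i+1}| + 2\delta)$ (since any mass transported from $B(x_i,\delta)$ to $B(x_{i+1},\delta)$ traverses distance at most $|x_i - x_{i+1}| + 2\delta$, and monotonicity of $\eta$ gives a uniform lower bound on the kernel over the relevant edges); this is what allows evaluation of $\eta$ at the universal point $1/2$ regardless of $\varepsilon$.
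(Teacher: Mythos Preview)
Your proposal is correct and follows essentially the same approach as the paper's proof: spread each Dirac to a ball via the expel estimates (Lemmas~\ref{lem:expel non-integrable} or~\ref{lem:expel arithmetic mean}), then chain ball-to-ball hops along the segment using Lemma~\ref{lem:2 point space estimate}, with each hop contributing a constant of order $C_\theta/\sqrt{\alpha_d\,\eta(1/2)}$ after the $\varepsilon^d$ cancellation. The paper makes the specific choice $\delta = \varepsilon/6$ (rather than your $\varepsilon/8$) and spacing $|x_i - x_{i+1}| \leq \varepsilon/6$, which pins down the exact constant $C_{d,\theta} = C_\theta\big/\bigl(\tfrac{2}{3}\sqrt{\alpha_d}\,(1/6)^{d/2}\bigr)$; otherwise the arguments are interchangeable. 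One small bookkeeping point: Lemma~\ref{lem:2 point space estimate} requires $A$ and $B$ to be disjoint, so with $\delta = \varepsilon/8$ you want the strict inequality $|x_i - x_{i+1}| > 2\delta = \varepsilon/4$ rather than $\leq \varepsilon/4$; this is harmless for the count $k$ and the final estimate.
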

\begin{proof}

To estimate the distance between delta masses at $x$ and $y$ when $|x-y| \gg \veps$,  we first spread the mass to a ball of width $\delta$, comparable to $\epsilon$, and then jump between identical balls placed at distance comparable to $\varepsilon$ along the line segment between $x$ and $y$. The number of the balls is comparable to $|x-y|/\veps$, which explains the scaling of the right hand side in our estimate.

Let $\mathfrak{m}_{B(x_{0},\delta)}$ denote the uniform probability
measure on the ball $B(x_{0},\delta)$. Then, 
\[
\mathcal{W}_{\eta,\varepsilon}(\delta_{x},\delta_{y})\leq\mathcal{W}_{\eta,\varepsilon}(\delta_{x},\mathfrak{m}_{B(x,\delta)})+\mathcal{W}_{\eta,\varepsilon}(\mathfrak{m}_{B(x,\delta)},\mathfrak{m}_{B(y,\delta)})+\mathcal{W}_{\eta,\varepsilon}(\mathfrak{m}_{B(y,\delta)},\delta_{y}).
\]
In other to estimate $\mathcal{W}_{\eta,\varepsilon}(\mathfrak{m}_{B(x,\delta)},\mathfrak{m}_{B(y,\delta)})$,
we use Lemma \ref{lem:2 point space estimate}. Indeed, applying
Lemma \ref{lem:2 point space estimate} in the case where $A=B(x_{0},\delta)$
and $B=B(x_{1},\delta)$, and $|x_{0}-x_{1}|+2\delta\leq\frac{\varepsilon}{2}$,
it follows that
\[
\mathcal{W}_{\eta,\varepsilon}(\mathfrak{m}_{B(x_{0},\delta)},\mathfrak{m}_{B(x_{1},\delta)})\leq\frac{C_{\theta}}{4\sqrt{\alpha_{d}\delta^{d}\eta_{\varepsilon}(|x_{0}-x_{1}|+2\delta)}}\leq\frac{C_{\theta}}{4\sqrt{\alpha_{d}\delta^{d}\eta_{\varepsilon}(\varepsilon/2)}}.
\]
Then, in order to estimate $\mathcal{W}_{\eta,\varepsilon}(\mathfrak{m}_{B(x,\delta)},\mathfrak{m}_{B(y,\delta)})$,
it suffices to select a sequence $x=x_{0},x_{1},\ldots,x_{k}=y$ so
that $\delta<|x_{i}-x_{i+1}|\leq\frac{\varepsilon}{2}-2\delta$ for
all $i\in\{0,\ldots,k-1\}$; in particular, we can take 
\[
k=\left\lceil \frac{|x-y|}{\frac{\varepsilon}{2}-2\delta}\right\rceil .
\]
Hence, 
\[
\mathcal{W}_{\eta,\varepsilon}(\mathfrak{m}_{B(x,\delta)},\mathfrak{m}_{B(y,\delta)})\leq k\frac{C_{\theta}}{4\sqrt{\alpha_{d}\delta^{d}\eta_{\varepsilon}(\varepsilon/2)}}=\left(\left\lceil \frac{|x-y|}{\frac{\varepsilon}{2}-2\delta}\right\rceil \right)\frac{C_{\theta}}{4\sqrt{\alpha_{d}\delta^{d}\eta_{\varepsilon}(\varepsilon/2)}}.
\]
Now, note that 
\[
\eta_{\varepsilon}(\frac{\varepsilon}{2})=\frac{1}{\varepsilon^{d}}\eta\left(\frac{\frac{\varepsilon}{2}}{\varepsilon}\right)=\frac{1}{\varepsilon^{d}}\eta\left(\frac{1}{2}\right).
\]
For convenience, we also select $\delta=\frac{\varepsilon}{6}$. Plugging
this in, we get 
\begin{align*}
\mathcal{W}_{\eta,\varepsilon}(\mathfrak{m}_{B(x,\delta)},\mathfrak{m}_{B(y,\delta)}) & \leq\left(\left\lceil \frac{|x-y|}{\frac{\varepsilon}{6}}\right\rceil \right)\frac{1}{4\sqrt{\alpha_{d}}\left(\frac{1}{6}\right)^{d/2}}\frac{C_{\theta}}{\sqrt{\eta\left(\frac{1}{2}\right)}}.\\
 & \leq\left(\frac{|x-y|+\frac{\varepsilon}{6}}{\frac{\varepsilon}{6}}\right)\frac{1}{4\sqrt{\alpha_{d}}\left(\frac{1}{6}\right)^{d/2}}\frac{C_{\theta}}{\sqrt{\eta\left(\frac{1}{2}\right)}}
\end{align*}
In other words, putting 
\[
C_{d,\theta}=\frac{C_{\theta}}{\frac{2}{3}\sqrt{\alpha_{d}}\left(\frac{1}{6}\right)^{d/2}}
\]
we see that 
\begin{align*}
\mathcal{W}_{\eta,\varepsilon}(\mathfrak{m}_{B(x,\delta)},\mathfrak{m}_{B(y,\delta)}) & \leq\frac{C_{d,\theta}}{\sqrt{\eta\left(\frac{1}{2}\right)}}\frac{1}{\varepsilon}|x-y|+\frac{1}{6}\frac{C_{d,\theta}}{\sqrt{\eta\left(\frac{1}{2}\right)}}.
\end{align*}

Finally, we can compute as follows: 
\begin{align*}
\mathcal{W}_{\eta,\varepsilon}(\delta_{x},\delta_{y}) & \leq\mathcal{W}_{\eta,\varepsilon}(\delta_{x},\mathfrak{m}_{B(x,\delta)})+\mathcal{W}_{\eta,\varepsilon}(\mathfrak{m}_{B(x,\delta)},\mathfrak{m}_{B(y,\delta)})+\mathcal{W}_{\eta,\varepsilon}(\mathfrak{m}_{B(y,\delta)},\delta_{y})\\
 & \leq\mathcal{W}_{\eta,\varepsilon}(\mathfrak{m}_{B(x,\delta)},\mathfrak{m}_{B(y,\delta)})+2\cdot\mathcal{W}_{\eta,\varepsilon}(\delta_{x},\mathfrak{m}_{B(x,\delta)})\\
 & \leq\frac{C_{d,\theta}}{\sqrt{\eta\left(\frac{1}{2}\right)}}\frac{1}{\varepsilon}|x-y|+\frac{1}{6}\frac{C_{d,\theta}}{\sqrt{\eta\left(\frac{1}{2}\right)}}+2\mathcal{W}_{\eta,\varepsilon}\left(\delta_{x},\mathfrak{m}_{B\left(x,\frac{\varepsilon}{6}\right)}\right)
\end{align*}
where we have used the fact that $\delta=\frac{\varepsilon}{6}$.
In particular, applying Lemma \ref{lem:expel non-integrable}, we
find that if $\eta(x,y)\geq c_{s}|x-y|^{-d-s}$ when $|x-y|\leq\frac{1}{6}$,
it holds that 
\[
\mathcal{W}_{\eta,\varepsilon}(\delta_{x},\delta_{y})\leq\frac{C_{d,\theta}}{\sqrt{\eta\left(\frac{1}{2}\right)}}\frac{1}{\varepsilon}|x-y|+\frac{1}{6}\frac{C_{d,\theta}}{\sqrt{\eta\left(\frac{1}{2}\right)}}+2\frac{C_{\theta}}{C_{d,s}}\left(\frac{1}{6}\right)^{s/2};
\]
applying Lemma \ref{lem:expel arithmetic mean}, we find that if $\theta(1,0)=\kappa_{\theta}>0$,
then 
\[
\mathcal{W}_{\eta,\varepsilon}(\delta_{x},\delta_{y})\leq\frac{C_{d,\theta}}{\sqrt{\eta\left(\frac{1}{2}\right)}}\frac{1}{\varepsilon}|x-y|+\frac{1}{6}\frac{C_{d,\theta}}{\sqrt{\eta\left(\frac{1}{2}\right)}}+\frac{2}{\sqrt{\kappa_{\theta}\alpha_{d}\left(\frac{1}{6}\right)^{d}\eta\left(\frac{1}{6}\right)}}.
\]
So altogether, we have that 
\[
\mathcal{W}_{\eta,\varepsilon}(\delta_{x},\delta_{y})\leq\frac{C_{d,\theta}}{\sqrt{\eta\left(\frac{1}{2}\right)}}\frac{1}{\varepsilon}|x-y|+C_{d,\theta,\eta}
\]
where we have the case-wise definition of $C_{d,\theta,\eta}$ (if
both conditions obtain, either case can be chosen for the value of
$C_{d,\theta,\eta}$): 
\[
C_{d,\theta,\eta}:=\begin{cases}
\frac{1}{6}\frac{C_{d,\theta}}{\sqrt{\eta\left(\frac{1}{2}\right)}}+2\frac{C_{\theta}}{C_{d,s}}\left(\frac{1}{6}\right)^{s/2} & \begin{aligned}\eta(x,y)\geq c_{s}|x-y|^{-d-s}\\
\text{ when }|x-y|\leq\frac{1}{6};
\end{aligned}
\\
\frac{1}{6}\frac{C_{d,\theta}}{\sqrt{\eta\left(\frac{1}{2}\right)}}+\frac{2}{\sqrt{\kappa_{\theta}\alpha_{d}\left(\frac{1}{6}\right)^{d}\eta\left(\frac{1}{6}\right)}} & \theta(1,0)=\kappa_{\theta}>0.
\end{cases}
\]
\end{proof}
In order to proceed, we prove the following disintegration inequality
for the $\mathcal{W}_{\eta}$ metric, which is of independent interest,
in addition to being needed in the proof of Lemma \ref{lem:crude W2 upper bound}
below. An analogous result was established in \cite[Proposition 2.14]{erbar2012ricci}
in the discrete case; however, their proof does not readily adapt
to our continuum setting.
\begin{thm}[Disintegration inequality]
\label{thm:disintegration-inequality} Let $\mu,\nu\in\mathcal{P}(\mathbb{R}^{d})$.
Then, 
\[
\mathcal{W}_{\eta}^{2}(\mu,\nu)\leq\min_{\pi\in\Pi(\mu,\nu)}\int_{\mathbb{R}^{d}\times\mathbb{R}^{d}}\mathcal{W}_{\eta}^{2}(\delta_{x},\delta_{y})d\pi(x,y).
\]
\end{thm}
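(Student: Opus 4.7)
The plan is to prove the disintegration inequality via a \emph{superposition principle}: given a coupling $\pi \in \Pi(\mu, \nu)$ with finite right-hand side, I build a single competitor $(\rho_t, \mathbf{j}_t) \in \mathcal{CE}(\mu, \nu)$ by averaging a Borel family of near-geodesics from $\delta_x$ to $\delta_y$ against $\pi$. The three ingredients driving the proof are the linearity of the nonlocal continuity equation in $(\rho,\mathbf{j})$, the joint convexity and lower semicontinuity of $\mathcal{A}$ furnished by Lemma \ref{lem:action convex lsc}, and a measurable selection producing the family of near-geodesics.

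If the right-hand side is $+\infty$ the inequality is vacuous; so assume it is finite, whence $\mathcal{W}_\eta(\delta_x,\delta_y)<\infty$ for $\pi$-a.e.\ $(x,y)$. Since $\eta$ and the Lebesgue reference measure are translation invariant, $\mathcal{W}_\eta(\delta_x,\delta_y)$ depends only on $z=y-x$, reducing the selection to a one-parameter problem. Equipping $\mathcal{CE}(\delta_0,\delta_z)$ with its natural weak-$*$ topology, the action is a lower semicontinuous functional whose sub-level sets are sequentially compact by standard tightness estimates for bounded-action curves; a Kuratowski--Ryll-Nardzewski type selection then produces, for each $\varepsilon>0$, a Borel map $z\mapsto(\rho_t^z,\mathbf{j}_t^z)$ with $\int_0^1 \mathcal{A}(\rho_t^z,\mathbf{j}_t^z)\,dt \le \mathcal{W}_\eta^2(\delta_0,\delta_z)+\varepsilon$. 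Translating yields a Borel family $(\rho_t^{x,y},\mathbf{j}_t^{x,y})\in\mathcal{CE}(\delta_x,\delta_y)$, and I set
\[
\rho_t:=\int \rho_t^{x,y}\,d\pi(x,y),\qquad \mathbf{j}_t:=\int \mathbf{j}_t^{x,y}\,d\pi(x,y),
\]
interpreted by pairing against bounded continuous (resp.\ compactly supported) test functions. The endpoint conditions $\rho_0=\mu$, $\rho_1=\nu$ follow because $\pi$ has marginals $\mu$ and $\nu$. Testing against $\varphi\in C_c^\infty([0,1]\times\mathbb{R}^d)$ and applying Fubini verifies \eqref{eq:ncewf}, since each $(\rho_t^{x,y},\mathbf{j}_t^{x,y})$ satisfies it. Narrow continuity of $t\mapsto \rho_t$ and Borel measurability of $t\mapsto \mathbf{j}_t$ are inherited from the integrand via dominated convergence, so that $(\rho_t,\mathbf{j}_t)\in\mathcal{CE}(\mu,\nu)$.

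For the action bound, the joint convexity and lower semicontinuity of $\mathcal{A}$ from Lemma \ref{lem:action convex lsc} give a dual representation as a supremum of continuous affine functionals, which yields the Jensen-type inequality
\[
\mathcal{A}(\rho_t,\mathbf{j}_t)\le \int \mathcal{A}(\rho_t^{x,y},\mathbf{j}_t^{x,y})\,d\pi(x,y)
\]
for every $t\in[0,1]$. Integrating in $t$, applying Fubini, and using the near-geodesic bound from the selection gives
\[
\mathcal{W}_\eta^2(\mu,\nu)\le \int_0^1 \mathcal{A}(\rho_t,\mathbf{j}_t)\,dt \le \int \mathcal{W}_\eta^2(\delta_x,\delta_y)\,d\pi(x,y)+\varepsilon;
\]
letting $\varepsilon\downarrow 0$ and infimizing over $\pi\in\Pi(\mu,\nu)$ closes the argument. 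The main obstacle is the Borel selection in Step 1: flux measures take values in $\mathcal{M}_{loc}(G)$, which must be equipped with a Polish topology on which $\mathcal{A}$ is jointly Borel with sequentially compact sub-level sets; once that functional framework is fixed, the selection theorem applies routinely. A selection-free alternative is to first prove the inequality for finitely supported $\pi$ (where the superposition is a finite convex combination and one uses only the elementary convexity of $\mathcal{A}$), and then approximate general $\pi$ by discrete couplings --- but this demands upper semicontinuity of $(x,y)\mapsto \mathcal{W}_\eta^2(\delta_x,\delta_y)$, which is not automatic and would have to be extracted from the quantitative estimates of Lemmas \ref{lem:topological upper bound non-integrable} and \ref{lem:nonlocal dirac crude upper bound}.
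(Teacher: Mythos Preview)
Your superposition approach is essentially the ``obvious'' attack, and with enough care it can be made to work; but the paper takes a markedly different and much shorter route that bypasses both of the technical obstacles you flag.

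The paper's argument operates entirely at the level of the metric $\mathcal{W}_\eta^2$, never descending to the action or to curves. Given a coupling $\pi$, take i.i.d.\ samples $(X_i,Y_i)\sim\pi$. Joint convexity of $\mathcal{W}_\eta^2$ on $\mathcal{P}(\mathbb{R}^d)\times\mathcal{P}(\mathbb{R}^d)$ gives
\[
\mathcal{W}_\eta^2\!\left(\tfrac{1}{n}\sum_{i=1}^n\delta_{X_i},\,\tfrac{1}{n}\sum_{i=1}^n\delta_{Y_i}\right)\le \tfrac{1}{n}\sum_{i=1}^n \mathcal{W}_\eta^2(\delta_{X_i},\delta_{Y_i}).
\]
The right-hand side converges a.s.\ to $\int\mathcal{W}_\eta^2(\delta_x,\delta_y)\,d\pi$ by the strong law of large numbers, while the empirical measures converge narrowly to $\mu,\nu$ by Glivenko--Cantelli; joint narrow lower semicontinuity of $\mathcal{W}_\eta^2$ finishes the proof. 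No measurable selection, no Jensen on a non-TVS, no superposition of fluxes.

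Your ``selection-free alternative'' at the end is close in spirit, but you misdiagnose the obstruction: you worry about upper semicontinuity of $(x,y)\mapsto\mathcal{W}_\eta^2(\delta_x,\delta_y)$ to push the discrete inequality to the limit. The paper's trick is that with \emph{random} discrete approximations, the SLLN delivers convergence of the right-hand side directly---no continuity of the integrand is needed---and only \emph{lower} semicontinuity of $\mathcal{W}_\eta^2$ (which is already known) is used on the left. This is exactly the insight that collapses the argument to a few lines.

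As for your main route: the measurable selection in $\mathcal{M}_{loc}(G)$ is genuinely delicate (one must choose a topology that is simultaneously Polish and makes $\mathcal{A}$ Borel with compact sublevels), and the action-level Jensen inequality, while morally clear, requires embedding $\mathcal{P}(\mathbb{R}^d)$ into a locally convex space and invoking a suitable abstract Jensen---the paper explicitly notes that standard Jensen fails here because $\mathcal{P}(\mathbb{R}^d)^2$ is not a TVS. Both steps can be carried out, but the empirical-measure argument makes them unnecessary.
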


Morally speaking, this theorem is just an instance of Jensen's inequality,
since $\mathcal{W}^{2}$ is a convex l.s.c. function, and $\mathcal{W}_{\eta}^{2}\left(\mu,\nu\right)=\mathcal{W}_{\eta}^{2}\left(\int\delta_{x}d\pi(x,y),\int\delta_{y}d\pi(x,y)\right)$.
And indeed, in the discrete case, the proof of \cite[Proposition 2.14]{erbar2012ricci}
proceeds rather directly from Jensen's inequality, albeit applied
to the action $\mathcal{A}(\rho,\mathbf{j})$ rather than to $\mathcal{W}$.
However, standard proofs of Jensen's inequality (see for instance
\cite{perlman1974jensen}) require the underlying space (in this case
$\mathcal{P}(\mathbb{R}^{d})^{2}$) to carry the structure of a topological
vector space, which we do not have here. 
We are aware of one more abstract version of Jensen's inequality \cite{teran2014jensen} which
does not require a t.v.s. structure, but in our situation a direct
proof turns out to be readily available.
\begin{proof}
Let $\pi\in\Pi(\mu,\nu)$. Let $(X(\omega),Y(\omega))$ and $(X_{i}(\omega),Y_{i}(\omega))$,
$i=1,2,\ldots$ be i.i.d. random variables distributed according to
$\pi$. In particular, $(X,Y)_{\#}\mathbb{P}=\pi$. By the joint convexity
of $\mathcal{W}_{\eta}^{2}$, we have that 
\[
\mathcal{W}_{\eta}^{2}\left(\frac{1}{n}\sum_{i=1}^{n}\delta_{X_{i}(\omega)},\frac{1}{n}\sum_{i=1}^{n}\delta_{Y_{i}(\omega)}\right)\leq\frac{1}{n}\sum_{i=1}^{n}\mathcal{W}_{\eta}^{2}(\delta_{X_{i}(\omega)},\delta_{Y_{i}(\omega)}).
\]
At the same time, by change of variables, we observe that 
\[
\int\mathcal{W}_{\eta}^{2}(\delta_{X(\omega)},\delta_{Y(\omega)})d\mathbb{P}(\omega)=\int\mathcal{W}_{\eta}^{2}(\delta_{x},\delta_{y})d\pi(x,y).
\]
Now, suppose that $\mathcal{W}_{\eta}^{2}(\delta_{X(\omega)},\delta_{Y(\omega)})$
is an integrable random variable; since $\mathcal{W}_{\eta}^{2}$
is nonnegative, otherwise $\int\mathcal{W}_{\eta}^{2}(\delta_{x},\delta_{y})d\pi(x,y)=\infty$,
in which case the theorem holds trivially. Applying the strong law
of large numbers to the i.i.d. random variables $\mathcal{W}_{\eta}^{2}(\delta_{X_{i}(\omega)},\delta_{Y_{i}(\omega)})$,
we see that with probability 1, 
\[
\frac{1}{n}\sum_{i=1}^{n}\mathcal{W}_{\eta}^{2}(\delta_{X_{i}(\omega)},\delta_{Y_{i}(\omega)})\rightarrow\int\mathcal{W}_{\eta}^{2}(\delta_{X(\omega)},\delta_{Y(\omega)})d\mathbb{P}(\omega).
\]
Therefore, 
\[
\liminf_{n\rightarrow\infty}\mathcal{W}_{\eta}^{2}\left(\frac{1}{n}\sum_{i=1}^{n}\delta_{X_{i}(\omega)},\frac{1}{n}\sum_{i=1}^{n}\delta_{Y_{i}(\omega)}\right)\leq\int\mathcal{W}_{\eta}^{2}(\delta_{x},\delta_{y})d\pi(x,y).
\]
The Glivenko-Cantelli theorem tells us that with probability 1, $\frac{1}{n}\sum_{i=1}^{n}\delta_{X_{i}(\omega)}\rightharpoonup^{*}\mu$
and $\frac{1}{n}\sum_{i=1}^{n}\delta_{Y_{i}(\omega)}\rightharpoonup^{*}\nu$,
so since $\mathcal{W}_{\eta}^{2}$ is jointly l.s.c. with respect
to narrow convergence,
\[
\mathcal{W}_{\eta}^{2}(\mu,\nu)\leq\liminf_{n\rightarrow\infty}\mathcal{W}_{\eta}^{2}\left(\frac{1}{n}\sum_{i=1}^{n}\delta_{X_{i}(\omega)},\frac{1}{n}\sum_{i=1}^{n}\delta_{Y_{i}(\omega)}\right).
\]
This shows that $\mathcal{W}_{\eta}^{2}(\mu,\nu)\leq\int\mathcal{W}_{\eta}^{2}(\delta_{x},\delta_{y})d\pi(x,y)$.
But since $\pi\in\Pi(\mu,\nu)$ was arbitrary, we find that $\mathcal{W}_{\eta}^{2}(\nu_{0},\nu_{1})\leq\inf_{\pi\in\Pi(\nu_{0},\nu_{1})}\int\mathcal{W}_{\eta}^{2}(\delta_{x},\delta_{y})d\pi(x,y)$.
Finally, the fact that the infimum is actually attained follows from
the fact that $c(x,y):=\mathcal{W}_{\eta}^{2}(\delta_{x},\delta_{y})$
is a nonnegative l.s.c. cost function, so standard Monge-Kantorovich
theory applies, for instance \cite[Theorem 1.7]{santambrogio2015optimal}. 
\end{proof}

We now use the disintegration inequality and the estimates on the nonloal transport between delta masses to obtain the initial, crude, upper bound on nonlocal transport between general probability measures in terms of the Wasserstein distance.
\begin{lemma}
\label{lem:crude W2 upper bound} Let $\nu_{0},\nu_{1}$ be any measures
in $\mathcal{P}_{2}(\mathbb{R}^{d})$. Suppose that $\eta(x,y)\geq c_{s}|x-y|^{-d-s}$
when $|x-y|\leq\frac{1}{6}$, or $\theta(1,0)=\kappa_{\theta}>0$
(or both). Then, 
\[
\mathcal{W}_{\eta,\varepsilon}^{2}(\nu_{0},\nu_{1})\leq2\frac{C_{d,\theta}^{2}}{\eta\left(\frac{1}{2}\right)}\frac{1}{\varepsilon^{2}}W_{2}^{2}(\nu_{0},\nu_{1})+2C_{d,\theta,\eta}^{2}
\]
where $C_{d,\theta}=\frac{C_{\theta}}{\frac{2}{3}\sqrt{\alpha_{d}}\left(\frac{1}{6}\right)^{d/2}}$,
$C_{\theta}=\int_{0}^{1}\frac{1}{\sqrt{\theta(1-r,1+r)}}dr$, and
$C_{d,\theta,\eta}$ is the constant from Lemma \ref{lem:nonlocal dirac crude upper bound}.

Furthermore, in the case where $\eta(x,y)\geq c_{s}|x-y|^{-d-s}$
when $|x-y|\leq\frac{1}{6}$, one has the alternative upper bound
\[
\mathcal{W}_{\eta,\varepsilon}^{2}(\nu_{0},\nu_{1})\leq\min_{\pi\in\Pi(\nu_{0},\nu_{1})}\int_{\left(\mathbb{R}^{d}\right)^{2}}\Phi\left(\frac{|x-y|}{\varepsilon}\right)^{2}d\pi(x,y)
\]
where $\Phi$ is the function from the statement of Lemma \ref{lem:topological upper bound non-integrable}.
\end{lemma}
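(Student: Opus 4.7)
The plan is to combine the disintegration inequality (Theorem \ref{thm:disintegration-inequality}) with the pointwise bounds on $\mathcal{W}_{\eta,\varepsilon}(\delta_x, \delta_y)$ established in Lemmas \ref{lem:nonlocal dirac crude upper bound} and \ref{lem:topological upper bound non-integrable}. The disintegration inequality asserts that for any $\pi \in \Pi(\nu_0, \nu_1)$,
\[
\mathcal{W}_{\eta,\varepsilon}^{2}(\nu_{0},\nu_{1})\leq\int_{\mathbb{R}^d\times\mathbb{R}^d}\mathcal{W}_{\eta,\varepsilon}^{2}(\delta_{x},\delta_{y})\,d\pi(x,y),
\]
so the strategy is simply to bound the integrand and to choose $\pi$ advantageously.

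For the first inequality, I would apply Lemma \ref{lem:nonlocal dirac crude upper bound}, which gives
\[
\mathcal{W}_{\eta,\varepsilon}(\delta_{x},\delta_{y})\leq\frac{C_{d,\theta}}{\sqrt{\eta(1/2)}}\frac{|x-y|}{\varepsilon}+C_{d,\theta,\eta}
\]
whenever either of the hypotheses (on $\eta$ near the origin or on $\theta(1,0)$) holds. Squaring this and using $(a+b)^2\leq 2a^2+2b^2$ produces a pointwise bound of the form $\mathcal{W}_{\eta,\varepsilon}^2(\delta_x,\delta_y)\leq 2 C_{d,\theta}^2/\eta(1/2)\cdot |x-y|^2/\varepsilon^2+2C_{d,\theta,\eta}^2$. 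Inserting this into the disintegration inequality and choosing $\pi$ to be the $W_2$-optimal coupling yields the first claimed estimate, since $\int|x-y|^2\,d\pi = W_2^2(\nu_0,\nu_1)$ and $\pi$ has total mass one.

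For the second (refined) inequality, valid in the algebraic blow-up case, I would instead invoke Lemma \ref{lem:topological upper bound non-integrable}, which directly gives the pointwise bound $\mathcal{W}_{\eta,\varepsilon}(\delta_x,\delta_y)\leq \Phi(|x-y|/\varepsilon)$. Squaring and substituting into the disintegration inequality, then passing to the infimum over $\pi\in\Pi(\nu_0,\nu_1)$, yields the estimate in the stated form. The existence of a minimizing $\pi$ (justifying writing $\min$ rather than $\inf$) follows from the lower semicontinuity of the cost $(x,y)\mapsto\Phi(|x-y|/\varepsilon)^2$ combined with standard Monge--Kantorovich theory, exactly as in the last paragraph of the proof of Theorem \ref{thm:disintegration-inequality}.

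There is no real obstacle here beyond bookkeeping: both steps are essentially ``plug in and integrate.'' The only mild subtlety is verifying that the $W_2$-optimal coupling is admissible for bounding the first estimate (which is immediate, since $\nu_0,\nu_1\in\mathcal{P}_2(\mathbb{R}^d)$ guarantees $W_2(\nu_0,\nu_1)<\infty$ and hence the integral of $|x-y|^2$ against the optimal $\pi$ is finite), and that the constants appearing in the final bounds agree with those produced by the earlier lemmas.
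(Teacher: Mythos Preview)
Your proposal is correct and follows essentially the same approach as the paper: apply the disintegration inequality (Theorem \ref{thm:disintegration-inequality}), plug in the pointwise Dirac-to-Dirac bound from Lemma \ref{lem:nonlocal dirac crude upper bound}, square via $(a+b)^2\leq 2a^2+2b^2$, and integrate against the $W_2$-optimal coupling; for the second part, substitute the bound from Lemma \ref{lem:topological upper bound non-integrable} instead and take the infimum over couplings. The paper's proof is identical in structure and detail.
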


\begin{rem}
This lemma actually helps to address a question posed by Erbar.
In \cite{erbar2014gradient}, it is mentioned that it is unclear for
which probability measures $\nu_{0}$ and $\nu_{1}$ on $\mathbb{R}^{d}$
we have that $\mathcal{W}(\nu_{0},\nu_{1})<\infty$ (in the case of
the Wasserstein distances $W_{p}$, these are precisely the measures
which have finite $p$th moments). The proposition we are about to
prove gives a sufficient condition on $\eta$ and $\theta$ ensuring
that $\mathcal{W}(\nu_{0},\nu_{1})<\infty$ for all $\nu_{0},\nu_{1}\in\mathcal{P}_{2}(\mathbb{R}^{d})$.
\end{rem}

\begin{proof}
In Proposition \ref{thm:disintegration-inequality}, we proved the
disintegration inequality
\[
\mathcal{W}_{\eta,\varepsilon}^{2}(\nu_{0},\nu_{1})\leq\min_{\pi\in\Pi(\nu_{0},\nu_{1})}\int_{\left(\mathbb{R}^{d}\right)^{2}}\mathcal{W}_{\eta,\varepsilon}^{2}(\delta_{x},\delta_{y})d\pi(x,y).
\]
On the other hand, 
\[
W_{2}^{2}(\nu_{0},\nu_{1}):=\min_{\pi\in\Pi(\nu_{0},\nu_{1})}\int_{\left(\mathbb{R}^{d}\right)^{2}}|x-y|^{2}d\pi(x,y).
\]
Therefore, let $\overline{\pi}$ be $W_{2}$-optimal plan for $(\nu_{0},\nu_{1})$.
Using the assumption that $\eta(x,y)\geq c_{s}|x-y|^{-d-s}$ when
$|x-y|\leq\frac{1}{6}$, or $\theta(1,0)=\kappa_{\theta}>0$ (or both),
it follows that 
\begin{align*}
\mathcal{W}_{\eta,\varepsilon}^{2}(\nu_{0},\nu_{1}) & \leq\int_{\left(\mathbb{R}^{d}\right)^{2}}\mathcal{W}_{\eta,\varepsilon}^{2}(\delta_{x},\delta_{y})d\overline{\pi}(x,y)\\
(\text{Lemma \ref{lem:nonlocal dirac crude upper bound}}) & \leq\int_{\left(\mathbb{R}^{d}\right)^{2}}\left[\frac{C_{d,\theta}}{\sqrt{\eta\left(\frac{1}{2}\right)}}\frac{1}{\varepsilon}|x-y|+C_{d,\theta,\eta}\right]^{2}d\overline{\pi}(x,y)\\
 & \leq2\left(\frac{C_{d,\theta}}{\sqrt{\eta\left(\frac{1}{2}\right)}}\frac{1}{\varepsilon}\right)^{2}\int_{\left(\mathbb{R}^{d}\right)^{2}}|x-y|^{2}d\overline{\pi}(x,y)+2C_{d,\theta,\eta}^{2}\\
 & =2\left(\frac{C_{d,\theta}}{\sqrt{\eta\left(\frac{1}{2}\right)}}\frac{1}{\varepsilon}\right)^{2}W_{2}^{2}(\nu_{0},\nu_{1})+2C_{d,\theta,\eta}^{2}.
\end{align*}
Alternatively, in the case where $\eta(|x-y|)\geq c_{s}|x-y|^{-d-s}$
when $|x-y|\leq\frac{1}{6}$, we can apply Lemma \ref{lem:topological upper bound non-integrable},
and deduce that 
\begin{align*}
\mathcal{W}_{\eta,\varepsilon}^{2}(\nu_{0},\nu_{1}) & \leq\min_{\pi\in\Pi(\nu_{0},\nu_{1})}\int_{\left(\mathbb{R}^{d}\right)^{2}}\mathcal{W}_{\eta,\varepsilon}^{2}(\delta_{x},\delta_{y})d\pi(x,y)\leq\min_{\pi\in\Pi(\nu_{0},\nu_{1})}\int_{\left(\mathbb{R}^{d}\right)^{2}}\Phi\left(\frac{|x-y|}{\varepsilon}\right)^{2}d\pi(x,y).
\end{align*}
\end{proof}

Let us mention the following consequence of Lemma \ref{lem:crude W2 upper bound}.
Together with Proposition \ref{prop:TV lower bound}, this shows that
when $\eta$ is integrable, on a bounded domain the topology induced
by $\mathcal{W}_{\eta}$ is equivalent to the strong topology on probability
measures.
\begin{prop}[$TV$ upper bound on a bounded set]
\label{prop:W+TV upper bound} Suppose that $\eta(x,y)\geq c_{s}|x-y|^{-d-s}$
when $|x-y|\leq\frac{1}{6}$, or $\theta(1,0)=\kappa_{\theta}>0$
(or both), 
and that $\nu_{0},\nu_{1}\in\mathcal{P}(\mathbb{R}^{d})$ are both
supported inside some bounded set $K\subset\mathbb{R}^{d}$. Then,
there exists some constant $C$ (independent of $\nu_{0}$ and $\nu_{1}$,
but allowed to depend on $d,\theta,\eta$, and the diameter of $K$)
such that 
\[
\mathcal{\mathcal{W}}_{\eta}^{2}(\nu_{0},\nu_{1})\leq C\cdot TV(\nu_{0},\nu_{1}).
\]
\end{prop}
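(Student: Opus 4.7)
The key idea is to transport only the mutually singular parts of $\nu_0$ and $\nu_1$ while keeping the common part stationary, exploiting the monotonicity of $\theta$ to control the action. First, I would use the Jordan/Hahn decomposition of $\nu_0 - \nu_1$ to write $\nu_0 = \mu + \sigma_0$ and $\nu_1 = \mu + \sigma_1$, where $\mu := \nu_0 \wedge \nu_1$ is the common mass and $\sigma_0, \sigma_1$ are mutually singular nonnegative measures. Standard identities give $\|\sigma_0\|_{TV} = \|\sigma_1\|_{TV} = \tfrac{1}{2}TV(\nu_0, \nu_1) =: m$. If $m = 0$ there is nothing to prove, so assume $m > 0$; both $\sigma_0/m$ and $\sigma_1/m$ are probability measures supported in $K$.

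Next, apply Lemma \ref{lem:crude W2 upper bound} to the pair $(\sigma_0/m, \sigma_1/m)$: since they are supported in the bounded set $K$, we have $W_2(\sigma_0/m, \sigma_1/m) \leq \diam(K)$, hence
\[
\mathcal{W}_\eta^2\!\left(\tfrac{\sigma_0}{m}, \tfrac{\sigma_1}{m}\right) \leq 2\frac{C_{d,\theta}^2}{\eta(1/2)} \diam(K)^2 + 2 C_{d,\theta,\eta}^2 =: C_K.
\]
Proposition \ref{prop:nlw equal mass extension} then yields $\mathcal{W}_\eta^2(\sigma_0, \sigma_1) \leq m \cdot C_K$. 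Fix $\varepsilon_0 > 0$ and pick $(\rho_t, \mathbf{j}_t)_{t \in [0,1]}$ satisfying the nonlocal continuity equation with endpoints $\sigma_0$ and $\sigma_1$ and action at most $\mathcal{W}_\eta^2(\sigma_0, \sigma_1) + \varepsilon_0 \leq m \cdot C_K + \varepsilon_0$.

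The crucial step is to lift this pair to one connecting $\nu_0$ and $\nu_1$ without increasing the action. Set $\tilde\rho_t := \mu + \rho_t$; then $(\tilde\rho_t, \mathbf{j}_t)$ is still in $\mathcal{CE}(\nu_0, \nu_1)$, since $\partial_t \mu = 0$. To compare actions, let $\lambda_t$ dominate $\mathbf{j}_t$, $\mu \otimes \Leb$, $\Leb \otimes \mu$, $\rho_t \otimes \Leb$, and $\Leb \otimes \rho_t$. Then $d(\tilde\rho_t \otimes \Leb)/d\lambda_t = d(\rho_t \otimes \Leb)/d\lambda_t + d(\mu \otimes \Leb)/d\lambda_t$ pointwise $\lambda_t$-a.e., and likewise in the other slot. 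By the monotonicity of $\theta$ (Assumption \ref{assu:theta properties}(iv)), the denominator of the action integrand can only grow, so
\[
\mathcal{A}(\tilde\rho_t, \mathbf{j}_t) \leq \mathcal{A}(\rho_t, \mathbf{j}_t) \qquad \text{for a.e. } t \in [0,1].
\]
Integrating in $t$ and letting $\varepsilon_0 \to 0$ gives $\mathcal{W}_\eta^2(\nu_0, \nu_1) \leq m \cdot C_K = \tfrac{C_K}{2} TV(\nu_0, \nu_1)$, which is the claim with $C = C_K/2$.

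\textbf{Anticipated obstacle.} The only subtle point is verifying the monotonicity step rigorously: one must choose a common dominating measure $\lambda_t$ that simultaneously controls $\mathbf{j}_t$ and all four tensor products, and then apply monotonicity of $\theta$ pointwise. This is straightforward but must be written carefully because the Radon--Nikodym densities of $\mu \otimes \Leb$ and $\Leb \otimes \mu$ need not be bounded uniformly in $\lambda_t$. The 1-homogeneity of $\theta$ ensures the computation is independent of the choice of $\lambda_t$, so no technical difficulty survives beyond bookkeeping.
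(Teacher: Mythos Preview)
Your proposal is correct and matches the paper's approach: decompose $\nu_i$ into the common part $\mu=\nu_0\wedge\nu_1$ plus mutually singular remainders, apply Lemma~\ref{lem:crude W2 upper bound} to the normalized remainders (bounding $W_2$ by $\diam(K)$), rescale via Proposition~\ref{prop:nlw equal mass extension}, and lift back by adding the stationary pair $(\mu,0)$. The only minor variation is that you justify $\mathcal{A}(\mu+\rho_t,\mathbf{j}_t)\le\mathcal{A}(\rho_t,\mathbf{j}_t)$ directly from the monotonicity of $\theta$, whereas the paper obtains the same inequality from joint convexity of the action together with 1-homogeneity; both arguments are valid and yield the same bound.
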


\begin{proof}
The idea is that we simply rerun the argument for Lemma \ref{lem:crude W2 upper bound},
but allow the mass in the ``overlap'' between $\nu_{0}$ and $\nu_{1}$
to stay put. 

To wit, define the measure $\Theta:=\min\{\nu_{0},\nu_{1}\}$. We
suppose that $\Theta$ is not identically zero, since otherwise the
desired inequality holds trivially. Observe that 
\[
\Vert\nu_{0}-\Theta\Vert_{TV}=\Vert\nu_{1}-\Theta\Vert_{TV}=2TV(\nu_{0},\nu_{1}),
\]
so in particular 
\[
\mathcal{W}_{\eta}\left(\frac{\nu_{0}-\Theta}{2TV(\nu_{0},\nu_{1})},\frac{\nu_{1}-\Theta}{2TV(\nu_{0},\nu_{1})}\right)
\]
is well-defined. Moreover, by Lemma \ref{lem:crude W2 upper bound}
(with $\varepsilon=1$) it holds that 
\begin{align*}
\mathcal{W}_{\eta}^{2}\left(\frac{\nu_{0}-\Theta}{2TV(\nu_{0},\nu_{1})},\frac{\nu_{1}-\Theta}{2TV(\nu_{0},\nu_{1})}\right) & \leq2\frac{C_{d,\theta}^{2}}{\eta\left(\frac{1}{2}\right)}W_{2}^{2}\left(\frac{\nu_{0}-\Theta}{2TV(\nu_{0},\nu_{1})},\frac{\nu_{1}-\Theta}{2TV(\nu_{0},\nu_{1})}\right)+2C_{d,\theta,\eta}^{2}.
\end{align*}
By the 1-homogeneity of the action $\mathcal{A}_{\eta,\theta}$, and
also the 1-homogeneity of $W_{2}^{2}$, this implies that 
\begin{align*}
\mathcal{W}_{\eta}^{2}\left(\nu_{0}-\Theta,\nu_{1}-\Theta\right) & \leq2\frac{C_{d,\theta}^{2}}{\eta\left(\frac{1}{2}\right)}W_{2}^{2}\left(\nu_{0}-\Theta,\nu_{1}-\Theta\right)\\
 & \qquad+4C_{d,\theta,\eta}^{2}TV(\nu_{0},\nu_{1}).
\end{align*}

At the same time, any solution to the nonlocal continuity equation
with endpoints $\nu_{0}-\Theta$ and $\nu_{1}-\Theta$ extends trivially
to a solution to the nonlocal continuity equation with endpoints $\nu_{0}$
and $\nu_{1}$: this is because the nonlocal continuity equation is
additive, and so we can just add on the constant solution $(\Theta,0)_{t\in[0,1]}$
to the NCE. By the convexity of the action, this implies that 
\[
\mathcal{W}_{\eta}^{2}(\nu_{0},\nu_{1})\leq2TV(\nu_{0},\nu_{1})\mathcal{W}_{\eta}^{2}\left(\frac{\nu_{0}-\Theta}{2TV(\nu_{0},\nu_{1})},\frac{\nu_{1}-\Theta}{2TV(\nu_{0},\nu_{1})}\right)=\mathcal{W}_{\eta}^{2}\left(\nu_{0}-\Theta,\nu_{1}-\Theta\right).
\]
Therefore, 
\begin{align*}
\mathcal{W}_{\eta}^{2}\left(\nu_{0},\nu_{1}\right) & \leq2\frac{C_{d,\theta}^{2}}{\eta\left(\frac{1}{2}\right)}W_{2}^{2}(\nu_{0},\nu_{1})+4C_{d,\theta,\eta}^{2}TV(\nu_{0},\nu_{1}).
\end{align*}
as desired. 

Lastly, by combining \cite[Equation (5.1)]{santambrogio2015optimal}
with \cite[Theorem 4]{gibbs2002choosing}, we see that 
\[
W_{2}^{2}(\nu_{0},\nu_{1})\leq\diam(K)\cdot W_{1}(\nu_{0},\nu_{1})\leq\diam(K)^{2}\cdot TV(\nu_{0},\nu_{1})
\]
which allows us to deduce that
\[
\mathcal{W}_{\eta}^{2}(\nu_{0},\nu_{1})\leq\left(2\frac{C_{d,\theta}^{2}\cdot\diam(K)^{2}}{\eta\left(\frac{1}{2}\right)}+4C_{d,\theta,\eta}^{2}\right)TV(\nu_{0},\nu_{1}).
\]
\end{proof}

\section{Exact Nonlocalization}
\label{exact nonlocalization}


\subsection{Exact solution to nonlocal continuity equation}
We start by introducing a way to use solutions of the continuity equation to create solutions of the nonlocal continuity equation with a kernel $\eta$, \eqref{eq:nce}. 
 Namely we discovered that given a solution of the continuity equation in the flux form one can convolve it by a specific, $\eta$-dependent kernel $\zeta$ so that the convolved flow is an exact solution of the nonlocal continuity equation. We first present the solution in a formal way and then justify it for weak solutions below.

Let $\eta (s)$ denote the radial profile of a kernel $\eta (x,y)$ satisfying Assumption \ref{assu:eta properties}. Define 
\[ \zeta(r) = \int_r^\infty  s \eta(s) ds. \]
One can check that under this assumption, $\zeta$ is integrable in $\R^d$ even when $\eta$ is not.

Consider a solution of the continuity equation
\[ \partial_t \rho + \divv(J) = 0. \]
Let $\rho_\zeta = \rho * \zeta$ and 
$J_\zeta = J * \zeta$.
Then 
\[ \partial_t \rho_\zeta + \divv(J_\zeta) = 0. \]
Let $j(x,y) = (y-x) \cdot (J(y) +  J(x) ) $. We claim that 
\[ \partial_t \rho_\zeta + \int j(x,y) \eta(x-y) dy = 0. \]
Namely note that 
\[ \nabla \zeta(|x-y|) = \eta(|x-y|) (x-y). \]
Thus, using symmetry of $\eta$, 
\begin{align*}
\int j(x,y) \eta(|x-y|) dy & = - \int  \eta(|x-y|) (x-y) \cdot (J(y) + J(x)) dy \\
& = \int \nabla \zeta(|x-y|)  \cdot  J(y) dy  - J(x) \int  \eta(|x-y|) (x-y)  dy \\ 
& = \divv \left( \int \zeta(|x-y|)  J(y) dy  \right)+ 0 = \divv (J_\zeta). 
\end{align*}

While the preceding argument is formal, and written for strong solutions, making the argument rigorous and extending to  weak solutions is straightforward, and is done in the lemma below.

In what follows, we use slightly more burdensome notation: given a
specific kernel $\eta$, we write $\zeta_{\eta}(r):=\int_{r}^{\infty}s\eta(s)ds$;
so in particular $\zeta_{(\eta_{\varepsilon})}(r):=\int_r^\infty s\eta_{\varepsilon}(s)ds$.
Note however that it need not hold that $\zeta_{\eta}(|x|)$ is a
\emph{convolution} kernel, i.e. in may not be normalized when integrating
on $\mathbb{R}^{d}$. We therefore introduce the (normalized) convolution
kernels (that these are the correct normalization constants is shown
in Lemma \ref{lem:zeta kernel mass}):
\[
\bar{\zeta}_{\eta}:=\frac{d}{M_{2}(\eta)}\zeta_{\eta};\qquad\bar{\zeta}_{(\eta_{\varepsilon})}:=\frac{d}{\varepsilon^{2}M_{2}(\eta)}\zeta_{(\eta_{\varepsilon})}.
\]
Note that $\bar{\zeta}_{\eta}$ is a convolution kernel supported
on the unit ball, while $\bar{\zeta}_{(\eta_{\varepsilon})}$ is a
convolution kernel supported on the ball of radius $\varepsilon$.

\begin{lemma}[exact nonlocalization]
\label{lem:exact-nonlocalization} Assume that $\eta$ 
satisfies Assumption \ref{assu:eta properties}. Suppose that for all test functions $\varphi_{t}\in C_{c}^{\infty}((0,1)\times\mathbb{R}^{d})$,
$(\rho_{t},\vec{\mathbf{j}}_{t})_{t\in[0,1]}\in[0,1]\rightarrow\mathcal{P}(\mathbb{R}^{d})\times\mathcal{M}_{loc}(\mathbb{R}^{d};\mathbb{R}^{d})$
satisfies
\[
\int_{0}^{1}\int_{\mathbb{R}^{d}}\partial_{t}\varphi_{t}(x)d\rho_{t}(x)dt+\int_{0}^{1}\int_{\mathbb{R}^{d}}\nabla\varphi_{t}(x)\cdot d\vec{\mathbf{j}}_{t}(x)dt=0.
\]
Then, it holds that $\left(\boldsymbol{\bar{\zeta}_{(\eta_{\varepsilon})}}*\rho_{t},\mathbf{j}_{t}\right)_{t\in[0,1]}$
solves the nonlocal continuity equation, where the measure $\mathbf{j}_{t}:[0,1]\rightarrow\mathcal{M}_{loc}(G)$
is defined in the following way: for all test functions $\Phi(x,y)\in C_{C}^{\infty}(G)$,
we define
\[
\iint_{G}\Phi(x,y)d\mathbf{j}_{t}(x,y):=\frac{d}{\varepsilon^{2}M_{2}(\eta)}\iint_{G}\Phi(x,y)(y-x)\cdot\left(d\vec{\mathbf{j}}_{t}(x)dy+d\vec{\mathbf{j}}_{t}(y)dx\right).
\]
\end{lemma}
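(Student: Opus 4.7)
The plan is to use the weak formulation of the local continuity equation as the starting point and test it against a carefully chosen convolved test function. Given $\varphi_t \in C_c^\infty((0,1)\times\mathbb{R}^d)$ (a test function for the nonlocal continuity equation), I would set $\widetilde{\varphi}_t(x) := (\boldsymbol{\bar{\zeta}}_{(\eta_\varepsilon)} * \varphi_t)(x)$ and plug $\widetilde{\varphi}_t$ into the local continuity equation satisfied by $(\rho_t, \vec{\mathbf{j}}_t)$. Under Assumption~\ref{assu:eta properties}(vi), $\bar{\zeta}_{(\eta_\varepsilon)}$ is an $L^1$ function supported in $\bar{B}(0,\varepsilon)$, and since $\varphi_t$ is smooth with compact support, $\widetilde{\varphi}_t$ is smooth with compact support in space-time, hence a bona fide test function for the local continuity equation.

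The time term is handled by Fubini and the symmetry of the convolution kernel: $\int \partial_t \widetilde{\varphi}_t\, d\rho_t = \int \partial_t \varphi_t\, d(\boldsymbol{\bar{\zeta}}_{(\eta_\varepsilon)} * \rho_t)$, which already matches the first term of the nonlocal continuity equation. For the spatial term, the key computation is the identity
\[
\nabla_x \bar{\zeta}_{(\eta_\varepsilon)}(|x-y|) \;=\; \frac{d}{\varepsilon^2 M_2(\eta)}\,(y-x)\,\eta_\varepsilon(|x-y|),
\]
which follows from $\zeta_{(\eta_\varepsilon)}'(r) = -r\,\eta_\varepsilon(r)$. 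An integration by parts transferring the gradient from $\varphi_t$ onto $\bar{\zeta}_{(\eta_\varepsilon)}$ gives
\[
\nabla \widetilde{\varphi}_t(x) \;=\; \frac{d}{\varepsilon^2 M_2(\eta)} \int (y-x)\,\eta_\varepsilon(|x-y|)\,\varphi_t(y)\,dy,
\]
so that the spatial pairing $\int \nabla \widetilde{\varphi}_t \cdot d\vec{\mathbf{j}}_t$ becomes a concrete double integral against $d\vec{\mathbf{j}}_t(x)\,dy$.

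It then remains to verify that this expression coincides with $\tfrac{1}{2}\iint_G \overline{\nabla}\varphi_t(x,y)\,\eta_\varepsilon(x,y)\,d\mathbf{j}_t(x,y)$. Expanding with the definition of $\mathbf{j}_t$, I exploit two symmetries: first, the integrand $(\varphi_t(y)-\varphi_t(x))\,\eta_\varepsilon(x,y)\,(y-x)$ is symmetric under $x \leftrightarrow y$ (two sign flips cancel), so the contributions from $d\vec{\mathbf{j}}_t(x)\,dy$ and $d\vec{\mathbf{j}}_t(y)\,dx$ are equal and account for the factor $\tfrac{1}{2}$; second, when the difference $\varphi_t(y)-\varphi_t(x)$ is split, the $\varphi_t(x)$ contribution integrates to zero against $(y-x)\,\eta_\varepsilon(|x-y|)\,dy$ by the radial (and hence centered) symmetry of $\eta_\varepsilon$. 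What remains is precisely $\int \nabla \widetilde{\varphi}_t \cdot d\vec{\mathbf{j}}_t$.

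Finally, the admissibility conditions (i)--(ii) of Definition~\ref{def:nce} for the pair $(\boldsymbol{\bar{\zeta}}_{(\eta_\varepsilon)} * \rho_t, \mathbf{j}_t)$ are easy consequences of the construction: narrow continuity of $t \mapsto \boldsymbol{\bar{\zeta}}_{(\eta_\varepsilon)} * \rho_t$ follows from narrow continuity of $\rho_t$ together with the fact that $\psi \mapsto \bar{\zeta}_{(\eta_\varepsilon)}*\psi$ maps $C_b(\mathbb{R}^d)$ into $C_b(\mathbb{R}^d)$; and weak$^*$ Borel measurability plus local finiteness of $t \mapsto \mathbf{j}_t$ on $G$ follow from the analogous properties of $\vec{\mathbf{j}}_t$, using that any compact $K\subset G$ is bounded and bounded away from the diagonal so that $(y-x)$ is uniformly bounded on $K$. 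The only delicate point in the whole argument is the integration by parts against the (a.e.-differentiable but possibly only Sobolev-regular) kernel $\bar{\zeta}_{(\eta_\varepsilon)}$; this is justified since $\zeta_{(\eta_\varepsilon)}' = -r\,\eta_\varepsilon \in L^1_{loc}$ under the moment hypothesis, or alternatively by first mollifying $\bar{\zeta}_{(\eta_\varepsilon)}$ and passing to the limit via dominated convergence.
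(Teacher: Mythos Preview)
Your proof is correct and rests on exactly the same three ingredients as the paper's: the gradient identity $\nabla_x\bar{\zeta}_{(\eta_\varepsilon)}(|x-y|)=\frac{d}{\varepsilon^2 M_2(\eta)}(y-x)\,\eta_\varepsilon(|x-y|)$, the vanishing of $\int(y-x)\,\eta_\varepsilon(|x-y|)\,dy$ by odd symmetry, and the $x\leftrightarrow y$ symmetry of $(\varphi_t(y)-\varphi_t(x))(y-x)$. The only difference is organizational and dual: you transfer the convolution $\bar{\zeta}_{(\eta_\varepsilon)}*$ onto the test function and plug $\widetilde\varphi_t$ into the local continuity equation for $(\rho_t,\vec{\mathbf j}_t)$, whereas the paper transfers it onto the solution---noting that $(\boldsymbol{\bar\zeta}_{(\eta_\varepsilon)}*\rho_t,\boldsymbol{\bar\zeta}_{(\eta_\varepsilon)}*\vec{\mathbf j}_t)$ again solves the local continuity equation---and then checks that the local divergence of the convolved flux coincides with the nonlocal divergence of $\mathbf j_t$.
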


\begin{proof}
Suppose that for all test functions $\varphi_{t}\in C_{c}((0,1)\times\mathbb{R}^{d})$,
$(\rho_{t},\vec{\mathbf{j}}_{t})_{t\in[0,1]}\in[0,1]\rightarrow\mathcal{P}(\mathbb{R}^{d})\times\mathcal{M}_{loc}(\mathbb{R}^{d};\mathbb{R}^{d})$
satisfies
\[
\int_{0}^{1}\int_{\mathbb{R}^{d}}\partial_{t}\varphi_{t}(x)d\rho_{t}(x)dt+\int_{0}^{1}\int_{\mathbb{R}^{d}}\nabla\varphi_{t}(x)\cdot d\vec{\mathbf{j}}_{t}(x)dt=0.
\]
Then, it also holds that $\left(\boldsymbol{\bar{\zeta}_{(\eta_{\varepsilon})}}*\rho_{t},\boldsymbol{\bar{\zeta}_{(\eta_{\varepsilon})}}*\vec{\mathbf{j}}_{t}\right)_{t\in[0,1]}$
is also a solution to this form of the continuity equation:
\[
\int_{0}^{1}\int_{\mathbb{R}^{d}}\partial_{t}\varphi_{t}(x)d\left(\boldsymbol{\bar{\zeta}_{(\eta_{\varepsilon})}}*\rho_{t}\right)(x)dt+\int_{0}^{1}\int_{\mathbb{R}^{d}}\nabla\varphi_{t}(x)\cdot d\left(\boldsymbol{\bar{\zeta}_{(\eta_{\varepsilon})}}*\vec{\mathbf{j}}_{t}\right)(x)dt=0.
\]
Our goal is to show that 
\[
\int_{0}^{1}\int_{\mathbb{R}^{d}}\partial_{t}\varphi_{t}(x)d\left(\boldsymbol{\bar{\zeta}_{(\eta_{\varepsilon})}}*\rho_{t}\right)(x)dt+\frac{1}{2}\int_{0}^{1}\iint_{G}\bar{\nabla}\varphi_{t}(x,y)\eta_{\varepsilon}(x,y)d\mathbf{j}_{t}(x,y)dt=0,
\]
so we claim that 
\[
\frac{1}{2}\int_{0}^{1}\iint_{G}\bar{\nabla}\varphi_{t}(x,y)\eta_{\varepsilon}(x,y)d\mathbf{j}_{t}(x,y)dt=\int_{0}^{1}\int_{\mathbb{R}^{d}}\nabla\varphi_{t}(x)\cdot d\left(\boldsymbol{\bar{\zeta}_{(\eta_{\varepsilon})}}*\vec{\mathbf{j}}_{t}\right)(x)dt,
\]
which establishes the theorem. Recalling that $\bar{\nabla}\varphi_{t}(x,y):=\varphi_{t}(y)-\varphi_{t}(x)$,
we see that for each $t\in[0,1]$, 
\begin{align*}
\iint_{G}\bar{\nabla}\varphi_{t}(x,y)\eta_{\varepsilon}(x,y)d\mathbf{j}_{t}(x,y) & =\frac{1}{\varepsilon^{2}\alpha_{d}\sigma_{\eta}}\iint_{G}\varphi_{t}(y)\eta_{\varepsilon}(x,y)(y-x)\cdot\left(d\vec{\mathbf{j}}_{t}(x)dy+d\vec{\mathbf{j}}_{t}(y)dx\right)\\
 & \quad-\frac{1}{\varepsilon^{2}\alpha_{d}\sigma_{\eta}}\iint_{G}\varphi_{t}(x)\eta_{\varepsilon}(x,y)(y-x)\cdot\left(d\vec{\mathbf{j}}_{t}(x)dy+d\vec{\mathbf{j}}_{t}(y)dx\right).
\end{align*}
(Note that the two integrals on the right hand side are well-defined,
since $\varphi_{t}(x)$ is smooth and compactly supported in $\mathbb{R}^{d}$,
$\eta_{\varepsilon}(x,y)(y-x)$ is integrable and compactly supported
in $x$ for each $y$, and $\vec{\mathbf{j}}_{t}\in\mathcal{M}_{loc}(\mathbb{R}^{d};\mathbb{R}^{d})$.)
So first, compute (using the fact that $\nabla_{y}\zeta_{(\eta_{\varepsilon})}(|x-y|)=(x-y)\eta_{\varepsilon}(|x-y|)$)
that 
\begin{align*}
\frac{d}{\varepsilon^{2}M_{2}(\eta)}\iint_{G}\varphi_{t}(y)\eta_{\varepsilon}(x,y)(y-x)\cdot d\vec{\mathbf{j}}_{t}(x)dy & =-\frac{d}{\varepsilon^{2}M_{2}(\eta)}\iint_{G}\varphi_{t}(y)\nabla_{y}\zeta_{(\eta_{\varepsilon})}(|x-y|)\cdot d\vec{\mathbf{j}}_{t}(x)dy\\
 & =\frac{d}{\varepsilon^{2}M_{2}(\eta)}\iint_{G}\zeta_{(\eta_{\varepsilon})}(|x-y|)\nabla\varphi_{t}(y)\cdot d\vec{\mathbf{j}}_{t}(x)dy\\
 & =\iint_{G}\bar{\zeta}_{(\eta_{\varepsilon})}(|x-y|)\nabla\varphi_{t}(y)\cdot d\vec{\mathbf{j}}_{t}(x)dy\\
 & =\int_{\mathbb{R}^{d}}\nabla\varphi_{t}(y)\cdot\left(\int_{\mathbb{R}^{d}}\bar{\zeta}_{(\eta_{\varepsilon})}(|x-y|)d\vec{\mathbf{j}}_{t}(x)\right)dy\\
 & =\int_{\mathbb{R}^{d}}\nabla\varphi_{t}(y)\cdot d\left(\boldsymbol{\bar{\zeta}_{(\eta_{\varepsilon})}}*\vec{\mathbf{j}}_{t}\right)(y).
\end{align*}
By identical reasoning, 
\[
\frac{d}{\varepsilon^{2}M_{2}(\eta)}\iint_{G}\varphi_{t}(x)\eta_{\varepsilon}(x,y)(y-x)\cdot d\vec{\mathbf{j}}_{t}(y)dx=-\int_{\mathbb{R}^{d}}\nabla\varphi_{t}(x)\cdot d\left(\boldsymbol{\bar{\zeta}_{(\eta_{\varepsilon})}}*\vec{\mathbf{j}}_{t}\right)(x).
\]
Next, compute that 
\begin{align*}
\iint_{G}\varphi_{t}(y)\eta_{\varepsilon}(x,y)(y-x)\cdot d\vec{\mathbf{j}}_{t}(y)dx & =\int_{\mathbb{R}^{d}}\varphi_{t}(y)\left(\int_{\mathbb{R}^{d}}\eta_{\varepsilon}(x,y)(y-x)dx\right)\cdot d\vec{\mathbf{j}}_{t}(y)\\
 & =0
\end{align*}
since the function $\eta_{\varepsilon}(x,y)(y-x)$ is radially anti-symmetric
around $y$, implying that $\int_{\mathbb{R}^{d}}\eta_{\varepsilon}(x,y)(y-x)dx=0$.
 By identical reasoning, it also holds that 
\[
\iint_{G}\varphi_{t}(x)\eta_{\varepsilon}(x,y)d\left(\vec{\mathbf{j}}_{t}\cdot(y-x)\right)(x)dy=0.
\]

Therefore, for all $t\in[0,1]$,
\[
\iint_{G}\bar{\nabla}\varphi_{t}(x,y)\eta_{\varepsilon}(x,y)d\mathbf{j}_{t}(x,y)=2\int_{\mathbb{R}^{d}}\nabla\varphi_{t}(x)\cdot\left(\boldsymbol{\bar{\zeta}_{(\eta_{\varepsilon})}}*\vec{\mathbf{j}}_{t}(y)\right)dx,
\]
which establishes the claim.
\end{proof}

\subsection{A quantitative upper bound on the nonlocal Wasserstein distance} 
\label{sec:UB-error}

In this section we establish a bound on the nonlocal Wasserstein distance of the form $\mathcal{W}_{\eta, \veps}  \leq  C W_{2} + O(\sqrt \veps)$, where $C$ is the exact proportionality constant that also appears in the matching lower bound on $\mathcal{W}_{\eta, \veps} $ presented in  Corollary \ref{lower-bound}.

\begin{prop}[bounding $\mathcal{W}$ by $W_{2}$]
 Assume that $\eta$ and $\theta$ satisfy Assumptions \ref{assu:eta properties}
and \ref{assu:theta properties} respectively. Let $K$ denote the
convolution kernel $K(x)=c_{K}e^{-|x|}$, where $c_{K}$ is a normalizing
constant, and let $\bar{\zeta}_{(\eta_{\varepsilon})}$ denote the
convolution kernel $\bar{\zeta}_{(\eta_{\varepsilon})}(x)=\frac{d}{\varepsilon^{2}M_{2}(\eta)}\int_{|x|}^{\infty}t\eta_{\varepsilon}(t)dt$.
Let $(\nu_{t},\vec{\mathbf{j}}_{t})_{t\in[0,1]}$ be a solution to
the (local) continuity equation in flux form. Furthermore, define
$\mathbf{j}_{t}:[0,1]\rightarrow\mathcal{M}_{loc}(G)$ as follows:
\[
d\mathbf{j}_{t}(x,y)=\frac{d}{\varepsilon^{2}M_{2}(\eta)}(y-x)\cdot\left(d(\boldsymbol{K_{s}}*\vec{\mathbf{j}}_{t})(x)dy+d(\boldsymbol{K_{s}}*\vec{\mathbf{j}}_{t}^{s})(y)dx\right).
\] 

Then, $(\boldsymbol{\bar{\zeta}_{(\eta_{\varepsilon})}}*\boldsymbol{K_{s}}*\rho_{t},\mathbf{j}_{t})_{t\in[0,1]}$
solves the nonlocal continuity equation, and for all $0<\varepsilon<s$,
and all $t\in[0,1]$, 
\[
\mathcal{A}_{\eta,\varepsilon}\left(\boldsymbol{\bar{\zeta}_{(\eta_{\varepsilon})}}*\boldsymbol{K_{s}}*\rho_{t},\mathbf{j}_{t}\right)\leq\frac{2d}{\varepsilon^{2}M_{2}(\eta)}\left(1+\frac{3}{s}\varepsilon\right)^{4}\mathcal{A}(\boldsymbol{K_{s}}*\rho_{t},\boldsymbol{K_{s}}*\vec{\mathbf{j}}_{t}).
\]
In particular, for all $\rho_{0},\rho_{1}\in\mathcal{P}_{2}(\mathbb{R}^{d})$,
\begin{align*}
\mathcal{W}_{\eta,\varepsilon}\left(\boldsymbol{\bar{\zeta}_{(\eta_{\varepsilon})}}*\boldsymbol{K_{s}}*\rho_{0},\boldsymbol{\bar{\zeta}_{(\eta_{\varepsilon})}}*\boldsymbol{K_{s}}*\rho_{1})\right) & \leq\frac{1}{\varepsilon}\left(\frac{2d}{M_{2}(\eta)}\right)^{1/2}\left(1+\frac{3}{s}\varepsilon\right)^{2}W_{2}\left(\rho_{0},\rho_{1}\right).
\end{align*}
\end{prop}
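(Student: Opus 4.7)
The plan is to obtain the $\mathcal{W}_{\eta,\varepsilon}$ bound by constructing an admissible curve for the nonlocal continuity equation via a two-step smoothing of a Benamou--Brenier optimal pair: first convolve with $K_s$ to acquire relative Lipschitz regularity of the density, then apply Lemma \ref{lem:exact-nonlocalization} to convert the smoothed local flow into a nonlocal one through a further convolution with $\bar{\zeta}_{(\eta_\varepsilon)}$. That $(\bar{\zeta}_{(\eta_\varepsilon)} * K_s * \rho_t, \mathbf{j}_t)$ solves the nonlocal continuity equation is then immediate from Lemma \ref{lem:exact-nonlocalization} applied to $(K_s * \rho_t, K_s * \vec{\mathbf{j}}_t)$, since convolution with $K_s$ preserves local continuity equation solutions (Remark \ref{rem:W2 convolution stability}).

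The main work is the pointwise action bound. Abbreviate $\mu_s := K_s * \rho_t$, $\vec{J}_s := K_s * \vec{\mathbf{j}}_t$ (which has a Lebesgue density $\vec{J}_s(x)$ by smoothing), and $\tilde{\rho}_t := \bar{\zeta}_{(\eta_\varepsilon)} * \mu_s$. The flux $\mathbf{j}_t$ has Lebesgue density $\frac{d}{\varepsilon^{2}M_2(\eta)}(y-x)\cdot(\vec{J}_s(x) + \vec{J}_s(y))$. Expanding the action via Definition \ref{def:action} with Lebesgue reference measure, applying $(a+b)^2 \leq 2(a^2+b^2)$, and symmetrizing in $x\leftrightarrow y$ (using the symmetry of $\theta$ and $\eta_\varepsilon$) reduces the action, up to a prefactor of $2(d/\varepsilon^{2}M_2(\eta))^{2}$, to an integral of $((y-x)\cdot\vec{J}_s(x))^{2}/\theta(\tilde{\rho}_t(x),\tilde{\rho}_t(y))$ against $\eta_\varepsilon$.

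For the denominator I would use $\theta(a,b)\geq\min(a,b)$ (from monotonicity, 1-homogeneity, and $\theta(c,c)=c$), and then bound $\min(\tilde{\rho}_t(x),\tilde{\rho}_t(y))$ from below by $\mu_s(x)$ via Lemma \ref{lem:relative Lipschitz}. Crucially, the relative Lipschitz estimate $\mu_s(z)\geq\mu_s(x)/(1+3|z-x|/s)$ transfers from $\mu_s$ to $\tilde{\rho}_t$ by a direct change of variables in the convolution $\tilde{\rho}_t = \bar{\zeta}_{(\eta_\varepsilon)} * \mu_s$; since $\bar{\zeta}_{(\eta_\varepsilon)}$ and $\eta_\varepsilon$ are both supported in $B(0,\varepsilon)$ and $\varepsilon\leq s$, on the integration domain $|y-x|\leq\varepsilon$ this produces the required power of $(1+3\varepsilon/s)$ out front. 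Integrating out $y$ via the isotropy identity $\int ((y-x)\cdot a)^2\eta_\varepsilon(|y-x|)\,dy = |a|^2\varepsilon^2 M_2(\eta)/d$ (rather than a crude Cauchy--Schwarz, which would cost a factor of $d$) collapses the $y$-integral to exactly the constant $2d/(\varepsilon^{2}M_2(\eta))$ multiplied by $|\vec{J}_s(x)|^{2}/\mu_s(x)$; the remaining $x$-integral is the local action $\mathcal{A}(\mu_s,\vec{J}_s)$.

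The $\mathcal{W}_{\eta,\varepsilon}$ inequality then follows by letting $(\rho_t,\vec{\mathbf{j}}_t)_{t\in[0,1]}$ be Benamou--Brenier optimal between $\rho_0$ and $\rho_1$ in flux form, integrating the action bound over $t$, noting that $\int_0^1 \mathcal{A}(K_s*\rho_t,K_s*\vec{\mathbf{j}}_t)\,dt \leq W_2^2(\rho_0,\rho_1)$ by Remark \ref{rem:W2 convolution stability}, and taking a square root. The main obstacle I anticipate is the bookkeeping around the denominator: correctly tracking how many factors of $(1+3\varepsilon/s)$ accumulate when one compares $\mu_s(x)$ to $\tilde{\rho}_t(x)$, to $\tilde{\rho}_t(y)$, and through $\theta$, while simultaneously keeping the isotropy identity in the right place so as not to spuriously inflate the $d$-dependence.
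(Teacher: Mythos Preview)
Your proposal is correct and follows essentially the same strategy as the paper: smooth with $K_s$, invoke Lemma~\ref{lem:exact-nonlocalization} to produce the nonlocal pair, lower-bound $\theta$ using the relative Lipschitz property of $K_s*\rho_t$, and collapse the $y$-integral with the isotropy identity $\int ((y-x)\cdot a)^2\eta_\varepsilon(|y-x|)\,dy = |a|^2\varepsilon^2 M_2(\eta)/d$. The only organisational difference is in the denominator bookkeeping. The paper first bounds $\theta(\tilde\rho_t(x),\tilde\rho_t(y))$ below by $\tilde\rho_t(y)$ times a factor coming from Lemma~\ref{lem:zeta-rel-lipschitz-regularity} (which costs $(1+3\varepsilon/s)^2(1+3|x-y|/s)$), and then in a second step replaces $\tilde\rho_t(y)$ by $\mu_s(y)$ at the cost of one further $(1+3\varepsilon/s)$; together with $|x-y|\le\varepsilon$ this accounts for the exponent $4$. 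Your route via $\theta(a,b)\ge\min(a,b)$ and a direct comparison $\min(\tilde\rho_t(x),\tilde\rho_t(y))\ge \mu_s(x)/[(1+3\varepsilon/s)(1+3|x-y|/s)]$ is slightly more economical and, if you track it carefully, actually lands on $(1+3\varepsilon/s)^2$ rather than $(1+3\varepsilon/s)^4$; either way the stated inequality follows.
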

\begin{proof}
Let $(\rho_{t},\vec{\mathbf{j}}_{t})_{t\in[0,1]}$ be a solution to
the (local) continuity equation in flux form. Let $s>0$ be some fixed
quantity chosen later on; we define $\rho_{t}^{s}:=\boldsymbol{K_{s}}*\nu_{t}$
and $\vec{\mathbf{j}}_{t}^{s}:=\boldsymbol{K_{s}}*\vec{\mathbf{j}}_{t}$.
Note that these objects are measures; the corresponding Lebesgue densities
are $K_{s}*\rho_{t}$ and $\vec{j}_{t}^{s}:=K_{s}*\vec{\mathbf{j}}_{t}$
respectively. By \cite[Lemma 8.1.9]{ambrosio2008gradient}, $(\rho_{t}^{s},\vec{\mathbf{j}}_{t}^{s})$
also solves the continuity equation in flux form. We then smooth $\rho_{t}^{s}$
and $\vec{\mathbf{j}}_{t}^{s}$ \emph{again}, using the kernel $\bar{\zeta}_{(\eta_{\varepsilon})}$,
so that $\left(\boldsymbol{\bar{\zeta}_{(\eta_{\varepsilon})}}*\rho_{t}^{s},\boldsymbol{\bar{\zeta}_{(\eta_{\varepsilon})}}*\vec{\mathbf{j}}_{t}^{s}\right)$
again solves the continuity equation in flux form. By Lemma \ref{lem:zeta-rel-lipschitz-regularity},
we know (since $\eta_{\varepsilon}(|x-y|)$ is supported on $B(0,\varepsilon)$)
that if $\varepsilon<s$, then the corresponding Lebesgue density
$\bar{\zeta}_{(\eta_{\varepsilon})}*\rho_{t}^{s}$ has local relative
Lipschitz regularity of the form 
\[
\frac{\bar{\zeta}_{(\eta_{\varepsilon})}*\rho_{t}^{s}(y)}{\bar{\zeta}_{(\eta_{\varepsilon})}*\rho_{t}^{s}(x)}\leq\left(1+\frac{3}{s}\varepsilon\right)^{2}\left(1+\frac{3}{s}|x-y|\right)\text{ when }|x-y|<s.
\]

Define $\mathbf{j}_{t}:[0,1]\rightarrow\mathcal{M}_{loc}(G)$ as in
Lemma \ref{lem:exact-nonlocalization} with respect to $\vec{\mathbf{j}}_{t}^{s}$,
namely 
\[
d\mathbf{j}_{t}(x,y)=\frac{d}{\varepsilon^{2}M_{2}(\eta)}(y-x)\cdot\left(d\vec{\mathbf{j}}_{t}^{s}(x)dy+d\vec{\mathbf{j}}_{t}^{s}(y)dx\right).
\]
In this case, since $\vec{\mathbf{j}}_{t}^{s}$ has a density with
respect to the Lebesgue measure given by $\vec{j}_{t}^{s}$, it follows
that $\mathbf{j}_{t}$ has density with respect to the product Lebesgue
measure restricted to $G$, given by
\[
j_{t}(x,y):=\frac{d\mathbf{j}_{t}}{dxdy}=\frac{d}{\varepsilon^{2}M_{2}(\eta)}(y-x)\cdot\left[\vec{j}_{t}^{s}(x)+\vec{j}_{t}^{s}(y)\right].
\]
Furthermore, by Lemma \ref{lem:exact-nonlocalization}, we know that
$\left(\boldsymbol{\bar{\zeta}_{(\eta_{\varepsilon})}}*\rho_{t}^{s},\mathbf{j}_{t}\right)$
solves the nonlocal continuity equation. 

Now, let us compare the nonlocal action $\mathcal{A}_{\eta,\varepsilon}\left(\boldsymbol{\bar{\zeta}_{(\eta_{\varepsilon})}}*\rho_{t}^{s},\mathbf{j}_{t}\right)$
with the local action $\mathcal{A}(\rho_{t}^{s},\vec{\mathbf{j}}_{t}^{s})$. 
Relying on the homogeneity of the interpolation $\theta$, we observe (using
Lemma \ref{lem:zeta-rel-lipschitz-regularity}) that 
\begin{align*}
\theta(\bar{\zeta}_{(\eta_{\varepsilon})}*\rho_{t}^{s}(x),\bar{\zeta}_{(\eta_{\varepsilon})}*\rho_{t}^{s}(y)) & =\bar{\zeta}_{(\eta_{\varepsilon})}*\rho_{t}^{s}(y)\theta\left(\frac{\bar{\zeta}_{(\eta_{\varepsilon})}*\rho_{t}^{s}(x)}{\bar{\zeta}_{(\eta_{\varepsilon})}*\rho_{t}^{s}(y)},1\right)\\
 & \geq\bar{\zeta}_{(\eta_{\varepsilon})}*\rho_{t}^{s}(y)\theta\left(\frac{1}{\left(1+\frac{3}{s}\varepsilon\right)^{2}}\frac{1}{1+\frac{3}{s}|x-y|},1\right)\\
 & \geq\bar{\zeta}_{(\eta_{\varepsilon})}*\rho_{t}^{s}(y)\cdot\frac{1}{\left(1+\frac{3}{s}\varepsilon\right)^{2}}\frac{1}{1+\frac{3}{s}|x-y|}.
\end{align*}
Therefore, 
\begin{align*}
\mathcal{A}_{\eta,\varepsilon} & \left(\boldsymbol{\bar{\zeta}_{(\eta_{\varepsilon})}}*\rho_{t}^{s},\mathbf{j}_{t}\right)  =\int_{\mathbb{R}^{d}}\int_{\mathbb{R}^{d}}\frac{j_{t}(x,y)^{2}}{2\theta(\bar{\zeta}_{(\eta_{\varepsilon})}*\rho_{t}^{s}(x),\bar{\zeta}_{(\eta_{\varepsilon})}*\rho_{t}^{s}(y))}\eta_{\varepsilon}(x,y)dxdy\\
 & =\int_{\mathbb{R}^{d}}\int_{\mathbb{R}^{d}}\frac{\left(\frac{d}{\varepsilon^{2}M_{2}(\eta)}\left[\vec{j}_{t}^{s}(x)+\vec{j}_{t}^{s}(y)\right]\cdot(y-x)\right)^{2}}{2\theta(\bar{\zeta}_{(\eta_{\varepsilon})}*\rho_{t}^{s}(x),\bar{\zeta}_{(\eta_{\varepsilon})}*\rho_{t}^{s}(y))}\eta_{\varepsilon}(x,y)dxdy\\
 & \leq\left(1+\frac{3}{s}\varepsilon\right)^{2}\int_{\mathbb{R}^{d}}\int_{\mathbb{R}^{d}}\left(1+\frac{3}{s}|x-y|\right)\frac{\left(\frac{d}{\varepsilon^{2}M_{2}(\eta)}\left[\vec{j}_{t}^{s}(x)+\vec{j}_{t}^{s}(y)\right]\cdot(y-x)\right)^{2}}{2\bar{\zeta}_{(\eta_{\varepsilon})}*\rho_{t}^{s}(y)}\eta_{\varepsilon}(x,y)dxdy.
\end{align*}
 By \cite[Corollary 2.16]{garciatrillos2020gromov},
together with Lemma \ref{lem:moment versus profile}, we have that
\[
\int_{\mathbb{R}^{d}}\left(\vec{j}_{t}^{s}(y)\cdot(y-x)\right)^{2}\eta_{\varepsilon}(x,y)dx=\varepsilon^{2}\frac{M_{2}(\eta)}{d}|\vec{j}_{t}^{s}(y)|^{2}.
\]
Consequently, also using the fact that the support of $\eta_{\varepsilon}(x,y)$ has diameter $\varepsilon$,
\begin{multline*}
\left(1+\frac{3}{s}\varepsilon\right)^{2}\int_{\mathbb{R}^{d}}\int_{\mathbb{R}^{d}}\left(1+\frac{3}{s}|x-y|\right)\frac{\left(\frac{d}{\varepsilon^{2}M_{2}(\eta)}\vec{j}_{t}^{s}(y)\cdot(y-x)\right)^{2}}{\bar{\zeta}_{(\eta_{\varepsilon})}*\rho_{t}^{s}(y)}\eta_{\varepsilon}(x,y)dxdy\\
\begin{aligned} & \leq\left(\frac{d}{\varepsilon^{2}M_{2}(\eta)}\right)^{2}\left(1+\frac{3}{s}\varepsilon\right)^{3}\int_{\mathbb{R}^{d}}\int_{\mathbb{R}^{d}}\frac{\left(\vec{j}_{t}^{s}(y)\cdot(y-x)\right)^{2}}{\bar{\zeta}_{(\eta_{\varepsilon})}*\rho_{t}^{s}(y)}\eta_{\varepsilon}(x,y)dxdy\\
 & \leq\frac{d}{\varepsilon^{2}M_{2}(\eta)}\left(1+\frac{3}{s}\varepsilon\right)^{3}\int_{\mathbb{R}^{d}}\frac{(\vec{j}_{t}^{s}(y))^{2}}{\bar{\zeta}_{(\eta_{\varepsilon})}*\rho_{t}^{s}(y)}dy.
\end{aligned}
\end{multline*}
Likewise, 
\begin{multline*}
\left(1+\frac{3}{s}\varepsilon\right)^{2}\int_{\mathbb{R}^{d}}\int_{\mathbb{R}^{d}}\left(1+\frac{3}{s}|x-y|\right)\frac{\left(\frac{d}{\varepsilon^{2}M_{2}(\eta)}\vec{j}_{t}^{s}(x)\cdot(y-x)\right)^{2}}{\bar{\zeta}_{(\eta_{\varepsilon})}*\rho_{t}^{s}(y)}\eta_{\varepsilon}(x,y)dxdy\\
\begin{aligned} & \leq\frac{d}{\varepsilon^{2}M_{2}(\eta)}\left(1+\frac{3}{s}\varepsilon\right)^{3}\int_{\mathbb{R}^{d}}\frac{(\vec{j}_{t}^{s}(x))^{2}}{\bar{\zeta}_{(\eta_{\varepsilon})}*\rho_{t}^{s}(x)}dx\end{aligned}
\end{multline*}
and so we deduce that 
\[
\mathcal{A}_{\eta,\varepsilon}\left(\boldsymbol{\bar{\zeta}_{(\eta_{\varepsilon})}}*\rho_{t}^{s},\mathbf{j}_{t}\right)\leq\frac{2d}{\varepsilon^{2}M_{2}(\eta)}\left(1+\frac{3}{s}\varepsilon\right)^{3}\int_{\mathbb{R}^{d}}\frac{(\vec{j}_{t}^{s}(y))^{2}}{\bar{\zeta}_{(\eta_{\varepsilon})}*\rho_{t}^{s}(y)}dy.
\]
From Lemma \ref{lem:zeta-rel-lipschitz-regularity}, we know that
$\rho_{t}^{s}(x)\left(\frac{1}{1+\frac{3}{s}\varepsilon}\right)\leq(\bar{\zeta}_{(\eta_{\varepsilon})}*\rho_{t}^{s})(x)$.
Therefore,
\begin{align*}
\int_{\mathbb{R}^{d}}\frac{(\vec{j}_{t}^{s}(y))^{2}}{\bar{\zeta}_{(\eta_{\varepsilon})}*\rho_{t}^{s}(y)}dy & \leq\left(1+\frac{3}{s}\varepsilon\right)\int_{\mathbb{R}^{d}}\frac{(\vec{j}_{t}^{s}(y))^{2}}{\rho_{t}^{s}(y)}dy
\end{align*}
and hence 
\[
\mathcal{A}_{\eta,\varepsilon}\left(\boldsymbol{\bar{\zeta}_{(\eta_{\varepsilon})}}*\rho_{t}^{s},\mathbf{j}_{t}\right)\leq\frac{2d}{\varepsilon^{2}M_{2}(\eta)}\left(1+\frac{3}{s}\varepsilon\right)^{4}\mathcal{A}(\rho_{t}^{s},\vec{\mathbf{j}}_{t}^{s}).
\]

We now take $(\rho_{t},\vec{\mathbf{j}}_{t})_{t\in[0,1]}$
to be a curve of least action for $W_{2}$, connecting $\rho_{0}$
and $\rho_{1}$. Then, by using the fact that $\mathcal{A}(\rho_{t}^{s},\vec{\mathbf{j}}_{t}^{s})\leq\mathcal{A}(\rho_{t},\mathbf{j}_{t})$,
and integrating in $t$, we have that 
\[
\mathcal{W}\left(\boldsymbol{\bar{\zeta}_{(\eta_{\varepsilon})}}*\rho_{0}^{s},\boldsymbol{\bar{\zeta}_{(\eta_{\varepsilon})}}*\rho_{1}^{s}\right)\leq\frac{1}{\varepsilon}\left(\frac{2d}{M_{2}(\eta)}\right)^{1/2}\left(1+\frac{3}{s}\varepsilon\right)^{2}W_{2}(\rho_{0},\rho_{1}),
\]
as desired.
\end{proof}

\begin{cor}
\label{upper-bound}Let $\mu_{0},\mu_{1}\in\mathcal{P}_{2}(\mathbb{R}^{d})$.
Assume that $\eta$ and $\theta$ satisfy Assumptions \ref{assu:eta properties}
and \ref{assu:theta properties} respectively. Suppose that
$\eta(|x-y|)\geq c_{s}|x-y|^{-d-s}$ whenever $|x-y|\leq\frac{1}{6}$,
or $\theta(1,0)=\kappa_{\theta}>0$. Then, 
\[
\varepsilon\mathcal{W}_{\eta,\varepsilon}(\mu_{0},\mu_{1})\leq\left(\frac{2d}{M_{2}(\eta)}\right)^{1/2}W_{2}(\mu_{0},\mu_{1})+O(\sqrt{\varepsilon}).
\]
Explicitly, 
\begin{align*}
\mathcal{W}_{\varepsilon,\eta}(\mu_{0},\mu_{1})\leq & \frac{1}{\varepsilon}\left(\frac{2d}{M_{2}(\eta)}\right)^{1/2}\left(1+\sqrt{\varepsilon}\right)^{2}W_{2}(\mu_{0},\mu_{1})\\
 & +2\sqrt{2}\left(\frac{C_{d,\theta}}{\sqrt{\eta\left(\frac{1}{2}\right)}}\frac{1}{\varepsilon}\right)\left(\left(d^{2}+d\right)^{1/2}\sqrt{\varepsilon}+\left(\frac{d}{d+2}\frac{M_{4}(\eta)}{M_{2}(\eta)}\right)^{1/2}\varepsilon\right)+2\sqrt{2}C_{d,\theta,\eta}
\end{align*}
where $C_{d,\theta}=\frac{C_{\theta}}{\frac{2}{3}\sqrt{\alpha_{d}}\left(\frac{1}{6}\right)^{d/2}}$,
$C_{\theta}=\int_{0}^{1}\frac{1}{\sqrt{\theta(1-r,1+r)}}dr$, and
$C_{d,\theta,\eta}$ is the constant from Lemma \ref{lem:nonlocal dirac crude upper bound}.
\end{cor}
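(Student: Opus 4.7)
The plan is to apply the preceding proposition (the bound on $\mathcal{W}_{\eta,\varepsilon}$ between the doubly-smoothed measures $\boldsymbol{\bar{\zeta}_{(\eta_{\varepsilon})}}*\boldsymbol{K_{s}}*\mu_{i}$) and then pay a small additive cost to pass back to $\mu_{0},\mu_{1}$ via the triangle inequality. Namely, I will write
\[
\mathcal{W}_{\eta,\varepsilon}(\mu_{0},\mu_{1})\leq\sum_{i=0,1}\mathcal{W}_{\eta,\varepsilon}\!\left(\mu_{i},\boldsymbol{\bar{\zeta}_{(\eta_{\varepsilon})}}*\boldsymbol{K_{s}}*\mu_{i}\right)+\mathcal{W}_{\eta,\varepsilon}\!\left(\boldsymbol{\bar{\zeta}_{(\eta_{\varepsilon})}}*\boldsymbol{K_{s}}*\mu_{0},\boldsymbol{\bar{\zeta}_{(\eta_{\varepsilon})}}*\boldsymbol{K_{s}}*\mu_{1}\right).
\]
The middle term is controlled directly by the proposition, giving the leading contribution $\frac{1}{\varepsilon}\left(\frac{2d}{M_{2}(\eta)}\right)^{1/2}(1+3\varepsilon/s)^{2}W_{2}(\mu_{0},\mu_{1})$.

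For the two endpoint ``regularization'' terms, I would invoke the crude upper bound of Lemma \ref{lem:crude W2 upper bound} (which applies precisely under the hypothesis of the corollary), yielding
\[
\mathcal{W}_{\eta,\varepsilon}\!\left(\mu_{i},\boldsymbol{\bar{\zeta}_{(\eta_{\varepsilon})}}*\boldsymbol{K_{s}}*\mu_{i}\right)\leq\sqrt{2}\,\frac{C_{d,\theta}}{\sqrt{\eta(1/2)}}\frac{1}{\varepsilon}\,W_{2}\!\left(\mu_{i},\boldsymbol{\bar{\zeta}_{(\eta_{\varepsilon})}}*\boldsymbol{K_{s}}*\mu_{i}\right)+\sqrt{2}\,C_{d,\theta,\eta},
\]
where I used $\sqrt{a^{2}+b^{2}}\le a+b$. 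It then remains to estimate $W_{2}(\mu,k*\mu)$ for the two convolution kernels. Using the standard coupling $(X,X+Y)$ with $Y$ independent of $X$ and $Y$ distributed according to $k$, one has $W_{2}^{2}(\mu,k*\mu)\le\int|y|^{2}k(y)\,dy$. For $K_{s}$, a direct computation gives $\int|y|^{2}K_{s}(y)\,dy=s^{2}d(d+1)$. For $\bar{\zeta}_{(\eta_{\varepsilon})}$, I would use Fubini on $\int|y|^{2}\bar{\zeta}_{(\eta_{\varepsilon})}(|y|)\,dy$ and reduce to a moment of $\eta_{\varepsilon}$, obtaining $\varepsilon^{2}\,\frac{d}{d+2}\,\frac{M_{4}(\eta)}{M_{2}(\eta)}$. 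Combining these two bounds via the $W_{2}$ triangle inequality controls $W_{2}(\mu_{i},\boldsymbol{\bar{\zeta}_{(\eta_{\varepsilon})}}*\boldsymbol{K_{s}}*\mu_{i})$.

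Finally, I will optimize the free parameter $s$ by choosing $s=3\sqrt{\varepsilon}$ (or an equivalent scaling) so that the Lipschitz-error factor $(1+3\varepsilon/s)^{2}$ becomes $(1+\sqrt{\varepsilon})^{2}$ in the leading term, while the endpoint terms contribute $s\sqrt{d^{2}+d}\asymp\sqrt{\varepsilon}$ and $\varepsilon\sqrt{\frac{d}{d+2}\frac{M_{4}(\eta)}{M_{2}(\eta)}}\asymp\varepsilon$. Substituting everything back yields the explicit bound displayed in the corollary and thus the asymptotic statement $\varepsilon\mathcal{W}_{\eta,\varepsilon}\le(\tfrac{2d}{M_{2}(\eta)})^{1/2}W_{2}+O(\sqrt{\varepsilon})$. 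The only mildly delicate part is keeping the constants clean: in particular one must be careful that Lemma \ref{lem:crude W2 upper bound} applies to the endpoint pairs $(\mu_{i},\boldsymbol{\bar{\zeta}_{(\eta_{\varepsilon})}}*\boldsymbol{K_{s}}*\mu_{i})$, which is automatic under either of the two standing alternatives ($\eta$ has the algebraic blow-up or $\theta(1,0)>0$).
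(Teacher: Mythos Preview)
Your proposal is correct and follows essentially the same route as the paper: triangle inequality for $\mathcal{W}_{\eta,\varepsilon}$, the preceding proposition for the middle (doubly-smoothed) term, Lemma~\ref{lem:crude W2 upper bound} for the two endpoint regularization terms, the convolution-coupling bound $W_2(\mu,k*\mu)\le M_2(k)^{1/2}$ (which the paper packages as Lemmas~\ref{lem:more convolution estimates} and~\ref{lem:Laplace kernel moments}), and finally the choice $s\sim\sqrt{\varepsilon}$. The only cosmetic difference is that the paper sets $s=\sqrt{\varepsilon}$ while you take $s=3\sqrt{\varepsilon}$ to make the factor $(1+3\varepsilon/s)^2$ match the displayed $(1+\sqrt{\varepsilon})^2$ exactly; either choice gives the asymptotic $O(\sqrt{\varepsilon})$ statement.
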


\begin{proof}
We saw in Lemma \ref{lem:crude W2 upper bound} that for arbitrary
$\nu_{0},\nu_{1}\in\mathcal{P}_{2}(\mathbb{R}^{d})$, 
\[
\mathcal{W}_{\eta,\varepsilon}^{2}(\nu_{0},\nu_{1})\leq2\frac{C_{d,\theta}^{2}}{\eta\left(\frac{1}{2}\right)}\frac{1}{\varepsilon^{2}}W_{2}^{2}(\nu_{0},\nu_{1})+2C_{d,\theta,\eta}^{2}
\]
where $C_{d,\theta}=\frac{C_{\theta}}{\frac{2}{3}\sqrt{\alpha_{d}}\left(\frac{1}{6}\right)^{d/2}}$,
$C_{\theta}=\int_{0}^{1}\frac{1}{\sqrt{\theta(1-r,1+r)}}dr$, and
$C_{d,\theta,\eta}$ is the constant from Lemma \ref{lem:nonlocal dirac crude upper bound}.
In particular, take $\nu_{0}=\mu_{0}$ and $\nu_{1}=\boldsymbol{\bar{\zeta}_{(\eta_{\varepsilon})}}*\boldsymbol{K_{s}}*\mu_{0}$.
It follows that 
\[
\mathcal{W}_{\eta,\varepsilon}^{2}(\mu_{0},\boldsymbol{\bar{\zeta}_{(\eta_{\varepsilon})}}*\boldsymbol{K_{s}}*\mu_{0})\leq2\frac{C_{d,\theta}^{2}}{\eta\left(\frac{1}{2}\right)}\frac{1}{\varepsilon^{2}}W_{2}^{2}(\mu_{0},\boldsymbol{\bar{\zeta}_{(\eta_{\varepsilon})}}*\boldsymbol{K_{s}}*\mu_{0})+2C_{d,\theta,\eta}^{2}.
\]
By identical reasoning, 
\[
\mathcal{W}_{\eta,\varepsilon}^{2}(\mu_{1},\boldsymbol{\bar{\zeta}_{(\eta_{\varepsilon})}}*\boldsymbol{K_{s}}*\mu_{1})\leq2\frac{C_{d,\theta}^{2}}{\eta\left(\frac{1}{2}\right)}\frac{1}{\varepsilon^{2}}W_{2}^{2}(\mu_{1},\boldsymbol{\bar{\zeta}_{(\eta_{\varepsilon})}}*\boldsymbol{K_{s}}*\mu_{1})+2C_{d,\theta,\eta}^{2}.
\]
Now since, by the triangle inequality, 
\[
\mathcal{W}_{\eta,\varepsilon}(\mu_{0},\mu_{1})\leq\mathcal{W}_{\eta,\varepsilon}(\mu_{0},\boldsymbol{\bar{\zeta}_{(\eta_{\varepsilon})}}*\boldsymbol{K_{s}}*\mu_{0})+\mathcal{W}_{\eta,\varepsilon}(\boldsymbol{\bar{\zeta}_{(\eta_{\varepsilon})}}*\boldsymbol{K_{s}}*\mu_{0},\boldsymbol{\bar{\zeta}_{(\eta_{\varepsilon})}}*\boldsymbol{K_{s}}*\mu_{1})+\mathcal{W}_{\eta,\varepsilon}(\mu_{1},\boldsymbol{\bar{\zeta}_{(\eta_{\varepsilon})}}*\boldsymbol{K_{s}}*\mu_{1})
\]
we can use the previous proposition to see that 
\begin{align*}
\mathcal{W}_{\eta,\varepsilon}(\mu_{0},\mu_{1})\leq & +\frac{1}{\varepsilon}\left(\frac{2d}{M_{2}(\eta)}\right)^{1/2}\left(1+\frac{3}{s}\varepsilon\right)^{2}W_{2}(\mu_{0},\mu_{1})\\
 & +\sqrt{2\frac{C_{d,\theta}^{2}}{\eta\left(\frac{1}{2}\right)}\frac{1}{\varepsilon^{2}}W_{2}^{2}(\mu_{0},\boldsymbol{\bar{\zeta}_{(\eta_{\varepsilon})}}*\boldsymbol{K_{s}}*\mu_{0})+2C_{d,\theta,\eta}^{2}}\\
 & +\sqrt{2\frac{C_{d,\theta}^{2}}{\eta\left(\frac{1}{2}\right)}\frac{1}{\varepsilon^{2}}W_{2}^{2}(\mu_{1},\boldsymbol{\bar{\zeta}_{(\eta_{\varepsilon})}}*\boldsymbol{K_{s}}*\mu_{1})+2C_{d,\theta,\eta}^{2}}.
\end{align*}
 It remains to estimate $W_{2}(\mu_{0},\boldsymbol{\bar{\zeta}_{(\eta_{\varepsilon})}}*\boldsymbol{K_{s}}*\mu_{0})$
and $W_{2}(\mu_{1},\boldsymbol{\bar{\zeta}_{(\eta_{\varepsilon})}}*\boldsymbol{K_{s}}*\mu_{1})$.
To do so, we can use two successive convolution estimates, since 
\[
W_{2}(\mu_{0},\boldsymbol{\bar{\zeta}_{(\eta_{\varepsilon})}}*\boldsymbol{K_{s}}*\mu_{0})\leq W_{2}(\mu_{0},\boldsymbol{K_{s}}*\mu_{0})+W_{2}(\boldsymbol{K_{s}}*\mu_{0},\boldsymbol{\bar{\zeta}_{(\eta_{\varepsilon})}}*\boldsymbol{K_{s}}*\mu_{0})
\]
and similarly for $\mu_{1}$. Thanks to the estimates from Lemmas
\ref{lem:more convolution estimates} and \ref{lem:Laplace kernel moments},
we know that 
\[
W_{2}(\mu_{0},\boldsymbol{K_{s}}*\mu_{0})\leq\left(\int|y|^{2}c_{K}e^{-|y|}dy\right)^{1/2}s=(d^{2}+d)^{1/2}s
\]
and 
\[
W_{2}(\boldsymbol{K_{s}}*\mu_{0},\boldsymbol{\bar{\zeta}_{(\eta_{\varepsilon})}}*\boldsymbol{K_{s}}*\mu_{0})\leq\left(\frac{d}{d+2}\frac{M_{4}(\eta)}{M_{2}(\eta)}\right)^{1/2}\varepsilon
\]
and of course the same holds for $\mu_{1}$. Therefore,  putting
$s=\sqrt{\varepsilon}$, we deduce the estimate 
\begin{align*}
\mathcal{W}_{\varepsilon,\eta}(\mu_{0},\mu_{1})\leq & \frac{1}{\varepsilon}\left(\frac{2d}{M_{2}(\eta)}\right)^{1/2}\left(1+\sqrt{\varepsilon}\right)^{2}W_{2}(\mu_{0},\mu_{1})\\
 & +2\sqrt{2}\left(\frac{C_{d,\theta}}{\sqrt{\eta\left(\frac{1}{2}\right)}}\frac{1}{\varepsilon}\right)\left(\left(d^{2}+d\right)^{1/2}\sqrt{\varepsilon}+\left(\frac{d}{d+2}\frac{M_{4}(\eta)}{M_{2}(\eta)}\right)^{1/2}\varepsilon\right)+2\sqrt{2}C_{d,\theta,\eta}
\end{align*}
as desired.
\end{proof}

\section{Nonlocal Hamilton-Jacobi Subsolution}
\label{sec:nonlocal hj}

We first first sketch the argument of this section. Our aim is to prove
a bound of the following form: 
\[
W_{2}(\mu_{0},\mu_{1})\leq\varepsilon\sqrt{\frac{M_{2}(\eta)}{2d}}\mathcal{W}_{\eta,\varepsilon}(\mu_{0},\mu_{1})+\text{error terms}.
\]
Ultimately, we will show that the error terms are of order $\sqrt{\varepsilon}$.
Our strategy is to use  \emph{Hamilton-Jacobi duality
}for $W_{2}$ (and also $\mathcal{W}_{\eta,\varepsilon}$). Simplifying
somewhat:
\begin{listi}
\item Let $(\phi_{t})_{t\in[0,1]}$ be a (viscosity) solution to the Hamilton-Jacobi
equation $\partial_{t}\phi_{t}+|\nabla\phi_{t}|^{2}=0$. Duality theory
for $W_{2}$ (specifically \cite[Proposition 5.48]{villani2003topics})
tells us that 
\[
\frac{1}{2}W_{2}^{2}(\mu_{0},\mu_{1})=\max_{\phi_{0}\in C_{b}}\left\{ \int\phi_{1}d\mu_{1}-\int\phi_{0}d\mu_{0}\::\:\partial_{t}\phi_{t}+\frac{1}{2}|\nabla\phi_{t}|^{2}=0\right\} .
\]
\item We expect that a similar duality theorem holds for $\mathcal{W}_{\eta,\varepsilon}$
(but there is a different notion of ``nonlocal Hamilton-Jacobi equation''):
\[
\frac{1}{2}\mathcal{W}_{\eta,\varepsilon}^{2}(\mu_{0},\mu_{1})=\max \left\{ \int\phi_{1}^\varepsilon d\mu_{1}-\int\phi_{0}^\varepsilon d\mu_{0}\::\:(\phi_{t}^\varepsilon)_{t\in[0,1]} \text{ is a n.l. HJ subsolution}\right\} .
\]
\item We will use solutions of the Hamilton-Jacobi equation to construct subsolutions to the nonlocal Hamilton-Jacobi equations thus obtaining a lower bound on $\mathcal{W}_{\eta,\varepsilon}^{2}$.
\end{listi}
Unfortunately, things are not so simple an we will need to introduce a layer of  approximations.
Because of the (conjectured, at this point, based
on the outcome of Section 4) asymptotic proportionality constant between
$W_{2}$ and $\mathcal{W}_{\eta,\varepsilon}$, the constant prefactor
in any mapping which takes a (local) HJ solution and gives us a
nonlocal HJ subsolution \emph{must} be $\frac{2d}{\varepsilon^{2}M_{2}(\eta)}$. 

Before proceeding, we present several preparatory lemmas.
\begin{defn}
(space of space-time bounded Lipschitz functions) We define 
\[
BL([0,1]\times\mathbb{R}^{d}):=\{\phi_{t}(x):[0,1]\times\mathbb{R}^{d}\rightarrow\mathbb{R} \: \mid \: \text{Lip}_{[0,1]\times\mathbb{R}^{d}}(\phi)+\Vert\phi\Vert_{L^{\infty}([0,1]\times\mathbb{R}^{d})}<\infty\}.
\]
\end{defn}

\begin{lemma}
\label{lem:Kantorovich potential Lipschitz bound}Suppose that $\mu_{0}$
and $\mu_{1}$ are probability measures which are both supported within
$B(0,R)$ inside $\mathbb{R}^{d}$. Then, in the Kantorovich duality
\[
\frac{1}{2}W_{2}^{2}(\mu_{0},\mu_{1})=\sup_{\phi\in BL(\mathbb{R}^{d})}\left\{ \int\phi^{c}(y)d\mu_{1}(y)-\int\phi(x)d\mu_{0}(x)\right\} 
\]
(where $\phi^{c}(y):=\inf_{x}\{\phi(x)+\frac{1}{2}|x-y|^{2}\}$),
the optimal potential, that is, 
\[
\underset{\phi\in BL(\mathbb{R}^{d})}{\text{argmax}}\left\{ \int\phi^{c}(y)d\mu_{1}(y)-\int\phi(x)d\mu_{0}(x)\right\} 
\]
has $\text{Lip}(\phi)\leq R$. 
\end{lemma}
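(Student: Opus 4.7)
The plan is to exploit the $c$-concavity of the optimal Kantorovich potential together with the compactness of the supports of $\mu_0$ and $\mu_1$. The key observation is that the pointwise supremum defining a $c$-concave function inherits Lipschitz regularity from its components.

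First I would reduce to $c$-concave potentials. For any admissible $\phi \in BL(\mathbb{R}^d)$, set
\[
\phi^{cc}(x) := \sup_{y \in \mathbb{R}^d}\bigl\{\phi^{c}(y) - \tfrac{1}{2}|x-y|^2\bigr\}.
\]
By the standard identities $\phi^{cc} \leq \phi$ pointwise and $(\phi^{cc})^c = \phi^c$, replacing $\phi$ by $\phi^{cc}$ cannot decrease the dual functional $\int \phi^c\,d\mu_1 - \int \phi\,d\mu_0$. Hence any maximizer may be assumed to satisfy $\phi = \phi^{cc}$.

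Second, I would use the support hypothesis on $\mu_1$ to restrict the outer supremum. Since $\mu_1$ is concentrated on $B(0,R)$, changing $\phi^c$ on $\mathbb{R}^d \setminus B(0,R)$ does not alter $\int \phi^c\,d\mu_1$; by declaring $\phi^c = -\infty$ there, one may assume
\[
\phi(x) = \sup_{y \in B(0,R)}\bigl\{\phi^c(y) - \tfrac{1}{2}|x-y|^2\bigr\},
\]
and this modification can only decrease $\phi$ on $B(0,R) \supset \mathrm{supp}(\mu_0)$, hence can only increase the objective. Now $\phi$ is realized as a pointwise supremum, over a compact family of indices, of smooth functions of $x$.

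Third, I would extract the Lipschitz bound. For each fixed $y \in B(0,R)$, the map $x \mapsto \phi^c(y) - \tfrac{1}{2}|x-y|^2$ has gradient $y-x$. Restricting $x$ also to $B(0,R)$, a near-maximizer $y^* = y^*(x) \in B(0,R)$ satisfies the envelope estimate
\[
\phi(x_2) - \phi(x_1) \leq \tfrac{1}{2}\bigl(|x_1 - y^*|^2 - |x_2 - y^*|^2\bigr) = (x_1 - x_2)\cdot\bigl(\tfrac{x_1+x_2}{2} - y^*\bigr),
\]
so $\mathrm{Lip}(\phi)$ on $B(0,R)$ is at most $\sup_{x,y \in B(0,R)} |x - y| \leq R$ (after centering $\mu_0, \mu_1$ at the origin and identifying $y^*(x)$ with the Brenier transport image, which lies in the convex hull of $\mathrm{supp}(\mu_1)$). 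The main subtlety is keeping track of the sharp constant; the $c$-concavity reduction and the restriction of the outer supremum are standard, and the conclusion follows from the fact that a supremum of functions with uniformly bounded Lipschitz constant has the same Lipschitz bound.
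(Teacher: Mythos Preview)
Your approach is essentially the paper's: both reduce to a $c$-concave optimal potential and then read the Lipschitz constant off from the cost. The paper simply invokes the general principle (citing Santambrogio) that any $c$-transform inherits a modulus of continuity from $c(\cdot,y)$ uniformly in $y$; you spell out the envelope computation by hand. Your steps 1 and 2 are correct --- in particular, the restriction of the outer supremum to $\mathrm{supp}(\mu_1)$ does not decrease the objective, since one checks that $\tilde\phi^c=\phi^c$ on $B(0,R)$ while $\tilde\phi\le\phi$ everywhere.

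The slip is in the final line. The inequality $\sup_{x,y\in B(0,R)}|x-y|\le R$ is simply false (the diameter is $2R$), and the parenthetical about ``centering'' and the Brenier image does not salvage it: your envelope estimate yields
\[
|\phi(x_2)-\phi(x_1)|\le |x_1-x_2|\,\Bigl|\tfrac{x_1+x_2}{2}-y^*\Bigr|,
\]
and with $x_1,x_2,y^*\in B(0,R)$ the second factor is bounded by $2R$, not $R$. In fact the paper's own proof quietly replaces the hypothesis by ``a domain of diameter $R$,'' for which the constant $R$ is the correct output of this argument; so the honest conclusion from either argument, under the stated hypothesis $\mathrm{supp}(\mu_i)\subset B(0,R)$, is $\mathrm{Lip}(\phi)\le 2R$.
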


Note that by Rademacher's theorem, if $\phi$ is Lipschitz then $\nabla\phi$
exists Lebesgue-almost everywhere; the lemma therefore also shows
that for the optimal Kantorovich potential, $|\nabla\phi|\leq R$.
\begin{proof}
This is an easy refinement of standard results concerning Kantorovich
duality, such as \cite[Theorem 5.10]{villani2008optimal}. 

Indeed, as discussed on \cite[p. 11]{santambrogio2015optimal}, if
$c(x,y)$ is any continuous cost function and $\psi^{c}$ is any $c$-convex
(resp. $c$-concave) function, it holds automatically that any modulus
of continuity for $c(x,y)$ is also a modulus of continuity for $\psi^{c}$.
In the case of $c(x,y)=\frac{1}{2}|x-y|^{2}$ on a domain of diameter
$R$, we can take $|c(x,y)-c(x^{\prime},y)|\leq R|x-x^{\prime}|$
as a crude global modulus of continuity in the $x$ variable (and
of course the same reasoning applies to the $y$ variable). Since
the optimal Kantorovich potential $\phi$ can always be taken to be
$c$-convex, the claim follows.
\end{proof}

The Kantorovich duality formula for $W_{2}$ also has a ``dynamic''
counterpart in terms of solutions to a Hamilton-Jacobi equation. This
fact was initially observed in \cite{bobkov2001hypercontractivity,otto2000generalization};
here we just give a  proof for convenience.


\begin{cor}
\label{cor:HJ Lipschitz diameter bound}Suppose that $\mu_{0}$ and
$\mu_{1}$ are probability measures which are both supported within
some domain of radius $R$ inside $\mathbb{R}^{d}$. Then, 
\[
\frac{1}{2}W_{2}^{2}(\mu_{0},\mu_{1})=\sup_{\phi_{t}\in BL([0,1]\times\mathbb{R}^{d})}\left\{ \int\phi_{1}d\mu_{1}-\int\phi_{0}d\mu_{0}:\partial_{t}\phi_{t}+\frac{1}{2}|\nabla\phi_{t}|^{2}=0\text{ in viscosity sense}\right\} 
\]
and it holds that the optimal Hamilton-Jacobi subsolution, that is,
\[
\underset{\phi\in BL([0,1]\times\mathbb{R}^{d})}{\mathrm{argmax}}\left\{ \int\phi_{1}d\mu_{1}-\int\phi_{0}d\mu_{0}:\partial_{t}\phi_{t}+\frac{1}{2}|\nabla\phi_{t}|^{2}=0\text{ in viscosity sense}\right\} 
\]
has the property that $\text{Lip}(\phi_{t})\leq R$, for all $t\in[0,1]$. 
\end{cor}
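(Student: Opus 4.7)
The plan is to identify the dynamic supremum over Hamilton-Jacobi subsolutions with the static Kantorovich supremum from Lemma \ref{lem:Kantorovich potential Lipschitz bound} by means of the Hopf-Lax formula. Writing $c(x,y) := \tfrac{1}{2}|x-y|^2$, we recall that for $\phi_0 \in BL(\mathbb{R}^d)$, the Hopf-Lax formula
\[
\phi_t(y) := \inf_{x \in \mathbb{R}^d}\Bigl\{\phi_0(x) + \tfrac{1}{2t}|x-y|^2\Bigr\}, \qquad t \in (0,1],
\]
produces the (unique) viscosity solution of $\partial_t \phi_t + \tfrac{1}{2}|\nabla \phi_t|^2 = 0$ with initial datum $\phi_0$, and satisfies $\phi_1 = \phi_0^c$.

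The upper bound is obtained as follows. Let $(\phi_t)_{t \in [0,1]} \in BL([0,1]\times\mathbb{R}^d)$ be any viscosity subsolution of the Hamilton-Jacobi equation. A standard comparison argument shows that $\phi_t \leq \phi_0 \,\square\, \tfrac{1}{2t}|\cdot|^2$ pointwise, where $\square$ denotes infimum-convolution; in particular
\[
\phi_1(y) - \phi_0(x) \leq \tfrac{1}{2}|x-y|^2 \qquad \text{for all } x,y \in \mathbb{R}^d.
\]
Thus the pair $(\phi_1,\phi_0)$ is admissible in the Kantorovich dual for $c$, and integrating against $\mu_1 - \mu_0$ together with Lemma \ref{lem:Kantorovich potential Lipschitz bound} yields $\int \phi_1\,d\mu_1 - \int \phi_0\,d\mu_0 \leq \tfrac{1}{2}W_2^2(\mu_0,\mu_1)$.

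For the matching lower bound, let $\bar{\phi}$ be the optimizer in the static Kantorovich formulation provided by Lemma \ref{lem:Kantorovich potential Lipschitz bound}; by that lemma, $\mathrm{Lip}(\bar{\phi}) \leq R$, and since $\mu_0,\mu_1$ are supported in $B(0,R)$ we may truncate $\bar{\phi}$ outside a sufficiently large ball without affecting $\int \bar{\phi}\,d\mu_0$ or $\int \bar{\phi}^c\,d\mu_1$, producing a member of $BL(\mathbb{R}^d)$. Defining $\phi_t$ from $\bar{\phi}$ via the Hopf-Lax formula above gives a viscosity solution with $\phi_0 = \bar{\phi}$ and $\phi_1 = \bar{\phi}^c$, and hence realizes the supremum in the dynamic formulation, yielding equality.

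It remains to propagate the Lipschitz constant. Given $y,y' \in \mathbb{R}^d$ and an $\varepsilon$-optimal $x^*$ for $\phi_t(y')$, the translated competitor $x^* + (y-y')$ for $\phi_t(y)$ gives
\[
\phi_t(y) - \phi_t(y') \leq \phi_0\bigl(x^* + (y-y')\bigr) - \phi_0(x^*) + \varepsilon \leq \mathrm{Lip}(\phi_0)\,|y-y'| + \varepsilon.
\]
Sending $\varepsilon \to 0$ and exchanging $y \leftrightarrow y'$ shows $\mathrm{Lip}(\phi_t) \leq \mathrm{Lip}(\phi_0) \leq R$ for every $t \in [0,1]$. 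The main technical point to verify carefully is the first (upper bound) step, namely that any $BL$ viscosity subsolution of the Hamilton-Jacobi equation satisfies the infimum-convolution inequality $\phi_1(y) - \phi_0(x) \leq \tfrac{1}{2}|x-y|^2$; this follows from the standard comparison principle applied to $\phi_t$ and the family of explicit classical supersolutions $(x,t) \mapsto \phi_0(x_0) + \tfrac{1}{2(1-t)}|x-x_0|^2$ parametrized by $x_0$, but it is the step with the most moving parts.
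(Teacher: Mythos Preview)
Your proposal is correct and follows the same overall strategy as the paper: identify the dynamic Hamilton--Jacobi supremum with the static Kantorovich dual via the Hopf--Lax semigroup, then invoke Lemma~\ref{lem:Kantorovich potential Lipschitz bound} for the Lipschitz bound on the initial potential and propagate it forward in time. The paper's argument is slightly more economical in that it appeals directly to \emph{uniqueness} of viscosity solutions (so the supremum over solutions is automatically parametrized by initial data $\phi_0 \in BL(\mathbb{R}^d)$, with $\phi_1 = \phi_0^c$), rather than proving the upper and lower bounds separately as you do; and it cites \cite{evans2022partial} for the Lipschitz propagation rather than proving it by hand. Your translation argument for $\mathrm{Lip}(\phi_t) \leq \mathrm{Lip}(\phi_0)$ is exactly what lies behind that citation.

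One small correction: the explicit supersolution family you write down, $(x,t) \mapsto \phi_0(x_0) + \tfrac{1}{2(1-t)}|x-x_0|^2$, does not do what you want. At $t=0$ its value is $\phi_0(x_0) + \tfrac{1}{2}|x-x_0|^2$, so the initial comparison $\phi_0 \leq \psi_0$ would require $\phi_0$ to be $1$-semiconcave, which is not assumed. The standard route is either to use the family $(x,t) \mapsto \phi_0(x_0) + \tfrac{1}{2t}|x-x_0|^2$ (an exact classical solution, singular at $t=0$ away from $x_0$) together with a limiting argument, or---since $\phi \in BL$---to integrate the a.e.\ inequality $\partial_t\phi_t + \tfrac12|\nabla\phi_t|^2 \leq 0$ along straight lines after mollification. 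You correctly flagged this step as the one with the most moving parts, and the conclusion $\phi_1(y) - \phi_0(x) \leq \tfrac12|x-y|^2$ is of course standard.
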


\begin{proof}
Let $\phi_{0}\in BL(\mathbb{R}^{d})$. By \cite[Theorem 10.3.3]{evans2022partial},
the unique viscosity solution of $\partial_{t}\phi_{t}+\frac{1}{2}|\nabla\phi_{t}|^{2}=0$
with initial condition $\phi_{0}$ is given by the Hopf-Lax formula
$\phi_{t}(x)=\inf_{y}\{\phi_{0}(y)+\frac{t}{2}|x-y|^{2}\}$ (which
at time 1 just returns $\phi_{0}^{c}$ for $c=\frac{1}{2}|x-y|^{2}$,
and so by comparing with \cite[Theorem 5.10]{villani2008optimal},
we see that 
\[
\sup_{\phi\in BL(\mathbb{R}^{d})}  \int\phi^{c}(y)d\mu_{1}(y)-\int\phi(x)d\mu_{0}(x) =\sup_{\phi_{0}\in BL(\mathbb{R}^{d})}\left\{ \int\phi_{1}d\mu_{1}- \int\phi_{0}d\mu_{0}:\partial_{t}\phi_{t}+\frac{1}{2}|\nabla\phi_{t}|^{2}=0\right\} 
\]
where on the right hand side $\phi_{t}$ solves $\partial_{t}\phi_{t}+\frac{1}{2}|\nabla\phi_{t}|^{2}=0$
in the viscosity sense. By \cite[Lemma 3.3.2]{evans2022partial},
we know that $\phi_{t}\in BL([0,1]\times\mathbb{R}^{d})$. 

Furthermore, the proof of \cite[Lemma 3.3.2]{evans2022partial} indicates
that $\text{Lip}(\phi_{t})\leq\text{Lip}(\phi_{0})$ for all $t\in[0,1].$
Therefore, by Lemma \ref{lem:Kantorovich potential Lipschitz bound}
it holds that $\text{Lip}(\phi_{t})\leq R$, for all $t\in[0,1]$.
\end{proof}
\begin{defn}
Following \cite[Definition 3.1]{erbar2019geometry}, we say that $\phi_{t}(x):[0,1]\times\mathbb{R}^{d}\rightarrow\mathbb{R}$
is a \emph{nonlocal Hamilton-Jacobi (HJ) subsolution}, and write $\phi_{t}(x)\in\text{HJ}_{\text{NL}}^{1}$
if, for a.e. $t\in(0,1)$,
if, for a.e. $t\in(0,1)$, the partial derivative $\partial_{t}\phi_{t}(x)$
exists for every $x\in\mathbb{R}^{d}$, and $\sup_{x\in\mathbb{R}^{d}}|\partial_{t}\phi_{t}(x)|<\infty$;
and we have, for all probability measures $\mu\in\mathcal{P}(\mathbb{R}^{d})$,
and for any (hence all) $\lambda$ such that $\mu\ll\lambda$, 
\[
\int\partial_{t}\phi_{t}(x)d\mu(x)+\frac{1}{4}\int(\phi_{t}(y)-\phi_{t}(x))^{2}\theta\left(\frac{d\mu}{d\lambda}(x),\frac{d\mu}{d\lambda}(y)\right)\eta_{\varepsilon}(x,y)d\lambda(x)d\lambda(y)\leq0.
\]
\end{defn}

\begin{rem}[Nonlocal Hamilton-Jacobi solutions]
 In the present work, it is the notion of nonlocal HJ subsolution
which is relevant. Nonetheless, we mention here an associated notion
of \emph{solution}. Following \cite{gangbo2019geodesics}, we say
that $\phi_{t}(x)$ is a \emph{nonlocal Hamilton-Jacobi solution}
if it is a nonlocal Hamilton-Jacobi subsolution, and moreover and
we have that
\[
\sup_{\mu\in\mathcal{P}(\mathbb{R}^{d});\lambda\gg\mu}\left\{ \int\partial_{t}\phi_{t}(x)d\mu(x)+\frac{1}{4}\int(\phi_{t}(y)-\phi_{t}(x))^{2}\theta\left(\frac{d\mu}{d\lambda}(x),\frac{d\mu}{d\lambda}(y)\right)\eta_{\varepsilon}(x,y)d\lambda(x)d\lambda(y)\right\} =0.
\]
We leave investigation of this rather atypical PDE to future work.
\end{rem}

The duality formula we expect to hold for $\mathcal{W}_{\eta,\varepsilon}$
is 
\[
\frac{1}{2}\mathcal{W}_{\eta,\varepsilon}^{2}(\mu_{0},\mu_{1})=\sup\left\{ \int\phi_{1}(x)d\mu_{1}(x)-\int\phi_{0}(x)\mu_{0}(x):\phi_{t}\in\text{HJ}_{\text{NL}}^{1}\right\} .
\]
However, in this work, we do not attempt to prove this duality formula
directly. Rather, for technical reasons, we introduce a ``smoothed
version'' of the nonlocal Wasserstein distance (for which we do prove
a partial duality result), as follows. 

\begin{defn}
Let $K$ denote the convolution kernel $c_{K}e^{-|x|}$. The ``$s$-smoothed
nonlocal Wasserstein distance'', denoted $\mathcal{W}_{\eta,\varepsilon,s}$,
is defined as follows: given $\mu_{0},\mu_{1}\in\mathcal{P}(\mathbb{R}^{d})$,
and denoting $\mu_{t}^{s}:=\boldsymbol{K_{s}}*\mu_{t}$ and $\mathbf{j}_{t}^{s}:=\boldsymbol{K_{s}}*\mathbf{j}_{t}$,
\begin{align*}
\mathcal{W}_{\eta,\varepsilon,s}^{2}(\mu_{0}^{s},\mu_{1}^{s}) & :=\inf_{(\mu_{t},\mathbf{j}_{t})_{t\in[0,1]}}\left\{ \int_{0}^{1}\mathcal{A}(\mu_{t}^{s},\mathbf{j}_{t}^{s})dt:(\mu_{t}^{s},\mathbf{j}_{t}^{s})_{t\in[0,1]}\in\mathcal{CE}(\mu_{0}^{s},\mu_{1}^{s})\right\} .
\end{align*}
\end{defn}

In other words, $\mathcal{W}_{\eta,\varepsilon,s}$ is defined in
the same variational fashion as $\mathcal{W}_{\eta,\varepsilon}$,
except we restrict to the class of a.c. curves which have been smoothed
using the mollification kernel $K_{s}$. Similarly, we also have a
notion of ``smoothed nonlocal HJ subsolution:'' 
\begin{defn}
We say that $\phi_{t}(x):[0,1]\times\mathbb{R}^{d}\rightarrow\mathbb{R}$
is an \emph{$s$-smoothed nonlocal Hamilton-Jacobi subsolution}, and
write $\phi_{t}(x)\in\text{HJ}_{\text{NL}}^{1,s}$ if 
the weak partial
derivative $\partial_{t}\phi_{t}(x):\mathbb{R}^{d}\rightarrow\mathbb{R}$
exists, and there exists some $p\in[1,\infty]$ such that $\Vert\partial_{t}\phi_{t}\Vert_{L^{p}(\mathbb{R}^{d})}<\infty$,
for Lebesgue almost all $t\in(0,1)$; and we have, for all probability
measures $\mu\in\mathcal{P}(\mathbb{R}^{d})$, for almost all $t\in(0,1)$,
\[
\int\partial_{t}\phi_{t}(x)d(\boldsymbol{K_{s}}*\mu)(x)+\frac{1}{4}\iint(\phi_{t}(y)-\phi_{t}(x))^{2}\theta\left(\frac{d(\boldsymbol{K_{s}}*\mu)}{dLeb}(x),\frac{d(\boldsymbol{K_{s}}*\mu)}{dLeb}(y)\right)\eta_{\varepsilon}(x,y)dxdy\leq0.
\]
\end{defn}

\begin{rem}
The regularity assumption we have imposed on $\partial_{t}\phi_{t}(x)$
is chosen so that $|\int\partial_{t}\phi_{t}d\boldsymbol{K_{s}}*\mu|<\infty$
for almost all $t\in(0,1)$. Indeed, $\boldsymbol{K_{s}}*\mu$ has
density $K_{s}*\mu$ with respect to the Lebesgue measure, and since
$\boldsymbol{K_{s}}*\mu$ is a probability measure, $K_{s}*\mu\in L^{1}(\mathbb{R}^{d})$.
At the same time, Young's convolution inequality implies that $K_{s}*\mu\in L^{\infty}(\mathbb{R}^{d})$
since $e^{-|x|}\in L^{\infty}$. Hence by interpolation, $K_{s}*\mu\in L^{q}$
for all $q\in[1,\infty]$, so it suffices that $\partial_{t}\phi_{t}(x)\in L^{p}(\mathbb{R}^{d})$
for some $p\in[1,\infty]$, for almost all $t\in(0,1)$. In our present
situation, it will always be the case that $\phi_{t}(x)\in BL([0,1]\times\mathbb{R}^{d})$,
so in particular we can take $p=\infty$.
\end{rem}

\begin{lemma}
\label{lem:s-smoothed nonlocal comparison}For all $\mu_{0},\mu_{1}\in\mathcal{P}_{2}(\mathbb{R}^{d})$,
it holds that
\[
\mathcal{W}_{\eta,\varepsilon}(\boldsymbol{K_{s}}*\mu_{0},\boldsymbol{K_{s}}*\mu_{1})\leq\mathcal{W}_{\eta,\varepsilon,s}(\boldsymbol{K_{s}}*\mu_{0},\boldsymbol{K_{s}}*\mu_{1})\leq\mathcal{W}_{\eta,\varepsilon}(\mu_{0},\mu_{1}).
\]
\end{lemma}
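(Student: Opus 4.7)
The plan is to establish both inequalities directly from Definition of $\mathcal{W}_{\eta,\varepsilon,s}$, using the stability of the nonlocal continuity equation and of the action under convolution (Proposition \ref{prop:convolution action stability}). Both inequalities reduce to checking which competitors are admissible and which action is being minimized.

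For the right-hand inequality $\mathcal{W}_{\eta,\varepsilon,s}(\boldsymbol{K_{s}}*\mu_{0},\boldsymbol{K_{s}}*\mu_{1})\le \mathcal{W}_{\eta,\varepsilon}(\mu_{0},\mu_{1})$, I would take an (almost) action-minimizing pair $(\mu_{t},\mathbf{j}_{t})\in\mathcal{CE}(\mu_{0},\mu_{1})$ for $\mathcal{W}_{\eta,\varepsilon}$. By Proposition \ref{prop:convolution action stability}(ii), the convolved pair $(\boldsymbol{K_{s}}*\mu_{t},\boldsymbol{K_{s}}*\mathbf{j}_{t})_{t\in[0,1]}$ solves the nonlocal continuity equation with endpoints $\boldsymbol{K_{s}}*\mu_{0}$ and $\boldsymbol{K_{s}}*\mu_{1}$, so $(\mu_{t},\mathbf{j}_{t})$ is an admissible competitor in the definition of $\mathcal{W}_{\eta,\varepsilon,s}^{2}(\boldsymbol{K_{s}}*\mu_{0},\boldsymbol{K_{s}}*\mu_{1})$. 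By Proposition \ref{prop:convolution action stability}(i), the pointwise-in-time inequality $\mathcal{A}(\boldsymbol{K_{s}}*\mu_{t},\boldsymbol{K_{s}}*\mathbf{j}_{t})\le\mathcal{A}(\mu_{t},\mathbf{j}_{t})$ holds. Integrating in $t$ and passing to the infimum on the left yields the claim.

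For the left-hand inequality $\mathcal{W}_{\eta,\varepsilon}(\boldsymbol{K_{s}}*\mu_{0},\boldsymbol{K_{s}}*\mu_{1})\le \mathcal{W}_{\eta,\varepsilon,s}(\boldsymbol{K_{s}}*\mu_{0},\boldsymbol{K_{s}}*\mu_{1})$, I would take a near-minimizer $(\mu_{t},\mathbf{j}_{t})$ for $\mathcal{W}_{\eta,\varepsilon,s}^{2}(\boldsymbol{K_{s}}*\mu_{0},\boldsymbol{K_{s}}*\mu_{1})$. By the very definition of $\mathcal{W}_{\eta,\varepsilon,s}$, the smoothed pair $(\boldsymbol{K_{s}}*\mu_{t},\boldsymbol{K_{s}}*\mathbf{j}_{t})$ belongs to $\mathcal{CE}(\boldsymbol{K_{s}}*\mu_{0},\boldsymbol{K_{s}}*\mu_{1})$. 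Hence it is itself an admissible competitor in the definition of $\mathcal{W}_{\eta,\varepsilon}^{2}(\boldsymbol{K_{s}}*\mu_{0},\boldsymbol{K_{s}}*\mu_{1})$, so
\[
\mathcal{W}_{\eta,\varepsilon}^{2}(\boldsymbol{K_{s}}*\mu_{0},\boldsymbol{K_{s}}*\mu_{1})\le \int_{0}^{1}\mathcal{A}(\boldsymbol{K_{s}}*\mu_{t},\boldsymbol{K_{s}}*\mathbf{j}_{t})\,dt,
\]
and taking the infimum over the near-minimizing sequence produces the inequality.

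There is no real obstacle here: both directions are formal consequences of how $\mathcal{W}_{\eta,\varepsilon,s}$ has been defined together with the two convolution-stability properties already proved in Proposition \ref{prop:convolution action stability}. The only point to be slightly careful about is that in the definition of $\mathcal{W}_{\eta,\varepsilon,s}$ the action is evaluated on the \emph{smoothed} pair, which is exactly what makes the second inequality a tautology once admissibility is verified, and makes the first inequality an immediate application of part (i) of Proposition \ref{prop:convolution action stability}. Taking square roots at the end converts the action inequalities into distance inequalities as stated.
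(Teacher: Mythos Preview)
Your proof is correct and follows essentially the same argument as the paper: the left inequality holds because every competitor in the definition of $\mathcal{W}_{\eta,\varepsilon,s}$ produces, after smoothing, an admissible competitor for $\mathcal{W}_{\eta,\varepsilon}$ with the same action; the right inequality follows because any $(\mu_t,\mathbf{j}_t)\in\mathcal{CE}(\mu_0,\mu_1)$ is admissible for $\mathcal{W}_{\eta,\varepsilon,s}$ (via Proposition~\ref{prop:convolution action stability}(ii)) and its smoothed action is no larger by Proposition~\ref{prop:convolution action stability}(i).
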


\begin{proof}
That $\mathcal{W}_{\eta,\varepsilon}(\boldsymbol{K_{s}}*\mu_{0},\boldsymbol{K_{s}}*\mu_{1})\leq\mathcal{W}_{\eta,\varepsilon,s}(\boldsymbol{K_{s}}*\mu_{0},\boldsymbol{K_{s}}*\mu_{1})$
holds is immediate from the fact that the infimum in the definition
of $\mathcal{W}_{\eta,\varepsilon,s}$ runs only over a.c. curves
connecting $\boldsymbol{K_{s}}*\mu_{0}$ and $\boldsymbol{K_{s}}*\mu_{1}$
which happen to be $K_{s}$-smoothings of (not necessarily a.c.) curves
between $\mu_{0}$ and $\mu_{1}$; whereas, for $\mathcal{W}_{\eta,\varepsilon}(\boldsymbol{K_{s}}*\mu_{0},\boldsymbol{K_{s}}*\mu_{1})$,
the infimum runs over all a.c. curves connecting $\boldsymbol{K_{s}}*\mu_{0}$
and $\boldsymbol{K_{s}}*\mu_{1}$.

That $\mathcal{W}_{\eta,\varepsilon,s}(\boldsymbol{K_{s}}*\mu_{0},\boldsymbol{K_{s}}*\mu_{1})\leq\mathcal{W}_{\eta,\varepsilon}(\mu_{0},\mu_{1})$
follows from the fact that mollifying a.c. curves reduces their total
action, see Proposition \ref{prop:convolution action stability}; and
that the class of time-dependent mass-flux pairs $(\mu_{t},\mathbf{j}_{t})_{t\in[0,1]}$
considered in the infimum defining $\mathcal{W}_{\eta,\varepsilon,s}$
includes all a.c. curves connecting $\mu_{0}$ and $\mu_{1}$.
\end{proof}
\begin{prop}
\label{prop:s-smooth duality inequality} Assume that $\eta$ and $\theta$ satisfy Assumptions \ref{assu:eta properties} (i-iv)
and \ref{assu:theta properties} respectively. Let $s>0$. Denote $\mu^{s}:=\boldsymbol{K_{s}}*\mu$
for any $\mu\in\mathcal{P}(\mathbb{R}^{d})$. The following duality
inequality for $\mathcal{W}_{\eta,\varepsilon,s}$ holds: 
\begin{align*}
\frac{1}{2}\mathcal{W}_{\eta,\varepsilon,s}^{2}(\mu_{0}^{s},\mu_{1}^{s}) & \geq\sup\left\{ \int\phi_{1}(x)d\mu_{1}^{s}(x)-\int\phi_{0}(x)d\mu_{0}^{s}(x):\phi_{t}\in\text{HJ}_{\text{NL}}^{1,s}\cap BL([0,1]\times\mathbb{R}^{d})\right\} .
\end{align*}
\end{prop}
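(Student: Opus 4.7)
The plan is to adapt the classical duality argument for Benamou-Brenier-type distances: given an admissible HJ subsolution $\phi_t$ and an admissible smoothed curve $(\mu_t^s,\mathbf{j}_t^s)\in\mathcal{CE}(\mu_0^s,\mu_1^s)$, I will differentiate $t\mapsto\int\phi_t\,d\mu_t^s$ along the flow and use the subsolution property to pointwise bound the result by $\tfrac12\mathcal{A}(\mu_t^s,\mathbf{j}_t^s)$; integrating in $t$ and then taking the infimum over admissible curves and the supremum over admissible $\phi_t$ gives the claim.

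Concretely, I would fix $\phi_t\in\mathrm{HJ}^{1,s}_{\mathrm{NL}}\cap BL$ and $(\mu_t,\mathbf{j}_t)\in\mathcal{CE}$, so that by Proposition \ref{prop:convolution action stability}(ii), $(\mu_t^s,\mathbf{j}_t^s)\in\mathcal{CE}(\mu_0^s,\mu_1^s)$. The first main step is to establish a chain rule
\begin{equation*}
\int\phi_1\,d\mu_1^s-\int\phi_0\,d\mu_0^s=\int_0^1\!\!\int\partial_t\phi_t\,d\mu_t^s\,dt+\tfrac12\int_0^1\!\!\iint_G\overline\nabla\phi_t(x,y)\,\eta_\varepsilon(x,y)\,d\mathbf{j}_t^s(x,y)\,dt.
\end{equation*}
Since $\phi_t$ is only Lipschitz, I would obtain this by mollifying $\phi_t$ in space and time to get $\phi_t^\delta\in C_c^\infty$ (truncating in space if needed, exploiting the boundedness of $\phi_t$, the Lipschitz bound on $\overline\nabla\phi_t$, and the tail moment bound on $\eta$ to control the tails of $\mathbf{j}_t^s$), using the weak form of the NCE from Definition \ref{def:nce} with endpoint contributions, and passing $\delta\to 0$ via dominated convergence.

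The second main step is a weighted Young inequality on the flux integral. Choosing any $\lambda\in\mathcal{M}_{loc}^+(G)$ dominating $|\mathbf{j}_t^s|$, $\mu_t^s\otimes\mathrm{Leb}$, and $\mathrm{Leb}\otimes\mu_t^s$, and writing
\begin{equation*}
\overline\nabla\phi_t\,\eta_\varepsilon\,d\mathbf{j}_t^s=\bigl(\overline\nabla\phi_t\sqrt{\theta\,\eta_\varepsilon}\bigr)\cdot\Bigl(\tfrac{d\mathbf{j}_t^s/d\lambda}{\sqrt\theta}\sqrt{\eta_\varepsilon}\Bigr)\,d\lambda,
\end{equation*}
Cauchy-Schwarz gives
\begin{equation*}
\tfrac12\iint_G\overline\nabla\phi_t\,\eta_\varepsilon\,d\mathbf{j}_t^s\leq\tfrac14\iint(\overline\nabla\phi_t)^2\,\theta(\mu_t^s(x),\mu_t^s(y))\,\eta_\varepsilon\,dx\,dy+\tfrac12\mathcal{A}(\mu_t^s,\mathbf{j}_t^s),
\end{equation*}
where I use $1$-homogeneity of $\theta$ (together with $\theta(0,0)=0$) to collapse the $\lambda$-integral against $\theta$ onto its Lebesgue-absolutely-continuous part. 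Applying the $\mathrm{HJ}^{1,s}_{\mathrm{NL}}$ condition with $\mu=\mu_t$ absorbs the first term, leaving $\tfrac{d}{dt}\int\phi_t\,d\mu_t^s\leq\tfrac12\mathcal{A}(\mu_t^s,\mathbf{j}_t^s)$.

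Integrating in $t$ and infimizing over admissible $(\mu_t,\mathbf{j}_t)$ yields $\int\phi_1\,d\mu_1^s-\int\phi_0\,d\mu_0^s\leq\tfrac12\mathcal{W}_{\eta,\varepsilon,s}^2(\mu_0^s,\mu_1^s)$; then supremizing over $\phi_t$ gives the stated inequality. I expect the main obstacle to be the first step, namely rigorously justifying the chain rule for merely Lipschitz $\phi_t$ against a weak solution of the NCE where $\mathbf{j}_t^s$ may be only locally finite; the mollification argument works but requires checking that the approximations $\phi_t^\delta$ are legitimate test functions in the weak formulation and that the nonlocal flux term passes to the limit under the moment assumption on $\eta$. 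Everything else is an application of 1-homogeneity and Young's inequality.
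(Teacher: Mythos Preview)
Your approach is correct and takes a more direct route than the paper. The paper follows the Lagrangian duality scheme of \cite{erbar2019geometry}: it writes $\tfrac12\mathcal{W}_{\eta,\varepsilon,s}^2$ as a constrained infimum, introduces $\phi_t$ as a Lagrange multiplier for the nonlocal continuity equation, swaps to $\sup\inf$, uses 1-homogeneity of $(\mu,\mathbf{j})\mapsto\tfrac12\mathcal{A}-\langle\partial_t\phi,\mu^s\rangle-\tfrac12\langle\langle\overline\nabla\phi,\mathbf{j}^s\rangle\rangle$ to reduce the inner infimum to a $0/{-\infty}$ dichotomy, and then completes the square to identify the constraint on $\phi$. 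You instead fix $\phi\in\mathrm{HJ}^{1,s}_{\mathrm{NL}}$ and a curve and estimate directly; your Young inequality and the paper's completion of the square are the same algebraic identity. Your route avoids the minimax machinery; the paper's route makes the dual structure explicit and is the natural setup if one later wants full duality (equality). The chain rule you need is exactly equation~\eqref{eq:nce with bl} in the paper, obtained there via the approximation argument from \cite[Remark~3.3]{erbar2014gradient}, so your mollification sketch is on target.

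One point deserves a sentence in your write-up. The definition of $\mathrm{HJ}^{1,s}_{\mathrm{NL}}$ reads ``for all $\mu\in\mathcal{P}(\mathbb{R}^d)$, for almost all $t$'', so the exceptional null set may depend on $\mu$. To apply the subsolution inequality with the time-dependent choice $\mu=\mu_t$ you are implicitly using the quantifier order ``for almost all $t$, for all $\mu$''. The swap is valid here: for a.e.\ $t$ the map
\[
\mu\longmapsto\int\partial_t\phi_t\,d(\boldsymbol{K_s}*\mu)+\tfrac14\iint(\overline\nabla\phi_t)^2\,\theta\bigl(K_s*\mu(x),K_s*\mu(y)\bigr)\,\eta_\varepsilon\,dx\,dy
\]
is narrowly continuous (bound $(\overline\nabla\phi_t)^2\le C(1\wedge|x-y|^2)$ using $\phi_t\in BL$, then use $\theta\le$ arithmetic mean together with the tail moment bound on $\eta$), and $\mathcal{P}(\mathbb{R}^d)$ is separable, so a countable-dense-set argument upgrades the quantifiers. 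The paper's own final step (``restricting the quantification to constant curves'') glosses over essentially the same issue.
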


\begin{proof}
We follow rather closely the argument given in \cite[Section 3]{erbar2019geometry}.
Following the exposition there, we introduce the shorthand notation
\[
\langle\phi,\mu\rangle:=\int_{\mathbb{R}^{d}}\phi(x)d\mu(x);\qquad\langle\langle\Phi,\mathbf{j}\rangle\rangle:=\iint_{G}\Phi(x,y)\eta(x,y)d\mathbf{j}(x,y).
\]
We also use the notation $\mu^{s}:=\boldsymbol{K_{s}}*\mu$ and $\mathbf{j}^{s}:=\boldsymbol{K_{s}}*\mathbf{j}$. 
In this argument, we allow $\mu_{t}$ to take values in $\mathcal{M}^{+}(\mathbb{R}^{d})$
rather than just $\mathcal{P}(\mathbb{R}^{d})$. Note that in this
situation, the action $\mathcal{A}(\mu,\mathbf{j})$ and the nonlocal continuity equation are still well-defined. 
Additionally,
there will be no loss of generality in assuming that, for paths we consider, $\mathcal{A}(\mu_{t}^{s},\mathbf{j}_{t}^{s})<\infty$
for almost all $t\in[0,1]$; since otherwise, we will find that $\mathcal{W}_{\eta,\varepsilon,s}^{2}(\boldsymbol{K_{s}}*\mu_{0},\boldsymbol{K_{s}}*\mu_{1})=\infty$,
and so the proposition holds trivially. In particular, quantification
over ``all'' $(\mu_{t},\mathbf{j}_{t})_{t\in[0,1]}$ shall be understood
to mean that:
\begin{itemize}
\item both $\mu_{t}:[0,1]\rightarrow\mathcal{M}^{+}(\mathbb{R}^{d})$ is
continuous, and $\mathbf{j}_{t}:[0,1]\rightarrow\mathcal{M}_{loc}(G)$
is Borel measurable, w.r.t. the respective weak{*} topologies on $\mathcal{M}^{+}(\mathbb{R}^{d})$
and $\mathcal{M}_{loc}(G)$; and
\item $\mathcal{A}(\mu_{t}^{s},\mathbf{j}_{t}^{s})<\infty$ for almost all
$t\in[0,1]$. 
\end{itemize}

By \cite[Lemma 3.1]{erbar2014gradient}, we know that, if $(\mu_{t}^{s},\mathbf{j}_{t}^{s})_{t\in[0,1]}$
satisfies the nonlocal continuity equation in the sense of distributions,
then for all $\phi_{t}(x)\in C_{c}^{\infty}([0,1]\times\mathbb{R}^{d})$,
\[
\langle\phi_{1},\mu_{1}^{s}\rangle-\langle\phi_{0},\mu_{0}^{s}\rangle-\int_{0}^{1}\left(\langle\partial_{t}\phi_{t},\mu_{t}^{s}\rangle+\frac{1}{2}\langle\langle\bar{\nabla}\phi_{t},\mathbf{j}_{t}^{s}\rangle\rangle\right)dt=0.
\]
Now consider, more generally, the case where $\tilde{\phi}_{t}(x)\in BL([0,1]\times\mathbb{R}^{d})$.
Reasoning as in \cite[Remark 3.3]{erbar2014gradient}, we approximate
$\tilde{\phi}_{t}(x)$ by functions $\phi_{t}\in C_{c}^{\infty}([0,1]\times\mathbb{R}^{d})$
which are uniformly bounded in $C^{1}([0,1]\times\mathbb{R}^{d})$
norm, and use the fact that 
\[
|\langle\langle\bar{\nabla}\phi_{t},\mathbf{j}_{t}^{s}\rangle\rangle|\leq\left(\sup_{t\in[0,1]}\Vert\phi_{t}\Vert_{C^{1}(\mathbb{R}^{d})}\right)\iint(1\wedge|x-y|)\eta_{\varepsilon}(x,y)d|\mathbf{j}_{t}^{s}|(x,y)\in L^{1}([0,1])
\]
to pass to the limit in the nonlocal continuity equation, and deduce
that more generally,
\begin{equation}
\forall\phi_{t}(x)\in BL([0,1]\times\mathbb{R}^{d})\quad\langle\phi_{1},\mu_{1}^{s}\rangle-\langle\phi_{0},\mu_{0}^{s}\rangle-\int_{0}^{1}\left(\langle\partial_{t}\phi_{t},\mu_{t}^{s}\rangle+\frac{1}{2}\langle\langle\bar{\nabla}\phi_{t},\mathbf{j}_{t}^{s}\rangle\rangle\right)dt=0.\label{eq:nce with bl}
\end{equation}
Therefore, for any two $s$-smoothed probability measures $\bar{\mu}_{0}^{s}$
and $\bar{\mu}_{1}^{s}$ with $\mathcal{W}_{\eta,\varepsilon,s}^{2}(\bar{\mu}_{0}^{s},\bar{\mu}_{1}^{s})<\infty$,
we have that
\[
\frac{1}{2}\mathcal{W}_{\eta,\varepsilon,s}^{2}(\bar{\mu}_{0}^{s},\bar{\mu}_{1}^{s})=\inf_{(\mu_{t},\mathbf{j}_{t})_{t\in[0,1]}:\mu_{0}=\bar{\mu}_{0},\mu_{1}=\bar{\mu}_{1}}\left\{ \int_{0}^{1}\frac{1}{2}\mathcal{A}(\mu_{t}^{s},\mathbf{j}_{t}^{s})dt:(\mu_{t}^{s},\mathbf{j}_{t}^{s})_{t\in[0,1]}\text{ satisfies }(\ref{eq:nce with bl})\right\} .
\]

Introducing a Lagrange multiplier for the constraint ``$(\mu_{t}^{s},\mathbf{j}_{t}^{s})_{t\in[0,1]}\text{ satisfies }(\ref{eq:nce with bl})$''
we see that 
\begin{multline*}
\frac{1}{2}\mathcal{W}_{\eta,\varepsilon,s}^{2}(\bar{\mu}_{0}^{s},\bar{\mu}_{1}^{s})=\inf_{(\mu_{t},\mathbf{j}_{t})_{t\in[0,1]}:\mu_{0}=\bar{\mu}_{0},\mu_{1}=\bar{\mu}_{1}}\sup_{\phi_{t}\in BL([0,1]\times\mathbb{R}^{d})}\bigg\{\int_{0}^{1}\frac{1}{2}\mathcal{A}(\mu_{t}^{s},\mathbf{j}_{t}^{s})dt\\
+\langle\phi_{1},\mu_{1}^{s}\rangle-\langle\phi_{0},\mu_{0}^{s}\rangle-\int_{0}^{1}\left(\langle\partial_{t}\phi_{t},\mu_{t}^{s}\rangle+\frac{1}{2}\langle\langle\bar{\nabla}\phi_{t},\mathbf{j}_{t}^{s}\rangle\rangle\right)dt\bigg\}.
\end{multline*}
(Note that the the inner supremum takes the value $+\infty$ unless
(\ref{eq:nce with bl}) holds.)

Using the general fact that $\sup\inf\leq\inf\sup$, we see that
\begin{multline*}
\frac{1}{2}\mathcal{W}_{\eta,\varepsilon,s}^{2}(\bar{\mu}_{0}^{s},\bar{\mu}_{1}^{s})\geq\sup_{\phi_{t}\in BL([0,1]\times\mathbb{R}^{d})}\inf_{(\mu_{t},\mathbf{j}_{t})_{t\in[0,1]}:\mu_{0}=\bar{\mu}_{0},\mu_{1}=\bar{\mu}_{1}}\bigg\{\langle\phi_{1},\mu_{1}^{s}\rangle-\langle\phi_{0},\mu_{0}^{s}\rangle\\
+\int_{0}^{1}\left(\frac{1}{2}\mathcal{A}(\mu_{t}^{s},\mathbf{j}_{t}^{s})-\langle\partial_{t}\phi_{t},\mu_{t}^{s}\rangle-\frac{1}{2}\langle\langle\bar{\nabla}\phi_{t},\mathbf{j}_{t}^{s}\rangle\rangle\right)dt\bigg\}
\end{multline*}
which in turn implies that (now letting the infimum quantify over
a larger set, without fixed endpoints $\bar{\mu}_{0}$ and $\bar{\mu}_{1}$)
\begin{multline*}
\frac{1}{2}\mathcal{W}_{\eta,\varepsilon,s}^{2}(\bar{\mu}_{0}^{s},\bar{\mu}_{1}^{s})\geq\sup_{\phi_{t}\in BL([0,1]\times\mathbb{R}^{d})}\bigg\{\langle\phi_{1},\bar{\mu}_{1}^{s}\rangle-\langle\phi_{0},\bar{\mu}_{0}^{s}\rangle\\
+\inf_{(\mu_{t},\mathbf{j}_{t})_{t\in[0,1]}}\int_{0}^{1}\left(\frac{1}{2}\mathcal{A}(\mu_{t}^{s},\mathbf{j}_{t}^{s})-\langle\partial_{t}\phi_{t},\mu_{t}^{s}\rangle-\frac{1}{2}\langle\langle\bar{\nabla}\phi_{t},\mathbf{j}_{t}^{s}\rangle\rangle\right)dt\bigg\}.
\end{multline*}
Observe that due to the 1-homogeneity in $(\mu_{t}^{s},\mathbf{j}_{t}^{s})_{t\in[0,1]}$
of both the total action and nonlocal continuity equation, the inner
infimum evaluates to $-\infty$ unless $\phi_{t}$ is chosen so that,
for all $(\mu_{t},\mathbf{j}_{t})_{t\in[0,1]}$, 
\[
\int_{0}^{1}\left(\frac{1}{2}\mathcal{A}(\mu_{t}^{s},\mathbf{j}_{t}^{s})-\langle\partial_{t}\phi_{t},\mu_{t}^{s}\rangle-\frac{1}{2}\langle\langle\bar{\nabla}\phi_{t},\mathbf{j}_{t}^{s}\rangle\rangle\right)dt\geq0
\]
since otherwise we can just replace $(\mu_{t}^{s},\mathbf{j}_{t}^{s})$
with $(\lambda\mu_{t}^{s},\lambda\mathbf{j}_{t}^{s})$ and then send
$\lambda\rightarrow\infty$. %

At the same time,  since $\mu_{t}^{s}$ has full support, and $\mathcal{A}(\mu_{t}^{s},\mathbf{j}_{t}^{s})<\infty$
by assumption, we have by \cite[Lemma 2.3]{erbar2014gradient} that
$\mathbf{j}_{t}^{s}\ll dx\otimes dy$. So, we can compute that
\begin{multline*}
\frac{1}{2}\mathcal{A}(\mu_{t}^{s},\mathbf{j}_{t}^{s})-\frac{1}{2}\langle\langle\bar{\nabla}\phi_{t},\mathbf{j}_{t}^{s}\rangle\rangle=\frac{1}{4}\iint\frac{\left(\frac{d\mathbf{j}_{t}^{s}}{dxdy}(x,y)-\bar{\nabla}\phi_{t}(x,y)\theta\left(\frac{d\mu_{t}^{s}}{dx}(x),\frac{d\mu_{t}^{s}}{dy}(y)\right)\right)^{2}}{\theta\left(\frac{d\mu_{t}^{s}}{dx}(x),\frac{d\mu_{t}^{s}}{dy}(y)\right)}\eta_{\varepsilon}(x,y)dxdy\\
-\frac{1}{4}\iint(\bar{\nabla}\phi_{t}(x,y))^{2}\theta\left(\frac{d\mu_{t}^{s}}{dx}(x),\frac{d\mu_{t}^{s}}{dy}(y)\right)(x,y)dxdy.
\end{multline*}
Therefore, we conclude that %
\begin{multline}
\int_{0}^{1} \int\partial_{t}\phi_{t}d\mu_{t}^{s}+\frac{1}{4}\iint(\bar{\nabla}\phi_{t}(x,y))^{2}\theta\left(\frac{d\mu_{t}^{s}}{dx}(x),\frac{d\mu_{t}^{s}}{dy}(y)\right)\eta_{\varepsilon}(x,y)dxdy\,dt\\
\leq\int_{0}^{1} \frac{1}{4}\iint\frac{\left(\frac{d\mathbf{j}_{t}^{s}}{dxdy}(x,y)-\bar{\nabla}\phi_{t}(x,y)\theta\left(\frac{d\mu_{t}^{s}}{dx}(x),\frac{d\mu_{t}^{s}}{dy}(y)\right)\right)^{2}}{\theta\left(\frac{d\mu_{t}^{s}}{dx}(x),\frac{d\mu_{t}^{s}}{dy}(y)\right)}\eta_{\varepsilon}(x,y)dxdy\,dt.\label{eq:s-smoothed HJ inequality}
\end{multline}
Again, this condition holds provided that the inner infimum is $\geq0$
(as opposed to $-\infty$). Of course, if it \emph{is} the case that
the inner infimum is nonnegative (for a given $\phi_{t}(x)\in BL([0,1]\times\mathbb{R}^{d})$),
we certainly have that 
\begin{equation*}
 \langle\phi_{1},\bar{\mu}_{1}^{s}\rangle-\langle\phi_{0},\bar{\mu}_{0}^{s}\rangle+\inf_{(\mu_{t},\mathbf{j}_{t})_{t\in[0,1]}}\int_{0}^{1}\left(\frac{1}{2}\mathcal{A}(\mu_{t}^{s},\mathbf{j}_{t}^{s})-\langle\partial_{t}\phi_{t},\mu_{t}^{s}\rangle-\frac{1}{2}\langle\langle\bar{\nabla}\phi_{t},\mathbf{j}_{t}^{s}\rangle\rangle\right)dt\geq\langle\phi_{1},\bar{\mu}_{1}^{s}\rangle-\langle\phi_{0},\bar{\mu}_{0}^{s}\rangle.
\end{equation*}
Therefore, we deduce the duality relation
\[
\frac{1}{2}\mathcal{W}_{\eta,\varepsilon,s}^{2}(\bar{\mu}_{0}^{s},\bar{\mu}_{1}^{s})\geq\sup_{\phi_{t}\in BL([0,1]\times\mathbb{R}^{d})}\{\langle\phi_{1},\bar{\mu}_{1}^{s}\rangle-\langle\phi_{0},\bar{\mu}_{0}^{s}\rangle:\forall(\mu_{t},\mathbf{j}_{t})_{t\in[0,1]}\text{ (\ref{eq:s-smoothed HJ inequality}) holds}\}.
\]
In turn, since in general, 
\[
\int_{0}^{1} \frac{1}{4}\iint\frac{\left(\frac{d\mathbf{j}_{t}^{s}}{dxdy}(x,y)-\bar{\nabla}\phi_{t}(x,y)\theta\left(\frac{d\mu_{t}^{s}}{dx}(x),\frac{d\mu_{t}^{s}}{dy}(y)\right)\right)^{2}}{\theta\left(\frac{d\mu_{t}^{s}}{dx}(x),\frac{d\mu_{t}^{s}}{dy}(y)\right)}\eta_{\varepsilon}(x,y)dxdy\, dt\geq0,
\]
this implies that 
\begin{multline*}
\frac{1}{2}\mathcal{W}_{\eta,\varepsilon,s}^{2}(\bar{\mu}_{0}^{s},\bar{\mu}_{1}^{s})\geq\sup_{\phi_{t}\in BL([0,1]\times\mathbb{R}^{d})}\bigg\{\langle\phi_{1},\bar{\mu}_{1}^{s}\rangle-\langle\phi_{0},\bar{\mu}_{0}^{s}\rangle:\\
\forall(\mu_{t},\mathbf{j}_{t})_{t\in[0,1]},\int_{0}^{1}\int\partial_{t}\phi_{t}d\mu_{t}^{s}+\frac{1}{4}\iint(\bar{\nabla}\phi_{t}(x,y))^{2}\theta\left(\frac{d\mu_{t}^{s}}{dx}(x),\frac{d\mu_{t}^{s}}{dy}(y)\right)\eta_{\varepsilon}(x,y)dxdy\,dt\leq0\bigg\}.
\end{multline*}
this implies that 
\begin{multline*}
\frac{1}{2}\mathcal{W}_{\eta,\varepsilon,s}^{2}(\bar{\mu}_{0}^{s},\bar{\mu}_{1}^{s})\geq\sup_{\phi_{t}\in BL([0,1]\times\mathbb{R}^{d})}\bigg\{\langle\phi_{1},\bar{\mu}_{1}^{s}\rangle-\langle\phi_{0},\bar{\mu}_{0}^{s}\rangle:\\
\forall(\mu_{t},\mathbf{j}_{t})_{t\in[0,1]},\underbrace{\int_{0}^{1}\int\partial_{t}\phi_{t}d\mu_{t}^{s}+\frac{1}{4}\iint(\bar{\nabla}\phi_{t}(x,y))^{2}\theta\left(\frac{d\mu_{t}^{s}}{dx}(x),\frac{d\mu_{t}^{s}}{dy}(y)\right)\eta_{\varepsilon}(x,y)dxdy\,dt}_{(\dagger)}\leq0\bigg\}.
\end{multline*}
However, the quantity $(\dagger)$ is independent of $\mathbf{j}_{t}$. Therefore the statement reduces
to quantification over $(\mu_{t})_{t\in[0,1]}$ is for all weak{*}ly
continuous ``curves'' in $\mathcal{M}^{+}(\mathbb{R}^{d})$, such
that there exists a $(\mathbf{j}_{t})_{t\in[0,1]}$ for which $\mathcal{A}(\mu_{t}^{s},\mathbf{j}_{t}^{s})<\infty$
for almost all $t\in[0,1]$; but we do not even require satisfaction
of the nonlocal continuity equation, so we can always just take $\mathbf{j}_{t}=0$.
From this, by restricting the quantification only to ``curves''
$(\mu_{t})_{t\in[0,1]}$ which are constant and belong to $\mathcal{P}(\mathbb{R}^{d})$,
we deduce that
\begin{multline*}
\frac{1}{2}\mathcal{W}_{\eta,\varepsilon,s}^{2}(\bar{\mu}_{0}^{s},\bar{\mu}_{1}^{s})\geq\sup_{\phi_{t}\in BL([0,1]\times\mathbb{R}^{d})}\bigg\{\langle\phi_{1},\bar{\mu}_{1}^{s}\rangle-\langle\phi_{0},\bar{\mu}_{0}^{s}\rangle:\\
\forall\mu\in\mathcal{P}(\mathbb{R}^{d}),\int\partial_{t}\phi_{t}d\mu^{s}+\frac{1}{4}\iint(\bar{\nabla}\phi_{t}(x,y))^{2}\theta\left(\frac{d\mu_{t}^{s}}{dx}(x),\frac{d\mu_{t}^{s}}{dy}(y)\right)\eta_{\varepsilon}(x,y)dxdy\leq0\quad t\text{-a.s.}\bigg\}
\end{multline*}
as desired.
\end{proof}


\begin{prop}
\label{prop:local-to-nonlocal subsolution}Let $\varepsilon\in(0,1]$
and $s\geq\varepsilon$. Assume that $\eta$ and $\theta$ satisfy Assumptions \ref{assu:eta properties} (i-iv)
and \ref{assu:theta properties} respectively. Assume also that $M_{5}(\eta)<\infty$. %
{} Suppose that $(\phi_{t})_{t\in[0,1]}\in BL([0,1]\times\mathbb{R}^{d})$
is a Hamilton-Jacobi subsolution %
{} in the sense that %
\[
\partial_{t}\phi_{t}(x)+\frac{1}{2}|\nabla\phi_{t}(x)|^{2}\leq0\text{ a.e.}.
\]
Then, 
\[
\left(\frac{2d}{\varepsilon^{2}M_{2}(\eta)}K_{s}*\phi_{t}-\frac{CA^{2}}{\varepsilon s}t\right)_{t\in[0,1]};
\]
\[
C=\frac{d^{2}}{M_{2}(\eta)^{2}}\left[\frac{3}{8}M_{3}(\eta)+\sqrt{\left(\frac{M_{2}(\eta)}{d}+\frac{3}{2}M_{3}(\eta)\right)\left(M_{4}(\eta)+\frac{3}{2}M_{5}(\eta)\right)}+\frac{1}{4}\left(M_{4}(\eta)+\frac{3}{2}M_{5}(\eta)\right)\right]
\]
\[
A=\sup_{t\in[0,1]}\Vert\nabla\phi_{t}\Vert_{L^{\infty}(\mathbb{R}^{d})}
\]
is an $s$-smooth nonlocal Hamilton-Jacobi subsolution, belonging
to $BL([0,1]\times\mathbb{R}^{d})$.
\end{prop}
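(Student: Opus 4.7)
\smallskip

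\noindent\textbf{Proof proposal.} Write $\tilde\phi_t := K_s * \phi_t$ and $\mu^s := K_s * \mu$. The plan is to plug the candidate $\psi_t$ directly into the defining inequality of an $s$-smoothed nonlocal HJ subsolution. First I would verify $\psi_t \in BL([0,1]\times\mathbb{R}^d)$: convolution with the probability density $K_s$ preserves both the Lipschitz constant and the sup-norm bound, and the linear-in-$t$ correction is clearly $BL$. For the regularity of $\partial_t\psi_t$, observe that a HJ subsolution satisfies $|\partial_t\phi_t| \leq \tfrac{1}{2}|\nabla\phi_t|^2 \leq A^2/2$ pointwise, so $\partial_t\tilde\phi_t = K_s * \partial_t\phi_t$ is uniformly bounded, hence in $L^\infty$.

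The first substantive step handles the time derivative. Convolving the pointwise HJ inequality $\partial_t\phi_t \leq -\tfrac{1}{2}|\nabla\phi_t|^2$ with the positive kernel $K_s$, then applying Jensen's inequality to the convex map $v \mapsto |v|^2$, gives
\[
\partial_t\tilde\phi_t \;\leq\; -\tfrac{1}{2}\, K_s*|\nabla\phi_t|^2 \;\leq\; -\tfrac{1}{2}|K_s*\nabla\phi_t|^2 \;=\; -\tfrac{1}{2}|\nabla\tilde\phi_t|^2.
\]
Integrating $\partial_t\psi_t$ against $\mu^s$ then yields
\(
\int \partial_t\psi_t\,d\mu^s \leq -\frac{d}{\varepsilon^2 M_2(\eta)}\int|\nabla\tilde\phi_t|^2\,d\mu^s - \frac{CA^2}{\varepsilon s}.
\)

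The core step is a second-order Taylor expansion $\tilde\phi_t(y) - \tilde\phi_t(x) = \nabla\tilde\phi_t(x)\cdot(y-x) + R(x,y)$. To control $R$, I would use the identity $\partial^2_{uu}\tilde\phi_t = (\partial_u K_s)*\partial_u\phi_t$ together with $\|\nabla K\|_{L^1}=1$ (direct computation for the Laplace kernel); this gives $\|\nabla^2\tilde\phi_t\|_{\mathrm{op},\infty} \leq A/s$ and hence $|R(x,y)|\leq \frac{A}{2s}|y-x|^2$. To control the interpolation factor, combine Lemma \ref{lem:relative Lipschitz} (giving $\mu^s(y)\leq \mu^s(x)(1+\frac{3|y-x|}{s})$ when $|y-x|\leq s$) with the arithmetic-mean bound $\theta\leq \frac{r+s}{2}$ from Lemma \ref{lem:arithmetic mean upper bound}, to obtain $\theta(\mu^s(x),\mu^s(y))\leq \mu^s(x)(1+\frac{3|y-x|}{2s})$ throughout $\mathrm{supp}\,\eta_\varepsilon$, which is valid because $\varepsilon\leq s$. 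Substituting into the nonlocal quadratic form and integrating in $y$ first, isotropy of $\eta_\varepsilon$ gives the crucial identity $\int(\nabla\tilde\phi_t(x)\cdot z)^2\eta_\varepsilon(|z|)\,dz = \frac{\varepsilon^2 M_2(\eta)}{d}|\nabla\tilde\phi_t(x)|^2$. After the coefficient $\tfrac14 \bigl(\tfrac{2d}{\varepsilon^2 M_2(\eta)}\bigr)^2 = \tfrac{d^2}{\varepsilon^4 M_2(\eta)^2}$ is applied, this leading contribution becomes exactly $\frac{d}{\varepsilon^2 M_2(\eta)}\int|\nabla\tilde\phi_t|^2\,d\mu^s$, which cancels the leading term from the time derivative.

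What remains are three error pieces: (a) $(\nabla\tilde\phi_t\cdot z)^2$ integrated against the $\frac{3|z|}{2s}$-correction, yielding a factor $M_3(\eta)/s$; (b) the cross term $2R(\nabla\tilde\phi_t\cdot z)$, which I would bound by Cauchy--Schwarz on the inner integral, $\iint (|v\cdot z|+|R|)^2(\cdots)\eta_\varepsilon d\mu^s \leq (\sqrt{I_1}+\sqrt{I_2})^2$, producing the mixed moment term $\sqrt{(\tfrac{M_2(\eta)}{d}+\tfrac{3}{2}M_3(\eta))(M_4(\eta)+\tfrac{3}{2}M_5(\eta))}$; and (c) the pure remainder $R^2$ integrated with the $(1+\tfrac{3|z|}{2s})$ weight, giving $\tfrac{1}{4}(M_4(\eta)+\tfrac{3}{2}M_5(\eta))$. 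Each is estimated using $|\nabla\tilde\phi_t|\leq A$, $|R|\leq \tfrac{A|z|^2}{2s}$, and the moment formulas $\int|z|^k\eta_\varepsilon\,dz = \varepsilon^k M_k(\eta)$. Using $\varepsilon\leq s$ to absorb $\tfrac{1}{s^2}$ into $\tfrac{1}{\varepsilon s}$ and $\tfrac{\varepsilon}{s}\leq 1$ to simplify, each error scales as $\frac{A^2}{\varepsilon s}$; they assemble into exactly $\frac{CA^2}{\varepsilon s}$ times a $\mu$-dependent mass $\leq 1$. The requirement $M_5(\eta)<\infty$ enters precisely in (c). The main obstacle is the bookkeeping in this error computation: one must keep the isotropic integral exact for the leading term (so the cancellation with $\partial_t\tilde\phi_t$ is clean) while simultaneously producing the Cauchy--Schwarz-based error bounds in the specific form that assembles into the stated three-term constant $C$.
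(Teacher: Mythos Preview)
Your proposal is correct and follows essentially the same route as the paper: convolve to preserve the HJ subsolution property via Jensen, use the relative Lipschitz estimate for $K_s*\mu$ together with $\theta\leq$ arithmetic mean to bound $\theta(\mu^s(x),\mu^s(y))\leq \mu^s(x)(1+\tfrac{3|y-x|}{2s})$, Taylor-expand $\tilde\phi_t(y)-\tilde\phi_t(x)$ with the Hessian bound $\|D^2\tilde\phi_t\|_\infty\leq A/s$, use the isotropic identity so that the leading term cancels exactly against the time derivative, and then control the three residual pieces (the $M_3$ correction, the cross term via Cauchy--Schwarz/H\"older, and the pure remainder) to assemble the stated constant $C$. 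The paper organizes the computation by first rescaling $\phi_t^s$ by $\tfrac{2d}{\varepsilon^2 M_2(\eta)}$ and then splitting the quadratic form into $I_\varepsilon+II_\varepsilon+III_\varepsilon$, but the estimates and the constants are identical to yours.
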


\begin{proof}
Let $\phi_{t}\in BL([0,1]\times\mathbb{R}^{d})$ satisfy $\partial_{t}\phi_{t}(x)+\frac{1}{2}|\nabla\phi_{t}(x)|^{2}\leq0$
almost everywhere. Then $\phi_{t}^{s}(x):=(K_{s}*\phi_{t})(x)$
also satisfies 
\[
\partial_{t}\phi_{t}^{s}(x)+\frac{1}{2}|\nabla\phi_{t}^{s}(x)|^{2}\leq0\quad(t,x)\text{-a.e.}
\]
by Lemma \ref{lem:convolution of HJ subsoln}. Additionally, we observe
that $\phi_{t}^{s}\in BL([0,1]\times\mathbb{R}^{d})$, thanks to Lemma
\ref{lem:BL stable convolution in space}.

We claim that some slight modification of 
\[
\tilde{\phi}_{t}^{s,\varepsilon}:=\frac{2d}{\varepsilon^{2}M_{2}(\eta)}\phi_{t}^{s},
\]
more precisely 
\[
\check{\phi}_{t}^{s,\varepsilon}:=\tilde{\phi}_{t}^{s,\varepsilon}-\frac{CA^{2}}{\varepsilon s}t
\]
where $C>0$ and $A>0$ are some constants to be determined later
on, is a subsolution to the \emph{$s$-smoothed nonlocal }HJ equation. 

First, observe that since $\partial_{t}\phi_{t}^{s}(x)+\frac{1}{2}|\nabla\phi_{t}^{s}(x)|^{2}\leq0$,
it follows that 
\[
\frac{2d}{\varepsilon^{2}M_{2}(\eta)}\partial_{t}\phi_{t}^{s}(x)+\frac{2d}{\varepsilon^{2}M_{2}(\eta)}\frac{1}{2}|\nabla\phi_{t}^{s}(x)|^{2}\leq0
\]
and so
\[
\partial_{t}\tilde{\phi}_{t}^{s,\varepsilon}+\frac{1}{4}\varepsilon^{2}\frac{M_{2}(\eta)}{d}|\nabla\tilde{\phi}_{t}^{s,\varepsilon}(x)|^{2}\leq0.
\]
Second, note that replacing $\tilde{\phi}_{t}^{s,\varepsilon}$
with $\check{\phi}_{t}^{s,\varepsilon}:=\tilde{\phi}_{t}^{s,\varepsilon}-\frac{CA^{2}}{\varepsilon s}t$
leaves the gradient unchanged (that is, $\nabla\tilde{\phi}_{t}^{s,\varepsilon}(x)=\nabla\check{\phi}_{t}^{s,\varepsilon}(x)$),
whereas $\partial_{t}(\tilde{\phi}_{t}^{s,\varepsilon}-\frac{CA^{2}}{\varepsilon s}t)=\partial_{t}\tilde{\phi}_{t}^{s,\varepsilon}-\frac{CA^{2}}{\varepsilon s}$.
In particular, given that 
\[
\partial_{t}\tilde{\phi}_{t}^{s,\varepsilon}+\frac{1}{4}\varepsilon^{2}\frac{M_{2}(\eta)}{d}|\nabla\tilde{\phi}_{t}^{s,\varepsilon}(x)|^{2}\leq0
\]
we have that 
\[
\partial_{t}\left(\tilde{\phi}_{t}^{s,\varepsilon}-\frac{CA^{2}}{\varepsilon s}t\right)+\frac{1}{4}\varepsilon^{2}\frac{M_{2}(\eta)}{d}|\nabla\left(\tilde{\phi}_{t}^{s,\varepsilon}-\frac{CA^{2}}{\varepsilon s}t\right)(x)|^{2}\leq-\frac{CA^{2}}{\varepsilon s}
\]
so we know that $\check{\phi}_{t}^{s,\varepsilon}$ is also a (local)
HJ subsolution. At the same time, it is clear that $\check{\phi}_{t}^{s,\varepsilon}\in BL([0,1]\times\mathbb{R}^{d})$,
from the fact that $\phi_{t}^{s}\in BL([0,1]\times\mathbb{R}^{d})$. 

Given some arbitrary $\mu\in\mathcal{P}(\mathbb{R}^{d})$, we denote
$\rho:=K_{s}*\mu$. Note that $\rho$ is a Lebesgue density. Then,
we see that 
\begin{multline*}
\int\partial_{t}\check{\phi}_{t}^{s,\varepsilon}(x)d\rho(x)+\frac{1}{4}\int\int(\check{\phi}_{t}^{s,\varepsilon}(y)-\check{\phi}_{t}^{s,\varepsilon}(x))^{2}\theta(\rho(x),\rho(y))\eta_{\varepsilon}(x,y)dxdy\\
\begin{aligned} & \leq-\frac{1}{4}\varepsilon^{2}\frac{M_{2}(\eta)}{d}\int|\nabla\check{\phi}_{t}^{s,\varepsilon}(x)|^{2}d\rho(x)-\frac{CA^{2}}{\varepsilon s}+\frac{1}{4}\iint(\check{\phi}_{t}^{s,\varepsilon}(y)-\check{\phi}_{t}^{s,\varepsilon}(x))^{2}\theta(\rho(x),\rho(y))\eta_{\varepsilon}(x,y)dxdy\\
 & \leq-\frac{1}{4}\varepsilon^{2}\frac{M_{2}(\eta)}{d}\int|\nabla\check{\phi}_{t}^{s,\varepsilon}(x)|^{2}d\rho(x)-\frac{CA^{2}}{\varepsilon s} \\
 & \phantom{\leq} \;\, +\frac{1}{4}\iint(\check{\phi}_{t}^{s,\varepsilon}(y)-\check{\phi}_{t}^{s,\varepsilon}(x))^{2}\left(1+\frac{3}{2s}|y-x|\right)\rho(x)\eta_{\varepsilon}(x,y)dxdy
\end{aligned}
\end{multline*}
where we have used the fact that $K=c_{K}e^{-|x|}$, which implies (thanks to Lemma \ref{lem:relative Lipschitz})
that
\[
\theta(\rho(x),\rho(y))=\rho(x)\theta\left(1,\frac{\rho(y)}{\rho(x)}\right)\leq\rho(x)\theta\left(1,1+\frac{3}{s}|y-x|\right)\leq\rho(x)\left(1+\frac{3}{2s}|y-x|\right).
\]
Next, we replace $(\check{\phi}_{t}^{s,\varepsilon}(y)-\check{\phi}_{t}^{s,\varepsilon}(x))^{2}$
with an expression involving $|\nabla\check{\phi}_{t}^{s,\varepsilon}|^{2}$,
using a Taylor approximation. Note that such a Taylor series approximation
requires sufficient regularity of $\check{\phi}_{t}^{s,\varepsilon}$
(which is why we are using the smooth potential $\check{\phi}_{t}^{s,\varepsilon}$
as our candidate $s$-smoothed nonlocal HJ subsolution). 

To wit, 
\[
|(\check{\phi}_{t}^{s,\varepsilon}(y)-\check{\phi}_{t}^{s,\varepsilon}(x))|-|\nabla\check{\phi}_{t}^{s,\varepsilon}(x)\cdot(x-y)|\leq|(\check{\phi}_{t}^{s,\varepsilon}(y)-\check{\phi}_{t}^{s,\varepsilon}(x))-\nabla\check{\phi}_{t}^{s,\varepsilon}(x)\cdot(x-y)|\leq\frac{1}{2}\Vert D^{2}\check{\phi}_{t}^{s,\varepsilon}\Vert_{\infty}|x-y|^{2}
\]
and so
\[
(\check{\phi}_{t}^{s,\varepsilon}(y)-\check{\phi}_{t}^{s,\varepsilon}(x))^{2}\leq\frac{1}{4}\Vert D^{2}\check{\phi}_{t}^{s,\varepsilon}\Vert_{\infty}^{2}|x-y|^{4}+\Vert D^{2}\check{\phi}_{t}^{s,\varepsilon}\Vert_{\infty}|x-y|^{2}|\nabla\check{\phi}_{t}^{s,\varepsilon}(x)\cdot(x-y)|+|\nabla\check{\phi}_{t}^{s,\varepsilon}(x)\cdot(x-y)|^{2}.
\]
And, since $|\nabla\check{\phi}_{t}^{s,\varepsilon}(x)\cdot(x-y)|\leq|\nabla\check{\phi}_{t}^{s,\varepsilon}(x)||x-y|$,
it follows that 
\begin{multline*}
\frac{1}{4}\iint(\check{\phi}_{t}^{s,\varepsilon}(y)-\check{\phi}_{t}^{s,\varepsilon}(x))^{2}\left(1+\frac{3}{2s}|y-x|\right)\rho(x)\eta_{\varepsilon}(x,y)dxdy\\
\begin{aligned} & \leq\frac{1}{4}\iint|\nabla\check{\phi}_{t}^{s,\varepsilon}(x)\cdot(x-y)|^{2}\left(1+\frac{3}{2s}|y-x|\right)\rho(x)\eta_{\varepsilon}(x,y)dxdy\\
 & \quad+\frac{1}{4}\iint\Vert D^{2}\check{\phi}_{t}^{s,\varepsilon}\Vert_{\infty}|x-y|^{2}|\nabla\check{\phi}_{t}^{s,\varepsilon}(x)\cdot(x-y)|\left(1+\frac{3}{2s}|y-x|\right)\rho(x)\eta_{\varepsilon}(x,y)dxdy\\
 & \quad+\frac{1}{16}\iint\Vert D^{2}\check{\phi}_{t}^{s,\varepsilon}\Vert_{\infty}^{2}|x-y|^{4}\left(1+\frac{3}{2s}|y-x|\right)\rho(x)\eta_{\varepsilon}(x,y)dxdy\\
 & :=I_{\varepsilon}+II_{\varepsilon}+III_{\varepsilon}.
\end{aligned}
\end{multline*}
Since (as we will explicitly show below) the latter two terms are
higher order in $\varepsilon$, we initially focus our attention on
the first term $I_{\varepsilon}$.

In what follows, recall our notation $M_{p}(\eta):=\int|x-y|^{p}\eta(|x-y|)dy$;
note that this does not depend on the choice of $x\in\mathbb{R}^{d}$.
Additionally, observe that $M_{p}(\eta_{\varepsilon})=\varepsilon^{p}M_{p}(\eta).$

From \cite[Corollary 2.16]{garciatrillos2020gromov} (together with
Lemma \ref{lem:moment versus profile}), we know that 
\[
\frac{1}{4}\iint|\nabla\check{\phi}_{t}^{s,\varepsilon}(x)\cdot(x-y)|^{2}\rho(x)\eta_{\varepsilon}(x,y)dxdy=\frac{1}{4}\varepsilon^{2}\frac{M_{2}(\eta)}{d}\int|\nabla\check{\phi}_{t}^{s,\varepsilon}(x)|^{2}\rho(x)dx.
\]
Similarly, compute that 
\begin{align*}
\iint|\nabla\check{\phi}_{t}^{s,\varepsilon}(x)\cdot(x-y)|^{2}\left(\frac{3}{2s}|x-y|\right)\rho(x)\eta_{\varepsilon}(x,y)dxdy & \leq\frac{3}{2s}\iint|\nabla\check{\phi}_{t}^{s,\varepsilon}(x)|^{2}|x-y|^{3}\rho(x)\eta_{\varepsilon}(x,y)dxdy\\
 & =\frac{3}{2}\frac{\varepsilon^{3}}{s}M_{3}(\eta)\int|\nabla\check{\phi}_{t}^{s,\varepsilon}(x)|^{2}\rho(x)dx.
\end{align*}
Consequently,
\[
I_{\varepsilon}\leq\left(\frac{1}{4}\varepsilon^{2}\frac{M_{2}(\eta)}{d}+\frac{3}{8}\frac{\varepsilon^{3}}{s}M_{3}(\eta)\right)\int|\nabla\check{\phi}_{t}^{s,\varepsilon}(x)|^{2}\rho(x)dx
\]
and so
\begin{multline*}
\frac{1}{4}\iint(\check{\phi}_{t}^{s,\varepsilon}(y)-\breve{\phi}_{t}^{s,\varepsilon}(x))^{2}\left(1+\frac{3}{2s}|y-x|\right)\rho(x)\eta_{\varepsilon}(x,y)dxdy\\
\leq\left(\frac{1}{4}\frac{M_{2}(\eta)}{d}\varepsilon^{2}+\frac{3}{8}M_{3}(\eta)\frac{\varepsilon^{3}}{s}\right)\int|\nabla\check{\phi}_{t}^{s,\varepsilon}(x)|^{2}\rho(x)dx+II_{\varepsilon}+III_{\varepsilon}.
\end{multline*}

Therefore, we find that 

\begin{align*}
& \int\partial_{t}\check{\phi}_{t}^{s,\varepsilon}(x)d\rho(x)+\frac{1}{4}\int\int(\check{\phi}_{t}^{s,\varepsilon}(y)-\check{\phi}_{t}^{s,\varepsilon}(x))^{2}\theta(\rho(x),\rho(y))\eta_{\varepsilon}(x,y)dxdy \\
 & \leq -\frac{1}{4}\varepsilon^{2}\frac{M_{2}(\eta)}{d}\int|\nabla\check{\phi}_{t}^{s,\varepsilon}(x)|^{2}d\rho(x)-\frac{CA^{2}}{\varepsilon s}+\left(\frac{1}{4}\frac{M_{2}(\eta)}{d}\varepsilon^{2}+\frac{3}{8}M_{3}(\eta)\frac{\varepsilon^{3}}{s}\right)\int|\nabla\check{\phi}_{t}^{s,\varepsilon}(x)|^{2}\rho(x)dx \\
 & \phantom{\leq} \:\: +II_{\varepsilon}+III_{\varepsilon}\\
 & =-\frac{CA^{2}}{\varepsilon s}+\frac{3}{8}M_{3}(\eta)\frac{\varepsilon^{3}}{s}\int|\nabla\check{\phi}_{t}^{s,\varepsilon}(x)|^{2}\rho(x)dx+II_{\varepsilon}+III_{\varepsilon}.
\end{align*}
Hence, in order for $\check{\phi}_{t}^{s,\varepsilon}$ to be an $s$-smoothed
nonlocal HJ subsolution,
it suffices to show that for $s>\varepsilon$ and an appropriate choice
of $A$ and $C$,
\[
-\frac{CA^{2}}{\varepsilon s}+\frac{3}{8}M_{3}(\eta)\frac{\varepsilon^{3}}{s}\int|\nabla\check{\phi}_{t}^{s,\varepsilon}(x)|^{2}\rho(x)dx+II_{\varepsilon}+III_{\varepsilon}\leq0.
\]
To see when this occurs, we first use the estimate %
\begin{align*}
|\nabla\check{\phi}_{t}^{s,\varepsilon}(x)|^{2} & =|\nabla\tilde{\phi}_{t}^{s,\varepsilon}(x)|^{2}\\
 & =|\nabla\left(\frac{2d}{\varepsilon^{2}M_{2}(\eta)}\phi_{t}^{s}\right)(x)|^{2}\\
 & =\frac{4d^{2}}{\varepsilon^{4}M_{2}(\eta)^{2}}|\nabla\phi_{t}^{s}(x)|^{2}\\
\text{(Lemma \ref{lem:gradient convolution contraction})} & \leq\frac{4d^{2}}{\varepsilon^{4}M_{2}(\eta)^{2}}|\nabla\phi_{t}(x)|^{2}.\text{ \ensuremath{(x\text{-a.s})}}
\end{align*}
Consequently, for any probability density $\rho(x)dx$, it holds
that 
\begin{align*}
\frac{3}{8}M_{3}(\eta)\frac{\varepsilon^{3}}{s}\int|\nabla\check{\phi}_{t}^{s,\varepsilon}(x)|^{2}\rho(x)dx & \leq\frac{3}{2}\frac{d^{2}M_{3}(\eta)}{\varepsilon sM_{2}(\eta)^{2}}\Vert\nabla\phi_{t}\Vert_{\infty}^{2}.
\end{align*}
{} We also note that this implies that 
\begin{align}
I_{\varepsilon} & \leq\left(\frac{d}{\varepsilon^{2}M_{2}(\eta)}+\frac{3}{2}\frac{d^{2}M_{3}(\eta)}{\varepsilon sM_{2}(\eta)^{2}}\right)\Vert\nabla\phi_{t}\Vert_{\infty}^{2}.\label{eq:1epsilon upper bound}
\end{align}

Having analyzed the term $I_{\varepsilon}$ from the Taylor series
expansion, we consider the first terms $II_{\varepsilon}$ and $III_{\varepsilon}$,
both of which involve the quantity $\Vert D^{2}\check{\phi}_{t}^{s,\varepsilon}\Vert_{\infty}$.
For the term $III_{\varepsilon}$: using the fact that $\Vert D^{2}\check{\phi}_{t}^{s,\varepsilon}\Vert_{\infty}=\Vert D^{2}\tilde{\phi}_{t}^{s,\varepsilon}\Vert_{\infty}=\frac{2d}{\varepsilon^{2}M_{2}(\eta)}\Vert D^{2}\phi_{t}^{s}\Vert_{\infty}$,
and also, by Lemma \ref{lem:Folland Young's ineq} that $\Vert D^{2}\phi_{t}^{s}\Vert_{\infty}^{2}\leq s^{-2}\Vert\nabla\phi_{t}\Vert_{\infty}^{2}$,
we then see that
\begin{align} \label{eq:3epsilon upper bound}
\begin{split}
III_{\varepsilon} & =\frac{1}{16}\iint\Vert D^{2}\check{\phi}_{t}^{s,\varepsilon}\Vert_{\infty}^{2}|x-y|^{4}\left(1+\frac{3}{2s}|y-x|\right)\rho(x)\eta_{\varepsilon}(x,y)dxdy\\
 & \leq\frac{1}{4}\frac{d^{2}}{\left(\varepsilon^{2}M_{2}(\eta)\right)^{2}}\frac{1}{s^{2}}\Vert\nabla\phi_{t}\Vert_{\infty}^{2}\left(M_{4}(\eta)\varepsilon^{4}+\frac{3}{2}M_{5}(\eta)\frac{\varepsilon^{5}}{s}\right)\int\rho(x)dx \\
 & =\frac{1}{4}\frac{d^{2}}{M_{2}(\eta)^{2}}\frac{1}{s^{2}}\Vert\nabla\phi_{t}\Vert_{\infty}^{2}\left(M_{4}(\eta)+\frac{3}{2}M_{5}(\eta)\frac{\varepsilon}{s}\right).
\end{split}
\end{align}

It remains to consider the second ``mixed'' term in the Taylor series
expansion. Using Holder's inequality, we observe that 
\begin{align*}
II_{\varepsilon} & :=\frac{1}{4}\iint\Vert D^{2}\check{\phi}_{t}^{s,\varepsilon}\Vert_{\infty}|x-y|^{2}|\nabla\check{\phi}_{t}^{s,\varepsilon}(x)\cdot(x-y)|\left(1+\frac{3}{2s}|y-x|\right)\rho(x)\eta_{\varepsilon}(x,y)dxdy\\
 & \leq\left(\frac{1}{4}\iint\left(|\nabla\check{\phi}_{t}^{s,\varepsilon}(x)\cdot(x-y)|\left(1+\frac{3}{2s}|y-x|\right)^{1/2}\right)^{2}\rho(x)\eta_{\varepsilon}(x,y)dxdy\right)^{1/2}\\
 & \qquad\times\left(\frac{1}{4}\iint\left(\Vert D^{2}\check{\phi}_{t}^{s,\varepsilon}\Vert_{\infty}|x-y|^{2}\left(1+\frac{3}{2s}|y-x|\right)^{1/2}\right)^{2}\rho(x)\eta_{\varepsilon}(x,y)dxdy\right)^{1/2}\\
 & =(I_{\varepsilon}\times4\cdot III_{\varepsilon})^{1/2}.
\end{align*}
So, plugging in equations \ref{eq:1epsilon upper bound} and \ref{eq:3epsilon upper bound},
we see that
\begin{align*}
II_{\varepsilon} & \leq\left(\left(\left(\frac{d}{\varepsilon^{2}M_{2}(\eta)}+\frac{3}{2}\frac{d^{2}M_{3}(\eta)}{\varepsilon sM_{2}(\eta)^{2}}\right)\Vert\nabla\phi_{t}\Vert_{\infty}^{2}\right)\left(\frac{d^{2}}{M_{2}(\eta)^{2}}\frac{1}{s^{2}}\Vert\nabla\phi_{t}\Vert_{\infty}^{2}\left(M_{4}(\eta)+\frac{3}{2}M_{5}(\eta)\frac{\varepsilon}{s}\right)\right)\right)^{1/2}\\
 & =\frac{d^{2}}{\varepsilon sM_{2}(\eta)^{2}}\sqrt{\left(\frac{M_{2}(\eta)}{d}+\frac{3}{2}M_{3}(\eta)\frac{\varepsilon}{s}\right)\left(M_{4}(\eta)+\frac{3}{2}M_{5}(\eta)\frac{\varepsilon}{s}\right)}\Vert\nabla\phi_{t}\Vert_{\infty}^{2}.
\end{align*}

In sum: we find that it suffices to pick $C$ in such a way that 
\begin{multline*}
-\frac{CA^{2}}{\varepsilon s}+\frac{3}{8}\frac{d^{2}M_{3}(\eta)}{\varepsilon sM_{2}(\eta)^{2}}\Vert\nabla\phi_{t}\Vert_{\infty}^{2}+\frac{d^{2}}{\varepsilon sM_{2}(\eta)^{2}}\sqrt{\left(\frac{M_{2}(\eta)}{d}+\frac{3}{2}M_{3}(\eta)\frac{\varepsilon}{s}\right)\left(M_{4}(\eta)+\frac{3}{2}M_{5}(\eta)\frac{\varepsilon}{s}\right)}\Vert\nabla\phi_{t}\Vert_{\infty}^{2}\\
+\frac{1}{4}\frac{d^{2}}{M_{2}(\eta)^{2}}\frac{1}{s^{2}}\left(M_{4}(\eta)+\frac{3}{2}M_{5}(\eta)\frac{\varepsilon}{s}\right)\Vert\nabla\phi_{t}\Vert_{\infty}^{2}\leq0.
\end{multline*}
Using the fact that $s\geq\varepsilon$ and $\varepsilon\in(0,1]$,
we see that it suffices to pick
\[
C=\frac{d^{2}}{M_{2}(\eta)^{2}}\left[\frac{3}{8}M_{3}(\eta)+\sqrt{\left(\frac{M_{2}(\eta)}{d}+\frac{3}{2}M_{3}(\eta)\right)\left(M_{4}(\eta)+\frac{3}{2}M_{5}(\eta)\right)}+\frac{1}{4}\left(M_{4}(\eta)+\frac{3}{2}M_{5}(\eta)\right)\right]
\]
and 
$
A^{2}\geq\sup_{t}\Vert\nabla\phi_{t}\Vert_{\infty}^{2}.
$
\end{proof}

\begin{cor}
\label{lower-bound} Suppose that $\eta$ and $\theta$ satisfy
Assumptions \ref{assu:eta properties} and \ref{assu:theta properties}
respectively.
Let $\varepsilon\in(0,1]$. Let $\mu_{0},\mu_{1}\in\mathcal{P}(\mathbb{R}^{d})$
and assume both $\mu_{0}$ and $\mu_{1}$ are supported inside some
set with diameter at most $R$. Then,
we have that
\[
W_{2}^{2}(\mu_{0},\mu_{1})\leq\varepsilon^{2}\frac{M_{2}(\eta)}{2d}\mathcal{W}_{\eta,\varepsilon}^{2}(\mu_{0},\mu_{1})+\left(\frac{7}{4}dR^{2}+8dR\right)\sqrt{\varepsilon}.
\]
\end{cor}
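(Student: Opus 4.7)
The plan is to combine the Hamilton--Jacobi dual formulation of $W_2$ (Corollary \ref{cor:HJ Lipschitz diameter bound}) with the ``local-to-nonlocal subsolution'' construction of Proposition \ref{prop:local-to-nonlocal subsolution} and the duality inequality Proposition \ref{prop:s-smooth duality inequality} for the smoothed nonlocal Wasserstein distance, and then balance the resulting error terms by choosing $s=\sqrt{\varepsilon}$.

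First I would pick $\phi_t \in BL([0,1]\times\R^d)$ realizing the Hopf--Lax/Kantorovich duality for $W_2$, so that $\tfrac{1}{2}W_2^2(\mu_0,\mu_1) = \int \phi_1\,d\mu_1 - \int \phi_0\,d\mu_0$ and $\partial_t\phi_t + \tfrac{1}{2}|\nabla \phi_t|^2 \leq 0$. Since $\mu_0,\mu_1$ are supported in a set of diameter $R$, Corollary \ref{cor:HJ Lipschitz diameter bound} ensures I can further arrange $A:=\sup_t\|\nabla \phi_t\|_{L^\infty}\leq R$. I then feed $\phi_t$ into Proposition \ref{prop:local-to-nonlocal subsolution} with $s:=\sqrt\varepsilon$ (valid since $s \geq \varepsilon$ because $\varepsilon\leq 1$) to obtain the $s$-smoothed nonlocal Hamilton--Jacobi subsolution
\[
\check\phi_t^{s,\varepsilon} := \frac{2d}{\varepsilon^{2}M_2(\eta)}K_s*\phi_t - \frac{CA^2}{\varepsilon s}\,t,
\]
where $C$ is the explicit constant from that proposition.

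Next, Proposition \ref{prop:s-smooth duality inequality} applied to $\check\phi^{s,\varepsilon}$ yields
\[
\tfrac{1}{2}\mathcal W_{\eta,\varepsilon,s}^2(K_s*\mu_0, K_s*\mu_1)\ \geq\ \frac{2d}{\varepsilon^2 M_2(\eta)}\!\left[\int (K_s*\phi_1)\,d(K_s*\mu_1)-\int (K_s*\phi_0)\,d(K_s*\mu_0)\right] - \frac{CA^2}{\varepsilon s},
\]
and Lemma \ref{lem:s-smoothed nonlocal comparison} upgrades this to a bound in terms of the true $\mathcal W_{\eta,\varepsilon}(\mu_0,\mu_1)$. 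To relate the smoothed pairing $\int(K_s*\phi_t)\,d(K_s*\mu_t)$ to the original $\int \phi_t\,d\mu_t$, I use that $K_s$ is radial, so by Fubini $\int(K_s*\phi_t)\,d(K_s*\mu_t)=\int(K_s*K_s*\phi_t)\,d\mu_t$; the Lipschitz bound $\mathrm{Lip}(\phi_t)\leq R$ combined with the explicit first absolute moment $\int|z|K_s(z)\,dz = ds$ of the Laplace kernel (Lemma \ref{lem:Laplace kernel moments}) and the triangle inequality give $\|K_s*K_s*\phi_t - \phi_t\|_\infty \leq 2Rds$, whence
\[
\tfrac{1}{2}W_2^2(\mu_0,\mu_1)\ \leq\ \int(K_s*\phi_1)\,d(K_s*\mu_1)-\int(K_s*\phi_0)\,d(K_s*\mu_0)\ +\ 4Rds.
\]

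Combining the displays above and setting $s=\sqrt\varepsilon$, $A=R$ gives
\[
W_2^2(\mu_0,\mu_1)\ \leq\ \varepsilon^2\frac{M_2(\eta)}{2d}\mathcal W_{\eta,\varepsilon}^2(\mu_0,\mu_1)\ +\ \frac{M_2(\eta)\,C\,R^2}{d}\sqrt\varepsilon\ +\ 8dR\sqrt\varepsilon,
\]
which has the claimed structure, with the second summand already matching $8dR\sqrt\varepsilon$ exactly. The main obstacle is the final bookkeeping: one must verify, from the explicit formula for $C$ in Proposition \ref{prop:local-to-nonlocal subsolution} (evaluated at the scale-free parameter $\varepsilon/s = \sqrt\varepsilon \leq 1$, and using that under Assumption \ref{assu:eta properties}(vi) $\eta$ is compactly supported so all moments $M_2,\ldots,M_5$ of $\eta$ are finite), the prefactor $M_2(\eta)\,C/d$ can be collapsed into the constant $\tfrac{7}{4}d$ of the target estimate. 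This amounts to algebraic simplification of the various lower-order-in-$\varepsilon/s$ contributions arising from the Taylor expansion that produced $C$.
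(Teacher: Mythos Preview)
Your approach is essentially identical to the paper's: pick the optimal Hopf--Lax potential via Corollary~\ref{cor:HJ Lipschitz diameter bound}, feed it through Proposition~\ref{prop:local-to-nonlocal subsolution}, apply the duality inequality Proposition~\ref{prop:s-smooth duality inequality}, upgrade via Lemma~\ref{lem:s-smoothed nonlocal comparison}, control the smoothing error using the Lipschitz bound and the first moment of $K$, and finally set $s=\sqrt{\varepsilon}$.

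The one place where you undersell the argument is the ``main obstacle'' of collapsing $M_2(\eta)C/d$ into $\tfrac{7}{4}d$. Finiteness of $M_3,M_4,M_5$ is not enough here; what the paper actually uses is that under Assumption~\ref{assu:eta properties}(vi), $\eta$ is supported in the \emph{unit} ball, so $M_k(\eta)\leq M_2(\eta)$ for $k=3,4,5$. Plugging this into the explicit formula for $C$ in Proposition~\ref{prop:local-to-nonlocal subsolution} gives
\[
C\ \leq\ \frac{d^2}{M_2(\eta)}\left[\frac{3}{8}+\sqrt{\Bigl(\frac{1}{d}+\frac{3}{2}\Bigr)\frac{5}{2}}+\frac{5}{16}\right],
\]
so $M_2(\eta)C/d\leq d\cdot[\,\cdots\,]$, and the bracket is a numerical constant bounded by $7/2$ for all $d\geq 1$. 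That is the entire ``algebraic simplification'' you are anticipating; once you invoke $M_k\leq M_2$ rather than just $M_k<\infty$, it is a two-line computation.
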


\begin{proof}
Suppose by Corollary \ref{cor:HJ Lipschitz diameter bound} that $(\phi_{t})_{t\in[0,1]}$
is the optimal HJ subsolution for $(\mu_{0},\mu_{1})$, that is, $(\phi_{t})_{t\in[0,1]}$
satisfies
\[
\frac{1}{2}W_{2}^{2}(\mu_{0},\mu_{1})=\underset{\phi\in BL([0,1]\times\mathbb{R}^{d})}{\text{argmax}}\left\{ \int\phi_{1}d\mu_{1}-\int\phi_{0}d\mu_{0}:\partial_{t}\phi_{t}+\frac{1}{2}|\nabla\phi_{t}|^{2}=0\text{ in viscosity sense}\right\} .
\]
Note that $(\phi_{t})_{t\in[0,1]}$ is also a (not necessarily optimal)
Hamilton-Jacobi solution for $(\boldsymbol{K_{2s}}*\mu_{0},\boldsymbol{K_{2s}}*\mu_{1})$,
and so
\[
\frac{1}{2}W_{2}^{2}(\mu_{0},\mu_{1})\geq\frac{1}{2}W_{2}^{2}(\boldsymbol{K_{2s}}*\mu_{0},\boldsymbol{K_{2s}}*\mu_{1})\geq\int\phi_{1}d(\boldsymbol{K_{2s}}*\mu_{1})-\int\phi_{0}d(\boldsymbol{K_{2s}}*\mu_{0}).
\]
Furthermore, 
\begin{align*}
\int\phi_{1}d(\boldsymbol{K_{2s}}*\mu_{1})-\int\phi_{0}d(\boldsymbol{K_{2s}}*\mu_{0}) & =\int\phi_{1}^{s}d(\boldsymbol{K_{s}}*\mu_{1})-\int\phi_{0}^{s}d(\boldsymbol{K_{s}}*\mu_{0}).\\
 & =\int\phi_{1}^{2s}d\mu_{1}-\int\phi_{0}^{2s}d\mu_{0}
\end{align*}
At the same time, $\phi_{1}^{s}\rightarrow\phi_{1}$ and $\phi_{0}^{s}\rightarrow\phi_{0}$
uniformly on compact sets as $s\rightarrow0$. Quantitatively, by
Corollary \ref{cor:HJ Lipschitz diameter bound} one has the estimate
\[
\mid\phi-K_{s}*\phi|\leq s\cdot\text{Lip}\phi\leq sRM_{1}(K)
\]
where $M_{1}(K):=\int_{\mathbb{R}^{d}}c_{K}|x|e^{-|x|}dx$. Hence
\[
\int|\phi_{i}-\phi_{i}^{2s}|d\mu_{i}\leq2sRM_{1}(K)\qquad i=0,1
\]
and therefore 
\[
\int\phi_{1}^{2s}d\mu_{1}-\int\phi_{0}^{2s}d\mu_{0}\geq\int\phi_{1}d\mu_{1}-\int\phi_{0}d\mu_{0}-4sRM_{1}(K).
\]
Together, this implies that 
\[
\frac{1}{2}W_{2}^{2}(\mu_{0},\mu_{1})\geq\int\phi_{1}^{s}d(\boldsymbol{K_{s}}*\mu_{1})-\int\phi_{0}^{s}d(\boldsymbol{K_{s}}*\mu_{0})\geq\frac{1}{2}W_{2}^{2}(\mu_{0},\mu_{1})-4sRM_{1}(K).
\]

Since $\phi_{t}$ is a viscosity solution of the Hamilton-Jacobi equation
and is Lipschitz, it is also a Lebesgue $(x,t)$-almost everywhere
solution, by Rademacher's theorem and \cite[Theorem 10.1.1]{evans2022partial}.
Therefore, we can apply Proposition \ref{prop:local-to-nonlocal subsolution},
and deduce (defining $\tilde{\phi}_{t}^{s,\varepsilon}:=\frac{2d}{\varepsilon^{2}M_{2}(\eta)}\phi_{t}^{s}$,
as in Proposition \ref{prop:local-to-nonlocal subsolution}) that
\[
\frac{d}{\varepsilon^{2}M_{2}(\eta)}W_{2}^{2}(\mu_{0},\mu_{1})\geq\int\tilde{\phi}_{1}^{s,\varepsilon}d(\boldsymbol{K_{s}}*\mu_{1})-\int\tilde{\phi}_{0}^{s,\varepsilon}d(\boldsymbol{K_{s}}*\mu_{0})\geq\frac{d}{\varepsilon^{2}M_{2}(\eta)}\left(W_{2}^{2}(\mu_{0},\mu_{1})-8sRM_{1}(K)\right)
\]
and likewise (defining $\check{\phi}_{t}^{s,\varepsilon}:=\tilde{\phi}_{t}^{s,\varepsilon}-\frac{CA^{2}}{\varepsilon s}t$,
as in Proposition \ref{prop:local-to-nonlocal subsolution}) 
\begin{align*}
\int\check{\phi}_{1}^{s,\varepsilon}d(\boldsymbol{K_{s}}*\mu_{1})-\int\check{\phi}_{0}^{s,\varepsilon}d(\boldsymbol{K_{s}}*\mu_{0}) & =\int\left(\tilde{\phi}_{t}^{s,\varepsilon}-\frac{CA^{2}}{\varepsilon s}\right)d(\boldsymbol{K_{s}}*\mu_{1})-\int\tilde{\phi}_{t}^{s,\varepsilon}d(\boldsymbol{K_{s}}*\mu_{0})\\
 & \geq\frac{d}{\varepsilon^{2}M_{2}(\eta)}\left(W_{2}^{2}(\mu_{0},\mu_{1})-8sRM_{1}(K)\right)-\frac{CA^{2}}{\varepsilon s}.
\end{align*}
Observe that $\check{\phi}_{t}^{s,\varepsilon}\in BL([0,1]\times\mathbb{R}^{d})$.
At the same time, we know, by Lemma \ref{cor:HJ Lipschitz diameter bound},
that $\sup_{t}\Vert\nabla\phi_{t}\Vert_{\infty}^{2}\leq R^{2}$; so
if we put $A=R$ and $C$ as in Proposition \ref{prop:local-to-nonlocal subsolution},
then by Proposition \ref{prop:local-to-nonlocal subsolution}, $\check{\phi}_{t}^{s}$
is an $s$-smooth nonlocal HJ subsolution. Therefore, we have that
\begin{align*}
\int\check{\phi}_{1}^{s,\varepsilon}d(\boldsymbol{K_{s}}*\mu_{1})-\int\check{\phi}_{0}^{s,\varepsilon}d(\boldsymbol{K_{s}}*\mu_{0}) & \leq \! \sup_{\phi_{t}(x)\in\text{HJ}_{\text{NL}}^{1,s}\cap BL([0,1]\times\mathbb{R}^{d})} \int\phi_{1}d(\boldsymbol{K_{s}}*\mu_{1})-\int\phi_{0}d(\boldsymbol{K_{s}}*\mu_{0})\\
\text{(Proposition \ref{prop:s-smooth duality inequality})} & \leq\frac{1}{2}\mathcal{W}_{\eta,\varepsilon,s}^{2}(\boldsymbol{K_{s}}*\mu_{0},\boldsymbol{K_{s}}*\mu_{1}).
\end{align*}
This implies that 
\[
\frac{d}{\varepsilon^{2}M_{2}(\eta)}\left(W_{2}^{2}(\mu_{0},\mu_{1})-8sRM_{1}(K)\right)-\frac{CR^{2}}{\varepsilon s}\leq\frac{1}{2}\mathcal{W}_{\eta,\varepsilon,s}^{2}(\boldsymbol{K_{s}}*\mu_{0},\boldsymbol{K_{s}}*\mu_{1})
\]
and thus 
\[
W_{2}^{2}(\mu_{0},\mu_{1})-8sRM_{1}(K) \leq\varepsilon^{2}\frac{M_{2}(\eta)}{2d}\left(\mathcal{W}_{\eta,\varepsilon,s}^{2}(\boldsymbol{K_{s}}*\mu_{0},\boldsymbol{K_{s}}*\mu_{1})+\frac{CR^{2}}{\varepsilon s}\right).
\]
Now, since $\eta$ is supported on $B(0,1)$, all of the higher moments
$M_{3}(\eta)$, $M_{4}(\eta)$, and $M_{5}(\eta)$ are bounded above
by $M_{2}(\eta)$, we find that 
\[
C\leq\frac{d^{2}}{M_{2}(\eta)}\left[\frac{3}{8}+\sqrt{\left(\frac{1}{d}+\frac{3}{2}\right)\frac{5}{2}}+\frac{5}{16}\right]
\]
so that
\[
W_{2}^{2}(\mu_{0},\mu_{1})\leq  \varepsilon^{2}\frac{M_{2}(\eta)}{2d}\mathcal{W}_{\eta,\varepsilon,s}^{2}(\boldsymbol{K_{s}}*\mu_{0},\boldsymbol{K_{s}}*\mu_{1})
  +\left[\frac{3}{8}+\sqrt{\left(\frac{1}{d}+\frac{3}{2}\right)\frac{5}{2}}+\frac{5}{16}\right]R^{2}\frac{d\varepsilon}{2s}+8sRM_{1}(K).
\]
 Since $\mathcal{W}_{\eta,\varepsilon,s}^{2}(\boldsymbol{K_{s}}*\mu_{0},\boldsymbol{K_{s}}*\mu_{1})\leq\mathcal{W}_{\eta,\varepsilon}^{2}(\mu_{0},\mu_{1})$
by Lemma \ref{lem:s-smoothed nonlocal comparison}, we deduce that
\begin{align*}
W_{2}^{2}(\mu_{0},\mu_{1})\leq & \varepsilon^{2}\frac{M_{2}(\eta)}{2d}\mathcal{W}_{\eta,\varepsilon}^{2}(\mu_{0},\mu_{1})+\frac{7}{4}dR^{2}\frac{\varepsilon}{s}++8sRM_{1}(K).
\end{align*}
Finally, we set $s=\sqrt{\varepsilon}$, and use the fact that $M_1(K)=d$, by Lemma \ref{lem:Laplace kernel moments}.
\end{proof}
\medskip

\textbf{Acknowledgements:} The authors thank Giuseppe Buttazzo, Matthias Erbar, Wilfrid
Gangbo, Nicol\'as Garc\'ia Trillos, Giovanni Leoni, and Jan Maas for helpful discussions.
The authors are grateful to NSF for support via grants DMS 1814991 and DMS 2206069. 
 Part of this work was done while the authors were visiting the Simons Institute for the Theory of Computing. The authors are thank the institute for hospitality. Furthermore they are thankful to CNA of CMU for support.

\bibliographystyle{siam}
\bibliography{otrefs} \label{refer}

\begin{thebibliography}{10}

\bibitem{ambrosio1987new}
{\sc L.~Ambrosio}, {\em New lower semicontinuity results for integral
  functionals}, Rendiconti della Accademia Nazionale delle Scienze detta dei
  XL. Memorie di Matematica. Serie 5, Parte 1, 11 (1987), pp.~1--42.

\bibitem{ambrosio2000functions}
{\sc L.~Ambrosio, N.~Fusco, and D.~Pallara}, {\em Functions of bounded
  variation and free discontinuity problems}, Oxford University Press, 2000.

\bibitem{ambrosio2008gradient}
{\sc L.~Ambrosio, N.~Gigli, and G.~Savar{\'e}}, {\em Gradient flows: in metric
  spaces and in the space of probability measures}, Springer Science \&
  Business Media, 2008.

\bibitem{benamou2000computational}
{\sc J.-D. Benamou and Y.~Brenier}, {\em A computational fluid mechanics
  solution to the {M}onge-{K}antorovich mass transfer problem}, Numerische
  Mathematik, 84 (2000), pp.~375--393.

\bibitem{blandin2016well}
{\sc S.~Blandin and P.~Goatin}, {\em Well-posedness of a conservation law with
  non-local flux arising in traffic flow modeling}, Numerische Mathematik, 132
  (2016), pp.~217--241.

\bibitem{bobkov2001hypercontractivity}
{\sc S.~G. Bobkov, I.~Gentil, and M.~Ledoux}, {\em Hypercontractivity of
  {H}amilton--{J}acobi equations}, Journal de Math{\'e}matiques Pures et
  Appliqu{\'e}es, 80 (2001), pp.~669--696.

\bibitem{bogachev2007measure1}
{\sc V.~I. Bogachev}, {\em Measure theory}, vol.~1, Springer Science \&
  Business Media, 2007.

\bibitem{brenier2003extended}
{\sc Y.~Brenier}, {\em Extended {M}onge-{K}antorovich theory}, in Optimal
  Transportation and Applications: Lectures given at the CIME Summer School
  held in Martina Franca, Italy, September 2--8, 2001, L.~A. Caffarelli and
  S.~Salsa, eds., Springer, 2003, pp.~91--121.

\bibitem{buttazzo1989semicontinuity}
{\sc G.~Buttazzo}, {\em Semicontinuity, relaxation and integral representation
  problems in the calculus of variations}, Pitman Research Notes in
  Mathematics, 1989.

\bibitem{chow2012fokker}
{\sc S.-N. Chow, W.~Huang, Y.~Li, and H.~Zhou}, {\em Fokker--{P}lanck equations
  for a free energy functional or {M}arkov process on a graph}, Archive for
  Rational Mechanics and Analysis, 203 (2012), pp.~969--1008.

\bibitem{colombo2020local}
{\sc M.~Colombo, G.~Crippa, E.~Marconi, and L.~V. Spinolo}, {\em Local limit of
  nonlocal traffic models: convergence results and total variation blow-up},
  Annales de l'Institut Henri Poincar{\'e} C, Analyse non lin{\'e}aire, 38
  (2021), pp.~1653--1666.

\bibitem{colombo2019singular}
{\sc M.~Colombo, G.~Crippa, and L.~V. Spinolo}, {\em On the singular local
  limit for conservation laws with nonlocal fluxes}, Archive for Rational
  Mechanics and Analysis, 233 (2019), pp.~1131--1167.

\bibitem{crippa2013existence}
{\sc G.~Crippa and M.~L{\'e}cureux-Mercier}, {\em Existence and uniqueness of
  measure solutions for a system of continuity equations with non-local flow},
  Nonlinear Differential Equations and Applications NoDEA, 20 (2013),
  pp.~523--537.

\bibitem{dolbeault2009new}
{\sc J.~Dolbeault, B.~Nazaret, and G.~Savar{\'e}}, {\em A new class of
  transport distances between measures}, Calculus of Variations and Partial
  Differential Equations, 34 (2009), pp.~193--231.

\bibitem{du2012analysis}
{\sc Q.~Du, M.~Gunzburger, R.~B. Lehoucq, and K.~Zhou}, {\em Analysis and
  approximation of nonlocal diffusion problems with volume constraints}, SIAM
  review, 54 (2012), pp.~667--696.

\bibitem{du2017nonlocal}
{\sc Q.~Du, Z.~Huang, and P.~G. LeFloch}, {\em Nonlocal conservation laws. a
  new class of monotonicity-preserving models}, SIAM Journal on Numerical
  Analysis, 55 (2017), pp.~2465--2489.

\bibitem{du2012new}
{\sc Q.~Du, J.~R. Kamm, R.~B. Lehoucq, and M.~L. Parks}, {\em A new approach
  for a nonlocal, nonlinear conservation law}, SIAM Journal on Applied
  Mathematics, 72 (2012), pp.~464--487.

\bibitem{dubins1964measurable}
{\sc L.~Dubins and D.~Freedman}, {\em Measurable sets of measures.}, Pacific
  Journal of Mathematics, 14 (1964), pp.~1211--1222.

\bibitem{erbar2014gradient}
{\sc M.~Erbar}, {\em Gradient flows of the entropy for jump processes}, Annales
  de l'IHP Probabilit{\'e}s et statistiques, 50 (2014), pp.~920--945.

\bibitem{erbar2016gradient}
{\sc M.~Erbar, M.~Fathi, V.~Laschos, and A.~Schlichting}, {\em Gradient flow
  structure for {M}c{K}ean-{V}lasov equations on discrete spaces}, Discrete \&
  Continuous Dynamical Systems-A, 36 (2016), pp.~67--99.

\bibitem{erbar2012ricci}
{\sc M.~Erbar and J.~Maas}, {\em Ricci curvature of finite {M}arkov chains via
  convexity of the entropy}, Archive for Rational Mechanics and Analysis, 206
  (2012), pp.~997--1038.

\bibitem{erbarmaas2014gradient}
\leavevmode\vrule height 2pt depth -1.6pt width 23pt, {\em Gradient flow
  structures for discrete porous medium equations}, Discrete \& Continuous
  Dynamical Systems-A, 34 (2014), pp.~13--55.

\bibitem{erbar2019geometry}
{\sc M.~Erbar, J.~Maas, and M.~Wirth}, {\em On the geometry of geodesics in
  discrete optimal transport}, Calculus of variations and partial differential
  equations, 58 (2019), p.~19.

\bibitem{esposito2021novel}
{\sc A.~Esposito, R.~S. Gvalani, A.~Schlichting, and M.~Schmidtchen}, {\em On a
  novel gradient flow structure for the aggregation equation}, arXiv preprint
  arXiv:2112.08317,  (2021).

\bibitem{esposito2019nonlocal}
{\sc A.~Esposito, F.~S. Patacchini, A.~Schlichting, and D.~Slep{\v{c}}ev}, {\em
  Nonlocal-interaction equation on graphs: gradient flow structure and
  continuum limit}, Archive for Rational Mechanics and Analysis, 240 (2021),
  pp.~699--760.

\bibitem{evans2022partial}
{\sc L.~C. Evans}, {\em Partial differential equations, second edition},
  vol.~19, American Mathematical Society, 2010.

\bibitem{folland1995introduction}
{\sc G.~B. Folland}, {\em Introduction to Partial Differential Equations},
  Princeton University Press, 1995.

\bibitem{gangbo2019geodesics}
{\sc W.~Gangbo, W.~Li, and C.~Mou}, {\em Geodesics of minimal length in the set
  of probability measures on graphs}, ESAIM: Control, Optimisation and Calculus
  of Variations, 25 (2019).

\bibitem{garciatrillos2020gromov}
{\sc N.~Garcia~Trillos}, {\em Gromov--{H}ausdorff limit of {W}asserstein spaces
  on point clouds}, Calculus of Variations and Partial Differential Equations,
  59 (2020), pp.~1--43.

\bibitem{gibbs2002choosing}
{\sc A.~L. Gibbs and F.~E. Su}, {\em On choosing and bounding probability
  metrics}, International statistical review, 70 (2002), pp.~419--435.

\bibitem{gigli2013gromov}
{\sc N.~Gigli and J.~Maas}, {\em Gromov--{H}ausdorff convergence of discrete
  transportation metrics}, SIAM Journal on Mathematical Analysis, 45 (2013),
  pp.~879--899.

\bibitem{gilboa2009nonlocal}
{\sc G.~Gilboa and S.~Osher}, {\em Nonlocal operators with applications to
  image processing}, Multiscale Modeling \& Simulation, 7 (2009),
  pp.~1005--1028.

\bibitem{gladbach2020scaling}
{\sc P.~Gladbach, E.~Kopfer, and J.~Maas}, {\em Scaling limits of discrete
  optimal transport}, SIAM Journal on Mathematical Analysis, 52 (2020),
  pp.~2759--2802.

\bibitem{kallenberg2017random}
{\sc O.~Kallenberg}, {\em Random Measures, Theory and Applications}, vol.~77,
  Springer, 2017.

\bibitem{levin2017markov}
{\sc D.~A. Levin and Y.~Peres}, {\em Markov chains and mixing times}, vol.~107,
  American Mathematical Soc., 2017.

\bibitem{maas2011gradient}
{\sc J.~Maas}, {\em Gradient flows of the entropy for finite {M}arkov chains},
  Journal of Functional Analysis, 261 (2011), pp.~2250--2292.

\bibitem{mielke2011gradient}
{\sc A.~Mielke}, {\em A gradient structure for reaction--diffusion systems and
  for energy-drift-diffusion systems}, Nonlinearity, 24 (2011), p.~1329.

\bibitem{otto2000generalization}
{\sc F.~Otto and C.~Villani}, {\em Generalization of an inequality by
  {T}alagrand and links with the logarithmic {S}obolev inequality}, Journal of
  Functional Analysis, 173 (2000), pp.~361--400.

\bibitem{peletier2020jump}
{\sc M.~A. Peletier, R.~Rossi, G.~Savar{\'e}, and O.~Tse}, {\em Jump processes
  as generalized gradient flows}, Calculus of Variations and Partial
  Differential Equations, 61 (2022), pp.~1--85.

\bibitem{perlman1974jensen}
{\sc M.~D. Perlman}, {\em Jensen's inequality for a convex vector-valued
  function on an infinite-dimensional space}, Journal of Multivariate Analysis,
  4 (1974), pp.~52--65.

\bibitem{reshetnyak1967general}
{\sc Y.~Reshetnyak}, {\em General theorems on semicontinuity and on convergence
  with a functional}, Siberian Mathematical Journal, 8 (1967), pp.~801--816.

\bibitem{santambrogio2015optimal}
{\sc F.~Santambrogio}, {\em Optimal transport for applied mathematicians},
  Springer, 2015.

\bibitem{spector2011simple}
{\sc D.~Spector}, {\em Simple proofs of some results of {R}eshetnyak},
  Proceedings of the American Mathematical Society,  (2011), pp.~1681--1690.

\bibitem{teran2014jensen}
{\sc P.~Ter{\'a}n}, {\em Jensen's inequality for random elements in metric
  spaces and some applications}, Journal of Mathematical Analysis and
  Applications, 414 (2014), pp.~756--766.

\bibitem{villani2003topics}
{\sc C.~Villani}, {\em Topics in optimal transportation}, no.~58, American
  Mathematical Soc., 2003.

\bibitem{villani2008optimal}
\leavevmode\vrule height 2pt depth -1.6pt width 23pt, {\em Optimal transport:
  old and new}, vol.~338, Springer Science \& Business Media, 2008.

\end{thebibliography}

\appendix

\section{Lower semicontinuity of integral functionals on $\mathcal{M}_{loc}$}
\label{sec:lsc}

In this appendix, we establish the following variant of Reshetnyak's
theorem as well as some direct consequences of this theorem which
are used in the main body of the article (for other variants on this
theorem, we refer the reader to \cite{ambrosio1987new,ambrosio2000functions,buttazzo1989semicontinuity,reshetnyak1967general,spector2011simple}).

\begin{thm}
\label{thm:locally finite reshetnyak}(locally finite, topological
and sequential ``Reshetnyak's theorem'') Let $\Omega$ be a locally
compact Polish space, and let $f:\Omega\times\mathbb{R}^{n}\rightarrow[0,\infty]$
be a (topologically) lower semicontinuous function such that for every
$\omega\in\Omega$, the function $f(\omega,\cdot)$ is convex and
positively 1-homogeneous. Then the functional 
\[
F:\mathcal{M}_{loc}(\Omega,\mathbb{R}^{n})\rightarrow[0,\infty]
\]
\[
F(\lambda)=\int_{\Omega}f\left(\omega,\frac{d\lambda}{d|\lambda|}(\omega)\right)d|\lambda|(\omega)
\]
is convex, and both topologically and sequentially weak{*} lower semicontinuous.
\end{thm}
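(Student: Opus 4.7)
The plan is to represent $F$ as a pointwise supremum of weak$^{*}$-continuous affine functionals, from which convexity and lower semicontinuity (topological as well as sequential) follow immediately.

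First, I would set up the dual representation of $f$ in the second variable. Because $f(\omega,\cdot):\mathbb{R}^{n}\to[0,\infty]$ is convex, l.s.c., and positively $1$-homogeneous, standard convex analysis yields $f(\omega,z)=\sup_{p\in K(\omega)}\langle p,z\rangle$, where
\[
K(\omega):=\{p\in\mathbb{R}^{n}:\langle p,z\rangle\le f(\omega,z)\text{ for all }z\in\mathbb{R}^{n}\}
\]
is closed and convex. Joint lower semicontinuity of $f$ on $\Omega\times\mathbb{R}^{n}$ implies that the set $\{(\omega,p):p\in K(\omega)\}$ is closed in $\Omega\times\mathbb{R}^{n}$, i.e., the multifunction $\omega\mapsto K(\omega)$ has closed graph; in particular it is upper hemicontinuous with closed convex values.

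The heart of the proof is the integral dual representation
\[
F(\lambda)=\sup\left\{\int_{\Omega}\phi\cdot d\lambda:\phi\in C_{c}(\Omega;\mathbb{R}^{n}),\ \phi(\omega)\in K(\omega)\ \forall\omega\in\Omega\right\}.
\]
The inequality ``$\ge$'' is immediate: for any admissible $\phi$, the pointwise bound $\langle\phi(\omega),\tfrac{d\lambda}{d|\lambda|}(\omega)\rangle\le f(\omega,\tfrac{d\lambda}{d|\lambda|}(\omega))$ integrates against $|\lambda|$ to give $\int\phi\cdot d\lambda\le F(\lambda)$. For the reverse inequality, I would exhaust $\Omega$ by a nested sequence of compact sets $K_{m}\subset\operatorname{int}(K_{m+1})$ with $\bigcup_{m}K_{m}=\Omega$, and work on each $K_{m}$ separately, where $|\lambda|\mathord{\upharpoonright}K_{m}$ is finite. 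On $K_{m}$, a measurable selection theorem (Kuratowski--Ryll-Nardzewski for the closed convex-valued multifunction, combined with a measurable maximum argument using that $\omega\mapsto\tfrac{d\lambda}{d|\lambda|}(\omega)$ is Borel) produces a Borel selection $\tilde{\phi}_{m}$ with $\tilde{\phi}_{m}(\omega)\in K(\omega)$ and $\langle\tilde{\phi}_{m}(\omega),\tfrac{d\lambda}{d|\lambda|}(\omega)\rangle\ge f(\omega,\tfrac{d\lambda}{d|\lambda|}(\omega))-1/m$ for $|\lambda|$-a.e.\ $\omega\in K_{m}$. Lusin's theorem on $K_{m}$ then replaces $\tilde{\phi}_{m}$ by a continuous function on a compact set of measure arbitrarily close to $|\lambda|(K_{m})$; postcomposing with the pointwise nearest-point projection onto the closed convex sets $K(\omega)$ (which is $1$-Lipschitz in $p$ and respects the closed-graph structure) restores the constraint $\phi(\omega)\in K(\omega)$ without enlarging the approximation error. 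Finally a compactly supported cutoff and a small outward extension using Tietze's theorem produce a genuine $\phi_{m}\in C_{c}(\Omega;\mathbb{R}^{n})$ supported near $K_{m}$; passing $m\to\infty$ and invoking monotone convergence applied to $F(\lambda\mathord{\upharpoonright}K_{m})\uparrow F(\lambda)$ yields the claimed sup characterization (the case $F(\lambda)=\infty$ is handled by the same exhaustion, producing test functions with $\int\phi_{m}\cdot d\lambda\to\infty$).

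From this dual representation, both conclusions follow cleanly. Convexity of $F$ is immediate since a pointwise supremum of affine functionals is convex. For lower semicontinuity, observe that by the very definition of the weak$^{*}$ topology on $\mathcal{M}_{loc}(\Omega;\mathbb{R}^{n})$ (namely, the coarsest topology making all pairings against $C_{c}(\Omega;\mathbb{R}^{n})$ continuous), each functional $\lambda\mapsto\int\phi\cdot d\lambda$ with $\phi\in C_{c}(\Omega;\mathbb{R}^{n})$ is weak$^{*}$-continuous. A pointwise supremum of continuous functions is lower semicontinuous in any topology, so $F$ is topologically weak$^{*}$ l.s.c., and in particular sequentially weak$^{*}$ l.s.c. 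The main obstacle is the measurable-selection step: ensuring that the Borel selection can be built to approximate the support-function value pointwise almost everywhere, and that the subsequent Lusin--projection--extension procedure preserves both continuity, compact support, and the pointwise constraint $\phi(\omega)\in K(\omega)$ simultaneously; the projection onto $K(\omega)$ is the key device that makes this compatible.
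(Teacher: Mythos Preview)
Your strategy—prove the dual representation $F(\lambda)=\sup\{\int_\Omega\phi\cdot d\lambda:\phi\in C_c(\Omega;\mathbb{R}^n),\ \phi(\omega)\in K(\omega)\ \forall\omega\}$ and read off convexity and l.s.c.\ as a pointwise supremum of weak*-continuous affine functionals—is the classical route to Reshetnyak's theorem and is genuinely different from what the paper does. The paper instead invokes the finite-measure Reshetnyak theorem on each compact $K\subset\Omega$ as a black box (citing Buttazzo), then writes $F(\lambda)=\sup_n\int_{K_n}f\,d|\lambda|$ over a compact exhaustion, so that $F$ is a supremum of l.s.c.\ functionals and hence l.s.c. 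Your approach is more self-contained; the paper's is much shorter because it outsources exactly the approximation step you are trying to carry out.

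There is, however, a real gap in your construction. The claim that joint lower semicontinuity of $f$ forces the graph $\{(\omega,p):p\in K(\omega)\}$ to be closed is false: the defining conditions are $\langle p,z\rangle-f(\omega,z)\le 0$, and since $-f$ is only \emph{upper} semicontinuous these sublevel sets need not be closed. A one-dimensional counterexample: take $f(\omega,z)=z_+$ for $\omega\le 0$ and $f(\omega,z)=|z|$ for $\omega>0$; this $f$ is l.s.c., convex, nonnegative, and $1$-homogeneous in $z$, yet $K(\omega)=[0,1]$ for $\omega\le 0$ and $K(\omega)=[-1,1]$ for $\omega>0$, so $(1/n,-1)$ lies in the graph but its limit $(0,-1)$ does not. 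The same example kills the projection step you rely on: projecting the constant function $\phi\equiv -\tfrac12$ onto $K(\omega)$ yields $0$ for $\omega\le 0$ and $-\tfrac12$ for $\omega>0$, which is discontinuous, so ``project back into $K(\omega)$ after Lusin/Tietze'' does not produce an element of $C_c$. The dual representation you are aiming for does hold, but upgrading a near-optimal Borel selection to a \emph{continuous} admissible one is the genuinely delicate step in Reshetnyak-type arguments and requires more than Lusin-plus-projection; this is precisely the work the paper avoids by citing the compact case.
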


In the statement of this theorem, the ``weak{*} topology on $\mathcal{M}_{loc}(\Omega,\mathbb{R}^{d})$''
has the following sense: we consider $\mathcal{M}_{loc}(\Omega,\mathbb{R}^{n})$
as the dual space of $C_{c}(\Omega,\mathbb{R}^{n})$, where $C_{c}(\Omega,\mathbb{R}^{n})$
is understood as a locally convex t.v.s. which is a direct limit of
the Banach spaces $C_{c}(K,\mathbb{R}^{n})$ for every compact $K\subset\Omega$.
(See also discussion on this point in discussion in \cite[Chapter 1]{ambrosio2000functions}.)
The reason why the statement of the theorem carefully specifies that
$F$ is \emph{both} topologicallly and sequentially weak{*} l.s.c.
is that in this setting, it is not known to the authors whether the
two notions coincide. (In particular, $\mathcal{M}_{loc}(\Omega,\mathbb{R}^{d})$
is not the weak{*} dual of a separable Banach space, so we cannot
apply \cite[Proposition 1.1.6 (iii)]{buttazzo1989semicontinuity}.)
For our immediate purposes, the sequential weak{*} l.s.c. property
is used in variational arguments, while $F$ being topologically weak{*}
l.s.c. implies directly that $F$ is weak{*} Borel measurable.

\begin{proof}
Convexity of $F$ follows directly, regardless of the underlying domain,
from the convexity and $1$-homogeneity of $f$.

Let $K\subset\Omega$ be compact. Consider the functional
\[
\mathcal{M}_{loc}(\Omega,\mathbb{R}^{n})\rightarrow[0,\infty]
\]
\[
\lambda\mapsto\int_{K}f\left(\omega,\frac{d\pi_{K}\lambda}{d|\pi_{K}\lambda|}(\omega)\right)d|\pi_{K}\lambda|(\omega)
\]
namely precomposition of the continuous projection $\pi_{K}$ with
the functional $F(\cdot\upharpoonright K)$ (which has domain $\mathcal{M}(K,\mathbb{R}^{n})$).
The functional $F(\cdot\upharpoonright K)$ is known, by Reshetnyak's
theorem (more precisely, \cite[Theorem 3.4.3]{buttazzo1989semicontinuity}),
to be sequentially weak{*} lower semicontinuous on $\mathcal{M}(K,\mathbb{R}^{n})$.
Since $\mathcal{M}(K,\mathbb{R}^{n})$ is the dual of a separable
Banach space, and $F(\cdot\upharpoonright K)$ is convex, $F(\cdot\upharpoonright K)$
is also topologically weak{*} lower semicontinuous on $\mathcal{M}(K,\mathbb{R}^{n})$.
Since the precomposition of a continuous map with a (topologically)
l.s.c. map is again l.s.c., it follows that $F(\cdot\upharpoonright K)\circ\pi_{K}$
is a topologically lower semicontinuous map from $\mathcal{M}_{loc}(\Omega,,\mathbb{R}^{n})$
to $[0,\infty]$. 

Now, consider a sequence $(K_{n})$ of compact sets which exhaust
$\Omega$. Observe that 
\begin{align*}
\int_{\Omega}f\left(\omega,\frac{d\lambda}{d|\lambda|}(\omega)\right)d|\lambda|(\omega) & =\sup_{n}\int_{K_{n}}f\left(\omega,\frac{d\lambda}{d|\lambda|}(\omega)\right)d|\lambda|(\omega) \\
& =\sup_{n}\int_{K_{n}}f\left(\omega,\frac{d\pi_{K_{n}}\lambda}{d|\pi_{K_{n}}\lambda|}(\omega)\right)d|\pi_{K_{n}}\lambda|(\omega).
\end{align*}
Consequently, $\int_{\Omega}f\left(\omega,\frac{d\lambda}{d|\lambda|}(\omega)\right)d|\lambda|(\omega)$
is a supremum of topologically l.s.c. functionals, and therefore is
topologically l.s.c. on $\mathcal{M}_{loc}(\Omega,\mathbb{R}^{n})$
as well \cite[Proposition 1.1.2 (ii)]{buttazzo1989semicontinuity}. 

In fact, from this one can also deduce that $\int_{\Omega}f\left(\omega,\frac{d\lambda}{d|\lambda|}(\omega)\right)d|\lambda|(\omega)$
is also sequentially weak{*} l.s.c.: this follows from the fact that
a function is sequentially l.s.c. with respect to some topology $\tau$
iff it is l.s.c. with respect to the sequential topology induced by
$\tau$, and this topology is at least as fine as $\tau$ itself (see
discussion in \cite{buttazzo1989semicontinuity}, especially \cite[Proposition 1.1.5]{buttazzo1989semicontinuity}).
\end{proof}
\begin{cor}[{compare \cite[Lemma 2.4]{erbar2014gradient} and \cite[Lemma 2.9]{esposito2019nonlocal}}]
\label{cor:action convex l.s.c.} The action $\mathcal{A}(\mu,\mathbf{j};m)$
is jointly convex, and jointly topologically and sequentially weak{*}
l.s.c., on $\mathcal{P}(\mathbb{R}^{d})\times\mathcal{M}_{loc}(G)\times\mathcal{M}_{loc}^{+}(\mathbb{R}^{d})$,
also more generally on the larger space $\mathcal{M}_{loc}^{+}(\mathbb{R}^{d})\times\mathcal{M}_{loc}(G)\times\mathcal{M}_{loc}^{+}(\mathbb{R}^{d})$. 
\end{cor}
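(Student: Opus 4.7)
The strategy is to apply the generalized Reshetnyak result (Theorem \ref{thm:locally finite reshetnyak}) to a perspective-type integrand. Write the action as
\[
\mathcal{A}_{\theta,\eta}(\mu,\mathbf{j};m) \;=\; \tfrac{1}{2}\iint_G F\!\left(x,y;\, \tfrac{d\mathbf{j}}{d\lambda},\, \tfrac{d(\mu\otimes m)}{d\lambda},\, \tfrac{d(m\otimes \mu)}{d\lambda}\right)\,d\lambda,
\]
with integrand $F(x,y;j,a,b):=\eta(x,y)\,j^{2}/\theta(a,b)$ on $G\times\mathbb{R}\times[0,\infty)^{2}$, using the conventions $0/0=0$ and $j^{2}/0=+\infty$ for $j\neq 0$. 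The key structural observation is that $(j,a,b)\mapsto j^{2}/\theta(a,b)$ is a classical perspective-type function of a concave $1$-homogeneous $\theta$, hence jointly convex, positively $1$-homogeneous, and lower semicontinuous on $\mathbb{R}\times[0,\infty)^{2}$. Combined with continuity of $\eta$ on $G$ (Assumption \ref{assu:eta properties}(i)), this makes $F$ a nonnegative Borel function that is convex, $1$-homogeneous, and lower semicontinuous in its $(j,a,b)$ argument for every fixed $(x,y)\in G$, with joint lower semicontinuity in $(x,y,j,a,b)$.

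Applied to the $\mathbb{R}^{3}$-valued locally finite measure $\Lambda:=(\mathbf{j},\ \mu\otimes m,\ m\otimes\mu)\in\mathcal{M}_{loc}(G;\mathbb{R}^{3})$, Theorem \ref{thm:locally finite reshetnyak} immediately yields that the functional $\Lambda\mapsto \mathcal{A}$ is convex and both topologically and sequentially weak$^{*}$ lower semicontinuous on $\mathcal{M}_{loc}(G;\mathbb{R}^{3})$. The independence of the representation from the choice of $\lambda$ is standard (cf.\ the Remark following Definition \ref{def:action}) and relies on the $1$-homogeneity of $F$ in $(j,a,b)$.

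It remains to pull back from $\Lambda$-space to the product space $\mathcal{M}_{loc}^{+}(\mathbb{R}^{d})\times\mathcal{M}_{loc}(G)\times\mathcal{M}_{loc}^{+}(\mathbb{R}^{d})$. For the lower semicontinuity assertion, I would verify that the product map $(\mu,m)\mapsto(\mu\otimes m,\,m\otimes\mu)$ is jointly weak$^{*}$ continuous from $\mathcal{M}_{loc}^{+}(\mathbb{R}^{d})^{2}$ into $\mathcal{M}_{loc}^{+}(G)^{2}$: testing against tensor products $\varphi(x)\psi(y)\in C_{c}(\mathbb{R}^{d})\otimes C_{c}(\mathbb{R}^{d})$ is immediate, and the extension to a general test function in $C_{c}(G)$ is by Stone–Weierstrass density together with a local uniform bound. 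Composing a continuous map into $\Lambda$-space with a (sequentially and topologically) weak$^{*}$ l.s.c.\ functional preserves both forms of lower semicontinuity, which gives the claim on both the restricted and the enlarged domains.

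For the convexity statement, for each fixed $m\in\mathcal{M}_{loc}^{+}(\mathbb{R}^{d})$ the map $(\mu,\mathbf{j})\mapsto(\mathbf{j},\mu\otimes m, m\otimes\mu)$ is affine-linear, so convexity of $\mathcal{A}$ in $(\mu,\mathbf{j})$ follows at once from the convexity in $\Lambda$; by the symmetry axiom $\theta(s,t)=\theta(t,s)$ in Assumption \ref{assu:theta properties}(ii), the same argument yields convexity in $(\mathbf{j},m)$ for fixed $\mu$. The main subtlety I anticipate is the full joint convexity in the triple $(\mu,\mathbf{j},m)$: the map $(\mu,m)\mapsto \mu\otimes m$ is bilinear, not affine, so convexity upstairs does not transfer by a purely formal argument. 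The way to cope is to observe that the affine structure needed in the paper's applications is always convexity in $(\mu,\mathbf{j})$ with a \emph{single} reference measure $m$ held fixed (and symmetrically, the conditional convexity above); one then records joint weak$^{*}$ lower semicontinuity in all three variables as the separate statement delivered by Theorem \ref{thm:locally finite reshetnyak}. This is precisely the strength used in Lemma \ref{lem:action convex lsc} and throughout the paper.
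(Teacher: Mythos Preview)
Your approach is essentially the same as the paper's: apply Theorem~\ref{thm:locally finite reshetnyak} to the $\mathbb{R}^{3}$-valued measure $(\mu\otimes m,\,m\otimes\mu,\,\mathbf{j})$ with integrand $f((x,y);a,b,j)=\tfrac{1}{2}\eta(x,y)\,j^{2}/\theta(a,b)$ (extended by the conventions you state), then pull back along the continuous map $(\mu,\mathbf{j},m)\mapsto(\mu\otimes m,\,m\otimes\mu,\,\mathbf{j})$ into $\mathcal{M}_{loc}(\mathbb{R}^{2d};\mathbb{R}^{3})$. The paper does not spell out the weak$^{*}$ continuity of the tensor-product map and simply cites the literature (Erbar, EPSS) for convexity; your Stone--Weierstrass sketch and your explicit check that $(\mu,\mathbf{j})\mapsto(\mathbf{j},\mu\otimes m,m\otimes\mu)$ is affine for fixed $m$ fill in exactly what the paper takes for granted.

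Your closing observation about full joint convexity in the triple is well taken and worth flagging: the bilinear map $(\mu,m)\mapsto\mu\otimes m$ does not transmit convexity, so the Reshetnyak argument alone does not yield convexity in all three variables simultaneously. The paper's proof does not address this either (it just says ``joint convexity follows exactly as in \cite{erbar2014gradient,esposito2019nonlocal}''), and, as you correctly note, Lemma~\ref{lem:action convex lsc} and every downstream application only use convexity in $(\mu,\mathbf{j})$ at fixed $m$. So your reading of what is actually claimed and needed is right.
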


\begin{proof}
Joint convexity follows exactly as in \cite[Lemma 2.7]{erbar2014gradient}
and \cite[Lemma 2.12]{esposito2019nonlocal}.

We apply Theorem \ref{thm:locally finite reshetnyak} in the case
where the space of vector measures is $\mathcal{M}_{loc}(\mathbb{R}^{2d};\mathbb{R}^{3})$
and 
\[
f\left((x,y),\frac{d\lambda}{d|\lambda|}(x,y)\right)=\frac{\left(\frac{d\lambda_{3}}{d|\lambda|}\right)^{2}}{2\theta\left(\frac{d\lambda_{1}}{d|\lambda|},\frac{d\lambda_{2}}{d|\lambda|}\right)}\eta(x,y)\mathbf{1}_{G}(x,y)
\]
with the convention that $\frac{0}{0}=0$, and that $\theta(r,s)=0$
if either $r$ or $s$ is negative. Precomposing this $\int_{\mathbb{R}^{2d}}f\left((x,y),\frac{d\lambda}{d|\lambda|}(x,y)\right)d|\lambda|(x,y)$
with the continuous embedding 
\[
\mathcal{M}_{loc}^{+}(\mathbb{R}^{d})\times\mathcal{M}_{loc}(G)\times\mathcal{M}_{loc}^{+}(\mathbb{R}^{d})\hookrightarrow\mathcal{M}_{loc}(\mathbb{R}^{2d};\mathbb{R}^{3})
\]
\[
(\mu,\mathbf{j},m)\mapsto(\mu\times m,m\times\mu,\mathbf{j})
\]
demonstrates that $\mathcal{A}(\mu,\mathbf{j};m)$ is topologically
l.s.c. on $\mathcal{M}_{loc}^{+}(\mathbb{R}^{d})\times\mathcal{M}_{loc}(G)\times\mathcal{M}_{loc}^{+}(\mathbb{R}^{d})$
as desired, and sequential lower semicontinuity follows exactly as
before.
\end{proof}

As an application, we give a proof of the following convolution inequality.
\begin{lemma}
\label{lem:mass-flux convolution inequality}Let $\mu\in\mathcal{P}(\mathbb{R}^{d})$
and $\mathbf{j}\in\mathcal{M}_{loc}(G)$, and suppose that $\mathcal{A}_{\eta,\theta}(\mu,\mathbf{j})<\infty$.
Let $k$ be a convolution kernel, and for each $z\in\mathbb{R}^{d}$,
let $\mu_{z}$ and $\mathbf{j}_{z}$ denote the $z$-translates of
$\mu$ and $\mathbf{j}$ respectively: namely for all $\psi\in C_{C}^{\infty}(\mathbb{R}^{d})$
and $\varphi\in C_{C}^{\infty}(G)$,
\[
\int_{\mathbb{R}^{d}}\psi(x)d\mu_{z}(x):=\int_{\mathbb{R}^{d}}\psi(x+z)d\mu(x);\quad\iint_{G}\varphi(x,y)d\mathbf{j}_{z}(x,y)=\iint_{G}\varphi(x+z,y+z)d\mathbf{j}(x,y).
\]
Then, 
\[
\mathcal{A}_{\eta,\theta}(\boldsymbol{k}*\mu,\boldsymbol{k}*\mathbf{j})\leq\int_{\mathbb{R}^{d}}\mathcal{A}_{\eta,\theta}(\mu_{z},\mathbf{j}_{z})k(z)dz.
\]
\end{lemma}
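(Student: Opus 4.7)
The strategy is to recognize the pair $(\boldsymbol{k}*\mu,\boldsymbol{k}*\mathbf{j})$ as the $k(z)\,dz$-barycenter of the family of translates $\{(\mu_z,\mathbf{j}_z)\}_{z\in\mathbb{R}^d}$, and then to apply a Jensen-type inequality for the jointly convex, weak-$*$ lower semicontinuous functional $\mathcal{A}_{\eta,\theta}$ furnished by Corollary \ref{cor:action convex l.s.c.}. The first step is therefore to verify the barycenter identities
\[
\int_{\mathbb{R}^d}\psi\,d(\boldsymbol{k}*\mu)=\int_{\mathbb{R}^d}\!\!\int_{\mathbb{R}^d}\psi(x)\,d\mu_z(x)\,k(z)\,dz,\qquad \iint_G\varphi\,d(\boldsymbol{k}*\mathbf{j})=\int_{\mathbb{R}^d}\!\!\iint_G\varphi\,d\mathbf{j}_z\,k(z)\,dz,
\]
for all $\psi\in C_c^\infty(\mathbb{R}^d)$ and $\varphi\in C_c^\infty(G)$; both reduce to Fubini upon unwinding the definitions of $\boldsymbol{k}*\mu$ and $\boldsymbol{k}*\mathbf{j}$. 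These identities realise $\boldsymbol{k}*\mu$ and $\boldsymbol{k}*\mathbf{j}$ as weak-$*$ (Pettis-type) integrals against the probability measure $k(z)\,dz$.

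Next I would discretize by choosing finitely supported probability measures $\nu_n=\sum_i\lambda_i^{(n)}\delta_{z_i^{(n)}}$ approximating $k(z)\,dz$ narrowly, e.g.\ via a nested cube quadrature. The narrow continuity of $z\mapsto\mu_z$ and weak-$*$ continuity of $z\mapsto\mathbf{j}_z$ (checked pointwise against each $\psi\in C_c^\infty(\mathbb{R}^d)$ and $\varphi\in C_c^\infty(G)$) imply that the discrete averages
\[
\mu^{(n)}:=\sum_i\lambda_i^{(n)}\mu_{z_i^{(n)}},\qquad\mathbf{j}^{(n)}:=\sum_i\lambda_i^{(n)}\mathbf{j}_{z_i^{(n)}}
\]
converge narrowly, respectively weak-$*$, to $\boldsymbol{k}*\mu$ and $\boldsymbol{k}*\mathbf{j}$. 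Finite convexity of $\mathcal{A}_{\eta,\theta}$ then gives
\[
\mathcal{A}_{\eta,\theta}(\mu^{(n)},\mathbf{j}^{(n)})\leq\sum_i\lambda_i^{(n)}\,\mathcal{A}_{\eta,\theta}(\mu_{z_i^{(n)}},\mathbf{j}_{z_i^{(n)}}).
\]
Joint lower semicontinuity of $\mathcal{A}_{\eta,\theta}$ handles the left-hand side in the limit. For the right-hand side I would use translation invariance: the substitution $(x,y)\mapsto(x+z,y+z)$ in the integral defining $\mathcal{A}_{\eta,\theta}$, together with invariance of $\eta(|x-y|)$ and of the Lebesgue reference measure, shows that $z\mapsto\mathcal{A}_{\eta,\theta}(\mu_z,\mathbf{j}_z)$ is constant and equal to $\mathcal{A}_{\eta,\theta}(\mu,\mathbf{j})$; hence both the Riemann sums and the limiting integral reduce to the single constant $\mathcal{A}_{\eta,\theta}(\mu,\mathbf{j})=\int\mathcal{A}_{\eta,\theta}(\mu_z,\mathbf{j}_z)\,k(z)\,dz$, which closes the argument.

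The principal technical obstacle is the weak-$*$ convergence of the flux averages $\mathbf{j}^{(n)}\rightharpoonup^*\boldsymbol{k}*\mathbf{j}$ in $\mathcal{M}_{loc}(G)$, since $\mathbf{j}$ is only locally finite rather than a finite Radon measure. Testing against a fixed $\varphi\in C_c^\infty(G)$ reduces this to showing $\sum_i\lambda_i^{(n)}F_\varphi(z_i^{(n)})\to\int F_\varphi(z)\,k(z)\,dz$ where $F_\varphi(z):=\iint_G\varphi(x+z,y+z)\,d\mathbf{j}(x,y)$; since $F_\varphi$ is continuous and compactly supported in $z$ (the first from dominated convergence applied to the smooth $\varphi$, the second from the local finiteness of $\mathbf{j}$ combined with $\mathrm{supp}\,\varphi\Subset G$), this reduces to a standard quadrature-approximation fact. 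An alternative avoiding the discretization altogether would be to invoke an abstract Jensen's inequality for lower-semicontinuous convex functionals against measure-valued Pettis integrals, in the spirit of \cite{teran2014jensen}.
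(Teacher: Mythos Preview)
Your approach is essentially the same as the paper's: both recognize the inequality as a Jensen-type bound, apply finite convexity of the action to discrete convex combinations of translates, and pass to the limit via joint lower semicontinuity. The paper uses i.i.d.\ random samples $Z_i\sim k(z)\,dz$ together with the strong law of large numbers (for the right-hand side) and a Glivenko--Cantelli argument (for the weak-$*$ convergence of the empirical averages on the left), whereas you use a deterministic quadrature $\nu_n\rightharpoonup k(z)\,dz$. These are interchangeable devices for approximating the Pettis integral by finite convex combinations. Your observation that $z\mapsto\mathcal{A}_{\eta,\theta}(\mu_z,\mathbf{j}_z)$ is constant (by translation invariance of $\eta$ and of the Lebesgue reference measure) is correct and is noted in the paper as well.

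There is, however, one incorrect step. You assert that $F_\varphi(z)=\iint_G\varphi(x+z,y+z)\,d\mathbf{j}(x,y)$ is compactly supported in $z$, deducing this from local finiteness of $\mathbf{j}$ and $\supp\varphi\Subset G$. This is false: $\supp\mathbf{j}$ may be unbounded, so translates of $\supp\varphi$ can meet it for arbitrarily large $|z|$. What you need instead is that $F_\varphi$ is \emph{bounded}, and here the hypothesis $\mathcal{A}_{\eta,\theta}(\mu,\mathbf{j})<\infty$ becomes essential. The paper obtains this from \cite[Lemma 2.6]{erbar2014gradient}, which gives $|\eta\,\mathbf{j}_z|(K)\leq C(K)\,\mathcal{A}(\mu_z,\mathbf{j}_z)=C(K)\,\mathcal{A}(\mu,\mathbf{j})$ for any compact $K\subset G$, uniformly in $z$; since $\eta$ is bounded below on $K\Subset G$, this yields a uniform bound on $|\mathbf{j}_z|(K)$ and hence on $|F_\varphi|$. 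With this correction, $F_\varphi\in C_b(\mathbb{R}^d)$ and your narrow-convergence argument goes through unchanged.
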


We note that morally, this is just an application of Jensen's inequality,
but we are unaware of a version of Jensen's inequality in the literature
that applies to this setting. Therefore we give an ad hoc proof, essentially
the same (albeit with addition complications) to the proof of Proposition
\ref{thm:disintegration-inequality}.
\begin{proof}
Let $(\Omega,\mathcal{A},\mathbb{P})$ be a probability space, and
let $Z:\Omega\rightarrow\mathbb{R}^{d}$ be a random variable with
distribution $k(|z|)dz$. Furthermore, let $Z_{1},Z_{2},\ldots$ be
i.i.d. copies of $Z$. Then, $(\mu_{Z},\mathbf{j}_{Z})$ is a $\mathcal{P}(\mathbb{R}^{d})\times\mathcal{M}_{loc}(G)$-valued
random variable, and since $\mathcal{A}_{\eta,\theta}$ is topologically
l.s.c. and hence Borel, it follows that $\mathcal{A}_{\eta,\theta}(\mu_{Z},\mathbf{j}_{Z})$
is a $[0,\infty]$-valued random variable with distribution $\mathcal{A}_{\eta,\theta}(\mu_{z},\mathbf{j}_{z})k(|z|)dz$. 

Since $\mathcal{A}_{\eta,\theta}$ is jointly convex, we observe that
\[
\mathcal{A}_{\eta,\theta}\left(\frac{1}{n}\sum_{i=1}^{n}\mu_{Z_{i}},\frac{1}{n}\sum_{i=1}^{n}\mathbf{j}_{Z_{i}}\right)\leq\frac{1}{n}\sum_{i=1}^{n}\mathcal{A}_{\eta,\theta}(\mu_{Z_{i}},\mathbf{j}_{Z_{i}}).
\]
Now, suppose that $\int_{\mathbb{R}^{d}}\mathcal{A}_{\eta,\theta}(\mu_{z},\mathbf{j}_{z})k(|z|)dz<\infty$,
since otherwise the lemma holds trivially. Since the action is nonnegative,
the strong law of large numbers shows that 
\[
\sum_{i=1}^{n}\mathcal{A}_{\eta,\theta}(\mu_{Z_{i}},\mathbf{j}_{Z_{i}})\rightarrow\int_{\mathbb{R}^{d}}\mathcal{A}_{\eta,\theta}(\mu_{z},\mathbf{j}_{z})k(|z|)dz
\]
almost surely. Therefore, if we can verify that $\frac{1}{n}\sum_{i=1}^{n}\mu_{Z_{i}}\stackrel{*}{\rightharpoonup}\boldsymbol{k}*\mu$
and $\frac{1}{n}\sum_{i=1}^{n}\mathbf{j}_{Z_{i}}\stackrel{*}{\rightharpoonup}\boldsymbol{k}*\mathbf{j}$
almost surely, it then follows from the joint sequential lower semicontinuity
of $\mathcal{A}_{\eta,\theta}$ that 
\[
\mathcal{A}_{\eta,\theta}(\boldsymbol{k}*\mu,\boldsymbol{k}*\mathbf{j})\leq\liminf_{n\rightarrow\infty}\mathcal{A}_{\eta,\theta}\left(\frac{1}{n}\sum_{i=1}^{n}\mu_{Z_{i}},\frac{1}{n}\sum_{i=1}^{n}\mathbf{j}_{Z_{i}}\right)
\]
and the lemma is proved.  

We therefore verify that $\frac{1}{n}\sum_{i=1}^{n}\mu_{Z_{i}}\stackrel{*}{\rightharpoonup}\boldsymbol{k}*\mu$
and $\frac{1}{n}\sum_{i=1}^{n}\mathbf{j}_{Z_{i}}\stackrel{*}{\rightharpoonup}\boldsymbol{k}*\mathbf{j}$
almost surely; note that this is a similar result to the Glivenko-Cantelli
theorem. 

Let $\psi\in C_{0}(\mathbb{R}^{d})$. Then $\int_{\mathbb{R}^{d}}\psi(x)d\mu_{Z}(x)$
is an $L^{1}$ random variable, in fact $L^{\infty}$ since for all
$\omega\in\Omega$
\[
\left|\int_{\mathbb{R}^{d}}\psi(x)d\mu_{Z(\omega)}(x)\right|\leq\Vert\psi\Vert_{\infty}.
\]
It follows from the strong law of large numbers that 
\begin{align*}
\int_{\mathbb{R}^{d}}\psi(x)d\left(\frac{1}{n}\sum_{i=1}^{n}\mu_{Z_{i}}(x)\right) & =\frac{1}{n}\sum_{i=1}^{n}\int_{\mathbb{R}^{d}}\psi(x)d\mu_{Z_{i}}(x)\\
\text{(a.s.)} & \rightarrow\int_{\mathbb{R}^{d}}\left(\int_{\mathbb{R}^{d}}\psi(x)d\mu_{z}(x)\right)k(|z|)dz\\
 & =\int_{\mathbb{R}^{d}}\psi(x)d\left(\int_{\mathbb{R}^{d}}\mu_{z}k(|z|)dz\right)(x)\\
 & =\int_{\mathbb{R}^{d}}\psi(x)d(\boldsymbol{k}*\mu)(x).
\end{align*}
By \cite[Lemma 4.8 (i)]{kallenberg2017random} this implies that $\frac{1}{n}\sum_{i=1}^{n}\mu_{Z_{i}}\stackrel{*}{\rightharpoonup}\boldsymbol{k}*\mu$
almost surely (more specifically, in duality with $C_{0}(\mathbb{R}^{d})$).
We will use similar, albeit more involved, reasoning to show that
$\frac{1}{n}\sum_{i=1}^{n}\mathbf{j}_{Z_{i}}\stackrel{*}{\rightharpoonup}\boldsymbol{k}*\mathbf{j}$
almost surely as well. 

Consider the Jordan decomposition $\mathbf{j}=\mathbf{j}^{+}-\mathbf{j}^{-}$.
By %
{} globalization of \cite[Theorem 2.8]{dubins1964measurable} this is
a measurable operation, and so both $\mathbf{j}_{Z}^{+}$ and $\mathbf{j}_{Z}^{-}$
are $\mathcal{M}_{loc}^{+}(G)$-valued random variables. Note also
that 
\[
\boldsymbol{k}*\mathbf{j}=\int_{\mathbb{R}^{d}}\mathbf{j}_{z}k(|z|)dz=\int_{\mathbb{R}^{d}}\left(\mathbf{j}_{z}^{+}-\mathbf{j}_{z}^{-}\right)k(|z|)dz=\boldsymbol{k}*\mathbf{j}^{+}-\boldsymbol{k}*\mathbf{j}^{-}.
\]
Let $K\subset G$ be any compact set, and let $\varphi\in C_{c}(K)$.
Then, we claim that both of 
\[
\iint_{K}\varphi(x,y)\eta(|x-y|)d\mathbf{j}_{z}^{+}(x,y)\text{ and }\iint_{K}\varphi(x,y)\eta(|x-y|)d\mathbf{j}_{z}^{-}(x,y)
\]
are $L^{1}$ random variables, in fact $L^{\infty}$. It suffices
to show that $|\eta(|x-y|)d\mathbf{j}_{z}(x,y)|(K)$ is uniformly
bounded.

In \cite[Lemma 2.6]{erbar2014gradient} it is shown that for some
uniform constant $C$, the measure $\mathbf{v}=\eta(|x-y|)d\mathbf{j}(x,y)$
satisfies the property 
\[
|\mathbf{v}|(K)\leq\frac{1}{a}\int_{G}(1\wedge|x-y|)d|\mathbf{v}|(x,y)\leq C\mathcal{A}(\mu,\mathbf{j})
\]
where $a=\min\{|x-y|:(x,y)\in K\}$. Denoting 
\[
K-\binom{z}{z}:=\{(x-z,y-z):(x,y)\in K\}
\]
we see that $a$ is invariant under translation of $K$; and $\eta(|x-y|)$
is invariant under translation of $(x,y)$ by $z$, so in fact for
\emph{all }$z\in\mathbb{R}^{d}$ uniformly, 
\[
|\eta(|x-y|)d\mathbf{j}_{z}(x,y)|(K)\leq C\mathcal{A}(\mu_{z},\mathbf{j}_{z}).
\]
Furthermore, as observed in \cite{erbar2014gradient}, $\mathcal{A}(\mu,\mathbf{j})$
is invariant under translation of $(\mu,\mathbf{j})$ since we have
taken the reference measure on $\mathbb{R}^{d}$ to be the Lebesgue
measure. Therefore, it holds (uniformly in $z$) that
\[
\left|\iint_{K}\varphi(x,y)\eta(|x-y|)d\mathbf{j}_{z}^{+}(x,y)\right|\leq\Vert\varphi\Vert_{L^{\infty}(K)}|\eta(|x-y|)d\mathbf{j}(x,y)|(K)
\]
and the same holds for $\mathbf{j}_{z}^{-}$ also. 

Now, it follows from the strong law of large numbers that 
\begin{align*}
\int_{K}\varphi(x,y)\eta(|x-y|)d\left(\frac{1}{n}\sum_{i=1}^{n}\mathbf{j}_{Z_{i}}^{+}(x,y)\right) & =\frac{1}{n}\sum_{i=1}^{n}\int_{K}\varphi(x,y)\eta(|x-y|)d\mathbf{j}_{Z_{i}}^{+}(x)\\
\text{(a.s.)} & \rightarrow\int_{\mathbb{R}^{d}}\left(\int_{K}\varphi(x,y)\eta(|x-y|)d\mathbf{j}_{z}^{+}(x,y)\right)k(|z|)dz\\
 & =\int_{K}\varphi(x,y)\eta(|x-y|)d\left(\int_{\mathbb{R}^{d}}\mathbf{j}_{z}^{+}k(|z|)dz\right)(x,y)\\
 & =\int_{K}\varphi(x,y)\eta(|x-y|)d(\boldsymbol{k}*\mathbf{j}^{+})(x,y).
\end{align*}
By \cite[Lemma 4.8 (i)]{kallenberg2017random} this implies that almost
surely, 
\[
\eta(|x-y|)d\left(\frac{1}{n}\sum_{i=1}^{n}\mathbf{j}_{Z_{i}}^{+}(x,y)\right)\upharpoonright K\stackrel{*}{\rightharpoonup}\eta(|x-y|)d(\boldsymbol{k}*\mathbf{j}^{+})(x,y)\upharpoonright K
\]
(where the convergence is in duality with $C_{C}(K)$); since $\mathbf{j}$
is supported on $G:=\{(x,y):\eta(|x-y|)>0\}$ (and therefore, so is
$\mathbf{j}_{z}$ for every $z$, since $G$ is translation-invariant)
and $\eta$ is a continuous function of compact support when restricted
to any $K\subset G$, this implies that that almost surely, 
\[
\left(\frac{1}{n}\sum_{i=1}^{n}\mathbf{j}_{Z_{i}}^{+}\right)\upharpoonright K\stackrel{*}{\rightharpoonup}\boldsymbol{k}*\mathbf{j}^{+}\upharpoonright K
\]
again in duality with $C_{C}(K)$. Since $K$ was arbitrary, by the
characterization of weak{*} convergence in $\mathcal{M}_{loc}(G)$
we have that almost surely, 
\[
\left(\frac{1}{n}\sum_{i=1}^{n}\mathbf{j}_{Z_{i}}^{+}\right)\stackrel{*}{\rightharpoonup}\boldsymbol{k}*\mathbf{j}^{+}
\]
in duality with $C_{c}(G)$. By identical reasoning, we have that
$\left(\frac{1}{n}\sum_{i=1}^{n}\mathbf{j}_{Z_{i}}^{-}\right)\stackrel{*}{\rightharpoonup}\boldsymbol{k}*\mathbf{j}^{-}$
also. Finally, this implies that $\left(\frac{1}{n}\sum_{i=1}^{n}\mathbf{j}_{Z_{i}}\right)\stackrel{*}{\rightharpoonup}\boldsymbol{k}*\mathbf{j}$
almost surely. 
\end{proof}

\section{Additional lemmas} \label{sec:additional}

\begin{lemma}
[estimate adapted from the two point $\mathcal{W}$ space]\label{lem:2 point space estimate}Let
$\varepsilon>0$. Let $A$ and $B$ be disjoint, bounded subsets of
$\mathbb{R}^{d}$ of positive Lebesgue measure, such that $\sup_{x \in A, y\in B} |x-y| <\varepsilon$.
Suppose without loss of generality that $|A|\leq|B|$. Let $\mathfrak{m}_{A}$
and $\mathfrak{m}_{B}$ denote the uniform probability measures supported
on $A$ and $B$, respectively. Then, 
\[
\mathcal{W}_{\eta,\varepsilon}(\mathfrak{m}_{A},\mathfrak{m}_{B})\leq\frac{C_{\theta}}{4\sqrt{|A|\eta_{\varepsilon}(\sup_{x \in A, y\in B} |x-y|)}}
\]
where $C_{\theta}:=\int_{0}^{1}\frac{1}{\sqrt{\theta(1-r,1+r)}}dr$.
\end{lemma}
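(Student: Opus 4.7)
The strategy is to transfer the action-minimizing curve between two Dirac masses on a two-node weighted graph, which is constructed by Maas in \cite{maas2011gradient}, to a curve in $\mathcal{P}(\mathbb{R}^d)$ connecting $\mathfrak{m}_A$ to $\mathfrak{m}_B$. Concretely, for every $\varepsilon_0 > 0$, Maas's construction provides a differentiable path $\gamma:[0,1] \to [-1,1]$ with $\gamma(0) = -1$, $\gamma(1) = 1$, together with an ``edge velocity'' $\chi: [0,1] \to \mathbb{R}$ satisfying $\dot{\gamma}_t = \tfrac14 \theta(1+\gamma_t, 1 - \gamma_t) \chi_t$, and enjoying the action bound $\int_0^1 \chi_t^2 \, \theta(1-\gamma_t,1+\gamma_t)\, dt \leq C_\theta^2 + \varepsilon_0$ (where the constant $C_\theta^2$ comes from the Cauchy-Schwarz optimal reparametrization $|\dot{\gamma}_t|/\sqrt{\theta(1-\gamma_t,1+\gamma_t)} \equiv \mathrm{const}$ on a symmetric two-point space with edge weight $1/2$). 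These two functions will be used as the sole time-dependent ``scalar parameters'' driving our curve.

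Given $(\gamma_t, \chi_t)$, I would define the curve $(\rho_t)_{t\in[0,1]} \subset \mathcal{P}(\mathbb{R}^d)$ by
\[
\frac{d\rho_t}{d\mathrm{Leb}}(x) = \frac{1 - \gamma_t}{2|A|}\,\mathbf{1}_A(x) + \frac{1+\gamma_t}{2|B|}\,\mathbf{1}_B(x),
\]
so that $\rho_0 = \mathfrak{m}_A$ and $\rho_1 = \mathfrak{m}_B$. I would then construct the (antisymmetric) flux $\mathbf{j}_t \ll \mathrm{Leb}\otimes\mathrm{Leb}$, supported on $(A \times B) \cup (B \times A)$, by setting
\[
\frac{d\mathbf{j}_t}{dxdy}(x,y) = \frac{\dot{\gamma}_t}{2\,|A|\,|B|\,\eta_\varepsilon(x,y)} \quad \text{for } (x,y) \in A \times B,
\]
and extending antisymmetrically. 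A direct computation shows that $(\rho_t, \mathbf{j}_t) \in \mathcal{CE}(\mathfrak{m}_A, \mathfrak{m}_B)$, since for each $x \in A$ one has $\int j_t(x,y)\eta_\varepsilon(x,y)\, dy = \dot{\gamma}_t/(2|A|) = -\partial_t \rho_t(x)$, and similarly on $B$.

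The central estimate is a pointwise-in-$t$ bound on the action. Using the 1-homogeneity of $\theta$ and writing $\theta(\rho_t(x), \rho_t(y)) = \frac{1}{2|B|} \theta\left((1-\gamma_t)\frac{|B|}{|A|},\, 1+\gamma_t\right)$, and then applying the monotonicity of $\theta$ in its first argument together with the hypothesis $|A| \leq |B|$, one obtains the lower bound $\theta(\rho_t(x),\rho_t(y)) \geq \frac{1}{2|B|}\theta(1-\gamma_t,1+\gamma_t)$ for $(x,y) \in A \times B$. Combining this with the monotonicity of $\eta_\varepsilon$ (Assumption \ref{assu:eta properties}(iii)), which gives $\eta_\varepsilon(x,y) \geq \eta_\varepsilon(\sup_{A \times B}|x-y|)$, and substituting the chosen flux, the $A \times B$ integrand reduces, after the cancellations induced by 1-homogeneity and the identity $\dot\gamma_t = \tfrac14 \theta \chi_t$, to an expression that integrates to at most $\frac{\chi_t^2 \,\theta(1-\gamma_t,1+\gamma_t)}{c \,|A|\, \eta_\varepsilon(\sup_{A \times B}|x-y|)}$ for an explicit numerical constant $c$.

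Finally I would integrate in $t$ and use the Maas action bound, together with the arbitrariness of $\varepsilon_0$, to conclude
\[
\mathcal{W}_{\eta,\varepsilon}^2(\mathfrak{m}_A, \mathfrak{m}_B) \leq \int_0^1 \mathcal{A}(\rho_t, \mathbf{j}_t)\,dt \leq \frac{C_\theta^2}{16\, |A|\, \eta_\varepsilon(\sup_{A\times B}|x-y|)},
\]
after which taking square roots yields the claim. The main obstacle to executing this plan is bookkeeping of numerical constants---in particular verifying that the asymmetry between $A$ and $B$ is correctly absorbed into the lower bound on $\theta(\rho_t(x),\rho_t(y))$ so that the final estimate depends on the smaller of the two volumes $|A|$ (as stated), and that the Maas two-point geodesic is normalized consistently with the edge weight implicit in our action integral.
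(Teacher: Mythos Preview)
Your proposal is correct and follows essentially the same construction as the paper's proof: lift the Maas two-point geodesic $(\gamma_t,\chi_t)$ to the curve $\rho_t = \frac{1-\gamma_t}{2|A|}\mathbf{1}_A + \frac{1+\gamma_t}{2|B|}\mathbf{1}_B$ with an antisymmetric flux supported on $(A\times B)\cup(B\times A)$, then use 1-homogeneity and monotonicity of $\theta$ (to absorb the $|A|\le|B|$ asymmetry into a lower bound $\theta(\rho_t(x),\rho_t(y))\ge \tfrac{1}{2|B|}\theta(1-\gamma_t,1+\gamma_t)$) together with monotonicity of $\eta_\varepsilon$, and finally the Maas action bound. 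The only residual point is exactly the one you flag---the normalization of the two-point action relative to $C_\theta$---and the paper handles this via the identity $C_\theta=\tfrac{\sqrt{2}}{2}\mathcal{W}_\theta(\delta_0,\delta_1)$.
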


\begin{proof}
We use a construction (as suggested by \cite{garciatrillos2020gromov})
from Lemma 2.3 (and Theorem 2.4) from \cite{maas2011gradient}. If
the 2-point $\mathcal{W}_{\theta}$ space is connected, then the proof
Lemma 2.3 therein indicates that for any $\varepsilon_{0}>0$, there
exist differentiable functions $\gamma:[0,1]\rightarrow[-1,1]$ and
$\chi:[0,1]\rightarrow\mathbb{R}$ such that $\gamma(0)=-1$, $\gamma(1)=1$,
and 
\[
\forall t\in(0,1)\quad\gamma_{t}^{\prime}=\frac{1}{4}\theta(1+\gamma_{t},1-\gamma_{t})\chi_{t},
\]
and, 
\[
\int_{0}^{1}\theta(1+\gamma_{t},1-\gamma_{t})\chi_{t}^{2}dt<\mathcal{W}_{\theta}^{2}(\delta_{0},\delta_{1})+\varepsilon_{0}.
\]
(Here $\mathcal{W}_{\theta}(\delta_{0},\delta_{1})$ denotes the $\mathcal{W}_{\theta}$
distance between two Dirac masses on the two-point graph, with edge
weight $1/2$.)

Consider the curve $\rho_{t}:[0,1]\rightarrow\mathcal{P}(\mathbb{R}^d)$ defined
by 
\[
\frac{d\rho_{t}}{d\text{Leb}}(x)=\begin{cases}
\frac{1-\gamma_{t}}{2|A|} & x\in A\\
\frac{1+\gamma_{t}}{2|B|} & x\in B\\
0 & \text{else}.
\end{cases}
\]
Note also that by construction, 
\[
\frac{d}{dt}\frac{d\rho_{t}}{d\text{Leb}}(x)=\begin{cases}
-\frac{\theta(1+\gamma_{t},1-\gamma_{t})\chi_{t}}{8|A|} & x\in A\\
\frac{\theta(1+\gamma_{t},1-\gamma_{t})\chi_{t}}{8|B|} & x\in B\\
0 & \text{else}.
\end{cases}
\]
 Let $\mathbf{j}_{t}(x,y)$ be a flux so that $(\rho_{t},\mathbf{j}_{t})$
solves the nonlocal continuity equation; in particular we set
\[
\frac{d\mathbf{j}_{t}}{d(\text{Leb}\otimes\text{Leb})}(x,y)=\begin{cases}
\frac{\theta(1+\gamma_{t},1-\gamma_{t})\chi_{t}}{16\eta_{\varepsilon}(x,y)|A||B|} & (x,y)\in A\times B\\
-\frac{\theta(1+\gamma_{t},1-\gamma_{t})\chi_{t}}{16\eta_{\varepsilon}(x,y)|A||B|} & (x,y)\in B\times A\\
0 & \text{else}.
\end{cases}
\]
Now, observe that $\text{Leb}\otimes\text{Leb}$ dominates all of
$\mathbf{j}_{t}$, $\rho_{t}\otimes\text{Leb}$, and $\text{Leb}\otimes\rho_{t}$.
Furthermore, note that that $\frac{d(\rho_{t}\otimes\text{Leb})}{d(\text{Leb}\otimes\text{Leb})}(x,y)=\frac{d\rho_{t}}{d\text{Leb}}(x)$,
and similarly $\frac{d(\text{Leb}\otimes\rho_{t})}{d(\text{Leb}\otimes\text{Leb})}(x,y)=\frac{d\rho_{t}}{d\text{Leb}}(y)$.
Together with the fact that $\mathbf{j}_{t}$ is antisymmetric, it
follows that
\begin{align*}
\mathcal{A}(\rho_{t},\mathbf{j}_{t}) & =\int_{A}\int_{B}\frac{\left(\frac{\theta(1+\gamma_{t},1-\gamma_{t})\chi_{t}}{16\eta_{\varepsilon}(x,y)|A||B|}\right)^{2}}{\theta\left(\frac{1-\gamma_{t}}{2|A|}(x),\frac{1+\gamma_{t}}{2|B|}(y)\right)}\eta_{\varepsilon}(x,y)dxdy.
\end{align*}
Moreover, from the homogeneity and monotonicity of $\theta$, and
the fact that $|A|\leq|B|$,
\begin{align*}
\mathcal{A}(\rho_{t},\mathbf{j}_{t}) & =\int_{A}\int_{B}\frac{\left(\frac{\theta(1+\gamma_{t},1-\gamma_{t})\chi_{t}}{16\eta_{\varepsilon}(x,y)|A||B|}\right)^{2}}{\frac{1}{2|B|}\cdot\theta\left(\frac{1-\gamma_{t}}{\frac{|A|}{|B|}},1+\gamma_{t}\right)}\eta_{\varepsilon}(x,y)dxdy\\
 & \leq\int_{A}\int_{B}\frac{\left(\frac{\theta(1+\gamma_{t},1-\gamma_{t})\chi_{t}}{16\eta_{\varepsilon}(x,y)|A||B|}\right)^{2}}{\frac{1}{2|B|}\cdot\theta\left(1-\gamma_{t},1+\gamma_{t}\right)}\eta_{\varepsilon}(x,y)dxdy
\end{align*}
and simplifying, we see that 
\begin{align*}
\mathcal{A}(\rho_{t},\mathbf{j}_{t}) & \leq\int_{A}\int_{B}\frac{\theta(1+\gamma_{t},1-\gamma_{t})\chi_{t}^{2}}{2^{7}\eta_{\varepsilon}(x,y)|A|^{2}|B|}dxdy.
\end{align*}
On the other hand, the action computation from the 2-point space in
\cite{maas2011gradient} tells us that
\[
\frac{1}{4}\int_{0}^{1}\theta(1+\gamma_{t},1-\gamma_{t})\chi_{t}^{2}dt<\mathcal{W}_{\theta}^{2}(\delta_{0},\delta_{1})+\varepsilon_{0}.
\]
Thus, 
\begin{align*}
\int_{0}^{1}\mathcal{A}(\rho_{t},\mathbf{j}_{t})dt & \leq\frac{1}{2^{7}|A|^{2}|B|\eta_{\varepsilon}(\sup_{x\in A,y\in B}|x-y|)}\int_{0}^{1}\int_{A}\int_{B}\theta(1+\gamma_{t},1-\gamma_{t})\chi_{t}^{2}dxdydt\\
 & =\frac{1}{2^{7}|A|\eta_{\varepsilon}(\sup_{x\in A,y\in B}|x-y|)}\int_{0}^{1}\theta(1+\gamma_{t},1-\gamma_{t})\chi_{t}^{2}dt\\
 & <\frac{\mathcal{W}^{2}(\delta_{0},\delta_{1})+\varepsilon_{0}}{2^{5}|A|\eta_{\varepsilon}(\sup_{x\in A,y\in B}|x-y|)}.
\end{align*}
But $\varepsilon_{0}$ is arbitrary, so 
\[
\mathcal{W}_{\eta,\varepsilon}(\mathfrak{m}_{A},\mathfrak{m}_{B})=\sqrt{\int_{0}^{1}\mathcal{A}(\rho_{t},j_{t})dt}\leq\frac{\mathcal{W}(\delta_{0},\delta_{1})}{2^{2.5}\sqrt{|A|\eta_{\varepsilon}(\sup_{x\in A,y\in B}|x-y|)}}.
\]
We can rephrase this in terms of the constant $C_{\theta}$, which
is defined by $C_{\theta}:=\int_{0}^{1}\frac{1}{\sqrt{\theta(1-r,1+r)}}dr$
or, equivalently (see \cite{maas2011gradient}), $C_{\theta}$ is
$\sqrt{2}$ times the $\mathcal{W}_{\theta}$ distance between a Dirac
mass and the uniform distribution on the two-point space, in other
words, $C_{\theta}=\frac{\sqrt{2}}{2}\mathcal{W}(\delta_{0},\delta_{1})$;
so that
\[
\mathcal{W}_{\eta,\varepsilon}(\mathfrak{m}_{A},\mathfrak{m}_{B})\leq\frac{C_{\theta}}{4\sqrt{|A|\eta_{\varepsilon}(\sup_{x\in A,y\in B}|x-y|)}}.
\]
\end{proof}

\subsection{Lemmas for Section \ref{exact nonlocalization}}

\begin{lemma}
\label{lem:moment versus profile}Let $\alpha_{d}:=Vol(B(0,1))$ in
$\mathbb{R}^{d}$. Then, $M_{p}(\eta):=\int_{\mathbb{R}^{d}}|y|^{p}\eta(|y|)dy=d\alpha_{d}\int_{0}^{\infty}r^{d+p-1}\eta(r)dr.$
\end{lemma}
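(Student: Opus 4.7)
The plan is to convert the integral on the left to polar coordinates. Since the integrand $|y|^p \eta(|y|)$ depends only on the radius, we can write
\[
\int_{\R^d} |y|^p \eta(|y|) \, dy = \mathcal{H}^{d-1}(S^{d-1}) \int_0^\infty r^p \eta(r) \, r^{d-1} \, dr,
\]
where $\mathcal{H}^{d-1}(S^{d-1})$ is the surface measure of the unit sphere in $\R^d$. It then suffices to check the elementary identity $\mathcal{H}^{d-1}(S^{d-1}) = d \alpha_d$, which follows from computing $\alpha_d = |B(0,1)| = \int_0^1 \mathcal{H}^{d-1}(S^{d-1}) \, r^{d-1} \, dr = \mathcal{H}^{d-1}(S^{d-1})/d$. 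Combining these two displays yields the claimed formula. No obstacle is anticipated, as this is purely a change-of-variables computation.
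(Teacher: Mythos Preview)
Your proposal is correct and follows essentially the same approach as the paper: the paper also passes to polar coordinates, noting that the Hausdorff measure of $\partial B(0,r)$ is $\frac{d}{r}\mathrm{Vol}(B(0,r)) = d\alpha_d r^{d-1}$, and then integrates $r^p\eta(r)$ against this surface measure.
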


\begin{proof}
Recall that the Hausdorff measure of $\partial B(0,r)$ is $\frac{d}{r}Vol(B(0,r))$;
in turn, $Vol(B(0,r))=\alpha_{d}r^{d}$. Now, simply compute that
\begin{align*}
\int_{\mathbb{R}^{d}}|y|^{p}\eta(|y|)dy & =\int_{0}^{\infty}\int_{\partial B(0,r)}r^{p}\eta(r)d\mathcal{H}^{d-1}dr=d\alpha_{d}\int_{0}^{\infty}r^{p+d-1}\eta(r)dr.
\end{align*}
\end{proof}

\begin{lemma}
\label{lem:zeta kernel mass}Given any kernel $\eta(|x-y|)$ satisfying
Assumption \ref{assu:eta properties} (i-iv), let $\zeta_{\eta}(x,y):=\int_{|x-y|}^{\infty}s\eta(s)ds$.
Then, for all $x\in\mathbb{R}^{d}$, 
\[
\int_{\mathbb{R}^{d}}\zeta_{\eta}(x,y)dy=\frac{M_{2}(\eta)}{d}
\]
and, concerning the rescaled kernel $\eta_{\varepsilon}(|x-y|)$,
we have moreover that 
\[
\int_{\mathbb{R}^{d}}\zeta_{(\eta_{\varepsilon})}(x,y)dy=\varepsilon^{2}\frac{M_{2}(\eta)}{d}.
\]
\end{lemma}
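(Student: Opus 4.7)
The plan is to reduce the identity to a one-dimensional radial computation using polar coordinates, Fubini, and the moment-versus-profile identity from Lemma B.2, then obtain the rescaled statement by an elementary change of variables.

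First I would observe that, since $\zeta_{\eta}(x,y)$ depends only on $|x-y|$, translation invariance of the Lebesgue measure lets us assume $x=0$, so the claim reduces to
\[
\int_{\mathbb{R}^{d}} \int_{|y|}^{\infty} s\,\eta(s)\,ds\, dy \;=\; \frac{M_{2}(\eta)}{d}.
\]
Passing to spherical coordinates on $\mathbb{R}^{d}$ (using that the surface area of $\partial B(0,r)$ equals $d\alpha_{d} r^{d-1}$, as in the proof of Lemma~B.2) rewrites the left-hand side as
\[
d\alpha_{d}\int_{0}^{\infty} r^{d-1}\!\left(\int_{r}^{\infty} s\,\eta(s)\,ds\right)dr.
\]
Then I would apply Fubini on the region $\{(r,s):0<r<s<\infty\}$ (the integrand is nonnegative so Tonelli applies without conditions) to obtain
\[
d\alpha_{d}\int_{0}^{\infty} s\,\eta(s)\!\left(\int_{0}^{s} r^{d-1}dr\right)ds \;=\; \alpha_{d}\int_{0}^{\infty} s^{d+1}\,\eta(s)\,ds.
\]
By Lemma~B.2, $M_{2}(\eta)=d\alpha_{d}\int_{0}^{\infty} s^{d+1}\eta(s)\,ds$, so the right-hand side is exactly $M_{2}(\eta)/d$, establishing the first identity.

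For the rescaled version, the cleanest route is to note that $\eta_{\varepsilon}$ itself satisfies the hypotheses of the lemma, and then to check separately that $M_{2}(\eta_{\varepsilon})=\varepsilon^{2}M_{2}(\eta)$. The latter follows from the substitution $z=y/\varepsilon$:
\[
M_{2}(\eta_{\varepsilon})=\int_{\mathbb{R}^{d}}|y|^{2}\,\varepsilon^{-d}\eta(|y|/\varepsilon)\,dy = \int_{\mathbb{R}^{d}}\varepsilon^{2}|z|^{2}\eta(|z|)\,dz = \varepsilon^{2}M_{2}(\eta).
\]
Applying the first part of the lemma to $\eta_{\varepsilon}$ then yields $\int_{\mathbb{R}^{d}}\zeta_{(\eta_{\varepsilon})}(x,y)\,dy=M_{2}(\eta_{\varepsilon})/d=\varepsilon^{2}M_{2}(\eta)/d$, as claimed.

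There is no real obstacle here: the only thing to be slightly careful about is that Fubini/Tonelli is used on the half-plane $0<r<s$ with a nonnegative integrand, which is legitimate regardless of whether $\eta$ is integrable near the origin, and that the formula in Lemma~B.2 was already derived under exactly the same Assumption~\ref{assu:eta properties}(i-iv) that we are assuming here.
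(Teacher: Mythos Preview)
Your proof is correct and follows essentially the same route as the paper: translate to the origin, pass to polar coordinates, reduce to a one-dimensional radial identity via Lemma~\ref{lem:moment versus profile}, and for the rescaled kernel apply part~(i) to $\eta_{\varepsilon}$ together with the scaling $M_{2}(\eta_{\varepsilon})=\varepsilon^{2}M_{2}(\eta)$. The only minor difference is that where the paper evaluates $\int_{0}^{\infty}\bigl(\int_{r}^{\infty}s\eta(s)\,ds\bigr)r^{d-1}\,dr$ by integration by parts, you use Tonelli to swap the order of integration; your choice is arguably cleaner since it sidesteps any concern about the boundary term at $r=0$ when $\eta$ is non-integrable near the origin.
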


\begin{proof}
(i) Compute that 
\begin{align*}
\int_{\mathbb{R}^{d}}\zeta_{\eta}(x,y)dy=\int_{\mathbb{R}^{d}}\left[\int_{|x-y|}^{\infty}s\eta(s)ds\right]dy & =\int_{\mathbb{R}^{d}}\left[\int_{|y|}s\eta(s)ds\right]dy\\
 & =\int_{0}^{\infty}\int_{\partial B(0,r)}\left[\int_{r}^{\infty}s\eta(s)ds\right]d\mathcal{H}^{d-1}dr.
\end{align*}
Notice that $\left[\int_{r}^{\infty}s\eta(s)ds\right]$ is rotationally
invariant, and recall that the Hausdorff measure of $\partial B(0,r)$
is $\frac{d}{r}Vol(B(0,r))$; in turn, $Vol(B(0,r))=\alpha_{d}r^{d}$.
Hence, 
\[
\int_{0}^{\infty}\int_{\partial B(0,r)}\left[\int_{r}^{\infty}s\eta(s)ds\right]d\mathcal{H}^{d-1}dr=d\alpha_{d}\int_{0}^{\infty}\left[\int_{r}^{\infty}s\eta(s)ds\right]r^{d-1}dr.
\]
In turn, using integration by parts, we observe that 
\begin{align*}
\int_{0}^{\infty}\left[\int_{r}^{\infty}s\eta(s)ds\right]r^{d-1}dr & =\left[\left[\int_{r}^{\infty}s\eta(s)ds\right]\frac{r^{d}}{d}\right]_{0}^{\infty}-\int_{0}^{\infty}\left(-r\eta(r)\right)\frac{r^{d}}{d}dr\\
 & =\frac{1}{d}\int_{0}^{\infty}r^{d+1}\eta(r)dr.
\end{align*}
The claim now follows by way of Lemma \ref{lem:moment versus profile}.

(ii) By replacing $\eta$ with $\eta_{\varepsilon}$, the previous
part shows that 
\[
\int_{\mathbb{R}^{d}}\zeta_{(\eta_{\varepsilon})}(x,y)dy=\frac{M_{2}(\eta_{\varepsilon})}{d}.
\]
So we conclude by computing that 
\[
M_{2}(\eta_{\varepsilon})=\int_{\mathbb{R}^{d}}|y|^{2}\frac{1}{\varepsilon^{d}}\eta\left(\frac{|y|}{\varepsilon}\right)dy=\varepsilon^{2}\int_{\mathbb{R}^{d}}|y|^{2}\eta(|y|)dy=\varepsilon^{2}\frac{M_{2}(\eta)}{d}.
\]
\end{proof}

The preceding lemma indicates that $\zeta_{\eta}$ and $\zeta_{(\eta_{\varepsilon})}$
are not convolution kernels (since they are not appropriately normalized).
We therefore introduce their normalizations
\[
\bar{\zeta}_{\eta}:=\frac{d}{M_{2}(\eta)}\zeta_{\eta};\qquad\bar{\zeta}_{(\eta_{\varepsilon})}:=\frac{d}{\varepsilon^{2}M_{2}(\eta)}\zeta_{(\eta_{\varepsilon})}.
\]
Note that under Assumption \ref{assu:eta properties}, $\bar{\zeta}_{\eta}$ is a convolution kernel supported
on the unit ball, while $\bar{\zeta}_{(\eta_{\varepsilon})}$ is a
convolution kernel supported on the ball of radius $\varepsilon$.

\begin{lemma}
\label{lem:zeta-rel-lipschitz-regularity}Assume that $\eta$ satisfies Assumption \ref{assu:eta properties}. Let $K(x)=c_{K}e^{-|x|}$,
where $c_{K}$ is a normalizing constant; and let $\overline{\zeta}_{(\eta_{\varepsilon})}(x)=\frac{1}{\varepsilon^{2}\alpha_{d}\sigma_{\eta}}\int_{|x|}^{\infty}t\eta_{\varepsilon}(t)dt$.
Let $\mu\in\mathcal{P}(\mathbb{R}^d)$. Then, for $0<\varepsilon<\delta$, and
$|x-y|<\varepsilon$, 
\[
K_{\delta}*\mu(x)\left(\frac{1}{1+\frac{3}{\delta}\varepsilon}\right)\leq\overline{\zeta}_{(\eta_{\varepsilon})}*(K_{\delta}*\mu)(x)\leq K_{\delta}*\mu(x)\left(1+\frac{3}{\delta}\varepsilon\right)
\]
and
\[
\frac{1}{\left(1+\frac{3}{\delta}\varepsilon\right)^{2}}\frac{1}{1+\frac{3}{\delta}|x-y|}\leq\frac{\overline{\zeta}_{(\eta_{\varepsilon})}*(K_{\delta}*\mu)(y)}{\overline{\zeta}_{(\eta_{\varepsilon})}*(K_{\delta}*\mu)(x)}\leq\left(1+\frac{3}{\delta}\varepsilon\right)^{2}\left(1+\frac{3}{\delta}|x-y|\right).
\]
\end{lemma}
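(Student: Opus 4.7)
The plan is to reduce both inequalities directly to the pointwise relative Lipschitz estimate already established in Lemma \ref{lem:relative Lipschitz}, using the fact that $\bar{\zeta}_{(\eta_{\varepsilon})}$ is a probability convolution kernel supported in the ball $B(0,\varepsilon)$ (by the normalization computed in Lemma \ref{lem:zeta kernel mass} together with Assumption \ref{assu:eta properties}(v)-(vi), which ensures $\eta$ is supported in $B(0,1)$ so $\eta_\varepsilon$ is supported in $B(0,\varepsilon)$).

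First, I would prove the envelope inequality (Part 1). Writing out the convolution,
\[
\bar{\zeta}_{(\eta_{\varepsilon})}*(K_{\delta}*\mu)(x)=\int_{B(0,\varepsilon)}\bar{\zeta}_{(\eta_{\varepsilon})}(z)\,(K_{\delta}*\mu)(x-z)\,dz.
\]
Since $|z|\le\varepsilon<\delta$, Lemma \ref{lem:relative Lipschitz} applied with the roles of $x$ and $x-z$ (in both directions) gives the two-sided estimate
\[
\frac{(K_{\delta}*\mu)(x)}{1+\tfrac{3}{\delta}\varepsilon}\le(K_{\delta}*\mu)(x-z)\le(K_{\delta}*\mu)(x)\bigl(1+\tfrac{3}{\delta}\varepsilon\bigr)
\]
for every $z$ in the support of $\bar{\zeta}_{(\eta_{\varepsilon})}$. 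Integrating against $\bar{\zeta}_{(\eta_{\varepsilon})}$, whose total mass is $1$, yields Part 1.

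For Part 2, I would simply combine Part 1 at the two points with the relative Lipschitz estimate for $K_\delta * \mu$ itself. Since $|x-y|<\varepsilon<\delta$, Lemma \ref{lem:relative Lipschitz} gives $(K_{\delta}*\mu)(y)\le(K_{\delta}*\mu)(x)(1+\tfrac{3}{\delta}|x-y|)$. Chaining the upper bound of Part 1 at $y$ with the lower bound of Part 1 at $x$ produces
\[
\frac{\bar{\zeta}_{(\eta_{\varepsilon})}*(K_{\delta}*\mu)(y)}{\bar{\zeta}_{(\eta_{\varepsilon})}*(K_{\delta}*\mu)(x)}\le\bigl(1+\tfrac{3}{\delta}\varepsilon\bigr)^{2}\,\frac{(K_{\delta}*\mu)(y)}{(K_{\delta}*\mu)(x)}\le\bigl(1+\tfrac{3}{\delta}\varepsilon\bigr)^{2}\bigl(1+\tfrac{3}{\delta}|x-y|\bigr),
\]
and the lower bound follows by swapping the roles of $x$ and $y$ (noting that $|x-y|<\delta$ permits Lemma \ref{lem:relative Lipschitz} in the reverse direction as well).

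There is no real obstacle here; the only subtle point is making sure that the support condition on $\bar{\zeta}_{(\eta_{\varepsilon})}$ is compatible with the hypothesis $|z|<\delta$ needed to invoke the second conclusion of Lemma \ref{lem:relative Lipschitz} (i.e.\ the polynomial, rather than exponential, form of the relative Lipschitz estimate). This is exactly why the assumption $\varepsilon<\delta$ is imposed. The proof is essentially a two-step application of the same pointwise estimate.
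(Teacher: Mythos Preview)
Your proposal is correct and follows essentially the same approach as the paper's proof: both first use Lemma~\ref{lem:relative Lipschitz} pointwise inside the convolution integral (exploiting that $\bar{\zeta}_{(\eta_{\varepsilon})}$ is a probability kernel supported in $B(0,\varepsilon)$ with $\varepsilon<\delta$) to obtain the sandwich bound of Part~1, and then chain Part~1 at the two points with the relative Lipschitz estimate for $K_\delta*\mu$ itself to obtain Part~2.
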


\begin{proof}
We know that for $|x-y|<\delta$,
\[
\frac{K_{\delta}*\mu(y)}{K_{\delta}*\mu(x)}\leq1+\frac{3}{\delta}|x-y|.
\]
Since (for all $|x-y|<\delta$)
\[
K_{\delta}*\mu(x)\left(\frac{1}{1+\frac{3}{\delta}|x-y|}\right)\leq K_{\delta}*\mu(y)\leq K_{\delta}*\mu(x)\left(1+\frac{3}{\delta}|x-y|\right)
\]
and 
$\overline{\zeta}_{(\eta_{\varepsilon})}$ is a convolution kernel
supported on the ball of radius $\varepsilon$, and $\varepsilon<\delta$,
it follows that for all $x\in X$,
\[
K_{\delta}*\mu(x)\left(\frac{1}{1+\frac{3}{\delta}\varepsilon}\right)\leq\overline{\zeta}_{(\eta_{\varepsilon})}*(K_{\delta}*\mu)(x)\leq K_{\delta}*\mu(x)\left(1+\frac{3}{\delta}\varepsilon\right).
\]
Therefore, if $|x-y|<\delta$,
\[
\frac{K_{\delta}*\mu(y)\left(\frac{1}{1+\frac{3}{\delta}\varepsilon}\right)}{K_{\delta}*\mu(x)\left(1+\frac{3}{\delta}\varepsilon\right)}\leq\frac{\overline{\zeta}_{(\eta_{\varepsilon})}*(K_{\delta}*\mu)(y)}{\overline{\zeta}_{(\eta_{\varepsilon})}*(K_{\delta}*\mu)(x)}\leq\frac{K_{\delta}*\mu(y)\left(1+\frac{3}{\delta}\varepsilon\right)}{K_{\delta}*\mu(x)\left(\frac{1}{1+\frac{3}{\delta}\varepsilon}\right)}
\]
and so 
\[
\frac{1}{\left(1+\frac{3}{\delta}\varepsilon\right)^{2}}\frac{1}{1+\frac{3}{\delta}|x-y|}\leq\frac{\overline{\zeta}_{(\eta_{\varepsilon})}*(K_{\delta}*\mu)(y)}{\overline{\zeta}_{(\eta_{\varepsilon})}*(K_{\delta}*\mu)(x)}\leq\left(1+\frac{3}{\delta}\varepsilon\right)^{2}\left(1+\frac{3}{\delta}|x-y|\right).
\]
\end{proof}
\begin{lemma}[$W_{2}$ convolution estimates]
\label{lem:more convolution estimates} (i) Let $k$ be any radially
symmetric convolution kernel. Then, for any $\mu\in\mathcal{P}_{2}(\mathbb{R}^{d})$,
\[
W_{2}(\mu,\boldsymbol{k_{s}}*\mu)\leq\left(M_2(k)\right)^{1/2}s.
\]

(ii) In the special case of $\bar{\zeta}_{\eta_{\varepsilon}}$, under Assumption \ref{assu:eta properties} (i-iv) we
have that 
\[
W_{2}(\mu,\boldsymbol{\bar{\zeta}_{(\eta_{\varepsilon})}}*\mu)\leq\left(\frac{d}{d+2}\frac{M_{4}(\eta)}{M_{2}(\eta)}\right)^{1/2}\varepsilon.
\]
\end{lemma}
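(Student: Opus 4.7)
The plan is to prove (i) by exhibiting an explicit coupling between $\mu$ and $\boldsymbol{k_s} \ast \mu$: namely, for any Borel sets $A, B \subset \mathbb{R}^d$, set
\[
\pi(A \times B) := \int_{\mathbb{R}^d} \int_{\mathbb{R}^d} \mathbf{1}_A(x) \mathbf{1}_B(x+z) \, k_s(|z|)\, dz \, d\mu(x).
\]
Equivalently, $\pi$ is the law of $(X, X+Z)$, where $X \sim \mu$ and $Z \sim k_s$ are independent. The first marginal is clearly $\mu$, and since $k_s * \mu$ is by definition the density of $X+Z$, the second marginal is $\boldsymbol{k_s} * \mu$. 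The quadratic transport cost of this plan is
\[
\int_{\mathbb{R}^d \times \mathbb{R}^d} |x-y|^2 \, d\pi(x,y) = \int_{\mathbb{R}^d} |z|^2 k_s(|z|)\, dz = s^2 M_2(k),
\]
where the last equality follows from a standard change of variables $z \mapsto z/s$ using the scaling $k_s(|z|) = s^{-d} k(|z|/s)$. Taking square roots and using that $W_2$ is defined as an infimum over couplings yields the claim.

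For part (ii), the key observation is that $\bar{\zeta}_{(\eta_\varepsilon)}$ is the $\varepsilon$-rescaling of $\bar{\zeta}_\eta$; concretely, a change of variables in the definition $\zeta_{(\eta_\varepsilon)}(r) = \int_r^\infty s \eta_\varepsilon(s)\, ds$ gives $\zeta_{(\eta_\varepsilon)}(r) = \varepsilon^{2-d} \zeta_\eta(r/\varepsilon)$, and normalizing by the constants from the definitions of $\bar{\zeta}_\eta$ and $\bar{\zeta}_{(\eta_\varepsilon)}$ produces
\[
\bar{\zeta}_{(\eta_\varepsilon)}(|x|) = \varepsilon^{-d} \bar{\zeta}_\eta(|x|/\varepsilon) = (\bar{\zeta}_\eta)_\varepsilon(|x|).
\]
Thus part (i) applied with $k = \bar{\zeta}_\eta$ and $s = \varepsilon$ gives $W_2(\mu, \boldsymbol{\bar{\zeta}_{(\eta_\varepsilon)}}*\mu) \leq \varepsilon \sqrt{M_2(\bar{\zeta}_\eta)}$.

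It remains to compute $M_2(\bar{\zeta}_\eta)$. Reasoning as in Lemma~\ref{lem:zeta kernel mass}, I pass to polar coordinates using Lemma~\ref{lem:moment versus profile} to obtain
\[
\int_{\mathbb{R}^d} |x|^2 \zeta_\eta(|x|)\, dx = d \alpha_d \int_0^\infty r^{d+1} \zeta_\eta(r)\, dr,
\]
and then integrate by parts using $\zeta_\eta'(r) = -r\eta(r)$, which produces
\[
d \alpha_d \int_0^\infty r^{d+1} \zeta_\eta(r)\, dr = \frac{d\alpha_d}{d+2} \int_0^\infty r^{d+3} \eta(r)\, dr = \frac{M_4(\eta)}{d+2},
\]
the last equality again by Lemma~\ref{lem:moment versus profile}. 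Since $\bar{\zeta}_\eta = \frac{d}{M_2(\eta)} \zeta_\eta$, multiplying by $d/M_2(\eta)$ gives $M_2(\bar{\zeta}_\eta) = \frac{d}{d+2} \frac{M_4(\eta)}{M_2(\eta)}$, and plugging this into the bound from part (i) with $s = \varepsilon$ yields (ii). There is no serious obstacle here: both parts are essentially bookkeeping once one recognizes (a) the natural coupling by i.i.d.\ displacement and (b) the dilation relation between $\bar{\zeta}_{(\eta_\varepsilon)}$ and $\bar{\zeta}_\eta$; the only computation requiring care is the integration-by-parts evaluation of $M_2(\bar{\zeta}_\eta)$ in terms of $M_4(\eta)$.
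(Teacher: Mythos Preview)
Your proof is correct. Part (i) is exactly the paper's argument, just phrased probabilistically: the coupling $k_s(|x-y|)\,d\mu(x)\,dy$ used in the paper is precisely the law of $(X,X+Z)$ you describe.

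For part (ii) there is a small organizational difference. The paper repeats the coupling argument directly for $\bar\zeta_{(\eta_\varepsilon)}$ and then computes $\int |y|^2 \bar\zeta_{(\eta_\varepsilon)}(|y|)\,dy$ by polar coordinates and integration by parts, obtaining $\frac{\varepsilon^4}{d+2}M_4(\eta)$ before dividing by the normalizing constant. You instead first establish the scaling identity $\bar\zeta_{(\eta_\varepsilon)} = (\bar\zeta_\eta)_\varepsilon$ and then invoke part (i) with $k=\bar\zeta_\eta$, $s=\varepsilon$, reducing the task to computing $M_2(\bar\zeta_\eta)$. Your route is slightly cleaner in that it makes explicit that (ii) is a special case of (i); the paper's route avoids checking the scaling identity but then duplicates the coupling step. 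Either way, the substantive computation --- the integration by parts yielding $d\alpha_d\int_0^\infty r^{d+1}\zeta_\eta(r)\,dr = \frac{M_4(\eta)}{d+2}$ --- is identical in both.
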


\begin{proof}
(i) Consider the coupling defined by $k_{s}(|x-y|)d\mu(x)dy$. Let
us first check that this is indeed a coupling between $\mu$ and $\boldsymbol{k_{s}}*\mu$.

To see that the first marginal is $\mu$, simply compute that for
any $A\subset\mathbb{R}^{d}$, 
\begin{align*}
\int_{\mathbb{R}^{d}}\int_{A}k_{s}(|x-y|)d\mu(x)dy & =\int_{A}\int_{\mathbb{R}^{d}}k_{s}(|x-y|)dyd\mu(x)\\
 & =\int_{A}d\mu(x)
\end{align*}
since $k_{s}(|x-y|)$ is normalized. On the other hand, $\int_{\mathbb{R}^{d}}k_{s}(|x-y|)d\mu(x)$
is definitionally equal to the density of $\boldsymbol{k_{s}}*\mu$.
So the second marginal of $k_{s}(|x-y|)d\mu(x)dy$ is indeed $\boldsymbol{k_{s}}*\mu$:
for any $A\subset\mathbb{R}^{d}$,
\[
\int_{A}\int_{\mathbb{R}^{d}}k_{s}(|x-y|)d\mu(x)dy=\int_{A}(k_{s}*\mu)(y)dy=(\boldsymbol{k_{s}}*\mu)(A).
\]

Therefore, 
\[
W_{2}^{2}(\mu,\boldsymbol{k_{s}}*\mu)\leq\int\int|x-y|^{2}k_{s}(|x-y|)d\mu(x)dy.
\]
Since for all $x\in\mathbb{R}^{d}$, 
\begin{align*}
\int|x-y|^{2}k_{s}(|x-y|)dy & =\int|y|^{2}k_{s}(|y|)dy=s^{2}\int|y|^{2}k(|y|)dy
\end{align*}
we find that 
\[
W_{2}^{2}(\mu,\boldsymbol{k_{s}}*\mu)\leq\int\left(s^{2}\int|y|^{2}k(|y|)dy\right)d\mu(x)=s^{2}\int|y|^{2}k(|y|)dy.
\]

(ii) By identical reasoning to part (i), we find that $\bar{\zeta}_{(\eta_{\varepsilon})}(|x-y|)d\mu(x)dy$
is indeed a coupling between $\mu$ and $\boldsymbol{\bar{\zeta}_{(\eta_{\varepsilon})}}*\mu$;
and furthermore, 
\[
W_{2}^{2}(\mu,\boldsymbol{\bar{\zeta}_{(\eta_{\varepsilon})}}*\mu)\leq\int|y|^{2}\bar{\zeta}_{(\eta_{\varepsilon})}(|y|)dy.
\]
So, observe that 
\[
\int|y|^{2}\bar{\zeta}_{(\eta_{\varepsilon})}(|y|)dy=\int|y|^{2}\left(\frac{d}{\varepsilon^{2}M_{2}(\eta)}\int_{|y|}^{\infty}s\eta_{\varepsilon}(s)ds\right)dy;
\]
by performing a similar computation to Lemma \ref{lem:zeta kernel mass},
we see that 
\begin{align*}
\int|y|^{2}\left(\int_{|y|}^{\infty}s\eta_{\varepsilon}(s)ds\right)dy & =\int_{0}^{\infty}\int_{\partial B(0,r)}\left[r^{2}\int_{r}^{\infty}s\eta_{\varepsilon}(s)ds\right]d\mathcal{H}^{d-1}dr\\
 & =d\alpha_{d}\int_{0}^{\infty}\left[r^{2}\int_{r}^{\infty}s\eta_{\varepsilon}(s)ds\right]r^{d-1}dr\\
 & =d\alpha_{d}\left(\frac{1}{d+2}\int_{0}^{\infty}r^{d+3}\eta_{\varepsilon}(r)dr\right)\\
 & =\frac{d}{d+2}\alpha_{d}\varepsilon^{4}\int_{0}^{\infty}r^{d+3}\eta(r)dr\\
\text{(Lemma \ref{lem:moment versus profile})} & =\frac{\varepsilon^{4}}{d+2}M_{4}(\eta).
\end{align*}
Hence, $W_{2}^{2}(\mu,\bar{\zeta}_{(\eta_{\varepsilon})}*\mu)\leq\frac{d}{d+2}\frac{M_{4}(\eta)}{M_{2}(\eta)}\varepsilon^{2}$
as desired. 
\end{proof}

\begin{lemma}[Moments and normalizing constant for the Laplace kernel]
\label{lem:Laplace kernel moments} Let $N\in\mathbb{N}$. Concerning
the kernel $K=c_{K}e^{-|x|}$, we have that $c_{K}=(\alpha_{d}d!)^{-1}$
and 
\[
M_{N}(K)=(N+d-1)!/(d-1)!.
\]
\end{lemma}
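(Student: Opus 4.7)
The plan is straightforward: both claims follow from a direct computation in polar coordinates using the identity $|\partial B(0,r)| = d\alpha_d r^{d-1}$ (see the derivation in Lemma \ref{lem:moment versus profile}) combined with the standard Gamma integral $\int_0^\infty r^k e^{-r} dr = k!$ for $k \in \mathbb{N}$.

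First I would handle the normalization. Since $c_K$ is defined by the requirement $\int_{\mathbb{R}^d} K(x)dx = 1$, I write
\[
c_K^{-1} = \int_{\mathbb{R}^d} e^{-|x|} dx = d\alpha_d \int_0^\infty r^{d-1} e^{-r} dr = d\alpha_d \cdot (d-1)! = \alpha_d \cdot d!,
\]
which gives $c_K = (\alpha_d d!)^{-1}$ as claimed.

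Next I compute the $N$th moment by the same procedure:
\[
M_N(K) = c_K \int_{\mathbb{R}^d} |x|^N e^{-|x|} dx = c_K \cdot d\alpha_d \int_0^\infty r^{N+d-1} e^{-r} dr = c_K \cdot d\alpha_d \cdot (N+d-1)!.
\]
Substituting the value of $c_K$ found above yields
\[
M_N(K) = \frac{d\alpha_d (N+d-1)!}{\alpha_d d!} = \frac{(N+d-1)!}{(d-1)!}.
\]
There is no real obstacle here; the only thing to verify is the spherical-shell surface measure formula $|\partial B(0,r)| = d\alpha_d r^{d-1}$, which is already used elsewhere in the paper, and the evaluation of the Gamma function at integer arguments. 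The calculation is essentially mechanical.
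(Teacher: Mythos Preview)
Your proof is correct and follows essentially the same approach as the paper: both compute the integrals in polar coordinates using $|\partial B(0,r)| = d\alpha_d r^{d-1}$ and then evaluate the resulting $\int_0^\infty r^k e^{-r}\,dr$ via the factorial identity.
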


\begin{proof}
Simply compute that 
\[
c_{K}^{-1}=\int_{\mathbb{R}^{d}}e^{-|x|}dx=\int_{0}^{\infty}\int_{\partial B(0,r)}e^{-r}d\mathcal{H}^{d-1}dr=d\alpha_{d}\int_{0}^{\infty}e^{-r}r^{d-1}dr=d\alpha_{d}(d-1)!
\]
and similarly
\[
M_{N}(K)=c_{K}\int_{\mathbb{R}^{d}}|x|^{N}e^{-|x|}dx=c_{K}d\alpha_{d}\int_{0}^{\infty}e^{-r}r^{N+d-1}dr=c_{K}d\alpha_{d}(N+d-1)!=\frac{(N+d-1)!}{(d-1)!}.
\]
\end{proof}
\subsection{Lemmas for Section \ref{sec:nonlocal hj}}

\begin{lemma}[Hamilton-Jacobi subsolutions closed under convolution in $x$]
\label{lem:convolution of HJ subsoln} Let $\phi_{t}(x)\in BL([0,1]\times\mathbb{R}^{d})$.
Suppose that for almost all $(t,x)\in[0,1]\times\mathbb{R}^{d}$,
\[
\partial_{t}\phi_{t}(x)+H(\nabla\phi_{t}(x))\leq0
\]
where $H:\mathbb{R}^{d}\rightarrow\mathbb{R}_{+}$ is convex. Then,
if $k$ is any convolution kernel, it also holds $(t,x)$-a.e. that
\[
\forall(t,x)\in[0,1]\times\mathbb{R}^{d}\qquad\partial_{t}(k*\phi_{t}(x))+H(\nabla(k*\phi_{t}(x)))\leq0
\]
where $k*\phi_{t}(x):=\int_{\mathbb{R}^{d}}k(x-y)\phi(y)dy$.
\end{lemma}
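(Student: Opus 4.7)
The plan is to exploit two facts in combination: (a) convolution in the space variable commutes with both the time derivative and the spatial gradient, and (b) since $k(|x-y|)\,dy$ is a probability measure on $\mathbb{R}^d$, Jensen's inequality applied to the convex function $H$ yields a pointwise inequality in the right direction.

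First I would set up the regularity needed to differentiate $k*\phi_t$ under the integral sign. Since $\phi_t(x)\in BL([0,1]\times\mathbb{R}^d)$, the function $\phi_t$ is uniformly bounded and jointly Lipschitz in $(t,x)$, so by Rademacher's theorem both $\partial_t\phi_t(x)$ and $\nabla\phi_t(x)$ exist Lebesgue a.e. on $[0,1]\times\mathbb{R}^d$ and are uniformly bounded. The convolution kernel $k$ is nonnegative, normalized, and integrable on $\mathbb{R}^d$, so a standard dominated-convergence argument gives, for almost all $(t,x)$,
\[
\partial_t\bigl(k*\phi_t\bigr)(x)=\bigl(k*\partial_t\phi_t\bigr)(x),\qquad \nabla\bigl(k*\phi_t\bigr)(x)=\bigl(k*\nabla\phi_t\bigr)(x),
\]
where the convolution is taken componentwise in the gradient case.

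Next I would use the hypothesis $\partial_t\phi_t(y)+H(\nabla\phi_t(y))\le 0$ and integrate against $k(|x-y|)\,dy$, which is nonnegative, to obtain
\[
\bigl(k*\partial_t\phi_t\bigr)(x)+\int_{\mathbb{R}^d}k(|x-y|)\,H\bigl(\nabla\phi_t(y)\bigr)\,dy\le 0
\]
for a.e.\ $(t,x)$. Since $k(|x-y|)\,dy$ is a probability measure on $\mathbb{R}^d$ and $H:\mathbb{R}^d\to\mathbb{R}_+$ is convex, Jensen's inequality gives
\[
H\!\left(\int_{\mathbb{R}^d}k(|x-y|)\,\nabla\phi_t(y)\,dy\right)\le \int_{\mathbb{R}^d}k(|x-y|)\,H\bigl(\nabla\phi_t(y)\bigr)\,dy,
\]
and the left-hand side equals $H\bigl(\nabla(k*\phi_t)(x)\bigr)$ by the commutation identity from the previous step.

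Combining these two inequalities yields
\[
\partial_t\bigl(k*\phi_t\bigr)(x)+H\bigl(\nabla(k*\phi_t)(x)\bigr)\le 0
\]
for a.e.\ $(t,x)$, which is the desired conclusion. The only genuinely delicate point is the justification that differentiation commutes with convolution at a.e.\ point despite $\phi_t$ being only Lipschitz (not smooth); this is handled by standard mollification/dominated convergence arguments using boundedness of the difference quotients of $\phi_t$, so I expect no real obstacle beyond bookkeeping.
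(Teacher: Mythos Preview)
Your proposal is correct and follows essentially the same approach as the paper: commute differentiation with convolution (justified via Rademacher and boundedness of the difference quotients), then apply Jensen's inequality for the convex $H$ against the probability measure $k(|x-y|)\,dy$. The only cosmetic difference is the order of the steps (you integrate the hypothesis first and then apply Jensen, whereas the paper applies Jensen first and then invokes the hypothesis), which is immaterial.
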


\begin{proof}
First, since $\phi_{t}(x)\in BL([0,1]\times\mathbb{R}^{d})$, we have
by Rademacher's theorem that $\phi_{t}(x)$ is (jointly) differentiable
almost everywhere, and in particular 
\[
|\partial_{t}\phi_{t}(x)|\leq|\partial_{t}\phi_{t}(x)+\nabla\phi_{t}(x)|\leq\text{Lip}_{[0,1]\times\mathbb{R}^{d}}\phi\qquad(t,x)-\text{a.e.},
\]
so by \cite[Ex. 5.8.135]{bogachev2007measure1} we are justified in
differentiating under the integral sign with respect to the measure
$k(x-y)dy$:
\begin{align*}
\partial_{t}(k*\phi_{t}(x)) & =\partial_{t}\int_{\mathbb{R}^{d}}\phi_{t}(y)k(x-y)dy=\int_{\mathbb{R}^{d}}\partial_{t}\phi_{t}(y)k(x-y)dy=k*(\partial_{t}\phi_{t})(x)\qquad(t,x)-\text{a.e..}
\end{align*}

Second, using the convexity of $H$, we compute that 
\begin{align*}
H(\nabla(k*\phi_{t}(x)) & =H(k*\nabla\phi_{t}(x))\\
 & =H\left(\int_{\mathbb{R}^{d}}\nabla\phi_{t}(y)k(x-y)dy\right)\\
\text{(Jensen)} \;\; & \leq\int_{\mathbb{R}^{d}}H(\nabla\phi_{t}(y))k(x-y)dy.
\end{align*}
Using the fact that $\partial_{t}\phi_{t}(x)+H(\nabla\phi_{t}(x))\leq0$,
we see that 
\begin{align*}
\int_{\mathbb{R}^{d}}H(\nabla\phi_{t}(y))k(x-y)dy & \leq-\int_{\mathbb{R}^{d}}\left(\partial_{t}\phi_{t}(y)\right)k(x-y)dy\\
\text{(a.e.)} & =-\partial_{t}(k*\phi_{t}(x)).
\end{align*}
Therefore, $H(\nabla(k*\phi_{t}(x))\leq-\partial_{t}(k*\phi_{t}(x))$
a.e. as desired.
\end{proof}
\begin{lemma}[{Cf. \cite[(0.10) \textquotedblleft generalized Young's inequality\textquotedblright]{folland1995introduction}}]
\label{lem:Folland Young's ineq} Let $K=c_{K}e^{-|x|}$, and let
$\phi\in L^{\infty}(\mathbb{R}^{d})$. Then 
\[
\Vert\nabla(K_{s}*\phi)\Vert_{\infty}\leq s^{-1}\Vert\phi\Vert_{\infty}.
\]
Moreover, for any $\phi\in BL(\mathbb{R}^{d})$,
\[
\Vert D^{2}(K_{s}*\phi)\Vert_{\infty}\leq s^{-1}\Vert\nabla\phi\Vert_{\infty}.
\]
\end{lemma}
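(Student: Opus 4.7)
The plan is to prove both bounds via the standard convolution-derivative identity $\nabla(K_s*\phi)=(\nabla K_s)*\phi$ combined with the fact that the directional derivatives of the Laplace kernel have $L^1$ norm exactly $s^{-1}|v|$. The key input is a pointwise/integral computation: since for $x\neq 0$ one has $\nabla K(x)=-c_K e^{-|x|}\,x/|x|$, we get $|v\cdot\nabla K(x)|\leq c_K e^{-|x|}|v|$, and integrating yields $\|v\cdot\nabla K\|_{L^1(\mathbb{R}^d)}\leq|v|\int_{\mathbb{R}^d}c_K e^{-|x|}dx=|v|$. Rescaling, $(\nabla K_s)(x)=s^{-d-1}(\nabla K)(x/s)$, so $\|v\cdot\nabla K_s\|_{L^1}\leq s^{-1}|v|$.

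For the first inequality, I would write $v\cdot\nabla(K_s*\phi)(x)=((v\cdot\nabla K_s)*\phi)(x)$, which makes sense pointwise since $v\cdot\nabla K_s\in L^1$ and $\phi\in L^\infty$. Then by Young's inequality in the $L^1\ast L^\infty\to L^\infty$ form,
\[
|v\cdot\nabla(K_s*\phi)(x)|\leq\|v\cdot\nabla K_s\|_{L^1}\|\phi\|_{L^\infty}\leq s^{-1}|v|\|\phi\|_{L^\infty}.
\]
Taking the supremum over unit $v$ and over $x$ gives $\|\nabla(K_s*\phi)\|_\infty\leq s^{-1}\|\phi\|_\infty$.

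For the second inequality I would shift one derivative onto $\phi$. Since $\phi\in BL(\mathbb{R}^d)$ is Lipschitz, by Rademacher's theorem $\nabla\phi$ exists a.e.\ and lies in $L^\infty$. First, $w\cdot\nabla(K_s*\phi)=K_s*(w\cdot\nabla\phi)$ (justified by integration by parts against the smooth compactly-supported mollification of $K_s$ and passing to the limit, or directly by Fubini since $K_s\in W^{1,1}$ and $\phi$ is a.e.\ differentiable). Then, applying the first step to $K_s*(w\cdot\nabla\phi)$,
\[
|v\cdot D^2(K_s*\phi)(x)\,w|=|(v\cdot\nabla K_s)*(w\cdot\nabla\phi)(x)|\leq\|v\cdot\nabla K_s\|_{L^1}\|w\cdot\nabla\phi\|_{L^\infty}\leq s^{-1}|v||w|\|\nabla\phi\|_\infty.
\]
Taking the supremum over unit $v,w$ and over $x$ gives the desired bound $\|D^2(K_s*\phi)\|_\infty\leq s^{-1}\|\nabla\phi\|_\infty$.

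The only mild subtlety is justifying the derivative-convolution identities rigorously when $\phi$ is merely bounded (resp.\ Lipschitz), rather than smooth. For the first identity this follows by differentiation under the integral sign, valid because $\nabla K_s\in L^1$ dominates uniformly in $x$ on compact neighborhoods. For the second, one can approximate $\phi$ by $\phi_\varepsilon:=\phi*\rho_\varepsilon$ with smooth mollifiers $\rho_\varepsilon$, apply the classical identity $\nabla(K_s*\phi_\varepsilon)=K_s*\nabla\phi_\varepsilon$, then pass to the limit $\varepsilon\to 0$ using that $\nabla\phi_\varepsilon\to\nabla\phi$ a.e.\ with $\|\nabla\phi_\varepsilon\|_\infty\leq\|\nabla\phi\|_\infty$. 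No step here is really an obstacle; the only care needed is bookkeeping the scaling factor $s^{-1}$ and the isotropy-based identity $\|v\cdot\nabla K\|_{L^1}\leq|v|$.
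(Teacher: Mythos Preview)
Your proof is correct and follows essentially the same approach as the paper: differentiate under the integral to write $\nabla(K_s*\phi)=(\nabla K_s)*\phi$, use that $|\nabla K(x)|=c_K e^{-|x|}$ so $\|\nabla K\|_{L^1}=1$ and hence $\|\nabla K_s\|_{L^1}=s^{-1}$, then apply Young's $L^1*L^\infty\to L^\infty$ bound. Your treatment is slightly more detailed than the paper's (working with directional derivatives $v\cdot\nabla$, and explicitly shifting one derivative onto $\phi$ for the Hessian estimate, which the paper leaves implicit under ``similar''), but the argument is the same.
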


\begin{proof}
Simply compute as follows:
\begin{align*}
|\nabla(K_{s}*\phi)(x)| & \leq\int_{\mathbb{R}^{d}}|\nabla K_{s}(x-y)\phi(y)|dy\\
 & =s^{-(d+1)}\int_{\mathbb{R}^{d}}\left|\nabla K\left(\frac{|x-y|}{s}\right)\phi(y)\right|dy\\
 & \leq s^{-(d+1)}\Vert\phi\Vert_{\infty}\int_{\mathbb{R}^{d}}\left|\nabla K\left(\frac{|y|}{s}\right)\right|dy\\
 & =s^{-1}\Vert\phi\Vert_{\infty}\int_{\mathbb{R}^{d}}|\nabla K(|y|)|dy.
\end{align*}
Note that $c_{K}$ is chosen to normalize the integral of $e^{-|x|}$;
this shows that $\Vert\nabla K(x)\Vert_{1}=1$. Hence, $\Vert\nabla(K_{s}*\phi)\Vert_{\infty}\leq s^{-1}\Vert\phi\Vert_{\infty}$.
The proof that $\Vert D^{2}(K_{s}*\phi)\Vert_{\infty}\leq s^{-1}\Vert\nabla\phi\Vert_{\infty}$
is similar. 
\end{proof}

\begin{lemma}[Convolution decreases
size of gradient]
\label{lem:gradient convolution contraction} Let k be any convolution kernel, and let $\phi\in BL(\mathbb{R}^{d})$.
\end{lemma}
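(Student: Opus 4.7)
\medskip

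\noindent\textbf{Proof plan.} Based on the way this lemma is invoked in the proof of Proposition~\ref{prop:local-to-nonlocal subsolution} (estimating $|\nabla\phi_t^s(x)|^2$ in terms of $|\nabla\phi_t(x)|^2$), the natural statement is the pointwise Jensen-type inequality $|\nabla(k*\phi)(x)|^2 \le (k * |\nabla\phi|^2)(x)$ for a.e.\ $x$, or in the essential-supremum form $\|\nabla(k*\phi)\|_{L^\infty} \le \|\nabla\phi\|_{L^\infty}$. The plan below proves the stronger pointwise version, from which the sup version is immediate.

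\medskip

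\noindent\emph{Step 1 (differentiability and commutation with convolution).} Since $\phi \in BL(\mathbb R^d)$, Rademacher's theorem gives that $\phi$ is Lebesgue-a.e.\ differentiable with $\|\nabla\phi\|_{L^\infty} \le \mathrm{Lip}(\phi) < \infty$. Because $k \in L^1$ and $\nabla\phi \in L^\infty$, the convolution $(k * \nabla\phi)(x) = \int_{\mathbb R^d} k(x-y)\,\nabla\phi(y)\,dy$ is well-defined for every $x$, and standard differentiation under the integral sign (justified since $|k(x-y)\,\nabla\phi(y)| \le \|\nabla\phi\|_\infty\, k(x-y)$ with $k \in L^1$) yields $\nabla(k*\phi)(x) = (k * \nabla\phi)(x)$ for every $x \in \mathbb R^d$.

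\medskip

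\noindent\emph{Step 2 (Jensen's inequality).} Since $k$ is a convolution kernel, for each fixed $x$ the measure $k(x-y)\,dy$ is a probability measure on $\mathbb R^d$. Applying Jensen's inequality to the convex function $v \mapsto |v|^2$ on $\mathbb R^d$ against this probability measure,
\[
|\nabla(k*\phi)(x)|^2 = \left| \int_{\mathbb R^d} k(x-y)\,\nabla\phi(y)\,dy \right|^2 \le \int_{\mathbb R^d} k(x-y)\,|\nabla\phi(y)|^2\,dy = (k * |\nabla\phi|^2)(x),
\]
valid for a.e.\ $x \in \mathbb R^d$ (in particular wherever $\nabla\phi$ exists in a full-measure neighborhood).

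\medskip

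\noindent\emph{Step 3 (sup-norm corollary).} Bounding the integrand by $\|\nabla\phi\|_{L^\infty}^2$ and using $\int k(x-y)\,dy = 1$ gives $|\nabla(k*\phi)(x)|^2 \le \|\nabla\phi\|_{L^\infty}^2$ for a.e.\ $x$, hence $\|\nabla(k*\phi)\|_{L^\infty} \le \|\nabla\phi\|_{L^\infty}$. This is precisely the form needed at the invocation in Proposition~\ref{prop:local-to-nonlocal subsolution}, since the subsequent step there bounds $|\nabla\phi_t(x)|^2$ by $\|\nabla\phi_t\|_\infty^2$.

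\medskip

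There is no real obstacle here: the content is entirely Jensen's inequality together with the fact that, for a kernel in $L^1$ convolved with a globally Lipschitz function, one may exchange $\nabla$ with the convolution integral. The only mild care needed is to handle the a.e.\ nature of $\nabla\phi$ via Rademacher's theorem so that the pointwise computation in Step~1 makes sense; this is standard.
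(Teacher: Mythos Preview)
Your proof is correct and arrives at the same conclusion as the paper, $\|\nabla(k*\phi)\|_{L^\infty}\le\|\nabla\phi\|_{L^\infty}$, via a mildly different route. The paper argues coordinate-wise: for each unit vector $\vec v$, $\|\nabla_{\vec v}(k*\phi)\|_\infty=\|k*(\nabla_{\vec v}\phi)\|_\infty\le\|\nabla_{\vec v}\phi\|_\infty\le\|\nabla\phi\|_\infty$, then takes the supremum over $\vec v$. You instead apply Jensen directly to the vector-valued average, obtaining the stronger pointwise inequality $|\nabla(k*\phi)(x)|^2\le(k*|\nabla\phi|^2)(x)$ before passing to the sup norm. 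Both are one-line consequences of ``averaging against a probability measure does not increase size''; your intermediate pointwise bound is a genuine refinement, though only the sup-norm version is needed downstream. Your observation in Step~3 is also apt: the line in Proposition~\ref{prop:local-to-nonlocal subsolution} that cites this lemma is written as a pointwise bound $|\nabla\phi_t^s(x)|^2\le|\nabla\phi_t(x)|^2$, which is not literally true, but since the very next step replaces the right-hand side by $\|\nabla\phi_t\|_\infty^2$, the sup-norm statement you (and the paper) prove is exactly what is required.
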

\begin{proof}
First, compute that for any unit vector $\vec{v}$, 
\[
\Vert\nabla_{\vec{v}}(k*\phi)\Vert_{\infty}=\Vert k*(\nabla_{\vec{v}}\phi)\Vert_{\infty}\leq\Vert\nabla_{\vec{v}}\phi\Vert_{\infty} \leq  \Vert\nabla \phi\Vert_{\infty} .
\]
Taking the supremum over  unit vectors $v$ establishes the claim .
\end{proof}
\begin{lemma}
\label{lem:BL stable convolution in space}Let $\phi_{t}(x)\in$$BL([0,1]\times\mathbb{R}^{d})$.
Let $k$ be a convolution kernel. Then, 
$k*\phi_{t}(x):=\int_{\mathbb{R}^{d}}k(x-y)\phi_t(y)dy$ also belongs to $BL([0,1]\times\mathbb{R}^{d})$.
\end{lemma}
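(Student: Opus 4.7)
The plan is to verify directly the two defining bounds of $BL([0,1] \times \mathbb{R}^d)$ for $k*\phi_t(x)$, namely the $L^\infty$ bound and the joint Lipschitz bound, by exploiting the fact that $k$ is a probability density (i.e. $\int k = 1$, $k \geq 0$). No delicate analysis is needed; the only obstacle is mild bookkeeping with the change of variables in the convolution.

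First I would handle the uniform bound. Since $k$ is a nonnegative convolution kernel with $\int_{\mathbb{R}^d} k(|z|)\,dz = 1$, for every $(t,x) \in [0,1] \times \mathbb{R}^d$ one has
\[
|k*\phi_t(x)| \leq \int_{\mathbb{R}^d} k(|x-y|)\,|\phi_t(y)|\,dy \leq \|\phi\|_{L^\infty([0,1] \times \mathbb{R}^d)} \int_{\mathbb{R}^d} k(|z|)\,dz = \|\phi\|_{L^\infty([0,1] \times \mathbb{R}^d)}.
\]
So $\|k*\phi\|_{L^\infty} \leq \|\phi\|_{L^\infty} < \infty$.

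Next I would establish joint Lipschitz regularity. Write $L := \mathrm{Lip}_{[0,1] \times \mathbb{R}^d}(\phi)$. By a translation change of variables, $k * \phi_t(x) = \int_{\mathbb{R}^d} k(|z|)\,\phi_t(x-z)\,dz$. Fix $(t,x), (t',x') \in [0,1] \times \mathbb{R}^d$. Then
\[
|k*\phi_t(x) - k*\phi_{t'}(x')| \leq \int_{\mathbb{R}^d} k(|z|)\,|\phi_t(x-z) - \phi_{t'}(x'-z)|\,dz \leq L\,\bigl(|t-t'| + |x-x'|\bigr),
\]
which gives a joint Lipschitz bound on $k*\phi$. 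Combined with the previous paragraph, this proves $k*\phi \in BL([0,1] \times \mathbb{R}^d)$.

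The only point requiring (minor) justification is the change of variables/Fubini step used to write $k*\phi_t(x) = \int k(|z|)\phi_t(x-z)\,dz$; this is standard since $\phi_t \in L^\infty$ and $k \in L^1$, so the integrand is absolutely integrable, uniformly in $(t,x)$. There is no hard step.
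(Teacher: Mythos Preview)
Your proof is correct and follows essentially the same approach as the paper: direct verification of the $L^\infty$ and Lipschitz bounds using that $k$ is a nonnegative probability density. The only minor difference is in the Lipschitz step: the paper checks the spatial and temporal Lipschitz bounds separately (invoking Lemma~\ref{lem:gradient convolution contraction} together with Rademacher's theorem for the spatial part), whereas you handle both at once via the change of variables $z = x - y$, observing that the translation $(t,x-z) \mapsto (t',x'-z)$ preserves the space-time distance. Your route is slightly more self-contained and avoids the appeal to Rademacher.
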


\begin{rem}
If, instead, we were smoothing $\phi_{t}(x)$ in $[0,1]\times\mathbb{R}^{d}$
rather than just $\mathbb{R}^{d}$, then this would follow immediately
from the previous two lemmas.
\end{rem}

\begin{proof}
For each $t$, we know that $\Vert k*\phi_{t}(x)\Vert_{L^{\infty}(\mathbb{R}^{d})}\leq\Vert\phi_{t}(x)\Vert_{L^{\infty}(\mathbb{R}^{d})}$.
Therefore, $\Vert k*\phi_{t}(x)\Vert_{L^{\infty}([0,1]\times\mathbb{R}^{d})}\leq\Vert\phi_{t}(x)\Vert_{L^{\infty}([0,1]\times\mathbb{R}^{d})}$. 

In order to check that $\text{Lip}_{[0,1]\times\mathbb{R}^{d}}\phi_{t}(x)<\infty$,
it suffices to check that $\phi_{t}(x)$ is Lipschitz in $x$ for
uniformly for fixed $t$, and vice versa. By Lemma \ref{lem:gradient convolution contraction}
(together with Rademacher's theorem), we know that $\text{Lip}_{\mathbb{R}^{d}}k_{s}*\phi_{t}(x)\leq\text{Lip}_{\mathbb{R}^{d}}\phi_{t}(x)$.
Therefore it remains only to check that $k*\phi_{t}(x)$ is Lipschitz
in $t$ for each fixed $x$.

So compute that 
\begin{align*}
|k*\phi_{t_{1}}(x)-k*\phi_{t_{2}}(x)| & =\left|\int_{\mathbb{R}^{d}}k(x-y)(\phi_{t_{1}}(y)-\phi_{t_{2}}(y))dy\right|\\
 & \leq\int_{\mathbb{R}^{d}}k(x-y)\text{Lip}_{[0,1]}\phi_{t}(y)\cdot|t_{1}-t_{2}|dy\\
 & \leq\sup_{y\in\mathbb{R}^{d}}\text{Lip}_{[0,1]}\phi_{t}(y)\cdot|t_{1}-t_{2}|\\
 & \leq\text{Lip}_{[0,1]\times\mathbb{R}^{d}}\phi_{t}(y)\cdot|t_{1}-t_{2}|.
\end{align*}
\end{proof}


\end{document}